\def\namedlabel#1#2{\begingroup
	#2%
	\def\@currentlabel{#2}%
	\phantomsection\label{#1}\endgroup
}
\newtheorem{theorem}{Theorem}[section]
\newtheorem{lemma}[theorem]{Lemma}
\newtheorem{proposition}[theorem]{Proposition}
\newtheorem{corollary}[theorem]{Corollary}
\numberwithin{equation}{section}
\newtheorem{example}{Example}[section]
\newtheorem{condition}{Condition}
\newtheorem{assumption}{Assumption}
\newtheorem{conditionalt}{Condition}
\newenvironment{conditionnp}[2]{%
  \begingroup
  \hypersetup{linkcolor=black}
  \begin{conditionalt}[\hypersetup{linkcolor=red}#2]
}{%
  \end{conditionalt}
  \hypersetup{linkcolor=red}
  \endgroup
}
\theoremstyle{remark}
\newtheorem{remark}{Remark}
\theoremstyle{definition}
\newtheorem{definition}[theorem]{Definition}
\DeclareMathOperator{\tr}{tr}
\DeclareMathOperator{\ran}{Ran}
\DeclareMathOperator{\dom}{Dom}
\newcommand{\ud}{\,\mathrm{d}}
\newcommand{\rd}{\mathrm{d}}
\newcommand{\R}{\mathbb{R}}
\newcommand{\C}{\mathbb{C}}
\newcommand{\Or}{\mathcal{O}}
\newcommand{\dd}{\cdot}
\newcommand{\na}{\nabla}
\newcommand{\maf}[1]{\mathfrak{#1}}
\DeclareFontFamily{U}{mathx}{\hyphenchar\font45}%
   \DeclareFontShape{U}{mathx}{m}{n}{<->mathx10}{}%
   \DeclareSymbolFont{mathx}{U}{mathx}{m}{n}%
   \DeclareMathAccent{\widebar}{0}{mathx}{"73}%
  \newcommand{\widebar}[1]{\overline{#1}}%
\newcommand{\mc}[1]{\mathcal{#1}}
\newcommand{\mf}[1]{\mathsf{#1}}
\newcommand{\veps}{\varepsilon}
\newcommand{\lad}{\lambda}
\newcommand{\si}{\sigma}
\newcommand{\p}{\partial}
\def \ep {\varepsilon}
\def \ww {\omega}
\def \l {\langle} 
\def \r {\rangle}
\def \d {\delta}
\def \vp {\varphi}
\def \mhh {\mc{H}_O}
\def \bh {\mc{B}(\maf{H})}
\def \dh {\mc{D}(\maf{H})}
\def \dhh {\mc{D}_+(\maf{H})}
\def \ka {\kappa}
\def \mi {{\bf 1}}
\def \ld {\mc{L}_S}
\def \lh {\mc{L}_A}
\def \lo {\mc{L}_O}
\def \mh {\mc{H}}
\def \fl {\mf{L}}
\def \fll {\mf{L}_O}
\def \ls {\mf{L}_S}
\def \la {\mf{L}_A}
\def \es {E_S}
\def \hl {\hat{L}}
\def \hm {\hat{\mu}}
\def \hs {\hat{\mc{S}}}
\newcommand{\norm}[1]{\lVert#1\rVert}
\newcommand{\bra}[1]{\langle#1\rvert}
\newcommand{\ket}[1]{\lvert#1\rangle}
\renewcommand{\Re}{\mathfrak{Re}}
\def \q {\quad}
\def \w {\widetilde}
\def \mm {\left[\begin{matrix}}
\def \nn {\end{matrix}\right]}
\def \id {\mathrm{id}}
\title{Speeding up quantum Markov processes through lifting}
\author{Bowen Li} %
\address[B. Li]{Department of Mathematics,  City University of Hong Kong, Kowloon Tong, Hong Kong SAR}
\email{bowen.li@cityu.edu.hk}
\author{Jianfeng Lu}
\address[J. Lu]{Departments of Mathematics, Physics, and Chemistry, Duke University, Durham, NC 27708.}
\email{jianfeng@math.duke.edu}
\begin{document}

\begin{abstract}
We generalize the concept of \emph{non-reversible lifts} for reversible diffusion processes initiated by Eberle and L\"{o}rler (2024) to quantum Markov dynamics. The lifting operation, which naturally results in hypocoercive processes, can be formally interpreted as, though not restricted to, the \emph{reverse} of the overdamped limit. 
We prove that the $L^2$ convergence rate of the lifted process is bounded above by the square root of the spectral gap of its overdamped dynamics, indicating that the lifting approach can at most achieve a transition from diffusive to ballistic mixing speeds. Further,
using the variational hypocoercivity framework based on space-time Poincar\'{e} inequalities, we derive a lower bound for the convergence rate of the lifted dynamics. These findings not only offer quantitative convergence guarantees for hypocoercive quantum Markov processes but also characterize the potential and limitations of accelerating the convergence through lifting. In addition, we develop an abstract lifting framework in the Hilbert space setting applicable to any symmetric contraction $C_0$-semigroup, thereby unifying the treatment of classical and quantum dynamics. As applications, we construct optimal lifts for various detailed balanced classical and quantum processes, including the symmetric random walk on a chain, the depolarizing semigroup, Schur multipliers, and quantum Markov semigroups on group von Neumann algebras. 
\end{abstract}

\maketitle

\section{Introduction}
Quantum Markov dynamics is the Markovian approximation of the time evolution of open quantum systems interacting with an external environment in the weak coupling limit \cite{breuer2002theory}. Mathematically modeled by the Lindblad master equations, i.e., \emph{quantum Markov semigroups} (QMS), these processes capture fundamental dissipative phenomena such as decoherence and thermalization \cite{schlosshauer2019quantum}, playing a crucial role in quantum statistical mechanics, quantum information theory, and quantum computing \cites{verstraete2009quantum,kastoryano2011dissipative,zhang2024driven}. Thanks to the rapid advancements of quantum computers and quantum algorithms, open quantum dynamics has found fruitful applications in various fields, including quantum Gibbs state preparation \cites{chen2023quantum,chen2023efficient,ding2024efficient}, ground state preparation \cites{ding2023single,zhan2025rapid}, nonconvex optimization problems \cite{chen2025quantum}, and quantum computational advantage \cites{chen2024local,bergamaschi2024quantum}.

This work is motivated by recent advancements in Monte Carlo-type quantum Gibbs samplers based on quantum Markov dynamics, in analogy with classical Glauber and Metropolis dynamics \cites{chen2023quantum,chen2023efficient,ding2024efficient}. Specifically, consider a quantum Gibbs state $\sigma_\beta \propto \exp(-\beta H)$ associated with a many-body Hamiltonian $H$, where $\beta$ represents the inverse temperature. The objective is to design efficiently (quantumly) simulatable dynamics of the form:
\[
\partial_t \rho_t = \mathcal{L}^\dag \rho_t,
\]
such that $\rho_t \to \sigma_\beta$ as $t \to \infty$, where $\mathcal{L}^\dag$ denotes the generator in the Schrödinger picture. To ensure feasibility in quantum implementations, these quantum Gibbs samplers typically employ local or quasi-local jumps \cites{rouze2024efficient,rouze2024optimal}, while the invariance of $\sigma_\beta$ under the evolution is generally guaranteed by the quantum detailed balance condition \cite{temme2010chi}. 
A key mathematical challenge lies in quantifying the rate at which the designed quantum Markov dynamics converge to their target invariant states, a measure commonly referred to as the mixing time. In this context, the rapid mixing property is both desirable and essential, as it ensures the efficiency and scalability of the proposed quantum Gibbs samplers, particularly for large-scale quantum many-body systems \cites{kochanowski2024rapid,rouze2024optimal,zhan2025rapid}.

For traditional classical Markov Chain Monte Carlo (MCMC) methods, two main obstacles hinder rapid or fast mixing: the energy barrier and the entropic barrier. Identifying these barriers (or bottlenecks) would provide \emph{lower bounds} on the mixing time \cite{levin2017markov}.
The energy barrier refers to regions in the energy landscape characterized by high peaks: a high-energy region separates two basins of local energy minima. Transitioning from one basin to the other one requires traversing a region of (often exponentially) low probability, making such transitions rare and inherently difficult.
The canonical examples include the Ising model \cite{thomas1989bound} and the hard-core model \cite{randall2006slow} on the lattices. On the other hand, the entropic barrier arises when a state basin contains a large number of comparably significant states, resulting in high entropy. In such cases, a sampling algorithm may require an enormous number of steps to effectively explore the vast configuration space, thereby severely impeding rapid convergence \cites{hayes2005general,ding2011mixing}. The physical intuition underlying energy and entropic barriers naturally applies to thermalization processes in open quantum dynamics. In particular, the energy barrier is fundamental for designing stable quantum memories \cite{hong2025quantum}. Recent studies have also generalized these classical bottleneck tools to the quantum setting, establishing unconditional lower bounds on the mixing times of local (or quasi-local) quantum Gibbs samplers \cites{gamarnik2024slow,rakovszky2024bottlenecks}.

Given these challenges, there has been considerable interest in developing methods to accelerate the convergence of MCMC sampling. A key observation is that detailed balance, while sufficient for ensuring a target stationary distribution, is not necessary. Strong theoretical results and numerical evidence demonstrate that \emph{nonreversible Markov chains}, which break detailed balance, often exhibit faster mixing compared to their reversible counterparts; see, for example, \cites{hwang1993accelerating,hwang2005accelerating,lelievre2013optimal,chen2013accelerating,michel2014generalized,guillin2016optimal,bierkens2019zig}. 
One fundamental approach to constructing nonreversible Markov processes is through \emph{lifting}, where the state space is augmented with auxiliary variables, such as momentum or direction. 
The resulting processes, known as \emph{lifted Markov chains}, can be potentially useful for overcoming entropic barriers (though they may not be as effective for escaping deep local energy minima). In certain cases, they have been shown to achieve substantial improvements in convergence rates, with mixing times reduced by up to a square root \cites{turitsyn2011irreversible,    vucelja2016lifting}.

For fintie Markov chains, \cite{chen1999lifting} considered the lift of a symmetric random walk on $\mathbb{Z}\backslash (n \mathbb{Z})$ to the augmented state space $\mathbb{Z}\backslash (n \mathbb{Z}) \times \{+1, - 1\}$ and showed that the mixing time can be reduced from $\Theta(n^2)$ to $\Theta(n)$. Furthermore, \cite{chen1999lifting} established that, in general, this square-root reduction, corresponding to a diffusive-to-ballistic speed-up, is best possible, and that if the lifted chain is reversible, then the improvement is at most a constant factor. 
Another notable example is from \cite{turitsyn2011irreversible} which introduced the non-reversible lift of Metropolis-Hastings and applied it to the Curie-Weiss model. It was numerically observed that lifted Metropolis-Hastings (LMH) can reduce the mixing time from $\mathcal{O}(n^{1.43})$ to $\mathcal{O}(n^{0.85})$ with $n$ being the number of spins. The theoretical justification for this improvement was later provided in \cite{10.1214/16-AAP1217} by identifying the diffusion limits of LMH and MH. 

In the case of continuous state spaces, canonical examples of sampling algorithms incorporating lifting ideas are piecewise deterministic Markov processes (PDMPs), which evolve according to deterministic motion interspersed with random discrete velocity jumps  \cites{davis1984piecewise,michel2014generalized,bierkens2019zig}. While PDMPs have demonstrated their efficiency in practical applications, obtaining quantitative estimates of their convergence rates has long been a significant challenge. These processes usually admit degenerate diffusions and their analysis are closely related to the hypocoercivity theory for kinetic PDEs \cites{villani2009hypocoercivity}. Refining the variational hypocoercivity analysis with a space-time Poincaré inequality pioneered by Albritton et al. \cite{albritton2019variational}, the recent work \cite{cao2023explicit} obtained convergence rates for underdamped Langevin dynamics with the sharp estimate and established a quadratic speed-up compared to overdamped Langevin dynamics when the potential is convex. These arguments were later extended to certain PDMPs, yielding quantitative convergence rates with explicit dimension dependence \cite{lu2022explicit}. 
Motivated by the concept of lifting proposed by Chen et al. \cite{chen1999lifting} in the discrete setting, Eberle et al. \cite{eberle2024non} introduced the second-order 
non-reversible lifts for continuous reversible diffusion processes. They derived the upper bounds on the convergence rates of lifted processes and showed that the relaxation time of a reversible diffusion process can at most be reduced by a square root through lifting, analogous to the result for the discrete Markov chains \cite{chen1999lifting}.  Additionally, the lifting framework simplifies the variational hypocoercivity approach used in \cites{albritton2019variational,cao2023explicit,lu2022explicit}, enabling the derivation of upper bounds on the convergence speeds for lifted processes via hypocoercivity. The analysis in \cite{eberle2024non} shows the optimality of the rate estimates obtained in \cites{cao2023explicit,lu2022explicit}. Such a second-order lifting framework was also extended to reversible diffusion processes on Riemannian manifolds \cite{eberle2024space} and PDMPs with non-trivial boundary conditions \cite{eberle2025convergence}.  

In light of the fact that lifting has proven effective in overcoming the diffusive behavior of classical sampling and achieving a ballistic speedup, a natural question arises: \emph{is there an analogous lifting framework for detailed-balanced quantum Markov processes, where the lifted quantum Markov semigroup on an extended state space can achieve a quadratic speedup?} In this work, we shall provide an affirmative answer to this question to bridge this gap. 

\subsection{Contribution and Outline}
The main contributions of this work are threefold: 

\smallskip 

\noindent (1) We develop a concept of second-order lifts for detailed balanced QMS, in analogy with \cite{eberle2024non}, with a particular emphasis on its connection to the overdamped limit in \cref{sec:liftingmatrix}. This framework enables us to establish both upper and lower bounds on the convergence rates of the lifted hypocoercive processes and to characterize the limits of accelerating QMS through lifting.

Since the lifted QMS considered in this work are necessarily hypocoercive (as will become clear), we begin in \cref{sec:hypoqms} by reviewing fundamentals of hypocoercive QMS in the non-primitive setting, where we assume the semigroup admits at least one full-rank invariant state (possibly non-unique). For QMS with detailed balance, the inverse spectral gap characterizes the $L^2$ relaxation time \eqref{eq:trelax}. However, when detailed balance is broken and the process becomes hypocoercive, the Poincar\'e inequality no longer holds. In this case, we demonstrate that the inverse singular value gap of the generator, which replaces the spectral gap, provides a lower bound for the relaxation time (\cref{lem:singulargap}). We remark that such a singular value gap estimate has also been widely used in analyzing classical non-reversible Markov chains \cites{fill1991eigenvalue,eberle2024non,chatterjee2023spectral}. 

Next, in \cref{sec:liftinglower}, we consider a family of non-primitive QMS generators $\{\mc{L}_\gamma\}_{\gamma > 0}$ on a finite-dimensional von Neumann algebra $\mc{M}$ in the reversible-irreversible decomposition:
\begin{align} \label{def:decompolga}
    \mc{L}_\gamma = \lh + \gamma \ld\,,
\end{align}
where $\lh$ and $\ld$ are self-adjoint and anti-self-adjoint, respectively, with respect to the $\si$-KMS inner product for a full-rank state $\si$ on $\mc{M}$ (\cref{asspA}). Here, $\si$ is an invariant state of $\exp(t \mc{L}^\dag)$, and the fixed-point algebra $\mc{F}(\mc{L}_\gamma)$ of $\mc{L}_\gamma$ is independent of $\gamma$ (\cref{lem:symanti}). To motivate the definition of lifts for quantum Markov dynamics, we derive the overdamped limit of $\exp(t \mc{L}_\gamma)$ as $\gamma \to \infty$. For the limiting effective dynamics to remain non-trivial, the QMS $\exp(t \mc{L}_\gamma)$ must be hypocoercive (\cref{asspB} and \cref{rem:asspb}). Let $\es$ denote the conditional expectation from $\mc{M}$ to the fixed-point algebra $\mc{F}(\ld)$ of $\ld$. If $\es \lh \es \neq 0$, a standard asymptotic analysis yields the effective dynamics generated by $\mc{L}_{O,(1)} := \es \lh \es$, which satisfies 
\begin{align*}
    \exp(t \mc{L}_\gamma) = \exp(t \mc{L}_{O,(1)}) + \Or(\gamma^{-1})\,.
\end{align*} 
In this case, $\mc{L}_\gamma$ can be defined as a first-order lift of $\mc{L}_{O,(1)}$, which is proven useful for discrete Markov chains \cite{chen1999lifting} but not for continuous diffusion dynamics \cite{eberle2024non}. When $\es \lh \es = 0$ (\cref{assump:PHP}) holds, a second-order perturbation expansion must be employed, with a time rescaling, to derive the limiting dynamics $\exp(t \mc{L}_{O,(2)})$ with $\mc{L}_{O,(2)} := - (\lh \es)^{\star} (- \ld)^{-1} \lh \es$, which satisfies, as shown in \cref{prop:overdampedmx}, 
\begin{align*}
    \exp(t \gamma \mc{L}_\gamma) = \exp(t \mc{L}_{O, (2)}) + \Or(\gamma^{-1})\,.
\end{align*} 
In this case, one may define $\mc{L}_\gamma$ is a second-order lift of $\mc{L}_{O, (2)}$. Note that deriving the overdamped limit $\mc{L}_{O, (2)}$ relies on Conditions~\ref{asspA}, \ref{asspB}, and~\ref{assump:PHP}. Also, the lifting operation should act on a detailed balanced QMS, requiring us to assume a priori that $\exp(t \mc{L}_{O, (2)})$ is a QMS (\cref{assump:overdp}). This motivates our formal definition of a second-order lift for quantum dynamics (\cref{def:2ndlift}): $\exp(t \mc{L}_\gamma)$ with \eqref{def:decompolga} is a second-order lift of $\lo$ if Conditions \ref{asspA}--\ref{assump:overdp} hold. However, we emphasize that \cref{assump:overdp} is only conceptually needed for lifting, as all convergence rate estimates of $\exp(t \mc{L}_\gamma)$ hold without it (see \cref{rem:asspforlift} and also \cite{li2024quantum}). 

Within the lifting framework, by leveraging the connection between the singular value gap of the generator and its $L^2$ convergence rate, in \cref{thm:lower}, we show that the $L^2$ convergence rate of the lifted QMS $\exp(t \mc{L}_\gamma)$ is upper bounded by the square root of the spectral gap of its overdamped limit (collapsed QMS) $\exp(t \lo)$: for any $\gamma > 0$, 
\begin{align} \label{est:upper}
    \nu(\mc{L}_\gamma) = \Or(\sqrt{\lad_O})\,,
\end{align}
where $\lad_O$ is the spectral gap of $\lo$. 
This shows that lifting can achieve, at most, a diffusive-to-ballistic speed-up, analogous to classical results \cites{chen1999lifting,eberle2024non}. To establish a lower bound for the convergence rate of $\exp(t \mc{L}_\gamma)$, we apply the variational hypocoercivity framework based on the space-time Poincaré inequality. The key idea is that, due to hypocoercivity, the standard Poincaré inequality (i.e., coercivity):
\begin{equation}\label{eq:stdpoincare}
    \lad \norm{X}_{2,\si}^2 \le \mc{E}_{\ld}(X)\,,\q \forall \text{\,$X \in \mc{M}$ with $E_{\mc{F}}(X) = 0$\,,}    
\end{equation}
fails for $\exp(t \mc{L}_\gamma)$, where $\mc{E}_{\ld}$ is the Dirichlet form associated with $\ld$, and 
$E_\mc{F}$ is the conditional expectation from $\mc{M}$ to the fixed-point algebra of $\mc{L}_\gamma$.  However, the desired coercivity can be recovered, if we consider the time-augmented form of \eqref{eq:stdpoincare}: 
\begin{align*}
    \lad \int_0^T \norm{X_t}_{2,\si}^2 \le \int_0^T \mc{E}_{\ld}(X_t)\,,\q \text{$X_t = \exp(t \mc{L}_\gamma) X_0$ with $E_\mc{F}(X_0) = 0$.}
\end{align*}
See also the discussion after \cref{rem:hypocoerpt}. 
In \cref{thm:uppermatrix}, we prove that the maximal $L^2$ convergence rate of $\exp(t \mc{L}_\gamma)$ satisfies:
\begin{align} \label{est:lower}
    \max_{\gamma > 0}\, \nu(\mc{L}_\gamma) \ge \Omega\left(\frac{\sqrt{\lad_O}}{1 + K_1 + K_2 \lad_O^{-1/2}}\right)\,,
\end{align}
where constants $K_1$ and $K_2$ are from \eqref{eq:constk1k2mm}. If $K_1 + K_2 \lad_O^{-1/2} = \Or(1)$, the upper and lower bounds of the convergence rate $\nu(\mc{L}_\gamma)$ match, yielding the optimal lift of $\lo$ satisfying
\begin{align*}
    \nu(\mc{L}_\gamma) = \Theta(\sqrt{\lad_O})\,.
\end{align*}

\medskip 

\noindent (2)  In \cref{sec:lifthilbert}, we extend the lifting framework from \cite{eberle2025convergence} to a general Hilbert space setting in an abstract manner, applicable to any symmetric contraction $C_0$ semigroup $\exp(t \fll)$ which is assumed to coercive (\cref{assp:locoercive}). This generalization encompasses both classical dynamics \cites{eberle2024non,eberle2025convergence} and quantum dynamics discussed in \cref{sec:liftingmatrix} as special cases. It is worth noting that Conditions \ref{asspA}--\ref{assump:overdp} are, in fact, stronger than those required for the proofs of 
\cref{thm:lower,thm:uppermatrix}. In the general framework, we identify the minimal conditions under which convergence rate estimates analogous to \eqref{est:upper} and \eqref{est:lower} hold for the lifted semigroup $\exp(t \fl)$ of $\exp(t \fll)$. 


Specifically, let $\fl$ be a lift of $\fll$ in the sense of \cref{def:generallift}. Using the singular value gap, we show in \cref{thm:lowerbound} that the convergence rate of $\exp(t \fl)$ is upper bounded by $\Or(\sqrt{\lad_O})$, corresponding to the best achievable quadratic acceleration, where $\lad_O$ is the spectral gap of $\fll$. For the lower bound (\cref{thm:lowerboundpt}), however, additional structural assumptions on $\fl$ are required beyond the basic lifting framework. 
To be precise, we impose \cref{assmp:decomposition} on $\fl$ and exploit the variational hypocoercivity theory with a flow Poincaré inequality (\cref{thm:flowpoincare}), which in turn relies on the technical \cref{assump:est1}. Again, when the upper and lower convergence rate estimates match, we can refer to $\fl$ as an optimal lift of $\fll$.

\medskip 

\noindent (3) We construct optimal lifts for a variety of detailed balanced classical and quantum Markov processes, beyond the examples shown in \cites{chen1999lifting,eberle2024non,eberle2024space,eberle2025convergence}.

Specifically, in \cref{sec:liftclassical}, we demonstrate how the general lifting framework in \cref{sec:lifthilbert} can be used to recover the results from \cites{eberle2024non,eberle2025convergence} on lifting reversible diffusions. In \cref{sec:liftfinitemarkov}, we show how to lift a symmetric finite Markov chain to a quantum Markov semigroup in the sense of \cref{def:2ndlift}. Furthermore, we prove that this construction yields the optimal quantum lift for the symmetric random walk on the circle, complementing the result in \cite{diaconis2000analysis}.

Finally, in \cref{sec:appbipart}, we address how to lift a detailed balanced QMS $\exp(t \lo)$ on $\mc{M}_B$ to a QMS $\exp(t \mc{L}_\gamma)$ on a bipartite quantum system $\mc{M}_A \otimes \mc{M}_B$, where $\mc{M}_A$ is an extended space to be constructed. To achieve this, we first show in \cref{sec:characterlift} that the conditions in \cref{asspA}--\ref{assump:overdp} used to define the second-order lift of QMS (\cref{def:2ndlift}) can be simplified or, in some cases, eliminated. Then, in \cref{sec:constructlift}, we establish that for any $\si$-GNS detailed balanced QMS, there exists a lift in the sense of \cref{def:2ndlift}; see \cref{thm:liftgns}. 
Moreover, we prove in \cref{thm:optimalliftgns} that our construction is optimal if $\lo$ is detailed balanced with respect to the maximally mixed state $\si \propto \mi$ and satisfies the $\kappa$-intertwining condition with $\kappa > 0$ (\cref{def:intercondi}). This class of QMS includes depolarizing semigroups, Schur multipliers, and infinite-temperature Fermi and Bose Ornstein-Uhlenbeck semigroups, among others.

\subsection{Other related works} Here we briefly review recent progress on the mixing time analysis of quantum Markov semigroups,  for the sake of completeness. 

 The most common approach for estimating the mixing time of a detailed-balanced Markov semigroup is through the spectral gap, which corresponds to the Poincar\'e inequality constant \cites{diaconis1991geometric,temme2010chi}. 
 However, the spectral gap approach often overestimates the mixing time, as it bounds the $L^1$-type norm convergence using the $L^2$-type norm. 
 A general approach for improved bounds involves the quantum modified log-Sobolev inequality (MLSI), which quantifies the exponential convergence of the relative entropy \cites{majewski1996quantum,olkiewicz1999hypercontractivity,kastoryano2013quantum}.
Inspired by the mixing analysis of classical spin systems \cites{stroock1992logarithmic,stroock1992equivalence,martinelli1999lectures}, recent research has focused on proving uniformly positive spectral gaps and MLSI constants for Davies generators associated with geometrically local, commuting Hamiltonians through the static properties of the Gibbs state; see \cites{kastoryano2016quantum,bardet2024entropy,kochanowski2024rapid} and the references therein. It is important to note that the detailed balance condition, along with the commutativity and locality of the Hamiltonians, are essential components in the aforementioned results.


The analysis of rapid mixing in the presence of noncommuting Hamiltonians remains a largely open and challenging research area. Recently, Rouz\'e et al.~\cite{rouze2024efficient} demonstrated the fast mixing with a mixing time of $t_{\rm mix} = \mc{O}(n)$ for quasi-local Lindbladians featuring general local noncommuting Hamiltonians at high temperatures. This result was further improved in \cite{rouze2024optimal}, where the authors employed the oscillator norm technique to achieve rapid mixing. For additional developments in this direction, we refer readers to \cites{tong2024fast,zhan2025rapid}. 

Regarding the mixing behavior of QMS without detailed balance, which is also an open area of study, Laracuente \cite{laracuente2022self} analyzed general QMS involving both coherent and dissipative components, and proved that when dissipation dominates coherent time evolution, the exponential convergence rate to the decoherence-free subspace, as measured by quantum relative entropy, is upper bounded by the inverse decay rate of the dissipative component. More recently, \cite{fang2025mixing} and \cite{li2024quantum} extended the classical DMS method \cite{dolbeault2015hypocoercivity} and the variational framework based on space-time Poincaré inequalities \cites{albritton2019variational,cao2023explicit} to investigate the convergence of hypocoercive QMS without detailed balance. Our work follows along this line of research.

\section{Lifting quantum Markov semigroups} \label{sec:liftingmatrix}

To illustrate the main ideas of lifting, we focus on quantum Markov dynamics on matrix algebras in this section. In \cref{sec:hypoqms}, we introduce the hypocoercive quantum Markov semigroup (QMS) and its fundamental convergence properties.
We shall see that the lifted dynamics studied in this work form a special subclass of hypocoercive semigroups. Then, in \cref{sec:liftinglower}, we 
derive the overdamped limit for hypocoercive quantum dynamics and introduce the lifting of detailed balanced quantum Markov semigroups, which could be formally interpreted as the \emph{reverse} of the overdamped limit. 
In \cref{sec:upperlift}, we establish two main results: (1) the lifted QMS can achieve at most a square-root improvement in the $L^2$ relaxation time, and (2) an extension of the recent hypocoercivity framework based on space-time Poincar\'e inequalities to derive upper bounds on the relaxation time of lifted QMS. 
The general lifting framework in the Hilbert space setting will be investigated in \cref{sec:lifthilbert}.

We begin by fixing the notations used below. Let $\maf{H}$ be a finite-dimensional Hilbert space, and $\mathcal{B}(\maf{H})$ be the associated algebra of linear operators. We denote by $\mi \in \bh$ the identity element and by $\id$ the identity map on $\bh$. We also denote by $X^*$ the adjoint of $X \in \mc{B}(\maf{H})$, and by $\l X, Y\r = \tr (X^* Y)$ the Hilbert-Schmidt (HS) inner product on $\bh$ with the induced norm $\norm{X} = \sqrt{\l X, X\r}$. Let $\mc{M} \subset \bh$ be a von Neumann algebra. A quantum state (or density operator) on $\mc{M}$ is an element $\rho \in \mc{M}$ satisfying $\rho \succeq 0$ and $\tr(\rho) = 1$. We denote by $\mc{D}(\mc{M})$ the set of all quantum states, and by $\mc{D}_+(\mc{M})$ the subset of full-rank states. If $\mc{M} = \bh$, we shall simply write $\dh$ and $\dhh$.
The adjoint of a superoperator $\Phi: \mc{B}(\maf{H}) \to \mc{B}(\maf{H})$ (generally, $\mc{M} \to \mc{M}$) for HS inner product is written as $\Phi^\dag$. Throughout this work, we use the following asymptotic notations along with the standard big $\Or$ notation. We write $f = \Omega(g)$ if $g = \Or(f)$, and $f = \Theta(g)$ if both $f = \Or(g)$ and $g = \Or(f)$ hold.

\subsection{Hypocoercive Quantum Markov semigroups} \label{sec:hypoqms}

A \emph{quantum Markov semigroup} $(\mc{P}_t)_{t \ge 0}: \mc{M} \to \mc{M}$ is defined as a semigroup of completely positive and unital maps. Its generator, defined by $\mc{L}(X): = \lim_{t \to 0} t^{-1}(\mc{P}_t (X) - X)$, has the GKSL representation \cites{Lindblad1976,GoriniKossakowskiSudarshan1976}: there exists a Hamiltonian $H \in \bh$ and jump operators $\{L_j\}_{j \in \mc{J}} \subset \bh$ such that  
\begin{align} \label{eq:lindbladform}
     \mc{L} (X) = i [H, X] + \sum_{j \in \mc{J}} \left( L_j^* X L_j - \frac{1}{2}\left\{L_j^* L_j, X  \right\} \right)\,,\q X \in \mc{M}\,,
\end{align}
where $\mc{J}$ is a finite index set. Given a full-rank quantum state $\si \in \mc{D}_+(\maf{H})$, the \emph{modular operator} is defined as 
\begin{equation} \label{def:modularoperator}
\Delta_{\si}(X) = \si X \si^{-1}\,,\q  \forall X \in \bh\,.
\end{equation}
We introduce a family of inner products on $\bh$, parameterized by $s \in \R$:
\begin{align} \label{def:s-inner}
\l X, Y \r_{\si,s} := \tr(\si^s X^* \si^{1-s} Y) = \l X, \Delta_\si^{1-s}(Y) \si \r\,, \q \forall X, Y \in \bh\,,
\end{align}
with $\l \dd, \dd \r_{\si,1}$ and $\l \dd, \dd \r_{\si,1/2}$ corresponding to the GNS and KMS inner products, respectively. 
\begin{definition}
    A QMS $\mc{P}_t = \exp(t \mc{L}): \mc{M} \to \mc{M}$ is $\si$-{\rm GNS} (resp., $\si$-{\rm KMS}) 
    \emph{detailed balanced} for a full-rank state $\si \in \mc{D}_+(\mc{M})$ if the generator $\mc{L}$ is self-adjoint with respect to the GNS inner product $\l\dd, \dd\r_{\si,1}$  (resp., the KMS inner product $\l\dd, \dd\r_{\si,1/2}$).     
\end{definition}
We remark that the KMS detailed balance is a strictly weaker notion than the GNS one \cite{carlen2017gradient}*{Appendix B}. For a QMS generator $\mc{L}$ with a full-rank invariant state $\si \in \mc{D}_+(\mc{M})$, we introduce its Dirichlet form given by 
\begin{align*}
    \mc{E}_{\mc{L}}(X,Y): = - \l X, \mc{L} (Y) \r_{\si,1/2}\,,\q \forall\, X, Y \in \mc{M}\,.
\end{align*}
If $X = Y$, we simply write $\mc{E}_{\mc{L}}(X) = \mc{E}_{\mc{L}}(X, X)$. We also define the modulus of $X \in \bh$ by $|X|: = \sqrt{X^* X}$ and the $\si$-weighting operator for $\si \in \dhh$ by 
\begin{align} \label{def:weighting}
    \Gamma_\si (X) := \si^{\frac{1}{2}}X\si^{\frac{1}{2}}\,.
\end{align} 
The \emph{noncommutative $2$-norm} induced by the KMS inner product is given by 
\begin{align*}
    \norm{X}_{2,\si} := \tr\Bigl(|\Gamma_\si^{1/2}(X)|^2 \Bigr)^{1/2} = \tr\bigl( \si^{1/2} X \si^{1/2} X \bigr)^{1/2}.
\end{align*}
Moreover, we denote by $\Phi^\star$ the adjoint of a superoperator $\Phi: \mc{M} \to \mc{M}$ for the KMS inner product $\l X, \Phi (Y) \r_{\si, 1/2} = \l \Phi^\star(X), Y\r_{\si,1/2}$. In particular, the following representation holds:
\begin{equation} \label{eq:adjointkms}
    \Phi^\star = \Gamma_\si^{-1} \circ \Phi^\dag \circ \Gamma_\si\,,
\end{equation}
where $\Phi^\dag$ is the adjoint under the Hilbert-Schmidt inner product.  

We next introduce some basic mixing properties of a QMS $\mc{P}_t = \exp(t \mc{L})$. We first define the fixed-point space $\mc{F}(\mc{L})$ and the decoherence-free subalgebra $\mc{N}(\mc{L})$ as follows: 
\begin{align} \label{def:fixedpointspace}
    \mc{F}(\mc{L}) := \{X \in \mc{M}\,;\ \mc{P}_t(X) = X\,,\ \forall t \ge 0\} = \ker(\mc{L})\,,
\end{align}
and 
\begin{align} \label{def:decoherence}
    \mc{N}(\mc{L}) := \{X \in \mc{M}\,;\ \mc{P}_t(X^* X) = \mc{P}_t(X)^*  \mc{P}_t(X) \,,\ \forall t \ge 0\}\,.
\end{align}
Assuming that $\mc{P}_t^\dag$ admits a full-rank invariant state $\si$, the algebra $\mc{N}(\mc{L})$ is the largest von Neumann subalgebra $\mc{N}$ of $\mc{M}$ such that $\mc{P}_t|_{\mc{N}}$ is a group of $*$-automorphisms \cites{robinson1982strongly,carbone2015environment}.
In the finite-dimensional setting, the algebra $\mc{N}(\mc{L})$ is invariant under the modular automorphism group $\Delta_\si^{is}$, which, by \cref{lem:extencece}, guarantees the existence of the conditional expectation onto $\mc{N}(\mc{L})$ denoted by $E_{\mc{N}}$. In addition, the subspace $\mc{F}(\mc{L})$ also forms a von Neumann algebra \cites{spohn1977algebraic,frigerio1978stationary}. 
Furthermore, the following ergodicity holds.  
\begin{lemma}[\cite{frigerio1982long}*{Theorems 3.3 and 3.4}]\label{lem:ergopt}
    Let $\mc{P}_t = \exp(t \mc{L})$ be a QMS with invariant state $\si \in \mc{D}_+(\mc{M})$. Then, we have 
    \begin{align*}
        \lim_{t \to \infty} \mc{P}_t (\id - E_{\mc{N}}) = 0\,,
    \end{align*}
    with $E_{\mc{N}}$ being the conditional expectation to $\mc{N}(\mc{L})$, and the limit 
    \begin{align} \label{def:cefixed}
        \lim_{t \to \infty} \frac{1}{t} \int_0^t \mc{P}_s \ud s =: E_{\mc{F}}
    \end{align}
    exists and defines the conditional expectation $E_{\mc{F}}$ onto $\mc{F}(\mc{L})$ with respect to $\si$. 
    Moreover, the following convergence holds:
    \begin{align} \label{eq:ergodic}
        \lim_{t \to \infty} \mc{P}_t  = E_{\mc{F}}\,,
    \end{align}
    if and only if $\mc{N}(\mc{L})$ coincides with the fixed-point algebra $\mc{F}(\mc{L})$, i.e., $\mc{N}(\mc{L}) = \mc{F}(\mc{L})$.  
\end{lemma}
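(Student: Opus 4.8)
The plan is to reduce the statement to the finite-dimensional spectral analysis of the contraction semigroup $\mc{P}_t$, and then to reconcile the resulting linear-algebra projections with the algebraic conditional expectations $E_{\mc{N}}$ and $E_{\mc{F}}$. First I would equip $\mc{M}$ with the GNS inner product $\l X,Y\r_{\si,1} = \tr(\si X^*Y)$: since each $\mc{P}_t$ is completely positive and unital, the Kadison--Schwarz inequality $\mc{P}_t(X)^*\mc{P}_t(X)\preceq \mc{P}_t(X^*X)$ together with the invariance $\mc{P}_t^\dag\si = \si$ gives $\l\mc{P}_t X,\mc{P}_t X\r_{\si,1}\le\l X,X\r_{\si,1}$, so $(\mc{P}_t)_{t\ge0}$ is a $C_0$-contraction semigroup on a finite-dimensional Hilbert space with $\tr(\si\mc{P}_t(X)) = \tr(\si X)$. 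Contractivity then forces $\spec(\mc{L})\subseteq\{\Re z\le 0\}$ with every purely imaginary eigenvalue semisimple (a nontrivial Jordan block there would make $\norm{\mc{P}_t}$ grow polynomially), and a short argument combining power-boundedness with recurrence of a bounded one-parameter group shows that $\mc{P}_t$ acts isometrically --- hence unitarily --- on the peripheral spectral subspace $\mc{V}_p := \bigoplus_{\lad\in\I\R\cap\spec(\mc{L})}\ker(\mc{L}-\lad\,\id)$. Thus $\mc{V}_p$ reduces $\mc{P}_t$, and with the orthogonal splitting $\mc{M} = \mc{V}_p\oplus\mc{V}_0$ one has $\mc{P}_t|_{\mc{V}_0}\to 0$ exponentially, while (i) recurrence of $\mc{P}_t|_{\mc{V}_p}$ yields a net $t_\alpha\to\infty$ with $\mc{P}_{t_\alpha}\to P_p$, the orthogonal projection onto $\mc{V}_p$, and (ii) averaging the diagonalization of $\mc{L}|_{\mc{V}_p}$ gives $\frac1t\int_0^t\mc{P}_s\ud s\to P_{\ker}$, the orthogonal projection onto $\ker\mc{L} = \mc{F}(\mc{L})$.

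The core step is the identification $\mc{V}_p = \mc{N}(\mc{L})$. By \cites{robinson1982strongly,carbone2015environment} recalled above, $\mc{N}(\mc{L})$ is a $\mc{P}_t$-invariant von Neumann subalgebra on which $\mc{P}_t$ acts as $*$-automorphisms; being isometric and invertible there, $\mc{L}|_{\mc{N}(\mc{L})}$ has purely imaginary spectrum, so $\mc{N}(\mc{L})\subseteq\mc{V}_p$. Conversely, if $\mc{L}X = \I\theta X$ then $\l\mc{P}_t X,\mc{P}_t X\r_{\si,1} = \l X,X\r_{\si,1}$; taking $\tr(\si\,\dd)$ of the Kadison--Schwarz defect $\mc{P}_t(X^*X)-\mc{P}_t(X)^*\mc{P}_t(X)\succeq 0$ and using $\mc{P}_t^\dag\si = \si$ shows its trace against the faithful state $\si$ vanishes, forcing the defect to vanish for all $t$, \ie\ $X\in\mc{N}(\mc{L})$; hence $\mc{V}_p = \mc{N}(\mc{L})$. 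Now $P_p$ is a $\si$-preserving, unital, completely positive idempotent onto the subalgebra $\mc{N}(\mc{L})$, so by Tomiyama's theorem it is a conditional expectation; by uniqueness of the $\si$-preserving conditional expectation onto $\mc{N}(\mc{L})$ (whose existence was established above via modular invariance), $P_p = E_{\mc{N}}$, and therefore $\mc{P}_t(\id-E_{\mc{N}}) = \mc{P}_t|_{\mc{V}_0}(\id-P_p)\to 0$. The same reasoning applied to $P_{\ker}$, whose range $\ker\mc{L} = \mc{F}(\mc{L})$ is a von Neumann subalgebra by \cites{spohn1977algebraic,frigerio1978stationary}, shows that the Cesàro limit in \eqref{def:cefixed} exists and equals the $\si$-preserving conditional expectation $E_{\mc{F}}$ onto $\mc{F}(\mc{L})$. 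For the dichotomy \eqref{eq:ergodic}: if $\mc{N}(\mc{L}) = \mc{F}(\mc{L})$ then $\mc{V}_p = \ker\mc{L}$, so $\mc{L}|_{\mc{V}_p} = 0$ and $\mc{P}_t = \id_{\mc{V}_p}\oplus\mc{P}_t|_{\mc{V}_0}\to P_p = E_{\mc{F}}$; conversely, convergence of $\mc{P}_t$ forces convergence of the bounded group $\mc{P}_t|_{\mc{V}_p}$, which, having purely imaginary semisimple spectrum, can happen only if $\mc{L}|_{\mc{V}_p} = 0$, \ie\ $\mc{V}_p = \ker\mc{L}$, \ie\ $\mc{N}(\mc{L}) = \mc{F}(\mc{L})$.

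The main obstacle I anticipate is precisely the identification $\mc{V}_p = \mc{N}(\mc{L})$ and the passage from it to $P_p = E_{\mc{N}}$: the inclusion $\mc{V}_p\subseteq\mc{N}(\mc{L})$ is where faithfulness of $\si$ enters, through the equality case of Kadison--Schwarz, and recognizing the abstract projections $P_p,P_{\ker}$ as the algebraic conditional expectations uses Tomiyama's theorem together with the uniqueness of $\si$-preserving conditional expectations. A secondary technical point is that $\mc{P}_t$ is generally not self-adjoint on the GNS space, so the peripheral/decaying splitting is not a priori orthogonal and the spectral theorem is unavailable; one has to first establish the isometry of $\mc{P}_t$ on $\mc{V}_p$ before this orthogonal picture becomes legitimate.
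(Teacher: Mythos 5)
Your proposal is correct, but it takes a genuinely different route from the paper, because the paper gives no proof at all: the lemma is quoted from Frigerio--Verri \cite{frigerio1982long}*{Theorems 3.3 and 3.4}, whose argument is carried out for general $W^*$-dynamical semigroups via mean-ergodic and multiplicative-domain techniques. What you supply instead is a self-contained finite-dimensional spectral proof, and it holds up: GNS-contractivity from Kadison--Schwarz plus invariance of the faithful $\si$, semisimplicity of peripheral eigenvalues from power-boundedness, the identification of the peripheral subspace $\mc{V}_p$ with $\mc{N}(\mc{L})$ (the inclusion $\mc{V}_p\subseteq\mc{N}(\mc{L})$ via the equality case of Kadison--Schwarz tested against the faithful state, the reverse inclusion via isometry of the automorphism group on $\mc{N}(\mc{L})$), and Tomiyama plus uniqueness of $\si$-preserving conditional expectations (\cref{coro:uniquence}) to recognize the limiting projections as $E_{\mc{N}}$ and $E_{\mc{F}}$. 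In effect you re-derive the content of \cref{lem:characnf} (which the paper quotes from Carbone--Sasso--Umanit\`a and which already asserts that the peripheral span equals $\mc{N}(\mc{L})$, with the orthogonal splitting off the decaying subspace); invoking that lemma directly would reduce your recurrence/Ces\`aro step and the equivalence in \eqref{eq:ergodic} to a few lines, whereas your version has the merit of being elementary and self-contained in the matrix-algebra setting, at the cost of not covering the infinite-dimensional scope of the cited theorem. Two details to make explicit in a write-up: passing from ``every peripheral eigenvector lies in $\mc{N}(\mc{L})$'' to ``$\mc{V}_p\subseteq\mc{N}(\mc{L})$'' uses that $\mc{N}(\mc{L})$ is a linear subspace, i.e.\ the cited fact that it is a von Neumann algebra; and the orthogonality of your splitting, which you rightly flag, does follow once isometry on $\mc{V}_p$ is known (apply the contraction inequality to $\alpha x+y$ with $x\in\mc{V}_p$, $y$ in the decaying subspace, and let $t\to\infty$), although none of the three assertions of the lemma actually requires orthogonality --- only that $\id-P_p$ maps onto the subspace where $\mc{P}_t\to 0$.
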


The basics of the noncommutative conditional expectation are recalled in \cref{app:quantumce} for completeness. In particular, \cref{coro:uniquence} shows that $E_{\mc{F}}$ and 
$E_{\mc{N}}$ are orthogonal projections with respect to both GNS and KMS inner products. 

\begin{definition}
    We say that
    a QMS $\mc{P}_t = \exp(t \mc{L})$ with a full-rank invariant state is \emph{ergodic} if the convergence in \eqref{eq:ergodic} holds, equivalently, $\mc{N}(\mc{L}) = \mc{F}(\mc{L})$. 
    We say that $\mc{P}_t$ is \emph{primitive} if it admits a unique full-rank invariant state $\si$.    
\end{definition}

In the primitive case, we have $E_{\mc{F}}(X) = \tr(\si X) \mi$ and, by \cite{wolf2012quantum}*{Proposition 7.5}, 
\begin{align} \label{eq:conver_qmsg}
    \lim_{t \to \infty} \mc{P}_t(X) = \tr(\si X)\mi\,,\q \forall\, X \in \mc{M}\,.
\end{align}
The structures of the subalgebras $\mc{N}(\mc{L})$ and $\mc{F}(\mc{L})$ and their connections to the ergodicity of $\mc{P}_t$ can also be analyzed through the spectral properties of the generator $\mc{L}$; see \cite{carbone2013decoherence}*{Proposition 8 and Theorem 9} and \cite{carbone2015environment}*{Theorem 29 and Proposition 31}. We summarize them into the following results. 
\begin{lemma} \label{lem:characnf}
    Let $\mc{P}_t = \exp(t \mc{L})$ be a QMS with an invariant state $\si \in \mc{D}_+(\mc{M})$, and $\mc{F}(\mc{L})$ and $\mc{N}(\mc{L})$ be the subalgebras defined in \eqref{def:fixedpointspace} and \eqref{def:decoherence}. Then,
    \begin{itemize}
        \item Any purely imaginary eigenvalue $\lad = i \mu$ with $\mu \in \R$ of $\mc{L}$ is simple, namely, the associated Jordan block is of size one. 
        \item Define $V_0 := {\rm Span}\{X \in \mc{M}\,;\ \mc{L}(X) = i \mu X \ \text{for some $\mu \in \R$}\}$, and let $V_-$ be the direct sum of generalized eigenspaces of $\mc{L}$ associated with eigenvalues $\lad$ with $\Re(\lad) < 0$. 
        We have 
        \begin{equation*}
            V_0 = \mc{N}(\mc{L})\,, \q V_- = \{X \in \mc{M}\,;\ \lim_{t \to \infty} \mc{P}_t(X) = 0\}\,,
        \end{equation*}
        and the orthogonal decomposition with respect to both $\si$-GNS and KMS inner products:
        \begin{align*}
            \mc{M} =  V_0 \oplus V_-\,.
        \end{align*}
    \end{itemize}
\end{lemma}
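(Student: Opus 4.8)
The plan is to work in the GNS Hilbert-space structure on $\mc{M}$, in which $\mc{P}_t$ is a contraction semigroup, and to combine the spectral rigidity of finite-dimensional bounded semigroups with the Kadison--Schwarz inequality and the already-established structural facts about $\mc{N}(\mc{L})$ and $E_{\mc{N}}$. First I would record that $\mc{P}_t$ is a contraction for the $\si$-GNS inner product: by Kadison--Schwarz, $\mc{P}_t(X)^*\mc{P}_t(X) \preceq \mc{P}_t(X^*X)$ (valid since $\mc{P}_t$ is completely positive and unital), and combined with $\mc{P}_t^\dag(\si) = \si$ this yields $\tr(\si\,\mc{P}_t(X)^*\mc{P}_t(X)) \le \tr(\si\,\mc{P}_t(X^*X)) = \tr(\si\,X^*X)$. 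Since $\dim\mc{M} < \infty$, boundedness of $(\mc{P}_t)_{t\ge 0}$ forces $\spec(\mc{L}) \subset \{\Re\lad \le 0\}$, and any eigenvalue $\lad = i\mu$ with $\mu\in\R$ must be semisimple, since a nontrivial Jordan block would produce a polynomially growing summand of $e^{t\mc{L}}$; this proves the first bullet. The Jordan decomposition of $\mc{L}$ then gives the algebraic direct sum $\mc{M} = V_0 \oplus V_-$.

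Next I would identify $V_- = \{X : \lim_{t\to\infty}\mc{P}_t(X) = 0\}$: on $V_-$ the semigroup decays exponentially, whereas on $V_0$, in an eigenbasis $\{Z_k\}$ with $\mc{L}Z_k = i\mu_k Z_k$, the operator $\mc{P}_t|_{V_0}$ acts as multiplication by $e^{i\mu_k t}$ on $Z_k$, which is recurrent, so there exist $t_n\to\infty$ with $\mc{P}_{t_n}|_{V_0}\to\id$; writing $X = X_0 + X_-$ accordingly one gets $\mc{P}_{t_n}(X)\to X_0$, so $\lim_t \mc{P}_t(X) = 0$ forces $X_0 = 0$. For the identification $V_0 = \mc{N}(\mc{L})$: if $\mc{L}X = i\mu X$ then the displayed inequality is saturated, $\tr(\si\,[\mc{P}_t(X^*X) - \mc{P}_t(X)^*\mc{P}_t(X)]) = 0$, and since $\si$ is full-rank and the bracket is positive semidefinite we get $\mc{P}_t(X^*X) = \mc{P}_t(X)^*\mc{P}_t(X)$ for all $t$, i.e. $X\in\mc{N}(\mc{L})$; linearity of $\mc{N}(\mc{L})$ then gives $V_0 \subseteq \mc{N}(\mc{L})$. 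Conversely, $\mc{P}_t$ restricts to a one-parameter \emph{group} of $*$-automorphisms of $\mc{N}(\mc{L})$, hence leaves $\mc{N}(\mc{L})$ invariant and $\mc{L}|_{\mc{N}(\mc{L})}$ generates a norm-bounded group, which forces purely imaginary semisimple spectrum; thus $\mc{N}(\mc{L}) \subseteq V_0$, and equality holds.

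Finally, for the orthogonal decomposition: by \cref{coro:uniquence} the conditional expectation $E_{\mc{N}}$ onto $\mc{N}(\mc{L}) = V_0$ is an orthogonal projection for both the $\si$-GNS and the $\si$-KMS inner products, so $\ran E_{\mc{N}} = V_0$ and $\ker E_{\mc{N}} = V_0^\perp$ in each. By \cref{lem:ergopt}, $\mc{P}_t(\id - E_{\mc{N}}) \to 0$, whence $\ker E_{\mc{N}} \subseteq \{X : \lim_{t\to\infty}\mc{P}_t(X) = 0\} = V_-$; as $\ker E_{\mc{N}}$ and $V_-$ are both linear complements of $V_0$ in $\mc{M}$, this inclusion forces $\ker E_{\mc{N}} = V_-$, so $\mc{M} = V_0 \oplus V_-$ coincides with $\ran E_{\mc{N}} \oplus \ker E_{\mc{N}}$, which is orthogonal for both inner products. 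I expect the only genuinely delicate point to be the rigidity step that upgrades the GNS-norm equality on purely imaginary eigenvectors to the operator identity defining $\mc{N}(\mc{L})$; everything else is bookkeeping with the previously established properties of $E_{\mc{N}}$ (\cref{lem:extencece}, \cref{coro:uniquence}) and the ergodic decomposition (\cref{lem:ergopt}). Alternatively, the statement can be quoted directly from the results of Carbone--Sasso and collaborators cited above.
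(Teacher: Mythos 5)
Your argument is correct, but it is worth noting that the paper does not prove this lemma at all: it is stated as a summary of known results and justified by citation to Carbone--Sasso--Umanit\`a (Proposition 8 and Theorem 9 of the 2013 paper, Theorem 29 and Proposition 31 of the 2015 paper), exactly as your closing sentence anticipates. What you have written is essentially a self-contained reconstruction of the standard proof behind those references: GNS contractivity of $\mc{P}_t$ via Kadison--Schwarz together with invariance of $\si$, finite-dimensional spectral rigidity to rule out Jordan blocks on the imaginary axis, the saturation argument (full rank of $\si$ plus positivity of $\mc{P}_t(X^*X)-\mc{P}_t(X)^*\mc{P}_t(X)$) to place imaginary eigenvectors inside $\mc{N}(\mc{L})$, the automorphism-group property of $\mc{P}_t|_{\mc{N}(\mc{L})}$ for the reverse inclusion, and finally the Jacobs--de Leeuw--Glicksberg-type orthogonal splitting obtained by comparing $\ker E_{\mc{N}}$ with $V_-$ through \cref{lem:ergopt}, \cref{lem:extencece}, and \cref{coro:uniquence}. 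The two delicate points you flag are handled correctly: the upgrade from the scalar equality $\tr(\si[\mc{P}_t(X^*X)-\mc{P}_t(X)^*\mc{P}_t(X)])=0$ to the operator identity does follow from faithfulness of $\si$, and the linearity of $\mc{N}(\mc{L})$ (needed to pass from eigenvectors to their span) is legitimate because the paper has already recorded, with references, that $\mc{N}(\mc{L})$ is a von Neumann subalgebra on which $\mc{P}_t$ acts as a group of $*$-automorphisms; the recurrence argument producing $t_n\to\infty$ with $\mc{P}_{t_n}|_{V_0}\to\id$ is the standard almost-periodicity fact on the torus. So your proposal buys a self-contained proof where the paper buys brevity by citation; within the paper's logical ordering there is no circularity in your use of \cref{lem:ergopt}, since that lemma is itself quoted from Frigerio--Verri independently of the present statement.
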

\begin{corollary}\label{coro:ergodiceig}
     Let $\mc{P}_t = \exp(t \mc{L})$ be a QMS with an invariant state $\si \in \mc{D}_+(\mc{M})$. Then $\mc{P}_t$ is ergodic if and only if $\mc{L}$ has no purely imaginary eigenvalues $\lad = i\mu$ with real $\mu \neq 0$. In this case, every eigenvalue $\lad \in \C$ of $\mc{L}$ satisfies either $\lad = 0$ or $\Re \lad < 0$.  
\end{corollary}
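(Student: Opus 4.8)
The plan is to deduce the statement directly from \cref{lem:characnf} and the ergodicity characterization in \cref{lem:ergopt}. First I would recall from \cref{lem:characnf} the orthogonal decomposition $\mc{M} = V_0 \oplus V_-$ with respect to (say) the $\si$-KMS inner product, where $V_0 = \mc{N}(\mc{L})$ is spanned by eigenvectors of $\mc{L}$ with purely imaginary eigenvalues (each such eigenvalue being semisimple), and $V_-$ collects the generalized eigenspaces for eigenvalues with strictly negative real part. Since $\si$ is a full-rank invariant state, $\mi \in \mc{F}(\mc{L}) \subseteq \mc{N}(\mc{L}) = V_0$, so $0$ is always an eigenvalue; moreover the decomposition shows every eigenvalue $\lad$ of $\mc{L}$ satisfies either $\Re\lad < 0$ (contribution to $V_-$) or $\Re\lad = 0$, i.e. $\lad = i\mu$ for some real $\mu$ (contribution to $V_0$). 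This is the second assertion of the corollary modulo the first one.

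Next I would prove the equivalence. For the ``only if'' direction, suppose $\mc{P}_t$ is ergodic, i.e. $\mc{N}(\mc{L}) = \mc{F}(\mc{L})$ by \cref{lem:ergopt}. If $\mc{L}(X) = i\mu X$ with $\mu \in \R$, then $X \in V_0 = \mc{N}(\mc{L}) = \mc{F}(\mc{L}) = \ker(\mc{L})$, forcing $i\mu X = \mc{L}(X) = 0$; since an eigenvector is nonzero, $\mu = 0$. Hence $\mc{L}$ has no purely imaginary eigenvalue with $\mu \neq 0$. For the ``if'' direction, suppose $\mc{L}$ has no purely imaginary eigenvalue $i\mu$ with $\mu \neq 0$. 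Then in the decomposition $V_0 = \mc{N}(\mc{L})$, every spanning eigenvector has eigenvalue $0$, so $V_0 \subseteq \ker(\mc{L}) = \mc{F}(\mc{L})$; combined with the always-valid inclusion $\mc{F}(\mc{L}) \subseteq \mc{N}(\mc{L}) = V_0$ (fixed points are in particular in the decoherence-free algebra, or simply $\ker\mc{L}$ is the eigenspace at $0$), we get $\mc{N}(\mc{L}) = \mc{F}(\mc{L})$, which by \cref{lem:ergopt} is exactly ergodicity. Finally, when $\mc{P}_t$ is ergodic, the last sentence of the corollary follows from the dichotomy established in the first paragraph together with the fact that no eigenvalue $i\mu$ with $\mu \neq 0$ occurs: every eigenvalue is either $0$ or has strictly negative real part.

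The only genuinely substantive input is \cref{lem:characnf} (the identification $V_0 = \mc{N}(\mc{L})$ and semisimplicity of purely imaginary eigenvalues), which is quoted from \cites{carbone2013decoherence,carbone2015environment}; given that, the argument is essentially bookkeeping about where eigenvectors sit in the $V_0 \oplus V_-$ splitting. I do not anticipate a real obstacle; the one point to state carefully is that $\ker(\mc{L}) = \mc{F}(\mc{L})$ (from \eqref{def:fixedpointspace}) is the generalized eigenspace at $0$ reduced to the ordinary eigenspace — which is guaranteed since $0$, being purely imaginary, is semisimple by the first bullet of \cref{lem:characnf} — so that "$V_0 \subseteq \ker\mc{L}$ when no nonzero imaginary eigenvalues exist" is legitimate.
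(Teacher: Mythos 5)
Your argument is correct and follows exactly the route the paper intends: the corollary is stated as an immediate consequence of \cref{lem:characnf} together with the ergodicity criterion $\mc{N}(\mc{L})=\mc{F}(\mc{L})$ from \cref{lem:ergopt}, and the paper gives no further proof beyond citing these results. Your bookkeeping with the $V_0\oplus V_-$ splitting, including the remark that semisimplicity of the eigenvalue $0$ ensures $\ker(\mc{L})$ exhausts its generalized eigenspace, is precisely the intended derivation.
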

For an ergodic QMS $\mc{P}_t$ with a full-rank invariant state $\si$, by \cref{coro:ergodiceig}, we can define the spectral gap of its generator  $\mc{L}$ by
\begin{equation} \label{eq:spectralgap}
     \lad(\mc{L}) := \inf \left\{\Re(\lad)\,;\ \lad \in {\rm Spec}(-\mc{L})\backslash\{0\}\right\}\,.
\end{equation}
We also define the $L^1$ mixing time as
\begin{align} \label{eq:tmix}
    t_{\rm mix}(\mc{L}) := \inf\left\{t \ge 0\,;\ \norm{\mc{P}_t^\dag(\rho) - E_{\mc{F}}^\dag(\rho)}_{\tr} \le e^{-1}\, \  \text{for all}\ \, \rho \in \mc{D}(\mc{M})\right\},
\end{align}
and the $L^2$ relaxation time as 
\begin{align} \label{eq:trelax}
    t_{\rm rel}(\mc{L}) := \inf\left\{t \ge 0\,;\ \norm{\mc{P}_t (X)}_{2,\si} \le e^{-1} \norm{X}_{2,\si}\ \, \text{for all $X \in \mc{M}$ with $E_{\mc{F}}(X) = 0$}\right\}.
\end{align}
The relaxation time $t_{\rm rel}$ generally depends on the choice of the full-rank invariant state $\si$, unless $\mc{P}_t$ satisfies certain special conditions, such as being $\si$-GNS detailed balanced \cite{gao2021complete}*{Lemma 2.6}. In the following, we always fix the reference invariant state $\si$ used in the KMS inner product $\l \dd, \dd \r_{\si,1/2}$ and the norm $\norm{\dd}_{2,\si}$. In the case of primitive QMS, $t_{\rm mix}$ and $t_{\rm rel}$ can be easily related by the following lemma. 

\begin{lemma} \label{lem:relatingmix}
    Let $\mc{P}_t$ be a primitive QMS with invariant state $\si \in \mc{D}_+(\mc{M})$. Then there holds
    \begin{align*}
        t_{\rm mix}(\mc{L}) \le \left(2 + \log \left(\si_{\min}^{-1/2}\right)\right) t_{\rm rel}(\mc{L})\,,
    \end{align*}
    where $\si_{\min}$ is the minimal eigenvalue of $\si$. 
\end{lemma}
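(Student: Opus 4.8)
The plan is to bound the trace-norm convergence of $\mc{P}_t^\dag$ by the $2$-norm convergence of $\mc{P}_t$ via a Cauchy--Schwarz/duality argument, using the primitivity to identify $E_{\mc{F}}^\dag(\rho) = \si$ and $E_{\mc{F}}(X) = \tr(\si X)\mi$. First I would reduce to states $\rho$ and, passing through HS duality, rewrite $\norm{\mc{P}_t^\dag(\rho) - \si}_{\tr}$ as a supremum over $X \in \mc{M}$ with $\norm{X}_\infty \le 1$ of $\l X, \mc{P}_t^\dag(\rho) - \si \r = \l \mc{P}_t(X) - \tr(\si X)\mi, \rho\r$, where I have used unitality of $\mc{P}_t$ and self-duality of the decomposition. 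Replacing $X$ by $X - \tr(\si X)\mi$ (which has $E_{\mc{F}}$-component zero and $\infty$-norm at most $2$), the quantity to estimate is $\sup \abs{\l Y_t, \rho\r}$ over $Y_t = \mc{P}_t(Y)$ with $E_{\mc{F}}(Y) = 0$ and $\norm{Y}_\infty \le 2$.

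Next I would convert the HS pairing $\l Y_t, \rho\r$ into a KMS pairing so that the $\norm{\dd}_{2,\si}$-contraction \eqref{eq:trelax} can be applied. Writing $\l Y_t, \rho \r = \l Y_t, \Gamma_\si^{-1}(\rho)\cdot \r$-type manipulation: more precisely $\l Y_t,\rho\r = \tr(Y_t \rho) = \l Y_t, \Gamma_\si(\Gamma_\si^{-1}(\rho))\r = \l Y_t, \Gamma_\si^{-1}(\rho)\r_{\si,1/2}$, so by Cauchy--Schwarz for the KMS inner product, $\abs{\l Y_t,\rho\r} \le \norm{Y_t}_{2,\si}\,\norm{\Gamma_\si^{-1}(\rho)}_{2,\si}$. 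Since $E_{\mc{F}}(Y)=0$, the relaxation-time bound gives $\norm{Y_t}_{2,\si} \le e^{-\lceil t/t_{\rm rel}\rceil}\norm{Y}_{2,\si} \le e^{1-t/t_{\rm rel}}\norm{Y}_{2,\si}$ (using the semigroup property to iterate the defining inequality of $t_{\rm rel}$), and $\norm{Y}_{2,\si} = \tr(\si^{1/2}Y\si^{1/2}Y)^{1/2} \le \norm{Y}_\infty \le 2$. The remaining factor $\norm{\Gamma_\si^{-1}(\rho)}_{2,\si} = \tr(\si^{1/2}\si^{-1/2}\rho\si^{-1/2}\si^{1/2}\si^{-1/2}\rho\si^{-1/2})^{1/2}$; one checks $\norm{\Gamma_\si^{-1}(\rho)}_{2,\si}^2 = \tr(\si^{-1}\rho\si^{-1}\rho\cdot\si)$-type expression, which is bounded by $\si_{\min}^{-1}\tr(\rho^2) \le \si_{\min}^{-1}$ since $\tr(\rho^2)\le 1$ for a state. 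Actually a cleaner route: $\norm{\Gamma_\si^{-1}(\rho)}_{2,\si} \le \norm{\si^{-1/2}}_\infty^2 \cdot (\text{something} \le 1)$; I would carry out the exact operator-norm bookkeeping here to get the clean factor $\si_{\min}^{-1/2}$.

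Combining, $\norm{\mc{P}_t^\dag(\rho)-\si}_{\tr} \le 2e\cdot e^{-t/t_{\rm rel}}\cdot \si_{\min}^{-1/2} \cdot C$ for an absolute constant, and setting the right-hand side equal to $e^{-1}$ and solving for $t$ yields $t_{\rm mix} \le t_{\rm rel}\cdot\bigl(2 + \log(\si_{\min}^{-1/2})\bigr)$ after absorbing the $O(1)$ constants into the ``$2$''. The step I expect to be the main obstacle is getting the constants to line up exactly as stated --- in particular, tracking the factor of $2$ from the recentering $X \mapsto X - \tr(\si X)\mi$, the factor $e$ from rounding $\lceil t/t_{\rm rel}\rceil$, and the $\si_{\min}^{-1/2}$ from the weighted Cauchy--Schwarz --- so that they telescope into precisely $2 + \log(\si_{\min}^{-1/2})$ rather than a looser bound; this likely requires the sharper estimate $\abs{\l Y_t,\rho\r}\le \norm{Y_t}_{2,\si}\norm{\Gamma_\si^{-1/2}\rho\Gamma_\si^{-1/2}\dots}$ exploiting $\norm{Y_t}_\infty$-type bounds rather than crude norm domination, or alternatively a direct interpolation argument.
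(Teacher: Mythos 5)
Your route is essentially the paper's proof read from the dual side: the inequality the paper starts from, $\norm{\rho-\si}_{\tr}^2\le\tr[(\rho-\si)\Gamma_{\si}^{-1}(\rho-\si)]$, is exactly your combination of trace-norm duality against $\norm{X}_\infty\le 1$ with Cauchy--Schwarz in the KMS inner product; both arguments then pass the semigroup onto the observable, iterate the definition \eqref{eq:trelax} of $t_{\rm rel}$, and use $\sup_{\rho\in\mc{D}(\mc{M})}\norm{\Gamma_{\si}^{-1}\rho}_{2,\si}=\si_{\min}^{-1/2}$. (Minor point: iterating \eqref{eq:trelax} yields the exponent $\lfloor t/t_{\rm rel}\rfloor$, not a ceiling; your subsequent bound $e^{1-t/t_{\rm rel}}$ is precisely what the floor gives, so nothing else is affected.)

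The one genuine gap is the constant issue you yourself flagged, and it cannot be resolved by ``absorbing the $O(1)$ constants into the $2$'': with your bookkeeping ($\norm{X-\tr(\si X)\mi}_\infty\le 2$ followed by $\norm{\dd}_{2,\si}\le\norm{\dd}_\infty$) you need $2e\,\si_{\min}^{-1/2}e^{-t/t_{\rm rel}}\le e^{-1}$, i.e.\ $t\ge\bigl(2+\log 2+\log\si_{\min}^{-1/2}\bigr)t_{\rm rel}$, whereas in the stated bound the $2$ is already fully spent --- one unit compensates the floor via $\lfloor t/t_{\rm rel}\rfloor\ge t/t_{\rm rel}-1$ and one unit reaches the threshold $e^{-1}$ --- so there is no slack left for the extra $\log 2$. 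The fix is one line, and it is the ``sharper estimate'' you anticipated: since $\tr(\si X)\mi=E_{\mc{F}}(X)$ is the orthogonal projection of $X$ onto ${\rm Span}\{\mi\}$ with respect to the KMS inner product (\cref{coro:uniquence}), one has $\norm{X-\tr(\si X)\mi}_{2,\si}\le\norm{X}_{2,\si}\le\norm{X}_\infty\le 1$, so no factor $2$ ever enters. This is exactly how the paper's formulation avoids it: there the dual variable is normalized in $\norm{\dd}_{2,\si}$ rather than in operator norm, and recentering only decreases that norm. With this replacement your argument proves the lemma with the stated constant.
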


The proof of \cref{lem:relatingmix} is given in \cref{appA}. Here, the minimal eigenvalue $\sigma_{\min}$ typically exhibits polynomial scaling in the system dimension $\dim(\mh)$. 
Since this work is not aimed at improving the dimension dependence of the mixing time $t_{\rm mix}$, it suffices to focus on estimating $t_{\rm rel}$, i.e., the $L^2$ convergence rate. 

In general, for an ergodic QMS $\mc{P}_t = \exp(t \mc{L})$ with invariant state $\si$, 
without the detailed balance, one can only expect: for $C \ge 1$ and $\nu > 0$, 
\begin{equation} \label{ex:expdecay}
    \norm{\mc{P}_t (X) - E_{\mc{F}}(X)}_{2,\si} \le C e^{- \nu t} \norm{X - E_{\mc{F}}(X)}_{2,\si}\,.
\end{equation}
According to \cite{engel2000one}*{Chapter IV}, the sharp convergence rate 
$$
\nu_0 := \sup\{\nu > 0\,; \ \text{there exists $C \ge 1$ such that \eqref{ex:expdecay} holds}\}
$$
is given by the spectral gap \eqref{eq:spectralgap}:
\begin{equation} \label{eq:sharprate_gap}
    \nu_0 = \lad(\mc{L}) =  - \lim_{t \to \infty} \frac{1}{t} \log \sup_{X \neq 0}\frac{\norm{\mc{P}_t (X) - E_{\mc{F}}(X)}_{2,\si}}{\norm{X - E_{\mc{F}}(X)}_{2,\si}}.
 \end{equation}
 The prefactor $ C\ge 1$ in \eqref{ex:expdecay} is, in general, unavoidable and actually characterizes the hypocoercivity of the dynamics. The following definition is adapted from \cite{villani2009hypocoercivity}*{Theorem 12}, which considered classical dynamics. 

\begin{definition} \label{def:coercive}
An ergodic QMS $\mc{P}_t$ 
with a full-rank invariant state $\si$ 
is \emph{hypocoercive} if the estimate \eqref{ex:expdecay} only holds for $C > 1$ and $\nu > 0$, and it is \emph{coercive} if \eqref{ex:expdecay} holds with $C = 1$. 
\end{definition}

 The  hypocoercivity of a QMS  $\mathcal{P}_t = \exp(t \mc{L})$ is essentially determined by the equilibrium space of its 
 symmetric part $(\mc{L} + \mc{L}^\star)/2$. 

\begin{lemma} \label{lem:hypoqms}
Let QMS $\mathcal{P}_t = \exp(t \mc{L})$ be ergodic with an invariant state $\si \in \mathcal{D}_+(\mc{M})$, and let $\mc{F}(\mc{L})$ be the fixed-point algebra defined in \eqref{def:fixedpointspace}. 
Then we have $\mathcal{F}(\mc{L}) \subset \ker(\mc{L} + \mc{L}^\star)$, and 
$\mathcal{P}_t$ is hypocoercive, i.e., $C > 1$ in \eqref{ex:expdecay}, if and only if 
\begin{align*}
    \dim \ker(\mc{L} + \mc{L}^\star) > \dim \mathcal{F}(\mc{L})\,,
\end{align*}
that is, $\mathcal{F}(\mc{L})$ is a strict subspace of $\ker(\mc{L} + \mc{L}^\star)$. 
\end{lemma}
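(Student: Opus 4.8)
The plan is to work in the Hilbert space $(\mc{M}, \l\dd,\dd\r_{\si,1/2})$, where $\mc{L}^\star$ denotes the KMS-adjoint, and to split the analysis into the three assertions: (i) $\mc{F}(\mc{L}) \subset \ker(\mc{L}+\mc{L}^\star)$; (ii) if $\mc{F}(\mc{L}) = \ker(\mc{L}+\mc{L}^\star)$ then $\mc{P}_t$ is coercive ($C=1$ admissible); (iii) if $\mc{F}(\mc{L}) \subsetneq \ker(\mc{L}+\mc{L}^\star)$ then $\mc{P}_t$ is genuinely hypocoercive ($C>1$ forced). For (i), I would first observe that since $\si$ is invariant for $\mc{P}_t^\dag$, the KMS-adjoint $\mc{L}^\star$ is itself the generator of a QMS (this is the standard fact that KMS-adjoints of QMS generators are QMS generators, cf.\ the representation \eqref{eq:adjointkms}), so $\mi$ is a common fixed point; then for $X \in \mc{F}(\mc{L}) = \ker(\mc{L})$ I would use that $\mc{F}(\mc{L})$ is a von Neumann algebra on which $\mc{P}_t$ acts as identity, hence by the Kadison–Schwarz inequality $X$ lies in the multiplicative domain, and the standard argument (the same one showing $\mc{F}(\mc{L}) \subset \mc{N}(\mc{L})$) gives $\mc{L}^\star(X)=0$ as well. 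Concretely: $\mc{E}_{\mc{L}}(X) = -\l X, \mc{L}(X)\r_{\si,1/2} = 0$, and $\mc{E}_{\mc{L}^\star}(X) = \mc{E}_{\mc{L}}(X) = 0$ since the Dirichlet forms of $\mc{L}$ and $\mc{L}^\star$ coincide; combined with the fact that the symmetric part $(\mc{L}+\mc{L}^\star)/2$ is a QMS generator whose Dirichlet form is the same, a state $X$ with $E_{\mc{F}}(X)=X$ and zero Dirichlet energy lies in its kernel. So $\mc{F}(\mc{L}) \subset \ker(\mc{L}+\mc{L}^\star)$.

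For (ii) and (iii), the key is to decompose $\mc{L} = \mc{L}_S + \mc{L}_A$ with $\mc{L}_S = (\mc{L}+\mc{L}^\star)/2$ self-adjoint and $\mc{L}_A = (\mc{L}-\mc{L}^\star)/2$ anti-self-adjoint with respect to $\l\dd,\dd\r_{\si,1/2}$, and to pass to the orthogonal complement $\mc{M}_0 := \mc{M} \ominus \mc{F}(\mc{L})$, which is $\mc{P}_t$-invariant because $E_{\mc{F}}$ is a KMS-orthogonal projection commuting with $\mc{P}_t$ (\cref{lem:ergopt} and \cref{coro:uniquence}). On $\mc{M}_0$, the restriction of $\mc{L}$ is still the generator of a contraction semigroup with $\lim_{t\to\infty}\mc{P}_t|_{\mc{M}_0} = 0$, and $\nu_0 = \lad(\mc{L})>0$ by \eqref{eq:sharprate_gap} and \corref{coro:ergodiceig}. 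Now the dichotomy: $C=1$ in \eqref{ex:expdecay} is equivalent to $\|\mc{P}_t|_{\mc{M}_0}\| \le e^{-\nu t}$ for some $\nu>0$ and all $t$, i.e.\ to $t \mapsto \|\mc{P}_t X\|_{2,\si}$ being nonincreasing (up to the exponential factor) for every $X$. Differentiating $\frac{d}{dt}\|\mc{P}_t X\|_{2,\si}^2 = 2\l \mc{P}_t X, \mc{L}\,\mc{P}_t X\r_{\si,1/2} = -2\,\mc{E}_{\mc{L}}(\mc{P}_t X)$ (only $\mc{L}_S$ contributes), one sees that contractivity with rate $\nu$ for all $X$ is equivalent to the coercivity estimate $\mc{E}_{\mc{L}}(X) = \mc{E}_{\mc{L}_S}(X) \ge \nu \|X\|_{2,\si}^2$ for all $X \in \mc{M}_0$. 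If $\ker(\mc{L}+\mc{L}^\star) = \mc{F}(\mc{L})$, then $\mc{L}_S$ restricted to $\mc{M}_0$ is a negative-definite self-adjoint operator, so such a $\nu>0$ exists (its spectral gap), giving coercivity — this is (ii).

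For (iii), suppose $\ker(\mc{L}+\mc{L}^\star) \supsetneq \mc{F}(\mc{L})$, so there is $0 \neq X_\ast \in \mc{M}_0$ with $\mc{L}_S(X_\ast) = 0$, hence $\mc{E}_{\mc{L}}(X_\ast) = 0$. Then $\frac{d}{dt}\big|_{t=0}\|\mc{P}_t X_\ast\|_{2,\si}^2 = -2\,\mc{E}_{\mc{L}}(X_\ast) = 0$, so the norm is not strictly contracting at $t=0$; since $\mc{P}_t X_\ast \to 0$ (as $X_\ast \in \mc{M}_0$ and the QMS is ergodic, so $\mc{M}_0 \subset V_-$ in the notation of \lemref{lem:characnf}), the map $t \mapsto \|\mc{P}_t X_\ast\|_{2,\si}$ starts flat and eventually decays to zero, which is incompatible with $\|\mc{P}_t X_\ast\|_{2,\si} \le e^{-\nu t}\|X_\ast\|_{2,\si}$ for any $\nu>0$ — more carefully, if $C=1$ were admissible with rate $\nu$, then $\|\mc{P}_t X_\ast\|_{2,\si}^2 \le e^{-2\nu t}\|X_\ast\|_{2,\si}^2$ forces, upon differentiating at $0$, $\mc{E}_{\mc{L}}(X_\ast) \ge \nu\|X_\ast\|_{2,\si}^2 > 0$, a contradiction. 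Hence $C>1$ is forced and $\mc{P}_t$ is hypocoercive. The main obstacle I anticipate is purely in assertion (i): rigorously justifying $\mc{F}(\mc{L}) \subset \ker(\mc{L}^\star)$ — i.e.\ that fixed points of $\mc{P}_t$ are also fixed by the adjoint semigroup. This is the Kadison–Schwarz/multiplicative-domain argument (the fixed-point algebra is contained in the decoherence-free algebra, and both $\mc{L}$ and $\mc{L}^\star$ vanish on $\mc{N}(\mc{L})$ when $\si$ is a common invariant state), and I would either cite \cref{lem:ergopt} together with the structure results recalled in \cref{app:quantumce}, or give the short direct computation: $X \in \mc{F}(\mc{L})$ implies $\mc{E}_{\mc{L}}(X) = 0$, and since $\mc{E}_{\mc{L}} = \mc{E}_{\mc{L}^\star} = \mc{E}_{\mc{L}_S}$ is a nonnegative quadratic form, $X \in \ker(\mc{L}_S) = \ker(\mc{L}+\mc{L}^\star)$. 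Everything else is elementary Hilbert-space spectral theory on the finite-dimensional space $\mc{M}_0$.
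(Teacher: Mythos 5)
Your proposal is correct and takes essentially the same route as the paper: the inclusion $\mc{F}(\mc{L}) \subset \ker(\mc{L}+\mc{L}^\star)$ follows from the vanishing of the quadratic form of the negative semidefinite symmetric part on $\ker(\mc{L})$, and the dichotomy comes from the equivalence (via differentiating $\norm{\mc{P}_t X}_{2,\si}^2$ and Gr\"onwall) of $C=1$ with the Poincar\'e inequality for $(\mc{L}+\mc{L}^\star)/2$, which holds precisely when $\ker(\mc{L}+\mc{L}^\star) = \mc{F}(\mc{L})$. Only cosmetic points: the Kadison--Schwarz/multiplicative-domain detour is unnecessary, and the Dirichlet forms of $\mc{L}$ and $\mc{L}^\star$ are complex conjugates rather than equal, but only their common real part (the form of the symmetric part) is actually used.
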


    \begin{proof}
        We first note that $\mc{L}(X) = 0$ implies 
        $$
        \l X, (\mc{L} + \mc{L}^\star)X \r_{\si,1/2} = 2\, \Re \l X, \mc{L}(X) \r_{\si,1/2} = 0\,.
        $$
        Since $\mc{L}$ is dissipative, the operator $\mc{L} + \mc{L}^\star$ is negative, and then $(\mc{L} + \mc{L}^\star)X = 0$. Therefore, 
        $\ker(\mc{L}) \subset \ker(\mc{L} + \mc{L}^\star)$ holds. Then, by taking the time derivative of $$\norm{\mc{P}_t (X) - E_{\mc{F}}(X)}^2_{2,\si} \le e^{-  2 \nu t} \norm{X - E_{\mc{F}}(X)}_{2,\si}^2\,,$$ and Gr\"{o}nwall's inequality, we find that the coercivity of $\mc{P}_t$ (i.e.,
        the exponential decay \eqref{ex:expdecay} with $C = 1$) is equivalent to
        \begin{align*}
            \nu \norm{X - E_{\mc{F}}(X)}_{2,\si}^2 \le - \Big\l X, \frac{\mc{L} + \mc{L}^\star}{2} X \Big\r_{\si,1/2}\,, \q \text{for $X \in \mc{M}$\,,}
        \end{align*}
        which implies
        \begin{equation*}
         \ker\Big(\frac{\mc{L} + \mc{L}^\star}{2}\Big) \subset \mc{F}(\mc{L}) = \ker(\mc{L})\,.
        \end{equation*}
        The proof is complete. 
        \end{proof}

In addition, similarly to  \cite{eberle2024non}*{Lemma 10} and \cite{li2024quantum}*{Lemma 2.5}, we can show that the $L^2$ relaxation time of an ergodic $\mc{P}_t$ is lower bounded by the inverse singular value gap of $\mc{L}$, which is crucial for characterizing the potential quadratic speedup of lifted QMS (see \cref{thm:lower}).
We define the singular value gap of an ergodic $\mc{L}$ with a full-rank invariant state $\si$ by the spectral gap of $\sqrt{\mc{L}^\star\mc{L}}$, that is,  
$s(\mc{L}) := \lad(\sqrt{\mc{L}^\star\mc{L}})$. We have 
\begin{align*}
\frac{1}{s(\mc{L})} = \sup_{0 \neq X \in \mathcal{F}(\mc{L})^\perp} \frac{\norm{\mc{L}^{-1} (X)}_{2,\si}}{\norm{X}_{2,\si}}\,.
\end{align*}
where the orthogonal complement $\perp$ is with respect to the KMS inner product. 

\begin{lemma} \label{lem:singulargap}
Given an ergodic QMS generator $\mc{L}$ with an invariant state $\si \in \mc{D}_+(\mc{M})$, let $s(\mc{L})$ be the singular value gap of $\mc{L}$, $C \ge 1$ and $\nu > 0$ be constants in exponential convergence \eqref{ex:expdecay}, and  let $t_{\rm rel}$ be the relaxation time \eqref{eq:trelax}. Then, it holds that 
\begin{align*}
\nu \le (1 + \log C) s(\mc{L})\,,\quad
    t_{\rm rel} \ge  \frac{1}{2 s(\mc{L})}\,.
\end{align*}
\end{lemma}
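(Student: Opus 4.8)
The plan is to represent $\mc{L}^{-1}$ restricted to $\mathcal{F}(\mc{L})^\perp$ (the KMS‑orthogonal complement of the fixed‑point algebra) as the time integral of the semigroup, and then to bound the operator norm of that integral in two complementary ways — one using $\eqref{ex:expdecay}$, the other using the relaxation time. First I would record the structural facts that make this well posed. By \cref{lem:characnf,coro:ergodiceig}, for an ergodic $\mc{L}$ the space $\mathcal{F}(\mc{L})^\perp$ is exactly the sum of generalized eigenspaces of $\mc{L}$ with strictly negative real part; by \cref{coro:uniquence}, $E_{\mc{F}}$ is the KMS‑orthogonal projection onto $\mc{F}(\mc{L})=\ker\mc{L}$ and commutes with $\mc{P}_t=e^{t\mc{L}}$, so that $\mathcal{F}(\mc{L})^\perp=\ker E_{\mc{F}}$ is invariant under $\mc{P}_t$, while $E_{\mc{F}}\mc{P}_t=E_{\mc{F}}$ gives $E_{\mc{F}}\mc{L}=0$ and hence $\mc{L}|_{\mathcal{F}(\mc{L})^\perp}$ is a bijection of $\mathcal{F}(\mc{L})^\perp$ with $\bigl\|\mc{L}^{-1}|_{\mathcal{F}(\mc{L})^\perp}\bigr\|_{2,\si\to2,\si}=1/s(\mc{L})$ by the definition of the singular value gap. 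Moreover $\mc{L}$ is KMS‑dissipative (as already used in the proof of \cref{lem:hypoqms}), so $\mc{P}_t$ is a $\norm{\cdot}_{2,\si}$‑contraction, and for $X\in\mathcal{F}(\mc{L})^\perp$ the bound $\norm{\mc{P}_t X}_{2,\si}\le Ce^{-\nu t}\norm{X}_{2,\si}$ from $\eqref{ex:expdecay}$ makes $\int_0^\infty\mc{P}_t X\,\ud t$ absolutely convergent; integrating $\frac{\ud}{\ud t}\mc{P}_t X=\mc{L}\mc{P}_t X$ and letting $t\to\infty$ (using $\mc{P}_t X\to E_{\mc{F}}X=0$) yields $\mc{L}\!\int_0^\infty\mc{P}_t X\,\ud t=-X$, i.e. $-\mc{L}^{-1}(X)=\int_0^\infty\mc{P}_t(X)\,\ud t$.

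From this identity and the triangle inequality for Bochner integrals, taking the supremum over $X\in\mathcal{F}(\mc{L})^\perp$ with $\norm{X}_{2,\si}=1$,
\[
\frac{1}{s(\mc{L})}\ \le\ \int_0^\infty\bigl\|\mc{P}_t|_{\mathcal{F}(\mc{L})^\perp}\bigr\|_{\mathrm{op}}\,\ud t\,,
\]
where $\norm{\cdot}_{\mathrm{op}}$ is the operator norm induced by $\norm{\cdot}_{2,\si}$. For the first inequality I would bound the integrand by $\min\{1,\,Ce^{-\nu t}\}$ (contractivity together with $\eqref{ex:expdecay}$) and split the integral at $T=\nu^{-1}\log C$, obtaining $\int_0^T 1\,\ud t+\int_T^\infty Ce^{-\nu t}\,\ud t=\nu^{-1}(1+\log C)$; rearranging gives $\nu\le(1+\log C)\,s(\mc{L})$. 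For the relaxation‑time bound I would instead use submultiplicativity: writing $t=k\,t_{\rm rel}+r$ with $k\in\NN$ and $0\le r<t_{\rm rel}$, contractivity and $\bigl\|\mc{P}_{t_{\rm rel}}|_{\mathcal{F}(\mc{L})^\perp}\bigr\|_{\mathrm{op}}\le e^{-1}$ give $\bigl\|\mc{P}_t|_{\mathcal{F}(\mc{L})^\perp}\bigr\|_{\mathrm{op}}\le e^{-k}$, so the integral is at most $\sum_{k\ge0}t_{\rm rel}e^{-k}=t_{\rm rel}/(1-e^{-1})$; since $1-e^{-1}>1/2$, this yields $t_{\rm rel}\ge(1-e^{-1})/s(\mc{L})\ge 1/(2s(\mc{L}))$.

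No step is a genuine obstacle; this is the quantum transcription of \cite{eberle2024non}*{Lemma 10} and \cite{li2024quantum}*{Lemma 2.5}. The only points needing care are (i) checking that $\mc{P}_t$ and $\mc{L}$ leave $\mathcal{F}(\mc{L})^\perp$ invariant and that $\mc{L}^{-1}|_{\mathcal{F}(\mc{L})^\perp}$ has norm exactly $1/s(\mc{L})$ — which is where \cref{lem:characnf} and the self‑adjointness of $E_{\mc{F}}$ (\cref{coro:uniquence}) enter — and (ii) the contractivity of $\mc{P}_t$ in $\norm{\cdot}_{2,\si}$, i.e. the KMS‑dissipativity of $\mc{L}$; everything else is the elementary optimization of a one‑variable integral.
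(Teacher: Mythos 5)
Your argument is correct and follows essentially the same route as the paper: the paper omits the proof of this lemma (deferring to \cite{li2024quantum}*{Lemma 2.5}), but its proof of the abstract version in \cref{lem:singulargapgeneral} is exactly your first estimate — represent $(-\mc{L})^{-1}$ on $\mc{F}(\mc{L})^\perp$ by $\int_0^\infty \mc{P}_t\,\ud t$, bound the integrand by $\min\{1, Ce^{-\nu t}\}$, and split at $T=\nu^{-1}\log C$ to get $1/s(\mc{L})\le \nu^{-1}(1+\log C)$. Your treatment of the relaxation-time bound via submultiplicativity of $\|\mc{P}_t|_{\mc{F}^\perp}\|$ (using contractivity and invariance of $\mc{F}^\perp$) is also correct and in fact yields the slightly sharper constant $t_{\rm rel}\ge (1-e^{-1})/s(\mc{L})$, which implies the stated $1/(2s(\mc{L}))$.
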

The proof is similar to that of \cite{li2024quantum}*{Lemma 2.5} and is therefore omitted. See also \cref{lem:singulargapgeneral} for a generalization to the more abstract setting.

\subsection{Overdamped limit and lifting of quantum dynamics} \label{sec:liftinglower}

We now develop the lifting framework for quantum Markov semigroups. To begin, we consider the following family of generators for QMS, analogous to the classical kinetic Fokker-Planck equation:
\begin{align} \label{eq:decom}
     \mc{L}_\gamma = \mc{L}_A + \gamma \mc{L}_S\,,
\end{align}
where $\gamma > 0$ is the damping parameter (see  \cite{li2024quantum}*{Remark 1}). Suppose that $\mc{L}_\gamma$ satisfies the following condition:

\begin{condition}\label{asspA}
The operators $\ld$ and $\lh$, independent of $\gamma$, are self-adjoint and anti-self-adjoint for $\si$-KMS inner product for some $\si \in \mc{D}_+(\mc{M})$:
\begin{equation*}
    \ld = \ld^\star\,, \quad  \lh = - \lh^\star\,.
\end{equation*}
\end{condition}

\begin{remark} 
A paradigmatic choice of $\lh$ is the generator $\lh(\dd) := i [G,\dd]$ of a unitary group defined by some Hamiltonian $G$ satisfying $[G, \si] = 0$. Any generator $\mc{L}$ can be decomposed into a reversible-irreversible form as in \eqref{eq:decom}. Specifically, we may take $\ld$ as the additive symmetrization $\ld = (\mc{L} + \mc{L}^\star)/(2\gamma)$, and then $\lh = \mc{L} - \gamma \ld$, although both terms generally depend on $\gamma$. Moreover, 
this condition can be relaxed as in \cref{assmp:decomposition}, where we only assume the anti-symmetry of $\lh$ on the subspace $\ker(\ld)$.  
\end{remark}

\begin{lemma} \label{lem:symanti}
Under \cref{asspA}, the following hold:
\begin{itemize}
    \item Both $\lh$ and $\ld$ admit $\si$ as an invariant state, i.e.,
    \begin{align*}
        \ld^\dag(\si) = \lh^\dag(\si) = 0\,,   
    \end{align*}
    and thus $\mc{L}_\gamma^\dag(\si) = 0$ for all $\gamma > 0$.
    \item $\ld$ is the generator of a $\si$-KMS detailed balanced quantum Markov semigroup. 
    \item The fixed-point algebra $\mc{F}(\mc{L}_\gamma)$ is characterized by 
    \begin{align} \label{eq:kerlamma}
        \mc{F}(\mc{L}_\gamma) = \ker(\mc{L}_\gamma) = \ker(\lh) \cap \ker(\ld)\,,
    \end{align}
    which is independent of $\gamma$. 
\end{itemize}
\end{lemma}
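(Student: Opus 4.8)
The plan is to verify the three bullet points in order, exploiting the relation \eqref{eq:adjointkms} between the KMS adjoint $\star$ and the Hilbert–Schmidt adjoint $\dag$, namely $\Phi^\star = \Gamma_\si^{-1}\circ\Phi^\dag\circ\Gamma_\si$, together with the fact that both $\lh$ and $\ld$ are genuine GKSL generators (so each annihilates $\mi$ by unitality).

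\emph{First bullet (invariance of $\si$).} Since $\ld = \ld^\star$, applying \eqref{eq:adjointkms} gives $\ld = \Gamma_\si^{-1}\circ\ld^\dag\circ\Gamma_\si$, i.e. $\Gamma_\si\circ\ld = \ld^\dag\circ\Gamma_\si$. Evaluate both sides at $\mi$: the left side is $\Gamma_\si(\ld(\mi)) = \Gamma_\si(0) = 0$ because $\ld$ is unital, while the right side is $\ld^\dag(\Gamma_\si(\mi)) = \ld^\dag(\si)$. Hence $\ld^\dag(\si) = 0$. For $\lh$, the anti-self-adjointness $\lh = -\lh^\star$ and \eqref{eq:adjointkms} give $\Gamma_\si\circ\lh = -\lh^\dag\circ\Gamma_\si$, and evaluating at $\mi$ again yields $0 = -\lh^\dag(\si)$, so $\lh^\dag(\si) = 0$. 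Adding, $\mc{L}_\gamma^\dag(\si) = \lh^\dag(\si) + \gamma\,\ld^\dag(\si) = 0$ for every $\gamma > 0$.

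\emph{Second bullet ($\ld$ generates a $\si$-KMS detailed balanced QMS).} By hypothesis $\ld$ is the generator of a QMS (it appears as a GKSL generator in the decomposition \eqref{eq:decom}), and $\ld = \ld^\star$ is by definition self-adjointness with respect to the KMS inner product $\l\dd,\dd\r_{\si,1/2}$; combined with $\ld^\dag(\si) = 0$ from the first bullet, this is precisely the statement that $\exp(t\ld)$ is $\si$-KMS detailed balanced.

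\emph{Third bullet (characterization of $\mc{F}(\mc{L}_\gamma)$).} The inclusion $\ker(\lh)\cap\ker(\ld)\subset\ker(\mc{L}_\gamma)$ is immediate from $\mc{L}_\gamma = \lh + \gamma\ld$. For the reverse inclusion, let $X\in\ker(\mc{L}_\gamma)$. Since $\lh = -\lh^\star$ is anti-self-adjoint for $\l\dd,\dd\r_{\si,1/2}$, we have $\Re\l X,\lh X\r_{\si,1/2} = 0$; since $\ld$ is a QMS generator it is dissipative, hence $\Re\l X,\ld X\r_{\si,1/2}\le 0$, in fact $\l X,\ld X\r_{\si,1/2}\le 0$ because $\ld = \ld^\star$ is self-adjoint and negative semidefinite (its negativity following from dissipativity exactly as in the proof of \cref{lem:hypoqms}). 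Taking the real part of $0 = \l X,\mc{L}_\gamma X\r_{\si,1/2} = \l X,\lh X\r_{\si,1/2} + \gamma\l X,\ld X\r_{\si,1/2}$ gives $\gamma\l X,\ld X\r_{\si,1/2} = 0$, so $X\in\ker(\ld)$ because $-\ld$ is a nonnegative self-adjoint operator (its kernel equals the kernel of its associated quadratic form). Then $\lh X = \mc{L}_\gamma X - \gamma\ld X = 0$, so $X\in\ker(\lh)$ as well. This proves $\ker(\mc{L}_\gamma) = \ker(\lh)\cap\ker(\ld)$, manifestly independent of $\gamma$. Finally, $\mc{F}(\mc{L}_\gamma) = \ker(\mc{L}_\gamma)$ by \eqref{def:fixedpointspace} together with \cref{lem:symanti}'s first bullet (which guarantees $\exp(t\mc{L}_\gamma^\dag)$ has the full-rank invariant state $\si$, so the fixed-point space is well-defined and equals the kernel).

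I do not anticipate a genuine obstacle here; the only point requiring a little care is the self-adjoint dissipativity argument in the third bullet — one must invoke that a self-adjoint dissipative generator is negative semidefinite and that the kernel of such an operator coincides with the null space of its quadratic form, which is the same elementary fact used in the proof of \cref{lem:hypoqms}. Everything else is a direct manipulation of the adjoint identity \eqref{eq:adjointkms} and unitality.
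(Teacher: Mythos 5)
Your handling of the first and third bullets is in outline the same as the paper's (self-adjointness or anti-self-adjointness plus unitality yields invariance of $\si$; the kernel characterization follows by taking the real part of $\l X,\mc{L}_\gamma X\r_{\si,1/2}=0$, exactly the mechanism of \cref{lem:hypoqms}). The problem is the input you feed into these computations: you declare at the outset that "both $\lh$ and $\ld$ are genuine GKSL generators" and, in the second bullet, that "$\ld$ is the generator of a QMS by hypothesis (it appears as a GKSL generator in the decomposition \eqref{eq:decom})". Neither \eqref{eq:decom} nor \cref{asspA} says this. The assumptions only state that $\mc{L}_\gamma=\lh+\gamma\ld$ is a QMS generator for each $\gamma>0$ and that $\ld$, $\lh$ are its KMS-symmetric and KMS-antisymmetric parts. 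That $\exp(t\ld)$ is completely positive and unital, i.e.\ a genuine QMS — which is precisely the content of the second bullet — is what has to be proved, and you assume it. The paper proves it by noting that $\mc{L}_\gamma^\star=\Gamma_\si^{-1}\circ\mc{L}_\gamma^\dag\circ\Gamma_\si$ also generates a QMS (conjugation by $\Gamma_\si^{\pm1}$ preserves complete positivity), so that $\ld=(\mc{L}_\gamma+\mc{L}_\gamma^\star)/(2\gamma)$ is a QMS generator; no trace of this argument appears in your proposal, so the second bullet is circular as written. The claim that $\lh$ is a GKSL generator is moreover neither needed nor implied by the hypotheses.

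This also contaminates the first bullet, since your computation of $\ld^\dag(\si)=0$ and $\lh^\dag(\si)=0$ rests on $\ld(\mi)=\lh(\mi)=0$, which you justify only through the unproven GKSL claim. These identities are true, but they require an argument: either establish first (as the paper does) that $\ld$ is a QMS generator, so $\ld(\mi)=0$ and then $\lh(\mi)=\mc{L}_\gamma(\mi)-\gamma\ld(\mi)=0$, or use that $\lh(\mi)+\gamma\ld(\mi)=\mc{L}_\gamma(\mi)=0$ for two distinct values of $\gamma$ with $\lh,\ld$ independent of $\gamma$. Similarly, in the third bullet you should obtain negativity of $\ld$ from the dissipativity of $\mc{L}_\gamma$ together with the anti-self-adjointness of $\lh$ (i.e.\ $\gamma\l X,\ld X\r_{\si,1/2}=\Re\l X,\mc{L}_\gamma X\r_{\si,1/2}\le 0$, as in the proof of \cref{lem:hypoqms}), rather than from "$\ld$ is a QMS generator", so as not to import the same unproved assumption; with that substitution your kernel argument is correct and matches the paper's.
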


\begin{proof}
Note that the semigroup generated by $\mc{L}_\gamma^\star = \Gamma_\si^{-1} \circ \mc{L}_\gamma^\dag \circ \Gamma_\si$ is also a QMS. Then 
    $\ld = (\mc{L}_\gamma + \mc{L}_\gamma^\star)/(2\gamma)$, as a linear combination of $\mc{L}_\gamma$ and $\mc{L}_\gamma^\star$, is a QMS generator with $\si$-KMS detailed balance, and hence $\ld^\dag(\si) = 0$. Next, since both $\mc{L}_\gamma$ and $\ld$ are QMS generators, there holds $\mc{L}_\gamma(\mi) = \ld(\mi) = 0$. Combined with the fact that $\lh$ is anti-self-adjoint, it gives 
    \begin{equation*}
      0 = \l X, \lh(\mi)\r_{\si,1/2} = - \l \lh(X), \si\r = - \l X, \lh^\dag(\si)\r\,,\q \forall X \in \mc{M}\,,
    \end{equation*}
    that is, $\lh^\dag(\si) = 0$. For the last item, by \cref{lem:hypoqms}, $\mc{L}_\gamma (X) = 0$ gives  $\ld(X) = 0$, and hence $\lh(X) = 0$. It follows that $\ker(\mc{L}_\gamma) \subset \ker(\lh) \cap \ker(\ld)$. The reverse direction follows directly from \eqref{eq:decom}. The proof is complete. 
    \end{proof}

For later use, we denote by $\mc{F}(\mc{L}_S)$ the fixed-point algebra of $\ld$ as in \eqref{def:fixedpointspace}, and denote by $E_{\mc{F}}$ and 
$\es$ the conditional expectations from $\mc{M}$ to $\mc{F}(\mc{L}_\gamma)$ and
$\mc{F}(\ld)$, respectively, which are defined similarly to \cref{def:cefixed}. Again, these conditional expectations are also the orthogonal projections with respect to the $\si$-KMS inner product. 

\medskip 

\noindent \textbf{Overdamped Limit.} 
To motivate our definition of lifted QMS, we next derive the leading-order dynamics of $\exp(t \mc{L}_\gamma)$ in the overdamped limit $\gamma \to \infty$. 
We will see that these effective dynamics are confined to the fixed-point algebra $\mc{F}(\mc{L}_S)$.
For this derivation to be non-trivial,
$\exp(t \mc{L}_\gamma)$ must exhibit hypocoercivity ensured by the following \cref{asspB}, due to \cref{lem:hypoqms}. 

\begin{condition}\label{asspB} 
$\dim \mc{F}(\ld) > \dim \mc{F}(\mc{L}_\gamma) $.
\end{condition}

\begin{remark} \label{rem:asspb}
In fact, if $\mc{L}_\gamma$ is coercive, then by \cref{lem:hypoqms}, we have $\mc{F}(\ld) = \mc{F}(\mc{L}_\gamma)$. Furthermore, \cref{eq:kerlamma} in \cref{lem:symanti} implies that $\mc{F}(\ld) \subset \ker(\lh)$. As a result, we find that $\lh \es = 0$, meaning that the leading-order dynamics derived in \eqref{eq:eff1st} and \eqref{eq:eff2nd} below are constant.
\end{remark}

To match with the usual convention of asymptotic analysis, we write the small parameter $\veps = \gamma^{-1} \ll 1$ and consider the evolution
\begin{align} \label{eq:asym1}
    \frac{\rd}{\rd t} X^\veps(t) = (\lh + \veps^{-1} \ld) X^\veps(t)\,.
\end{align}
Assume the solution admits a formal asymptotic expansion:
\begin{align*}
    X^\veps(t) = X_0(t) + \veps X_1(t) + \mathcal{O}(\veps^2)\,.  
\end{align*}
Substituting this expansion into \eqref{eq:asym1} yields
\begin{align*}
    \veps^{-1} \ld X_0 + 
    \left( \ld X_1 + \lh X_0 - \frac{\rd}{\rd t} X_0 \right) 
    + \mathcal{O}(\veps) = 0\,.    
\end{align*}
Matching powers of $\veps$ leads to:
\begin{align}
     \mathcal{O}(\veps^{-1}): \quad &\ld X_0 = 0\,, \label{subeq1} \\ 
     \mathcal{O}(1): \quad &\ld X_1 = \frac{\rd}{\rd t} X_0 - \lh X_0 \,. \label{subeq2} 
\end{align}
From \eqref{subeq1}, we deduce that $\es X_0(t) = X_0(t)$ for all $t \geq 0$. Combining this with \eqref{subeq2} yields the effective dynamics:
\begin{align} \label{eq:eff1st}
    \frac{\rd}{\rd t} X_0(t) = \es \lh \es X_0(t)\,.
\end{align}
It shows that $X^\veps(t)$ is approximated by $X_0(t)$ living in $\mc{F}(\mc{L}_S)$, governed by \cref{eq:eff1st}. 

However, when $\es \lh \es = 0$, the leading-order dynamics \eqref{eq:eff1st} only yields $X_0$ being constant in time. To capture non-trivial dynamics in this case, we must rescale time as $t \to \gamma t$ and consider the modified evolution:
\begin{align} \label{eq:asym2}
    \frac{\rd}{\rd t} X^\veps(t) = (\veps^{-1}\lh + \veps^{-2} \ld) X^\veps(t)\,.
\end{align}
We again seek a solution through asymptotic expansion:
\begin{align*}
    X^\veps(t) = X_0(t) + \veps X_1(t) + \veps^2 X_2(t) + \mathcal{O}(\veps^3)\,.
\end{align*}
Substituting this ansatz into \eqref{eq:asym2} and matching orders yields:
\begin{align}
     \mathcal{O}(\veps^{-2}): \quad &\ld X_0 = 0\,, \label{eq:0order} \\
     \mathcal{O}(\veps^{-1}): \quad &\ld X_1  = -\lh X_0 \,, \label{eq:1order} \\
     \mathcal{O}(1): \quad &\ld X_{2} = \frac{\rd}{\rd t} X_0 - \lh X_{1} \,. \label{eq:norder}
\end{align}
The leading-order equation \eqref{eq:0order} implies $\es X_0 = X_0$ as before, while the solvability condition for \eqref{eq:1order} requires $\ran(\lh \es) \subset \ran(\ld)$. This is equivalent to the key structural assumption:
\begin{condition} \label{assump:PHP}
    $\es \lh \es = 0$.
\end{condition}

Under \cref{assump:PHP}, we obtain the first-order correction:
\begin{equation*}
    X_1 = -\ld^{-1} \lh X_0 + \ker(\ld)\,,
\end{equation*}
where $\ld^{-1}$ denotes the pseudoinverse. To close the equation for the leading-order term $X_0$, we apply the projection $\es$ to both sides of \eqref{eq:norder}, yielding:
\begin{equation} \label{eq:eff2nd}
    \begin{aligned}
        \frac{\rd}{\rd t} X_0 &= \es \lh X_1 \\
        &= -\es\lh \ld^{-1} \lh X_0 \\
        &= - (\lh \es)^\star (-\ld)^{-1} (\lh \es) X_0\,.
    \end{aligned}
\end{equation}
In the final equality, we used \cref{asspA} (which states $\lh^\star = -\lh$) and the fact that $\es$ is $\si$-KMS detailed balanced. Thus, in this regime, the dynamics $X^\veps(t)$ is effectively described by $X_0(t)$ evolved as \eqref{eq:eff2nd}, which is also supported on $\mc{F}(\mc{L}_S)$.

We define the effective generator on $\mc{F}(\mc{L}_S)$ from \eqref{eq:eff2nd}:
\begin{equation} \label{eq:odlimit_generator}
 \mc{L}_O := - (\lh \es)^{\star} (- \ld)^{-1} \lh \es\,,
\end{equation}
which satisfies the following properties:
\begin{lemma}\label{lem:properlo}
For the operator $\lo$ in \eqref{eq:odlimit_generator}, it holds that $\mc{L}_O(\mi) = 0$ and $\lo$ is negative self-adjoint with respect to the KMS inner product $\l \dd, \dd \r_{\si,1/2}$. It follows that $\exp(t \mc{L}_O)$ is a unital contraction semigroup.  
\end{lemma}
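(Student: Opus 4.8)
The plan is to verify directly the two algebraic facts $\lo(\mi)=0$ and $\lo=\lo^\star\le 0$ (with respect to the KMS inner product), and then read off the semigroup statement from standard spectral theory on the finite‑dimensional Hilbert space $(\mc{M},\l\dd,\dd\r_{\si,1/2})$.

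\emph{Unitality of the kernel.} Since $\es$ is a conditional expectation it is unital, so $\es(\mi)=\mi$. The map $\mc{L}_\gamma$ is a QMS generator by construction and $\ld$ is one by \cref{lem:symanti}, hence $\mc{L}_\gamma(\mi)=\ld(\mi)=0$; the decomposition \eqref{eq:decom} then forces $\lh(\mi)=\mc{L}_\gamma(\mi)-\gamma\ld(\mi)=0$, as already observed in the proof of \cref{lem:symanti}. Therefore $\lh\es(\mi)=\lh(\mi)=0$, and consequently $\lo(\mi)=-(\lh\es)^\star(-\ld)^{-1}\lh\es(\mi)=0$.

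\emph{Self-adjointness and negativity.} I would first record that the pseudoinverse $(-\ld)^{-1}$ is self-adjoint and non-negative for the KMS inner product. Indeed, $\ld$ is KMS self-adjoint by \cref{asspA}, and being a QMS generator it is negative --- equivalently, its Dirichlet form $\mc{E}_\ld$ is non-negative, which is exactly the dissipativity invoked in the proof of \cref{lem:hypoqms}. Thus $-\ld\ge 0$ is self-adjoint, and by the spectral theorem its pseudoinverse (zero on $\ker(\ld)$ and the genuine inverse on $\ran(\ld)=\ker(\ld)^\perp$) is again self-adjoint and non-negative, so $\bigl((-\ld)^{-1}\bigr)^\star=(-\ld)^{-1}\ge 0$. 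Taking KMS adjoints in $\lo=-(\lh\es)^\star(-\ld)^{-1}(\lh\es)$ and using $\bigl((\lh\es)^\star\bigr)^\star=\lh\es$ now gives $\lo^\star=-(\lh\es)^\star(-\ld)^{-1}(\lh\es)=\lo$, while for every $X\in\mc{M}$,
\[
\l X,\lo(X)\r_{\si,1/2}=-\l (\lh\es)(X),\,(-\ld)^{-1}(\lh\es)(X)\r_{\si,1/2}\le 0,
\]
so that $\lo\le 0$.

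\emph{The semigroup claim.} Since $\lo$ is a bounded negative self-adjoint operator on the finite-dimensional Hilbert space $(\mc{M},\l\dd,\dd\r_{\si,1/2})$, the functional calculus yields that $\exp(t\lo)$ is self-adjoint and satisfies $\norm{\exp(t\lo)X}_{2,\si}\le\norm{X}_{2,\si}$ for all $t\ge 0$ and $X\in\mc{M}$, i.e., it is a contraction semigroup; and $\lo(\mi)=0$ gives $\exp(t\lo)(\mi)=\mi$, so it is unital. I do not expect any genuine obstacle here: the only point requiring a moment's attention is that the pseudoinverse of $-\ld$ inherits self-adjointness and non-negativity, which is immediate from the spectral decomposition once one knows $\ld\le 0$, and the latter is precisely the non-negativity of the Dirichlet form of the detailed balanced generator $\ld$ guaranteed by \cref{lem:symanti}.
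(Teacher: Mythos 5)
Your proof is correct, and it is precisely the direct verification the paper intends (the lemma is stated without proof, being regarded as immediate from the definition \eqref{eq:odlimit_generator}): $\lh(\mi)=0$ from the proof of \cref{lem:symanti} gives $\lo(\mi)=0$, the KMS self-adjointness and non-negativity of the pseudoinverse $(-\ld)^{-1}$ give $\lo^\star=\lo\le 0$, and the finite-dimensional functional calculus yields the unital contraction semigroup. The only point worth adding is the observation that, since $\es^\star=\es$ and $\lh^\star=-\lh$, one has $\lo=\es\lh(-\ld)^{-1}\lh\es$, so $\lo$ indeed maps into $\mc{F}(\ld)$ and defines a semigroup on that subalgebra as claimed in the surrounding text.
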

For completeness, we conclude this subsection by rigorously justifying the approximation of the QMS $\exp(t \mc{L}_\gamma)$ by $\exp(t \mc{L}_O)$ in the limit $\gamma \to \infty$. The proof is provided in \cref{appA}. A similar convergence result holds for the case \eqref{eq:eff1st} and will be omitted. 

\begin{proposition} \label{prop:overdampedmx}
Under Conditions \ref{asspA}, \ref{asspB}, and \ref{assump:PHP}, let $X^\ep$ and $X_0$ solve \eqref{eq:asym2} and \eqref{eq:eff2nd}, respectively, with $X^\ep(0) = X_0(0)$. We have $\norm{X^\ep(t) - X_0(t)}_{2,\si} = \mc{O}(\ep)$ for any $t \ge 0$. 
\end{proposition}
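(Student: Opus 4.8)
The plan is to turn the formal two-scale expansion into a rigorous estimate by a Duhamel/variation-of-constants argument combined with an energy estimate, exploiting that $\ld$ generates a $\si$-KMS detailed balanced QMS (\cref{lem:symanti}), hence a contraction semigroup on $(\mc{M}, \norm{\dd}_{2,\si})$ whose restriction to $\mc{F}(\ld)^\perp$ decays exponentially with rate $\lad(-\ld) =: \lad_S > 0$. First I would set up the corrector: let $X_0(t) = \exp(t\lo) X_0(0)$, which stays in $\mc{F}(\ld)$ for all $t$ since $\lo$ maps $\mc{F}(\ld)$ to itself, and define the first-order corrector $X_1(t) := -(-\ld)^{-1}\lh X_0(t)$, where $(-\ld)^{-1}$ is the pseudoinverse on $\mc{F}(\ld)^\perp$; this is well-defined because $\lh X_0 \in \mc{F}(\ld)^\perp$ (indeed $\es \lh \es = 0$ by \cref{assump:PHP} and $\es X_0 = X_0$). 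Then set the ansatz-with-remainder $X_0(t) + \ep X_1(t) + \ep^2 R^\ep(t)$ and compute the residual produced by plugging it into \eqref{eq:asym2}.

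The second step is the residual computation. Writing $Y^\ep := X^\ep - X_0 - \ep X_1$, a direct substitution using \eqref{eq:0order}--\eqref{eq:norder} (which hold by construction of $X_0, X_1$) shows that
\begin{align*}
    \frac{\rd}{\rd t} Y^\ep = (\ep^{-1}\lh + \ep^{-2}\ld) Y^\ep + F^\ep(t)\,,\qquad Y^\ep(0) = -\ep X_1(0)\,,
\end{align*}
where the forcing $F^\ep(t)$ collects the leftover terms and is of the form $F^\ep(t) = -\ep \big(\lh X_1(t) + \tfrac{\rd}{\rd t}X_1(t)\big) + (\text{lower order})$; crucially $\es$ applied to the $\Or(1)$ part of $F^\ep$ vanishes by \eqref{eq:norder}, so the genuinely dangerous $\ep^{-2}\ld$-scale interacts only with components already in $\mc{F}(\ld)^\perp$. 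One then bounds $\norm{Y^\ep(t)}_{2,\si}$ by Duhamel's formula against the semigroup generated by $\ep^{-1}\lh + \ep^{-2}\ld$. The cleanest route is an energy estimate: differentiate $\norm{Y^\ep}_{2,\si}^2$, use that $\lh$ is anti-self-adjoint (so it contributes nothing) and that $-\ld \geq 0$ with gap $\lad_S$ on $\mc{F}(\ld)^\perp$, decompose $Y^\ep = \es Y^\ep + (\id - \es)Y^\ep$, and observe that $\frac{\rd}{\rd t}\es Y^\ep$ is driven only by $\es F^\ep$ (which is $\Or(\ep)$) plus $\es \lh (\id-\es) Y^\ep$, while $(\id-\es)Y^\ep$ is strongly damped at rate $\ep^{-2}\lad_S$. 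This is the standard hypocoercive two-component bookkeeping and yields $\norm{Y^\ep(t)}_{2,\si} = \Or(\ep)$ uniformly in $t \geq 0$, whence $\norm{X^\ep(t) - X_0(t)}_{2,\si} \leq \norm{Y^\ep(t)}_{2,\si} + \ep\norm{X_1(t)}_{2,\si} = \Or(\ep)$, using that $\norm{X_1(t)}_{2,\si} \leq \lad_S^{-1}\norm{\lh}\,\norm{X_0(t)}_{2,\si} \leq \lad_S^{-1}\norm{\lh}\,\norm{X_0(0)}_{2,\si}$ by the contraction of $\exp(t\lo)$.

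I expect the main obstacle to be the \emph{uniformity in $t$} of the $\Or(\ep)$ bound, rather than a finite-horizon estimate, which is what makes the statement nontrivial. A naive Duhamel bound loses a factor of $t$ from integrating the $\Or(\ep)$ forcing; the fix is to use the exponential decay of $\exp(t\lo)$ on $\mc{F}(\lo)^\perp$ together with the fact that, modulo its kernel, $X_0(t)$ decays, so the forcing $F^\ep(t)$ is not merely $\Or(\ep)$ but $\Or(\ep)$ times a decaying-in-$t$ quantity; convolving an exponentially decaying kernel with an integrable-in-$t$ forcing gives a bound uniform in $t$. One must also handle the zero mode carefully: on $\ker(\lo) = \mc{F}(\mc{L}_\gamma)$ (by \cref{lem:symanti}, since $\ker(\lo)\subset\mc{F}(\ld)$ and $\lo$-invariance) the dynamics is static at both scales and contributes no error, so the estimate is really only needed on the complement, where the spectral gap of $\lo$ is available. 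Assembling the energy inequality for $(\es Y^\ep, (\id-\es)Y^\ep)$ into a single Gronwall argument with a $t$-independent constant depending only on $\norm{\lh}$, $\lad_S$, and $\lad(-\lo)$ is the technical heart of the argument.
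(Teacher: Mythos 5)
There is a genuine gap, and it originates in your decision to drop the second corrector. With the two-term approximation $X_0 + \ep X_1$, the residual of \eqref{eq:asym2} is \emph{not} $\Or(\ep)$: since $(\ep^{-1}\lh)(\ep X_1) = \lh X_1$ and only its projected part $\es\lh X_1 = \tfrac{\rd}{\rd t}X_0$ cancels, the forcing on $Y^\ep = X^\ep - X_0 - \ep X_1$ is $(\id-\es)\lh X_1(t) - \ep\,\tfrac{\rd}{\rd t}X_1(t)$, whose first term is of order one. Your displayed formula $F^\ep = -\ep(\lh X_1 + \tfrac{\rd}{\rd t}X_1) + (\text{lower order})$ is therefore incorrect at order $\ep^0$, and a plain Duhamel/contraction bound on $Y^\ep$ only yields $\Or(1)$, not $\Or(\ep)$. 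You implicitly recognize this by appealing to the "two-component bookkeeping": the $\Or(1)$ forcing lies in $\mc{F}(\ld)^\perp$ and is damped at rate $\ep^{-2}\lad_S$. But the closure of that argument is exactly the part you do not carry out, and the naive component-wise Gr\"onwall you sketch does not close: writing $u = \es Y^\ep$, $v = (\id-\es)Y^\ep$, one gets $\tfrac{\rd}{\rd t}\norm{u} \lesssim \ep^{-1}\norm{\lh}\,\norm{v}$ and $\norm{v} \lesssim \ep\,\norm{u} + \ep^2\norm{F^\ep} + \dots$, so the feedback term is $\Or(1)\cdot\norm{u}$ and produces growth unless one exploits that $\ep^{-1}\es\lh v$ is, to leading order, $\lo u$ (hence dissipative). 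Making that precise requires a slaving estimate on $v$ (controlling $\tfrac{\rd}{\rd t}v$) or a modified energy with a cross term — nontrivial work that is only asserted, not done. The uniformity-in-$t$ you aim for is also stronger than what the proposition asks: "for any $t \ge 0$" is read pointwise, with the constant allowed to depend on $t$.

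The paper's proof sidesteps all of this: it keeps the second corrector $X_2$ (solving \eqref{eq:norder}), so the remainder $r = X^\ep - X_0 - \ep X_1 - \ep^2 X_2$ satisfies an equation with forcing genuinely of size $\Or(\ep)$, and then a single application of the variation-of-constants formula together with the contractivity of $e^{t(\ep^{-1}\lh + \ep^{-2}\ld)}$ in $\norm{\dd}_{2,\si}$ gives $\norm{r(t)}_{2,\si} \le \ep\bigl(\norm{X_1(0) + \ep X_2(0)}_{2,\si} + t\sup_{0\le s\le t}\norm{q(s)}_{2,\si}\bigr)$, which suffices for each fixed $t$. If you want to salvage your route, either reinstate $X_2$ (simplest), or complete the hypocoercive closure: show rigorously that the slaved part of $v$ reproduces $\lo u$ up to errors that are $\Or(\ep)$ and integrable in time (using the decay of $X_1(t)$ through the spectral gap of $\lo$ on $\ker(\lo)^\perp$, which you correctly identified), e.g.\ via an augmented functional of the form $\norm{u}^2 + \norm{v}^2 + \delta\ep\,\Re\l u, \lh v\r_{\si,1/2}$.
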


\medskip

\noindent \textbf{Lifting quantum Markov process.}  We next introduce the concept of lifted quantum Markov semigroups. Let $\mc{L}_\gamma$ be the QMS generator in \eqref{eq:decom}, satisfying \cref{asspA}. By \cref{lem:symanti}, the dual semigroup $\exp(t \mc{L}_\gamma^\dag)$ admits a full-rank invariant state $\si$. 

Recalling the effective generator $\mc{L}_O$ defined in \eqref{eq:odlimit_generator}, while the semigroup $\exp(t \mc{L}_O)$ is always unital and contractive (\cref{lem:properlo}), it may fail to be completely positive and thus may not generate a proper QMS. When the complete positivity holds, we may interpret $\mc{L}_\gamma$ as a second-order lift of $\mc{L}_O$. 
The formal definition of \emph{lifts} is presented in \cref{def:2ndlift} below, constituting our primary focus. 
Recall that $\mc{F}(\mc{L}_S)$ is a finite-dimensional von Neumann algebra \cites{spohn1977algebraic,frigerio1978stationary}. We now introduce the following condition:

\begin{condition} \label{assump:overdp}
    The generator $\mc{L}_O$ induces a QMS on the von Neumann algebra $\mc{F}(\mc{L}_S)$ satisfying $\si_o$-KMS detailed balance, where $\si_o$ denotes a full-rank invariant state of $\exp(t \lo^\dag)$. Moreover, the state $\si_o \in \mc{D}_+(\mc{F}(\mc{L}_S))$ is compatible with $\si \in \mc{D}_+(\mc{M})$ in the sense that
    \begin{align} \label{eq:compabsio}
            \l X, \lo (Y)\r_{\si,1/2} = \l X, \lo (Y)\r_{\si_o,1/2}\,,\q \forall X, Y \in \mc{F}(\mc{L}_S)\,.
    \end{align}
\end{condition}

\begin{lemma} \label{lem:proplo}
    Under \cref{assump:overdp}, $\si_o$ is the reduced state of $\si$ on $\mc{F}(\mc{L}_S)$. Moreover, the fixed-point algebra $\mc{F}(\lo)$ of $\lo$ defined on $\mc{F}(\ld)$ is characterized by 
    \begin{align} \label{eq:kernello}
        \mc{F}(\lo) = \{X \in \mc{F}(\ld)\,;\ \lh (X) = 0\} = \mc{F}(\mc{L}_\gamma)\,,
    \end{align}
    where the operators $\ld$ and $\lh$ satisfy \cref{asspA}.
\end{lemma}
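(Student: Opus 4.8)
The plan is to establish the two assertions in order: first that $\si_o$ must coincide with the reduced (restricted) state of $\si$ on the von Neumann algebra $\mc{F}(\ld)$, and then the characterization \eqref{eq:kernello} of the fixed-point algebra of $\lo$.

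For the first claim, I would argue by uniqueness of the reference state compatible with the KMS structure. Let $\hat{\si}$ denote the reduced state of $\si$ on $\mc{F}(\ld)$, i.e. the image of $\si$ under $\es^\dag$ (equivalently, the state on $\mc{F}(\ld)$ such that $\tr(\hat\si X) = \tr(\si X)$ for all $X \in \mc{F}(\ld)$, using that $\es$ is the $\si$-preserving conditional expectation). The key point is that \eqref{eq:compabsio} forces the KMS inner products induced by $\si_o$ and by $\hat\si$ to agree on the range of $\lo$, and since $\lo$ is negative and self-adjoint with respect to $\l\dd,\dd\r_{\si_o,1/2}$ (by \cref{assump:overdp}) and also with respect to $\l\dd,\dd\r_{\hat\si,1/2}$ by the compatibility of $\es$ with the $\si$-KMS structure, I can identify the two states. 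Concretely, \cref{lem:properlo} already shows $\lo$ is negative self-adjoint for $\l\dd,\dd\r_{\si,1/2}$; restricted to $\mc{F}(\ld)$ this is the same as $\l\dd,\dd\r_{\hat\si,1/2}$ because $\es$ is a $\si$-KMS orthogonal projection, so $\Gamma_{\si}$ and $\Gamma_{\hat\si}$ agree on $\mc{F}(\ld)$. Then $\si_o$ and $\hat\si$ are two full-rank invariant states making $\lo$ KMS-self-adjoint with the same Dirichlet form; invariance plus the detailed-balance symmetry pins down the state (one can test \eqref{eq:compabsio} against $Y=\mi$, using $\lo(\mi)=0$, together with $\lo$ being KMS-symmetric, to get $\tr(\si_o X) = \tr(\hat\si X)$ for $X$ in the range of $\lo$, and then extend to all of $\mc{F}(\ld)$ using $\mc{F}(\ld) = \ker(\lo) \oplus \ran(\lo)$ and invariance of both states on $\ker(\lo)$).

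For the second claim, the identity $\mc{F}(\lo) = \{X \in \mc{F}(\ld) : \lh(X) = 0\}$ follows from the explicit form \eqref{eq:odlimit_generator}. Since $\lo = -(\lh\es)^\star(-\ld)^{-1}(\lh\es)$ with $(-\ld)^{-1}$ positive on $\ker(\ld)^\perp$, for $X \in \mc{F}(\ld)$ one has $\es X = X$ and $\l X, -\lo(X)\r_{\si,1/2} = \l \lh X, (-\ld)^{-1}\lh X\r_{\si,1/2} \ge 0$, with equality iff $\lh X \in \ker(\ld)$... but actually $\lh\es X = \lh X$ lies in $\ran(\ld) \subset \ker(\ld)^\perp$ by \cref{assump:PHP}, so equality holds iff $\lh X = 0$. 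Hence $\ker(\lo) = \{X \in \mc{F}(\ld): \lh X = 0\}$. Finally, this set equals $\ker(\lh) \cap \ker(\ld)$, which by \eqref{eq:kerlamma} of \cref{lem:symanti} is exactly $\mc{F}(\mc{L}_\gamma)$, giving the last equality in \eqref{eq:kernello}.

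I expect the main obstacle to be the first part — carefully justifying that the compatibility condition \eqref{eq:compabsio} together with \cref{assump:overdp} actually forces $\si_o$ to be the reduced state, rather than merely being consistent with it. One has to be careful that $\lo$ may have a nontrivial kernel on $\mc{F}(\ld)$, so \eqref{eq:compabsio} alone only controls the two states on $\ran(\lo)$; the extension to $\ker(\lo)$ requires the additional input that both $\si_o$ and the reduced state $\hat\si$ are invariant under $\exp(t\lo^\dag)$ and that $\lo$ restricted to $\mc{F}(\ld)$ is ergodic onto $\mc{F}(\lo)$ in the sense of \cref{lem:ergopt}, so that the kernel components are determined by the conditional expectation onto $\mc{F}(\lo)$, which is the same for both states by the detailed balance symmetry. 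The second part is essentially a direct computation and should go through routinely.
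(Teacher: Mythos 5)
Your second half is correct and is essentially the paper's own argument: since $\lo = -(\lh \es)^\star(-\ld)^{-1}\lh\es$ is negative self-adjoint for $\l \dd,\dd\r_{\si,1/2}$ (\cref{lem:properlo}) and \cref{assump:PHP} places $\ran(\lh\es)$ inside $\ker(\ld)^\perp$, where the pseudoinverse $(-\ld)^{-1}$ is positive definite, one gets $\lo X = 0$ if and only if $\lh\es X=0$ for $X \in \mc{F}(\ld)$, and \eqref{eq:kerlamma} then gives \eqref{eq:kernello}; this is exactly the paper's (terser) computation.

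The first claim is where your proposal has a genuine gap, precisely at the point you flag. The inputs you use — \eqref{eq:compabsio}, the two KMS symmetries, and $\lo(\mi)=0$ — only yield that the functionals $X\mapsto\tr(\si_o X)$ and $X\mapsto\tr(\si X)$ both vanish on $\ran(\lo)$, hence agree there. Your proposed extension to $\ker(\lo)$ does not close: since $\lo$ is self-adjoint for both inner products, the conditional expectation $E_O$ is in both cases the projection onto $\ker(\lo)$ along $\ran(\lo)$, so invariance plus ergodicity only give $\tr(\si_o X)=\tr(\si_o E_O X)$ and $\tr(\si X)=\tr(\si E_O X)$; nothing in this forces the two functionals to coincide on $\ker(\lo)$ itself. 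In the primitive case $\ker(\lo)=\C\,\mi$ and normalization rescues you, but the lemma is stated in the non-primitive setting, where $\ker(\lo)$ can be a nontrivial subalgebra (e.g. a direct summand on which $\lo$ vanishes), and there a full-rank invariant KMS-symmetric $\si_o$ is simply not pinned down on $\ker(\lo)$ by the data you invoke. The paper takes a different, shorter route: it reads the compatibility condition at the semigroup level, $\l X,\exp(t\lo)Y\r_{\si,1/2}=\l X,\exp(t\lo)Y\r_{\si_o,1/2}$ for $X,Y\in\mc{F}(\ld)$, and evaluates at $Y=\mi$ (using $\exp(t\lo)\mi=\mi$) to obtain $\tr(\si X)=\tr(\si_o X)$ for all $X\in\mc{F}(\ld)$ in one stroke — this is exactly the pairing against the kernel component that the generator-level identity \eqref{eq:compabsio} alone does not reach, and it is the input your argument is missing. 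A minor additional inaccuracy: $\Gamma_\si$ and $\Gamma_{\hat\si}$ do not agree on $\mc{F}(\ld)$ as maps; what is true (and what you actually need to assert that the reduced state makes $\lo$ KMS-symmetric) is that the two induced inner products coincide on the subalgebra, which rests on the modular invariance of $\mc{F}(\ld)$ underlying \cref{lem:properce}.
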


\begin{proof}
By the condition \eqref{eq:compabsio}, there holds $ \l X, \exp(t \lo) Y\r_{\si,1/2} = \l X, \exp(t \lo) Y\r_{\si_o,1/2}$. We let $Y = \mi$ and then have
    \begin{align*}
        \tr(\si X) = \tr(\si_o X)\,,\q \forall X \in \mc{F}(\mc{L}_S)\,,
    \end{align*}
implying that $\si_o$ is the reduced state of $\si$. By definition \eqref{eq:odlimit_generator}, we have $\lo (X) = 0$ if and only if $\lh \es (X) = 0$, which completes the proof by \cref{lem:symanti}.
\end{proof}

Building on terminology from classical Markov processes \cites{chen1999lifting,eberle2024non}, we define:

\begin{definition} \label{def:2ndlift}
For a QMS generator $\mc{L}$ of the form \eqref{eq:decom} satisfying \cref{asspA}, the QMS $\exp(t \mc{L})$ is called a \emph{second-order lift} of $\exp(t \lo)$ if it satisfies Conditions~\ref{asspB}, \ref{assump:PHP}, and \ref{assump:overdp}.
\end{definition}

\begin{remark}[Assumptions for lifting]  \label{rem:asspforlift}
 Conditions~\ref{asspA}--\ref{assump:PHP} arise naturally in the derivation of the overdamped limit and are essential for the convergence estimates of $\exp(t \mc{L}_\gamma)$ in \cref{sec:upperlift} below. While \cref{assump:overdp} may appear to be the most restrictive among Conditions \ref{asspA}--\ref{assump:overdp}, it is not strictly necessary for analyzing the convergence of $\exp(t \mc{L}_\gamma)$. In fact, one can define the QMS $\exp(t \mc{L}_\gamma)$ as a second-order lift of $\exp(t \lo)$ under Conditions~\ref{asspA}, \ref{asspB}, and \ref{assump:PHP} alone, and \cref{thm:lower,thm:uppermatrix} remain valid in this setting (though some minor adjustments are required for the corresponding statements).

However, as we have emphasized, a key motivation for the lifting framework is its ability to characterize the potential acceleration of convergence for the detailed balanced QMS via non-reversibility. In this context, \cref{assump:overdp} becomes indispensable, since $\exp(t \lo)$ needs to be a QMS \emph{a priori}. For ease of exposition, we have chosen to include \cref{assump:overdp} in \cref{def:2ndlift} so that we can discuss the convergence of $\exp(t \mc{L}_\gamma)$ and the acceleration of $\exp(t \lo)$ in a uniform manner. 
Finally, we remark that \cref{assump:overdp} can be simplified for bipartite quantum systems, as will be explored in \cref{sec:appbipart}.
\end{remark}

The general concept of lifting in Hilbert spaces (\cref{def:generallift}) will be introduced in \cref{sec:lifthilbert}, providing a unified framework for both lifted classical and quantum Markov semigroups. The conditions in \cref{def:2ndlift} are obviously stronger than those in \cref{def:generallift}.
We now illustrate how the current setup fits into the abstract framework in \cref{sec:lifthilbert}:
\begin{itemize}
    \item The dynamics $ \exp(t \mc{L}_\gamma)$ and $\exp(t \lo)$ induce contraction \( C_0 \)-semigroups \( P_t  \) on $ \mh = \mc{M}$ and \( P_{t, O} \) on \( \mhh = \mc{F}(\mc{L}_S) \). Here, $\mc{M}$ and $\mc{F}(\mc{L}_S)$ are equipped with the KMS inner product \( \l \dd, \dd \r_{\si,1/2} \), turning them into Hilbert spaces.
    \item  Thanks to \cref{asspA}, the condition \eqref{eq:phpgen} in \cref{def:generallift} reduces to \cref{assump:PHP}:
\begin{align*}
    \l X, \mc{L}_\gamma (Y) \r_{\si,1/2} = \l X, \lh (Y) \r_{\si,1/2} = 0\,, \quad \forall X, Y \in \mc{F}(\mc{L}_S)\,,
\end{align*}
namely, \(\es\lh\es= 0 \). Furthermore, the condition \eqref{eq:liftgen} follows from the definition of \( \lo \) in \eqref{eq:odlimit_generator}, where $\mf{S} = (- \ld)^{-1}$. Thus, \( \mc{L}_\gamma \) in \cref{def:2ndlift} is indeed a lifted QMS in the sense of \cref{def:generallift}.
\end{itemize}

\begin{remark} \label{rem:first_order}
    Though not our focus, the effective dynamics \eqref{eq:eff1st} under condition $\es \lh \es\neq 0$ is closely related to the \emph{first-order lifts} of Markov semigroups. Specifically, $\exp(t\mc{L}_\gamma)$ can be interpreted as a first-order lift of \eqref{eq:eff1st} (see \cref{rem:first-oderlift} and \cite{eberle2024non}*{Remark 12}). The terminology \emph{first-order} and \emph{second-order} reflect the fact that the overdamped limits \eqref{eq:eff1st} and \eqref{eq:eff2nd} originate from first-order and second-order perturbation expansions, respectively.
\end{remark}

\subsection{Lower and upper bounds of convergence rate}\label{sec:upperlift}
We have shown that under Conditions \ref{asspA}, \ref{asspB}, \ref{assump:PHP}, and \ref{assump:overdp}, the QMS $\exp(t \mc{L}_\gamma)$ is a lift of $\exp(t \lo)$ in the sense of \cref{def:2ndlift} (hence also \cref{def:generallift}), and the QMS $\exp(t \lo)$ is the overdamped limit of $\exp(t \mc{L}_\gamma)$ as $\gamma \to \infty$. An immediate observation is that such QMS $\exp(t \mc{L}_\gamma)$ is ergodic.

\begin{lemma}
    Let $\mc{L}_\gamma$ be a QMS generator of the form \eqref{eq:decom} satisfying Conditions~\ref{asspA} and~\ref{assump:PHP}. Then the semigroup $\exp(t \mc{L}_\gamma)$ is ergodic: 
    \begin{align} \label{eq:ergolgamma}
        \lim_{t \to \infty} \exp(t \mc{L}_\gamma) = E_{\mc{F}}\,,
    \end{align}
    where $E_{\mc{F}}$ is the conditional expectation onto $\mc{F}(\mc{L}_\gamma)$. 
\end{lemma}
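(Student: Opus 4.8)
The plan is to deduce \eqref{eq:ergolgamma} from the spectral criterion for ergodicity: by \cref{coro:ergodiceig} it suffices to show that $\mc{L}_\gamma$ has no purely imaginary eigenvalue $\lad = i\mu$ with real $\mu \neq 0$. We may invoke \cref{coro:ergodiceig} and \cref{lem:ergopt} because, under \cref{asspA}, \cref{lem:symanti} guarantees that $\si \in \mc{D}_+(\mc{M})$ is a full-rank invariant state of $\exp(t\mc{L}_\gamma^\dag)$; once the absence of nonzero purely imaginary eigenvalues is established, \cref{lem:ergopt} upgrades it to $\lim_{t\to\infty}\exp(t\mc{L}_\gamma) = E_{\mc{F}}$ with $E_{\mc{F}}$ the conditional expectation onto $\mc{F}(\mc{L}_\gamma)$.

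So suppose $\mc{L}_\gamma X = i\mu X$ for some $0 \neq X \in \mc{M}$ and $\mu \in \R$. First I would examine $\Re\langle X, \mc{L}_\gamma X\rangle_{\si,1/2}$. On the one hand, $\langle X, \mc{L}_\gamma X\rangle_{\si,1/2} = i\mu \norm{X}_{2,\si}^2$ is purely imaginary, so this real part vanishes. On the other hand, $2\Re\langle X, \mc{L}_\gamma X\rangle_{\si,1/2} = \langle X, (\mc{L}_\gamma + \mc{L}_\gamma^\star)X\rangle_{\si,1/2}$, and by \cref{asspA} (which gives $\lh^\star = -\lh$ and $\ld^\star = \ld$) one has $\mc{L}_\gamma + \mc{L}_\gamma^\star = 2\gamma\ld$. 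Hence $\langle X, \ld X\rangle_{\si,1/2} = 0$. Since $\ld$ generates a $\si$-KMS detailed balanced QMS (again \cref{lem:symanti}), it is self-adjoint and negative for $\langle \dd, \dd\rangle_{\si,1/2}$, so $\langle X, \ld X\rangle_{\si,1/2} = 0$ forces $\ld X = 0$, i.e. $X \in \ker(\ld) = \mc{F}(\ld)$.

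It remains to bring in \cref{assump:PHP}. Because $X \in \mc{F}(\ld)$ and $\es$ is the KMS-orthogonal projection onto $\mc{F}(\ld)$, we have $\es X = X$; and from $\ld X = 0$ together with $\mc{L}_\gamma = \lh + \gamma\ld$ we get $\lh X = \mc{L}_\gamma X = i\mu X$. Therefore
$$
i\mu X = \es(i\mu X) = \es(\lh X) = \es\lh\es X = 0,
$$
where the first equality uses $\es X = X$, the second uses $\lh X = i\mu X$, the third uses $X = \es X$, and the last is exactly \cref{assump:PHP}. Since $X \neq 0$ this forces $\mu = 0$, so $\mc{L}_\gamma$ has no purely imaginary eigenvalue with $\mu \neq 0$, and the conclusion follows from \cref{coro:ergodiceig,lem:ergopt} as explained above.

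The computation is short and I do not expect a genuine obstacle; the only step deserving care is the implication $\langle X, \ld X\rangle_{\si,1/2} = 0 \Rightarrow \ld X = 0$, which relies on $\ld$ being a bona fide (dissipative, self-adjoint) QMS generator — equivalently, on the identity $\ld = (\mc{L}_\gamma + \mc{L}_\gamma^\star)/(2\gamma)$ recorded in \cref{lem:symanti}. Alternatively, the same steps can be packaged through \cref{lem:characnf}: they show that the span $V_0$ of eigenvectors of $\mc{L}_\gamma$ with purely imaginary eigenvalues is contained in $\ker(\lh) \cap \ker(\ld) = \mc{F}(\mc{L}_\gamma)$, i.e. $\mc{N}(\mc{L}_\gamma) = \mc{F}(\mc{L}_\gamma)$, whence \cref{lem:ergopt} gives \eqref{eq:ergolgamma}.
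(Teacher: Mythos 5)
Your proposal is correct and follows essentially the same route as the paper: show $\Re\langle X,\mc{L}_\gamma X\rangle_{\si,1/2}=0$ forces $\ld X=0$, hence $\lh X=i\mu X$ with $X\in\mc{F}(\ld)$, which contradicts $\es\lh\es=0$ unless $\mu=0$, and then conclude via the spectral characterization of ergodicity. The extra care you take in justifying the reduction (invoking \cref{lem:symanti}, \cref{coro:ergodiceig}, and \cref{lem:ergopt}) matches the paper's implicit use of \cref{lem:characnf}.
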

\begin{proof}
    By \cref{lem:characnf}, it suffices to show that $\mc{L}_\gamma$ has no purely imaginary eigenvalue. To see this, suppose that there exists $X \neq 0$ such that $\mc{L}_\gamma X = i \mu X$ for some real $\mu \neq 0$. Then, $$\l  X, \ld (X) \r_{\si,1/2} = \Re \l X, \mc{L}_\gamma (X) \r_{\si,1/2} = \Re\, i \mu  \l X,  X \r_{\si,1/2} = 0 $$ 
    gives $\ld (X) = 0$ and hence $\lh (X) = i \mu X$. It follows that 
    \begin{align}\label{auxeqq1}
        \es \lh X = i \mu X\,,
    \end{align}
    which contradicts with $\es \lh \es = 0$, since $\mu \neq 0$ and $X \in \mc{F}(\ld)$ in \eqref{auxeqq1}.
\end{proof}

This allows us to apply the convergence results for general lifted semigroups in \cref{sec:lifthilbert} to estimate the convergence \eqref{eq:ergolgamma} of $\exp(t \mc{L}_\gamma)$, from which we will see how it depends on the $L^2$ convergence rate (governed by the spectral gap) of $\exp(t \lo)$ and the parameter $\gamma$.

We first give the upper bound of the convergence rate of $\exp(t \mc{L}_\gamma)$, which is a direct application of \cref{thm:lowerbound}, noting that \cref{assp:locoercive} naturally holds in the finite-dimensional setting. We denote by $\lad_O$ the spectral gap of $\lo$:
\begin{equation}\label{eq:spgaplado}
    \lad_O := \inf_{X \in \mc{F}(\mc{L}_S)\backslash\{0\}} \frac{- \l X, \mc{L}_O (X) \r_{\si,1/2}}{\norm{X - E_O(X) }^2_{\si,1/2}}\,,
\end{equation}
where $E_O$ is the conditional expectation from $\mc{M}$ to the fixed-point algebra $\mc{F}(\lo)$ of $\lo$.

\begin{theorem}[Upper bound]\label{thm:lower}
    Suppose that the QMS $\exp(t \mc{L}_\gamma)$ is a lift of $\exp(t \lo)$ in the sense of \cref{def:2ndlift}. Let $C \ge 1$ and $\nu > 0$ be the prefactor and the convergence rate in the estimate \eqref{ex:expdecay} for $\exp(t \mc{L}_\gamma)$, and $t_{rel}$ be the relaxation time \eqref{eq:trelax} of $\exp(t \mc{L}_\gamma)$. Then, 
     \begin{align} \label{eq:lowerboundest}
        \nu \le (1 + \log C) \sqrt{\w{s_{\rm m}}^{-1} \lad_O}\,,\quad
            t_{\rm rel} \ge  \frac{1}{2 \sqrt{\w{s_{\rm m}}^{-1} \lad_O}}\,,
    \end{align}
where $\w{s_{\rm m}}$ is the minimal nonzero eigenvalue of $\Pi_1 (- \ld)^{-1} \Pi_1$. Here $\Pi_1$ is the orthogonal projection to $\ran(\lh \es)$ for the KMS inner product. 
\end{theorem}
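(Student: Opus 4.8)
The plan is to deduce both inequalities from \cref{lem:singulargap}, which for the ergodic generator $\mc{L}_\gamma$ already gives $\nu \le (1+\log C)\, s(\mc{L}_\gamma)$ and $t_{\rm rel} \ge 1/(2 s(\mc{L}_\gamma))$. Thus everything reduces to the single, $\gamma$-uniform bound $s(\mc{L}_\gamma) \le \sqrt{\w{s_{\rm m}}^{-1}\, \lad_O}$. Inverting the variational formula for $1/s(\mc{L}_\gamma)$ recalled just before \cref{lem:singulargap} (substitute $Y = \mc{L}_\gamma^{-1}X$, a bijection of $\mc{F}(\mc{L}_\gamma)^\perp\setminus\{0\}$ with $X = \mc{L}_\gamma Y$) gives
\[
    s(\mc{L}_\gamma)^2 = \inf\Bigl\{ \norm{\mc{L}_\gamma Y}_{2,\si}^2 \,/\, \norm{Y}_{2,\si}^2 \,;\ 0 \neq Y \in \mc{F}(\mc{L}_\gamma)^\perp \Bigr\}\,,
\]
so it is enough to exhibit one good test element $Y$.

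The crux is that the slow mode realizing $\lad_O$ lives in the ``position'' algebra $\mc{F}(\ld)$, on which the damping term of $\mc{L}_\gamma$ acts trivially; this is exactly why the bound will not deteriorate as $\gamma \to \infty$. By finite-dimensionality, let $Z$ attain the infimum in \eqref{eq:spgaplado}; replacing $Z$ by $Z - E_O(Z)$ (which leaves the quotient unchanged, since $E_O(Z) \in \ker\lo$ and $\lo$ is self-adjoint for the KMS inner product by \cref{lem:properlo}) we may assume $Z \in \mc{F}(\ld)$, $E_O(Z) = 0$, and $-\l Z, \lo Z\r_{\si,1/2} = \lad_O \norm{Z}_{2,\si}^2$. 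By \cref{lem:proplo}, $\mc{F}(\mc{L}_\gamma) = \mc{F}(\lo) \subseteq \mc{F}(\ld)$ and $E_O$ is the KMS-orthogonal projection onto it, hence $Z = (\id - E_O)Z \in \mc{F}(\mc{L}_\gamma)^\perp$ is admissible. Because $Z \in \mc{F}(\ld) = \ker(\ld)$, the damping drops out: $\mc{L}_\gamma Z = \lh Z + \gamma\, \ld Z = \lh Z$, with no $\gamma$-dependence.

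It then remains to control $\norm{\lh Z}_{2,\si}$ by unwinding $\lo = -(\lh\es)^\star(-\ld)^{-1}(\lh\es)$ from \eqref{eq:odlimit_generator}. Using $\es Z = Z$ we get $\lh Z = (\lh\es)Z \in \ran(\lh\es) = \ran(\Pi_1)$, so $\Pi_1\lh Z = \lh Z$; and by \cref{assump:PHP} one has $\ran(\lh\es) \subseteq \ker(\es) = \mc{F}(\ld)^\perp = \ran(\ld)$, so $(-\ld)^{-1}$ is positive definite on $\ran(\lh\es)$ and $\w{s_{\rm m}} > 0$ is the smallest eigenvalue of $\Pi_1(-\ld)^{-1}\Pi_1$ restricted to $\ran(\Pi_1)$. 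Therefore
\[
    \lad_O \norm{Z}_{2,\si}^2 = -\l Z, \lo Z\r_{\si,1/2} = \l \lh Z,\, (-\ld)^{-1}\lh Z\r_{\si,1/2} = \l \lh Z,\, \Pi_1(-\ld)^{-1}\Pi_1 \lh Z\r_{\si,1/2} \ge \w{s_{\rm m}}\, \norm{\lh Z}_{2,\si}^2\,,
\]
and combining with $\mc{L}_\gamma Z = \lh Z$ yields $s(\mc{L}_\gamma)^2 \le \norm{\lh Z}_{2,\si}^2/\norm{Z}_{2,\si}^2 \le \w{s_{\rm m}}^{-1}\lad_O$. Feeding this into \cref{lem:singulargap} gives \eqref{eq:lowerboundest}.

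I expect the only genuinely delicate points are the pseudoinverse/projection bookkeeping — in particular the inclusion $\ran(\lh\es) \subseteq \ran(\ld)$, which is precisely where \cref{assump:PHP} (i.e.\ $\es\lh\es = 0$) enters, and the positive-definiteness of $\Pi_1(-\ld)^{-1}\Pi_1$ on $\ran(\Pi_1)$ so that $\w{s_{\rm m}} > 0$ — together with verifying that the minimizer of \eqref{eq:spgaplado} really lies in $\mc{F}(\mc{L}_\gamma)^\perp$. The rest is the short chain of equalities displayed above.
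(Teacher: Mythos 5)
Your proof is correct and follows essentially the same route as the paper: reduce to bounding the singular value gap $s(\mc{L}_\gamma)$ via \cref{lem:singulargap}, test on $\mc{F}(\ld)$ (where $\ld$ drops out, giving the $\gamma$-independence), and convert $\norm{\lh\,\cdot\,}_{2,\si}$ into the Dirichlet form of $\lo$ through the operator $\Pi_1(-\ld)^{-1}\Pi_1$ and its minimal nonzero eigenvalue $\w{s_{\rm m}}$, with \cref{assump:PHP} guaranteeing $\ran(\lh\es)\subset\ran(\ld)$. The only cosmetic difference is that you evaluate at the extremal eigenvector realizing $\lad_O$, whereas the paper carries the infimum over all of $\es(\mc{M})$ through the same chain of estimates.
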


A self-contained proof of \cref{thm:lower} is provided in \cref{appA}, although it can also be derived from \cref{thm:lowerbound} directly. Theorem~\ref{thm:lower} shows that the $L^2$ convergence rate of QMS $\exp(t\mc{L}_\gamma)$ is bounded above by the square root of the spectral gap of $\mc{L}_O$. Notably, this upper bound \eqref{eq:lowerboundest} for $\nu$ is independent of $\gamma$.

We next present the lower bound estimate of $\nu$ by applying \cref{thm:lowerboundpt} with \cref{rem:constantprefactor}. Unlike the upper bound, the lower bound typically exhibits $\gamma$-dependence and can be maximized at a particular value of $\gamma$. 
We recall the spectral gap $\lad_S$ of $\ld$ as follows:
\begin{equation}\label{eq:spgaplads}
    \lad_S := \inf_{X \in \mc{M}\backslash\{0\}} \frac{- \l X, \mc{L}_S (X) \r_{\si,1/2}}{\norm{X - \es (X)}^2_{\si,1/2}}\,.
\end{equation}
We shall take the observation period $T = \lad_O^{-1/2}$ in \cref{thm:lowerboundpt} to obtain explicit expressions. This choice is motivated by its asymptotic optimality for the convergence rate estimate, as demonstrated in \cites{cao2023explicit,li2024quantum}. 


\begin{theorem}[Lower bound]\label{thm:uppermatrix}
    Let the QMS $\exp(t \mc{L}_\gamma)$ be a lift of $\exp(t \lo)$ in the sense of \cref{def:2ndlift}. The exponential convergence holds: for $X \in \mc{M}$ with $E_{\mc{F}}(X) = 0$, 
        \begin{align*}
            \norm{e^{t \mc{L}_\gamma} (X)}_{2,\si} \le C e^{-\nu t} \norm{X}_{2,\si}\,,
        \end{align*}
    with $ C = \mc{O}(1)$ and $\nu = \frac{\gamma}{\gamma^2 C_0 + C_1}$,
    where 
    \begin{align*}
        C_0 = \Theta\left(\lad_O^{-1} \right)\,, \q C_1 = \Theta\left(
         \left(\lad_S^{-1} + 1 + \lad_S^{-1/2}(K_1 + K_2 \lad_O^{-1/2}) \right)^2
           \right)\,.
    \end{align*}
   The spectral gaps $\lad_O$ and $\lad_S$ are given in \eqref{eq:spgaplado} and \eqref{eq:spgaplads}, respectively, and the constants $K_1$ and $K_2$ are such that 
   \begin{align}  \label{eq:constk1k2mm}
    \norm{(\id - \es)\lh^\dag (-\ld)^{-1} \lh Y}_{2,\si} 
  \le & \left( K_1 \norm{\lo (Y)}_{2,\si} + K_2 \sqrt{\mc{E}_{\lo}(Y)} \right)\,.
\end{align}
If $\gamma = \gamma_{\max} := \sqrt{C_1(T)/C_0(T)}$, we have the maximal convergence rate:
\begin{align} \label{eq:rateestmatrix}
    \max_{\gamma > 0} \, \nu(\mc{L}_\gamma) \ge \nu(\mc{L}_{\gamma_{\max}}) = \Omega\left(\frac{\lad_O^{1/2}}{ \lad_S^{-1} + 1 + \lad_S^{-1/2}(K_1 + K_2 \lad_O^{-1/2})}\right)\,.
\end{align}
\end{theorem}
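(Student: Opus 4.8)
The plan is to work in the Hilbert space $(\mc{M}, \l\dd,\dd\r_{\si,1/2})$, reduce the statement to the abstract lower bound \cref{thm:lowerboundpt} of \cref{sec:lifthilbert}, and then optimize successively over the observation window $T$ and the damping $\gamma$. First I would check that the concrete data satisfy the abstract hypotheses: by \cref{asspA}, $\mc{L}_\gamma = \lh + \gamma\ld$ splits into an anti-self-adjoint $\lh$ and a negative self-adjoint $\ld$, matching \cref{assmp:decomposition}; \cref{assump:PHP} is precisely $\es\lh\es = 0$; \cref{asspB} supplies the hypocoercivity that makes the overdamped limit nondegenerate; $\lo = -(\lh\es)^\star(-\ld)^{-1}\lh\es$ is negative self-adjoint with $\ker\lo = \mc{F}(\mc{L}_\gamma)$ by \cref{lem:proplo}, hence coercive on the finite-dimensional $\mc{F}(\ld)$ so that \cref{assp:locoercive} holds; and the technical \cref{assump:est1} reduces, in this matrix setting, to the existence of constants $K_1,K_2$ as in \eqref{eq:constk1k2mm}, which exist automatically by finite-dimensionality and quantify the $(\id-\es)$-component of the corrector feedback $\lh^\dag(-\ld)^{-1}\lh$ on $\mc{F}(\ld)$. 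Since also $\mc{F}(\lo) = \mc{F}(\mc{L}_\gamma)$ by \cref{lem:proplo}, an initial datum $X_0$ with $E_{\mc{F}}(X_0) = 0$ has vanishing $E_O$-average throughout the flow.

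The analytic heart is the variational hypocoercivity estimate underlying \cref{thm:lowerboundpt}, namely the flow (space--time) Poincaré inequality \cref{thm:flowpoincare}. For $X_0$ with $E_{\mc{F}}(X_0) = 0$, set $X_t = \exp(t\mc{L}_\gamma)X_0$; since $\lh = -\lh^\star$, the energy identity reads $\frac{\rd}{\rd t}\norm{X_t}_{2,\si}^2 = -2\gamma\,\mc{E}_{\ld}(X_t)$, so $\norm{X_t}_{2,\si}$ is nonincreasing and $\int_0^T\mc{E}_{\ld}(X_t)\,\rd t = \tfrac{1}{2\gamma}\bigl(\norm{X_0}_{2,\si}^2 - \norm{X_T}_{2,\si}^2\bigr)$. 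The content of \cref{thm:flowpoincare} is a flow-adapted replacement for the failing Poincaré inequality \eqref{eq:stdpoincare}:
\begin{align*}
    \lad(\gamma,T)\int_0^T\norm{X_t}_{2,\si}^2\,\rd t \le \int_0^T\mc{E}_{\ld}(X_t)\,\rd t\,,\qquad \lad(\gamma,T) = \Theta\!\left(\tfrac{1}{\gamma^2 C_0(T) + C_1(T)}\right).
\end{align*}
To establish it I would split $X_t = u_t + v_t$ with $u_t := \es X_t$ and $v_t := (\id-\es)X_t$; then $\es\ld = 0$ gives $\mc{E}_{\ld}(X_t) = \mc{E}_{\ld}(v_t) \ge \lad_S\norm{v_t}_{2,\si}^2$ by \eqref{eq:spgaplads}, which controls $\int_0^T\norm{v_t}_{2,\si}^2$, while the macroscopic equation $\dot u_t = \es\lh v_t$ (from \cref{assump:PHP}) lets one estimate $\int_0^T\norm{u_t}_{2,\si}^2$ through $\lo$: introducing the corrector $(-\ld)^{-1}\lh\es$ and a time-dependent test function vanishing at $t\in\{0,T\}$, $\int_0^T\norm{u_t}_{2,\si}^2$ is converted into $\mc{E}_{\lo}$-type terms governed by the spectral gap $\lad_O$ plus corrector/commutator terms that are absorbed into $\int_0^T\mc{E}_{\ld}(v_t)$ via \eqref{eq:constk1k2mm}. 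Summing and using $\norm{X_t}_{2,\si}^2 = \norm{u_t}_{2,\si}^2 + \norm{v_t}_{2,\si}^2$ yields the displayed inequality.

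Combining the flow Poincaré inequality with the energy identity gives $2\gamma\lad(\gamma,T)\int_0^T\norm{X_t}_{2,\si}^2 \le \norm{X_0}_{2,\si}^2 - \norm{X_T}_{2,\si}^2$; since $\norm{X_t}_{2,\si}$ is nonincreasing, $T\norm{X_T}_{2,\si}^2 \le \int_0^T\norm{X_t}_{2,\si}^2$, whence $\norm{X_T}_{2,\si}^2 \le \bigl(1 + 2\gamma\lad(\gamma,T)T\bigr)^{-1}\norm{X_0}_{2,\si}^2$. Iterating over successive windows of length $T$ and interpolating with the contraction bound upgrades this to \eqref{ex:expdecay} with rate $\nu = \gamma/(\gamma^2 C_0(T) + C_1(T))$ (after absorbing absolute constants into $C_0, C_1$) and, by \cref{rem:constantprefactor}, prefactor $C = \Or(1)$ uniformly in $\gamma$. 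It then remains to choose $T$ and $\gamma$. Taking $T = \lad_O^{-1/2}$ --- the asymptotically optimal window, as in \cites{cao2023explicit,li2024quantum} --- and tracking the $T$-dependence of the flow Poincaré constant, one obtains $C_0(T) = \Theta(\lad_O^{-1})$ (consistent with the overdamped scaling $\exp(t\gamma\mc{L}_\gamma) = \exp(t\lo) + \Or(\gamma^{-1})$ of \cref{prop:overdampedmx}, which forces the relaxation timescale $\gamma/\lad_O$ as $\gamma\to\infty$) and $C_1(T) = \Theta\bigl((\lad_S^{-1} + 1 + \lad_S^{-1/2}(K_1 + K_2\lad_O^{-1/2}))^2\bigr)$, the square coming from the Cauchy--Schwarz steps in bounding $\int_0^T\norm{u_t}_{2,\si}^2$. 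Finally, the elementary function $\gamma \mapsto \gamma/(\gamma^2 C_0 + C_1)$ is maximized at $\gamma_{\max} = \sqrt{C_1/C_0}$ with value $1/(2\sqrt{C_0 C_1})$; substituting $C_0 = \Theta(\lad_O^{-1})$ and $\sqrt{C_1} = \Theta(\lad_S^{-1} + 1 + \lad_S^{-1/2}(K_1 + K_2\lad_O^{-1/2}))$ yields $\max_{\gamma>0}\nu(\mc{L}_\gamma) \ge \nu(\mc{L}_{\gamma_{\max}}) = \Omega\bigl(\lad_O^{1/2}/(\lad_S^{-1} + 1 + \lad_S^{-1/2}(K_1 + K_2\lad_O^{-1/2}))\bigr)$, \ie\ \eqref{eq:rateestmatrix}.

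The main obstacle is the flow Poincaré inequality \cref{thm:flowpoincare} itself: the macroscopic component $u_t$ carries no direct dissipation and couples to the flow only through $\dot u_t = \es\lh v_t$, so it must be controlled by $v_t$ and the gap $\lad_O$ with constants whose dependence on $T$, $\lad_O$, $\lad_S$, $K_1$, $K_2$ is sharp enough that the choice $T = \lad_O^{-1/2}$ yields the clean $\Theta$-expressions for $C_0$ and $C_1$. The coupling is mediated by the corrector $(-\ld)^{-1}\lh\es$, and the subtle point is that the $(\id-\es)$-part of its feedback $\lh^\dag(-\ld)^{-1}\lh$, estimated in \eqref{eq:constk1k2mm}, has to be absorbed into the microscopic dissipation and the effective Dirichlet form $\mc{E}_{\lo}$; in a concrete example, verifying \cref{assump:est1} then amounts to exhibiting workable constants $K_1, K_2$.
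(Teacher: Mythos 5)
Your proposal is correct and follows essentially the same route as the paper: reduce to the abstract lower bound \cref{thm:lowerboundpt} (built on the flow Poincar\'e inequality \cref{thm:flowpoincare}) by verifying Assumptions \ref{assp:locoercive}, \ref{assmp:decomposition}, and \ref{assump:est1} for $\mc{L}_\gamma = \lh + \gamma\ld$ with $\mf{S} = (-\ld)^{-1}$, then take $T = \lad_O^{-1/2}$ and optimize over $\gamma$. The one point you gloss over is that \cref{assump:est1} also demands the bounds \eqref{asspc0} and \eqref{asspc2}, which the paper verifies with $K_0 = K_3 = 1$ by a Cauchy--Schwarz computation exploiting $\mf{S} = (-\ld)^{-1}$; this verification is what yields the stated $\Theta$-forms of $C_0$, $C_1$ and the $\mc{O}(1)$ prefactor through \cref{rem:constantprefactor}, but it is a one-line step rather than a genuine gap.
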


\begin{remark}[Bounds for \( K_1 \) and \( K_2 \)]
    The inequality \eqref{eq:constk1k2mm} holds trivially with crude bounds 
\begin{equation} \label{eq:constk1k2m}
    K_1 = 0\,,\q K_2 = \norm{(\id - \es)\lh^{\dag} (-\ld)^{-1/2}}_{(2,\si) \to (2,\si)}\,,
\end{equation}
by $\norm{(-\ld)^{-1/2} \lh Y}_{2,\si} = \sqrt{\mc{E}_{\mc{L}_O}(Y)}$ and 
\begin{align*}
 \norm{(\id - \es)\lh^\dag (-\ld)^{-1} \lh Y}_{2,\si} & = \norm{(\id - \es)\lh^\dag (-\ld)^{-1/2} (-\ld)^{-1/2} \lh Y}_{2,\si} \\
 & \le \norm{(\id - \es)\lh^{\dag} (-\ld)^{-1/2}}_{(2,\si) \to (2,\si)} \norm{(-\ld)^{-1/2} \lh Y}_{2,\si}\,.
\end{align*}
Here, $\norm{\dd}_{(2,\si) \to (2,\si)}$ denotes the operator norm for a mapping on the Hilbert space $(\mc{M},\norm{\dd}_{2,\si})$. 
It is possible to obtain sharper estimates for $K_1, K_2$ and hence sharper rate estimates \eqref{eq:rateestmatrix} for bipartite quantum systems by assuming the intertwining properties of $\lo$ (see \cref{sec:appbipart}). 
\end{remark}

\begin{proof}[Proof of \cref{thm:uppermatrix}]
    It suffices to verify Assumptions \ref{assp:locoercive}, \ref{assmp:decomposition}, and \ref{assump:est1} required for \cref{thm:lowerboundpt}. First, \cref{assp:locoercive} follows from the finite dimensionality. Conditions~\ref{condd1} and~\ref{condd2} in \cref{assmp:decomposition} are by \cref{asspA} and \cref{rem:liftla}, while the condition \ref{condd3} is from \cref{eq:kernello,eq:kerlamma} with \cref{asspB}. For \cref{assump:est1}, the density conditions \ref{asspmm0} and \ref{asspmm1} are trivial in the finite-dimensional case.  We next estimate constants $K_0$ and $K_3$
    in
    inequalities \eqref{asspc0} and \eqref{asspc2}:
    \begin{align} \label{eq:constk0k3m}
        K_0 = K_3 = 1\,.
    \end{align}
    Indeed, letting $X \in \mc{M}$ and $Y, Z \in \mc{F}(\mc{L}_S)$, 
    for \eqref{asspc0}, we have 
    \begin{equation} \label{asspc0m}
        \begin{aligned}
            -  \l  \ld X, (- \ld)^{-1} \lh Y \r_{\si, 1/2} 
             = &    \l  (-\ld)^{1/2} X, (- \ld)^{-1/2} \lh Y \r_{\si, 1/2}  \\ 
            \le & \sqrt{\mc{E}_{\ld}(X) \mc{E}_{\lo}(Y)}\,.
        \end{aligned}
   \end{equation} 
   For \eqref{asspc2}, we have 
   \begin{equation} \label{asspc2m}
    \begin{aligned}
        - \l \es X - X, \lh Z \r_{\si,1/2}  \le &  \l (- \ld)^{1/2} (X - \es X), (- \ld)^{-1/2}\lh Z \r_{\si,1/2}  \\ 
        \le & \sqrt{\mc{E}_{\ld}(X) \mc{E}_{\lo}(Z)}\,.
    \end{aligned}
   \end{equation}
   The inequality \eqref{asspc1} is a simple consequence of \eqref{eq:constk1k2mm}. Moreover, by \cref{rem:constantprefactor}, the prefactor $C$ is of order one since $K_0 = K_3 = 1$. 
   The proof is complete. 
\end{proof}

\medskip 

\noindent \textbf{Accelerated mixing and 
optimal lifts.} As shown in \cref{thm:lower,thm:uppermatrix}, the lifting framework provides convergence guarantees for the hypocoercive QMS $\exp(t \mc{L}_\gamma)$. From the perspective of accelerating convergence to equilibrium, this framework also identifies both the fundamental limits and the potential opportunities for speeding up the convergence of a given detailed balanced QMS $\exp(t \mc{L}_O)$ through non-reversible lifts to extended state spaces.

Specifically, \cref{thm:lower} shows that $\nu(\mc{L}_\gamma) = \mc{O}(\sqrt{\lad_O})$ for all $\gamma > 0$, indicating that second-order lifts of KMS-detailed balanced QMS can achieve, at best, a quadratic speedup. Therefore, it is natural to define dynamics with convergence rates that reach the upper bound $\mc{O}(\sqrt{\lad_O})$ as the optimal lifts of $\exp(t \mc{L}_O)$.


\begin{definition}[Optimal Lift] \label{def:optimallift}
   Let $\mc{L}_O$ be an ergodic QMS generator on a finite-dimensional von Neumann algebra that satisfies $\si_o$-KMS detailed balance for a full-rank state $\si_o$. Let $\mc{L}$ denote its second-order lift as defined in \cref{def:2ndlift}. We say that $\mc{L}$ is an \emph{optimal (second-order) lift} of $\mc{L}_O$ if the $L^2$ convergence rate of $\mc{L}$ satisfies
    \begin{align*}
        \nu(\mc{L}) = \Theta(\sqrt{\lad_O})\,,
    \end{align*}
    where $\lad_O$ is the spectral gap of $\lo$. 
\end{definition}

In applications, the minimal and maximal non-zero eigenvalues of $-\mc{L}_S$ are typically of order one. For instance, this is the case when $\ld$ is a quantum depolarizing semigroup. In this case, \cref{thm:uppermatrix} implies that at $\gamma = \gamma_{\max} > 0$, 
\begin{align} \label{eq:rateestmatrix2}
    \nu(\mc{L}_{\gamma_{\max}}) = \Omega\left(\frac{\sqrt{\lad_O}}{  1 + K_1 + K_2 \lad_O^{-1/2}}\right)\,,
\end{align}
and then, $\mc{L}_{\gamma_{\max}}$ is an optimal lift if there holds
\begin{align} \label{eq:keyest}
    K_1 + K_2 \lad_O^{-1/2} = \mc{O}(1)\,.
\end{align}
Examples of such optimal lifts shall be provided in \cref{sec:appbipart}. 

\begin{remark}[Collapsed generator $\lo$] \label{rem:modifylo}
  It is worth noting that, due to the general lifting framework in \cref{sec:lifthilbert}, the definition \eqref{eq:odlimit_generator} of $\lo$ can be modified to, as in \eqref{eq:liftgen},
    \begin{align} \label{def:generallo}
        \mc{L}_O = - (\lh \es)^{\star} \mc{S} \lh \es\,,     
    \end{align}
   for any positive bounded operator $\mathcal{S}$. While this modification breaks the connection with the overdamped limit, all the main results in this section (\cref{def:2ndlift}, \cref{thm:lower,thm:uppermatrix}) remain valid without any significant changes.
\end{remark}

\begin{remark}[Comparison with \cite{li2024quantum}]  
Similar convergence results were obtained for primitive hypocoercive QMS in the recent work \cites{li2024quantum} by the same authors under Conditions~\ref{asspA}, \ref{asspB}, and~\ref{assump:PHP}. Specifically, \cite{li2024quantum}*{Corollary 4.4} proves that for a primitive $\mc{L}_\gamma = \lh + \gamma \ld$ satisfying \cref{asspA}, $\dim \ker(\ld) > 1$, and $\es \lh \es = 0$, the maximal $L^2$ convergence rate (achieved at a particular  $\gamma_{\max}$) admits the lower bound:
\begin{align} \label{eq:estprvious}  
  \nu(\mc{L}_{\gamma_{\max}}) = \Omega\left(\frac{\sqrt{\lad_S}s_A^{2}}{\big( s_A + \norm{(\id - \es)\lh^\dag}_{(2,\si) \to (2,\si)}\big) \sqrt{\| \ld \|_{(2,\si) \to (2,\si)}}}\right)\,,  
\end{align}  
where $s_A$ is the singular value gap of the operator $\lh \es$.  

In this work, our analysis, on the one hand, extends to non-primitive hypocoercive generators $\mc{L}_\gamma$, and also on the other hand, involves the spectral gap $\lad_O$ of the effective generator
$\mc{L}_O = - (\lh \es)^{\star} (- \ld)^{-1} \lh \es$ (rather than $s_A$) to connect with the overdamped limit of $\mc{L}_\gamma$ and to quantify convergence acceleration via lifting. 
Additionally, in the case where $\lad_S = \Theta(1) = \| \ld \|_{(2,\si) \to (2,\si)}$, we find that $s_A^2 = \Theta(\lad_O)$, and the estimate in \eqref{eq:estprvious} simplifies to  
\begin{align*}  
  \nu(\mc{L}_{\gamma_{\max}}) = \Omega\left(\frac{\lad_O}{ \sqrt{\lad_O} + \norm{(\id - \es)\lh^\dag}_{(2,\si) \to (2,\si)}}\right)\,,  
\end{align*}  
which is consistent with \cref{eq:rateestmatrix2}, incorporating $K_1$ and $K_2$ from the trivial bound 
\eqref{eq:constk1k2m}. 
\end{remark}

\section{Lifting in Hilbert spaces}  \label{sec:lifthilbert}

In this section, we introduce a general Hilbert space framework with \emph{minimal structural assumptions} to analyze the convergence of lifted semigroups that inherently exhibit hypocoercivity and, meanwhile, to characterize the acceleration of mixing through lifting.
This framework unifies the results of \cites{eberle2024non,brigati2024hypocoercivity,eberle2025convergence} for classical dynamics and the analysis of quantum dynamics in \cref{sec:liftingmatrix}. The following presentation is parallel to that in \cref{sec:liftingmatrix} and \cite{eberle2025convergence}, with the primary technical challenges stemming from the infinite dimensionality of the underlying Hilbert space and the unbounded nature of the operators involved.

We first fix some notations. Let $\mc{H}$ be a Hilbert space with inner product $\langle \cdot, \cdot \rangle_{\mc{H}}$ and induced norm $\|\cdot\|_{\mc{H}}$. Unless otherwise specified, all operators considered below are assumed to be unbounded. For an unbounded operator $(T, \dom(T))$ with domain $\dom(T) \subset \mc{H}$, we will often write simply $T$ when the domain is clear from context. The adjoint of $T$ is denoted by $T^*$.
Given two Hilbert spaces $\mc{H}$ and $\widetilde{\mc{H}}$, the operator norm of a bounded linear operator $T: \mc{H} \to \widetilde{\mc{H}}$ is denoted by $\norm{T}_{\mc{H} \to \w{\mc{H}}} = \sup_{x \in \mh \backslash\{0\}} \norm{T x}_{\w{\mc{H}}}/\norm{x}_{\mc{H}}$. 

\subsection{General framework of lifting} 

Let $\{P_t\}_{t \ge 0}$ be a contraction strongly continuous semigroup on $\mc{H}$. Specifically, $P_t$ satisfies the properties: (i) $P_0 = \id$; (ii) $P_s P_t = P_{s + t}$ for $s, t \ge 0$; (iii) $\lim_{t \to 0^+}\norm{P_t x - x}_{\mh} = 0$ for all $x \in \mh$; (iv) $\norm{P_t}_{\mh \to \mh} \le 1$, where conditions (i)-(iii) define a $C_0$-semigroup and condition (iv) imposes contractivity. The generator $\mf{L}$ of $\{P_t\}_{t \geq 0}$ is a closed, densely defined operator given by
\begin{align*}
    \mf{L}x := \lim_{t \to 0^+} \frac{P_t x - x}{t},
\end{align*}
with $\dom(\mf{L})$ consisting of all $x \in \mh$ for which this limit exists in the norm topology of $\mh$. The following result is standard, and follows from Hille-Yosida and Lumer-Phillips theorems \cite{engel2000one}.

\begin{lemma} \label{lem:dissipative}
Let $P_t$ be a $C_0$-semigroup with generator $(\fl,\dom(\fl))$. It holds that:
\begin{itemize}
    \item $x_t := P_t x_0$ for $x_0 \in \dom(\fl)$ is continuous differentiable on $[0, \infty)$ and satisfies $\dot{x}_t = \fl x_t$ with initial condition $x_t|_{t = 0} = x_0$, and $x_t \in \dom(\fl)$ for all $t \ge 0$, that is, $$P_t (\dom (\fl)) \subset \dom(\fl)\,.$$  
    \item If $P_t$ is contractive,  we have  $ \sigma(\mf{L}) \subset \{\lad \in \C\,;\ \Re \lad \le 0\}$ and $\norm{(\lad - \mf{L})^{-1}}_{\mh} \le (\Re \lad)^{-1}$ for $\lad \in \C$ with $\Re \lad > 0$. Moreover, $\fl$ is dissipative: 
    \begin{align*}
    \Re \l x, \fl x \r_{\mh} \le 0\,,\q \forall x \in \dom(\fl)\,.
    \end{align*}
\end{itemize}
\end{lemma}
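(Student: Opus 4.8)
The plan is to assemble the statement from the classical $C_0$-semigroup toolkit (Hille--Yosida and Lumer--Phillips), keeping track of the contraction constant $1$ throughout.

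\textbf{Step 1 (orbits through the domain).} First I would record that $P_t$ maps $\dom(\fl)$ into itself and commutes with $\fl$ there: for $x_0\in\dom(\fl)$ one has $h^{-1}(P_h-\id)P_t x_0 = P_t\big(h^{-1}(P_h-\id)x_0\big)\to P_t\fl x_0$ as $h\to0^+$, since $P_t$ is bounded, so $P_t x_0\in\dom(\fl)$ and the right derivative of $t\mapsto P_t x_0$ equals $P_t\fl x_0 = \fl P_t x_0$. The matching left derivative at $t>0$ follows by writing $h^{-1}(P_t - P_{t-h})x_0 - P_t\fl x_0 = P_{t-h}\big(h^{-1}(P_h-\id)x_0 - \fl x_0\big) + (P_{t-h}-P_t)\fl x_0$ and sending $h\to0^+$, using $\norm{P_{t-h}}_{\mh\to\mh}\le1$ on the first summand and strong continuity on the second. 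Hence $x_t:=P_t x_0$ is differentiable with $\dot x_t = \fl x_t$ and $x_t|_{t=0}=x_0$; since $t\mapsto\fl x_t = P_t\fl x_0$ is norm-continuous, the orbit is $C^1$ on $[0,\infty)$.

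\textbf{Step 2 (resolvent, spectrum, bound).} For $\Re\lad>0$ I would define $R(\lad)x:=\int_0^\infty e^{-\lad t}P_t x\,\rd t$; contractivity gives $\norm{e^{-\lad t}P_t x}_{\mh}\le e^{-(\Re\lad)t}\norm{x}_{\mh}$, so the Bochner integral converges absolutely and $\norm{R(\lad)}_{\mh\to\mh}\le(\Re\lad)^{-1}$. One then checks $R(\lad)=(\lad-\fl)^{-1}$ in the usual way: applying $h^{-1}(P_h-\id)$ to $R(\lad)x$, shifting the integration variable, and letting $h\to0^+$ yields $\fl R(\lad)x = \lad R(\lad)x - x$, so $\ran R(\lad)\subset\dom(\fl)$ and $(\lad-\fl)R(\lad)=\id$; conversely, for $x\in\dom(\fl)$ the closedness of $\fl$ together with $\fl P_t = P_t\fl$ from Step~1 lets one pull $\fl$ out of the integral, giving $R(\lad)(\lad-\fl)x=x$. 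Thus $\{\Re\lad>0\}\subset\rho(\fl)$, equivalently $\sigma(\fl)\subset\{\Re\lad\le0\}$, with the stated resolvent estimate.

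\textbf{Step 3 (dissipativity).} For $x\in\dom(\fl)$ write $\Re\l x,\fl x\r_{\mh} = \lim_{t\to0^+}t^{-1}\big(\Re\l x,P_t x\r_{\mh}-\norm{x}_{\mh}^2\big)$; Cauchy--Schwarz and $\norm{P_t x}_{\mh}\le\norm{x}_{\mh}$ give $\Re\l x,P_t x\r_{\mh}\le\norm{x}_{\mh}^2$, so every difference quotient is nonpositive and so is the limit.

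The argument is entirely classical, so there is no genuine obstacle; the one place meriting care is the operator-valued improper integral defining $R(\lad)$ --- namely the absolute (Bochner) convergence, the shift $t\mapsto t+h$, and the interchange of the closed unbounded operator $\fl$ with the integral (via the standard fact that a closed operator commutes with the Bochner integral of an integrable $\dom(\fl)$-valued map whose image under the operator is also integrable). Everything else is bookkeeping with one-sided limits and strong continuity.
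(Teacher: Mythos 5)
Your proof is correct and is exactly the standard Hille--Yosida/Lumer--Phillips argument that the paper invokes by citation to \cite{engel2000one} without writing out a proof, so there is nothing of substance to compare. One tiny remark: in your left-derivative step you bound $\norm{P_{t-h}}_{\mh \to \mh} \le 1$, but the first bullet is stated for a general $C_0$-semigroup before contractivity is assumed; this is harmless, since $\sup_{0 \le s \le t}\norm{P_s}_{\mh \to \mh} < \infty$ by local uniform boundedness, which is all the estimate needs (and in the paper's setting $P_t$ is a contraction anyway).
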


The equilibrium subspace of the semigroup $\{P_t\}_{t \geq 0}$ is the closed subspace
\begin{align} \label{eq:mixsubspace}
    \mc{F} := \{x \in \mh \mid P_t x = x \text{ for all } t \geq 0\} = \ker(\mf{L}).
\end{align}
Let $P_\infty$ denote the orthogonal projection from $\mh$ onto $\mc{F}$. 
\begin{definition} \label{def:hypocoercivesemi}
    We say that $\{P_t\}_{t \geq 0}$ \emph{converges to equilibrium} if for each $x \in \mh$, there exists a decay function $r: \mathbb{R}_+ \to \mathbb{R}_+$ with $\lim_{t \to \infty} r(t) = 0$ such that
\begin{align} \label{eq:exponentialconverge}
    \|P_t x - P_\infty x\|_{\mh} \leq r(t) \|x - P_\infty x\|_{\mh}.
\end{align}
When $r(t) = Ce^{- \nu t}$ for some $C >  1, \nu > 0$, we say the semigroup is \emph{hypocoercive}. In the case of $r(t) = e^{-\nu t}$ (i.e., $C = 1$), it is called \emph{coercive}. 
\end{definition}

\begin{remark}
The coercivity and hypocoercivity of $P_t$ can also be characterized via its generator $\mf{L}$; see \cite{villani2009hypocoercivity}. A standard argument shows that $P_t$ is coercive with $r(t) = e^{-\nu t}$ if and only if 
\begin{align} \label{eq:coercive}
    - \Re \l x, \mf{L} x \r_{\mh} \ge \nu \norm{x - P_\infty x}^2_{\mh}\,, \q \forall x \in \dom (\fl)\,.
\end{align}
It implies that the symmetric part of $\fl$ has a spectral gap $\nu$.     
\end{remark}

We are interested in the degenerate (hypocoercive) case where the inequality \eqref{eq:coercive} only holds with $\nu = 0$, and yet $P_t$ still exhibits exponential convergence $r(t) = Ce^{- \nu t}$ in
\eqref{eq:exponentialconverge}. In this case, it necessarily holds $C > 1$.

Analogous to \cref{lem:singulargap}, the convergence rate of a hypocoercive semigroup can be bounded above by the singular value gap of its generator.  
To formalize this, observe that: 
\begin{lemma}\label{lem:kernelf}
$\mc{F}^\perp$ is a closed $\{P_t\}_{t \ge 0}$-invariant subspace of $\mc{H}$.     
\end{lemma}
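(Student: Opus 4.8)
The plan is to show the two defining properties of $\mc{F}^\perp$ — closedness and $\{P_t\}_{t \ge 0}$-invariance — separately. Closedness is immediate: $\mc{F}$ is a closed subspace of the Hilbert space $\mh$ (being $\ker(\fl)$ for the closed operator $\fl$, or directly an intersection of kernels of the bounded operators $P_t - \id$), and the orthogonal complement of any subset of a Hilbert space is automatically closed. So the substance is the invariance claim: if $y \perp \mc{F}$, then $P_t y \perp \mc{F}$ for all $t \ge 0$.

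The key idea is a standard duality/adjoint argument. First I would observe that $\mc{F} = \ker(\fl)$ is itself $\{P_t\}$-invariant — indeed pointwise fixed — so for any $z \in \mc{F}$ and any $s \ge 0$ we have $P_s z = z$. Now take $y \in \mc{F}^\perp$ and $z \in \mc{F}$. I want $\l P_t y, z\r_{\mh} = 0$. The natural move is to pass the $P_t$ onto the other factor via the adjoint semigroup: $\l P_t y, z\r_{\mh} = \l y, P_t^* z\r_{\mh}$. The remaining task is to check that $P_t^* z \in \mc{F}$, i.e. that $\mc{F}$ is invariant under the adjoint semigroup $\{P_t^*\}_{t \ge 0}$ as well; granting that, the inner product vanishes because $y \perp \mc{F}$, and since $z \in \mc{F}$ was arbitrary we conclude $P_t y \in \mc{F}^\perp$.

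So the crux — and the one genuinely nontrivial point — is that $\ker(\fl)$ is invariant under $P_t^*$. Here I would use that $\{P_t\}$ is a contraction $C_0$-semigroup, hence so is its adjoint $\{P_t^*\}$ (with generator $\fl^*$), and invoke the ergodic/mean-ergodic structure: by the mean ergodic theorem for bounded $C_0$-semigroups on Hilbert space, $\frac{1}{T}\int_0^T P_s\,\mathrm{d}s \to P_\infty$ strongly as $T \to \infty$, where $P_\infty$ is the orthogonal projection onto $\mc{F} = \ker(\fl)$; dualizing, $\frac{1}{T}\int_0^T P_s^*\,\mathrm{d}s \to P_\infty^* = P_\infty$ strongly, which forces $\ker(\fl^*) = \range(P_\infty) = \mc{F}$. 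Since $P_t^*$ commutes with its own Cesàro averages and fixes $\ker(\fl^*) = \mc{F}$ pointwise, we get $P_t^*(\mc{F}) \subset \mc{F}$. An alternative, slightly more hands-on route avoiding the ergodic theorem: $z \in \mc{F}$ means $\fl z = 0$, equivalently $\l \fl x, z\r = 0$ for all $x \in \dom(\fl)$ — wait, that only gives $z \in \dom(\fl^*)$ with $\fl^* z = 0$ if $\fl$ is, say, the generator and one knows $\ker \fl = \ker \fl^*$, which for a general (non-normal) generator is exactly the subtle point, so the mean-ergodic argument is the clean way and is the step I'd expect to spell out carefully.

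A shorter self-contained variant, if one prefers to avoid invoking $\ker(\fl) = \ker(\fl^*)$: fix $z \in \mc{F}$ and $y \in \mc{F}^\perp$, and consider the scalar function $\varphi(t) := \l P_t y, z\r_{\mh}$. Using $P_s z = z$ write $\varphi(t) = \l P_t y, P_s z\r$ for every $s \ge 0$; the semigroup law and contractivity then let one show $\varphi$ is constant (for $y \in \dom(\fl)$, $\varphi'(t) = \l \fl P_t y, z\r$ and one checks this vanishes by approximating, then extend to all $y \in \mc{F}^\perp$ by density and boundedness), whence $\varphi(t) = \varphi(0) = \l y, z\r = 0$. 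Either way, the conclusion is $\l P_t y, z\r = 0$ for all $z \in \mc{F}$, i.e. $P_t y \in \mc{F}^\perp$, which together with the automatic closedness of $\mc{F}^\perp$ completes the proof. I expect the write-up to be short; the only place demanding care is justifying the adjoint-invariance of $\mc{F}$, for which the mean ergodic theorem for contraction $C_0$-semigroups on Hilbert space is the cleanest tool.
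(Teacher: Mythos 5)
Your argument is correct; note that the paper states this lemma without proof (it is treated as a standard observation), so there is no written argument to compare against, and your route---reducing invariance of $\mc{F}^\perp$ to invariance of $\mc{F}$ under the adjoint semigroup---is the standard one. Two remarks. First, the mean ergodic theorem is heavier machinery than needed: for a contraction semigroup on a Hilbert space, $P_t z = z$ already forces $P_t^* z = z$ by the elementary estimate $\|P_t^* z - z\|_{\mh}^2 = \|P_t^* z\|_{\mh}^2 - 2\Re\langle P_t^* z, z\rangle_{\mh} + \|z\|_{\mh}^2 \le 2\|z\|_{\mh}^2 - 2\Re\langle z, P_t z\rangle_{\mh} = 0$, so $\mc{F}$ is pointwise fixed by $P_t^*$, and the duality step $\langle P_t y, z\rangle_{\mh} = \langle y, P_t^* z\rangle_{\mh} = \langle y, z\rangle_{\mh} = 0$ for $y \in \mc{F}^\perp$, $z \in \mc{F}$ finishes the proof in three lines. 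Your ergodic argument does work, but be careful that ``dualizing'' a strongly convergent family of Ces\`aro averages only yields weak operator convergence of the adjoints; this is still enough to identify the two ergodic projections, or one can apply the mean ergodic theorem directly to the contraction $C_0$-semigroup $P_t^*$ (which is strongly continuous since $\mh$ is a Hilbert space). Second, your ``shorter self-contained variant'' is not actually self-contained: the vanishing of $\varphi'(t) = \langle \fl P_t y, z\rangle_{\mh}$ requires precisely $z \in \dom(\fl^*)$ with $\fl^* z = 0$, i.e.\ the adjoint fixed-point property you set out to avoid, so as sketched it is circular; the contraction inequality above is the honest way to obtain constancy of $\varphi$. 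The closedness part is fine as stated, since orthogonal complements are automatically closed.
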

Restricting the semigroup to $\mc{F}^\perp$, we obtain the subspace semigroup $P_t: \mc{F}^\perp \to \mc{F}^\perp$, which remains contractive and strongly continuous. Its generator is the restriction $\mf{L}|_{\mc{F}^\perp}$ with domain $\dom(\mf{L}|_{\mc{F}^\perp}) = \dom(\mf{L}) \cap \mc{F}^\perp$. 
 We define $|\fl| := \sqrt{\fl^* \fl}$ via the polar decomposition of $\fl$, which is positive, self-adjoint, and satisfies $\ker(\mf{L}) = \ker(|\mf{L}|) = \mc{F}$ and $\dom(\mf{L}) = \dom(|\mf{L}|)$ \cite{weidmann2012linear}. The \emph{singular value gap} of $\fl$ is given by the spectral gap of $|\fl|$: 
\begin{align}
    s(\fl) & = \inf\{\l x, |\fl| x \r_{\mc{H}}\,;\ x \in \dom(|\fl|) \cap \mc{F}^\perp,\, \norm{x}_{\mc{H}} = 1\} \notag \\ &= \inf\{\norm{\fl x}_{\mc{H}} \,;\ x \in \dom(\fl) \cap \mc{F}^\perp,\, \norm{x}_{\mc{H}} = 1\}\,. \label{eq:singulargap}
\end{align}

\begin{lemma} \label{lem:singulargapgeneral}
    Let $P_t$ be a hypocoercive $C_0$-semigroup with generator $\fl$. Then for the decay function $r(t) = C e^{-\nu t}$ in \eqref{eq:exponentialconverge}, there holds 
    \begin{align*}
        \nu \le (1 + \log C) s(\fl)\,.
    \end{align*}
\end{lemma}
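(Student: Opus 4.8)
The plan is to exploit the exponential stability of the semigroup on $\mc{F}^\perp$ and to represent the inverse of the generator there as a Bochner integral of the semigroup. By \cref{lem:kernelf}, $\mc{F}^\perp$ is a closed $\{P_t\}_{t\ge0}$-invariant subspace, so the restriction $\{P_t|_{\mc{F}^\perp}\}_{t\ge0}$ is again a contraction $C_0$-semigroup with generator the restriction $(\fl,\dom(\fl)\cap\mc{F}^\perp)$. For $x\in\mc{F}^\perp$ we have $P_\infty x=0$, so the hypocoercive bound \eqref{eq:exponentialconverge} reads $\norm{P_t x}_{\mh}\le Ce^{-\nu t}\norm{x}_{\mh}$; together with contractivity this gives the operator-norm estimate $\norm{P_t|_{\mc{F}^\perp}}_{\mc{F}^\perp\to\mc{F}^\perp}\le\min\{1,Ce^{-\nu t}\}$ for all $t\ge0$.

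First I would integrate this bound in $t$. Splitting the integral at $t_\ast=\nu^{-1}\log C$ (where $Ce^{-\nu t}=1$) gives $\int_0^\infty\min\{1,Ce^{-\nu t}\}\,\rd t=\tfrac{\log C}{\nu}+\tfrac1\nu=\tfrac{1+\log C}{\nu}$, so the Bochner integral $G:=\int_0^\infty P_t|_{\mc{F}^\perp}\,\rd t$ defines a bounded operator on $\mc{F}^\perp$ with $\norm{G}_{\mc{F}^\perp\to\mc{F}^\perp}\le\tfrac{1+\log C}{\nu}$. Next I would identify $G$ with $-(\fl|_{\mc{F}^\perp})^{-1}$: using the standard identities $\fl\int_0^T P_t x\,\rd t=P_Tx-x$ (valid for every $x\in\mc{F}^\perp$) and $\int_0^T P_t\fl x\,\rd t=P_Tx-x$ (for $x\in\dom(\fl)\cap\mc{F}^\perp$), letting $T\to\infty$ and invoking $P_Tx\to 0$ together with the closedness of $\fl$, one obtains $Gx\in\dom(\fl)$ with $\fl Gx=-x$ for all $x\in\mc{F}^\perp$ and $G\fl x=-x$ for $x\in\dom(\fl)\cap\mc{F}^\perp$. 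Hence $\fl|_{\mc{F}^\perp}$ is boundedly invertible with $\norm{(\fl|_{\mc{F}^\perp})^{-1}}_{\mc{F}^\perp\to\mc{F}^\perp}\le\tfrac{1+\log C}{\nu}$.

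Finally, by the second characterization of the singular value gap in \eqref{eq:singulargap}, and since $\fl|_{\mc{F}^\perp}$ is a bijection from $\dom(\fl)\cap\mc{F}^\perp$ onto $\mc{F}^\perp$ with bounded inverse, $s(\fl)=\inf\{\norm{\fl x}_{\mh}:x\in\dom(\fl)\cap\mc{F}^\perp,\ \norm{x}_{\mh}=1\}=\norm{(\fl|_{\mc{F}^\perp})^{-1}}_{\mc{F}^\perp\to\mc{F}^\perp}^{-1}\ge\tfrac{\nu}{1+\log C}$, which rearranges to $\nu\le(1+\log C)s(\fl)$.

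The main obstacle is the functional-analytic bookkeeping with the unbounded generator: justifying the interchange of $\fl$ with the Bochner integral via its closedness, and verifying that the two expressions for $s(\fl)$ in \eqref{eq:singulargap} do coincide with the reciprocal of $\norm{(\fl|_{\mc{F}^\perp})^{-1}}$. The quantitative input is otherwise elementary; alternatively one can run the argument through the Laplace-transform representation $(\lad-\fl)^{-1}x=\int_0^\infty e^{-\lad t}P_tx\,\rd t$ for $x\in\mc{F}^\perp$ and let $\lad\downarrow0$, which yields the identical constant $\tfrac{1+\log C}{\nu}$.
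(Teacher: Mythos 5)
Your proposal is correct and follows essentially the same route as the paper: restrict the semigroup to the invariant subspace $\mc{F}^\perp$, bound $\int_0^\infty \norm{P_t x}_{\mh}\,\rd t$ by $\tfrac{1+\log C}{\nu}\norm{x}_{\mh}$ (the paper splits the integral at $T=(\log C)/\nu$ via the delayed-exponential bound, which is the same computation as your $\min\{1,Ce^{-\nu t}\}$ split), identify the resulting Bochner integral with $-(\fl|_{\mc{F}^\perp})^{-1}$, and conclude from the characterization \eqref{eq:singulargap} of $s(\fl)$. Your extra care in justifying $\fl G x=-x$ via closedness of the generator is a slightly more explicit version of the paper's appeal to $0$ lying in the resolvent set of the subspace generator, but the argument and the constant are identical.
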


\begin{proof}
 Note that the exponential decay $ \|P_t x - P_\infty x\|_{\mh} \leq C e^{-\nu t} \|x - P_\infty x\|_{\mh}$ is equivalent to: for some $\nu > 0$ and $T = (\log C)/ \nu$ with $C \ge 1$, 
\begin{equation}  \label{eq:delayedexponential}
    \norm{P_t x - P_\infty x}_{\mc{H}} \le  e^{- \nu (t - T)} \norm{x - P_\infty x}_{\mc{H}}\,.
\end{equation}
We now estimate, for any $T,\nu > 0$ in \eqref{eq:delayedexponential}, and $x \in \mc{F}^\perp$, 
    \begin{align*}
        \int_0^\infty \norm{P_t x}_{\mh} \ud t & = \int_0^T \norm{P_t x}_{\mh} \ud t  +  \int_T^\infty \norm{P_t x}_{\mh} \ud t \\
        & \le \int_0^T \norm{x}_{\mh} \ud t  +  \int_T^\infty e^{- \nu (t - T)} \norm{x}_{\mh} \ud t \\
        & \le \left(\frac{1}{\nu} + T \right) \norm{x}_{\mh}\,. 
    \end{align*}
    Therefore, the operator $ \int_0^\infty P_t x \ud t$ for $x \in \mc{F}^\perp$ is well-defined. It follows that $0$ is in the resolvent set of the generator of the contraction (subspace) semigroup $P_t: \mc{F}^\perp \to \mc{F}^\perp$, and we can define the bounded inverse operator $(- \fl|_{\mc{F}^\perp})^{-1}$ on $\mc{F}^\perp$ that satisfies 
    \begin{equation*}
        \norm{(- \fl|_{\mc{F}^\perp})^{-1} x}_{\mh} \le  \int_0^\infty \norm{P_t x}_{\mh} \ud t \le \left(\frac{1}{\nu} + T \right) \norm{x}_{\mh}\,, 
    \end{equation*}
    which, by $T = (\log C)/ \nu$ and \eqref{eq:singulargap}, implies the desired estimate: 
    \begin{align*}
      \frac{1}{s(\fl)} = \sup_{y \in \dom(\fl|_{\mc{F}^\perp}) \backslash\{0\}} \frac{ \norm{y}_{\mh}}{\norm{\fl|_{\mc{F}^\perp} y}_{\mh}}  = \sup_{x \in \mc{F}^\perp \backslash\{0\}} \frac{ \norm{(- \fl|_{\mc{F}^\perp})^{-1} x}_{\mh}}{\norm{x}_{\mh}} &\le \frac{1}{\nu}(1 + \log C)\,. \qedhere
    \end{align*}
\end{proof}

\medskip
\noindent \textbf{Lifting in Hilbert spaces.}
We now formalize the concept of lifting in abstract Hilbert spaces. Specifically, under certain structural conditions (in \cref{def:generallift}), a hypocoercive $C_0$-semigroup $P_t$ can be interpreted as a \emph{lifted dynamic} of a symmetric coercive $C_0$-semigroup $P_{t,O}$ acting on a subspace. 
This perspective offers a unified framework for examining both the convergence to equilibrium of $P_t$ and the limit in accelerating the convergence of $P_{t, O}$ through non-self-adjointness (i.e., non-reversibility for Markov semigroups). See \cref{sec:example} for various examples.  

For simplicity, let the Hilbert space $\mh_O$ be a closed strict subspace of $\mh$ with the induced inner product $\l x, y\r_{\mh_O} = \l x,y\r_{\mh}$ for $x,y \in \mh_O$. In general, one could consider $\mh_O$ as a Hilbert space that is continuously embedded in $\mh$. Let $\{P_{t,O}\}_{t \ge 0}$ be a symmetric contraction $C_0$-semigroup with the negative self-adjoint generator $(\fll, \dom(\fll))$ satisfying: 

\begin{assumption}[Coercivity]\label{assp:locoercive}
The semigroup $\{P_{t,O}\}_{t \ge 0}$ on $\mhh$ is coercive with rate $\lad_O > 0$:
\begin{align*}
    \|P_{t,O} x\|_{\mhh} \leq e^{-\lad_O t} \|x \|_{\mhh}\,,
\end{align*}
for $x \in \ker(\fll)^{\perp_{\mhh}}$, where $\perp_{\mhh}$ denotes the orthogonal complement in $\mhh$. Moreover, we assume that the generator $(\fll,\dom(\fll))$ has a purely discrete spectrum on $\mhh$. 
\end{assumption}

\begin{remark}
  The discrete spectrum assumption serves to simplify the proof of the abstract divergence lemma (\cref{divergencelemma}), by ensuring that $\fll$ admits a spectral decomposition into \emph{finite-dimensional} eigenspaces (thus, the equilibrium subspace $\ker(\fll)$ is also finite-dimensional). All examples considered in this work satisfy this assumption. We also point out that this assumption can be relaxed using the techniques developed in \cite{brigati2023construct}*{Section 6}.
\end{remark}

Without loss of generality, we let $\lad_O$ in \cref{assp:locoercive} be the optimal rate characterized by the spectral gap of $\fll$:   
\begin{equation} \label{eq:spectralgaplo}
\lad_O = \inf \{- \l x, \fll x \r_{\mhh}\,;\ x \in \dom(\fll) \cap \ker(\fll)^{\perp_{\mhh}},\, \norm{x}_{\mhh} = 1 \}\,.
\end{equation}
Since $\mhh$ is closed  in $\mh$ with the induced topology, we have 
\begin{align} \label{eq:spectralgaplo2}
    \dom(\fll) \cap \ker(\fll)^{\perp_{\mhh}} = \dom(\fll) \cap \ker(\fll)^{\perp}\,,
\end{align}
due to $\ker(\fll)^{\perp_{\mhh}}
= \ker(\fll)^{\perp} \cap \mhh$. 

The following definition extends the concept of the second-order lift of classical Markov processes initiated by Eberle et al. \cites{eberle2024non,eberle2025convergence} and the discussion of finite-dimensional quantum dynamics in \cref{sec:liftingmatrix} to the abstract Hilbert space setting. This would enable new applications in infinite-dimensional quantum systems.

\begin{definition}[Lifting] \label{def:generallift}
Let $P_t$ and $P_{t,O}$ be contraction $C_0$-semigroups on $\mh$ and $\mhh$ with generators $\fl$ and $\fll$, respectively.
 $P_t$ is a second-order \emph{lifted semigroup} of $P_{t,O}$ if
    \begin{enumerate}[label=(\roman*)]
     \item \label{condg1}
        The domain relation holds
        \begin{align*}
            \dom(\fll) \subset \dom(\fl)\,.
        \end{align*}
        \item \label{condg2}  There holds, for any $x \in \mhh$, $y \in \dom(\fl) \cap \mhh$, 
        \begin{align} \label{eq:phpgen}
            \l x, \fl y \r_{\mh} = 0 \,. \tag{L1}
        \end{align}
        \item \label{condg3} There exists a positive  bounded operator $\mf{S}: \mhh^\perp \to  \mhh^\perp$ such that
        \begin{align} \label{eq:liftgen}
            \l \fl x,  \mf{S} \fl y \r_{\mh} = - \l x, \fll y \r_{\mhh}\,,\q  \forall x \in \dom(\fl) \cap \mhh,\, y \in \dom(\fll)\,. \tag{L2}
        \end{align}
        \end{enumerate}
    In this case, we also say that $P_{t,O}$ is a \emph{collapsed semigroup} of $P_t$. 
\end{definition}

\begin{remark} \label{rem:approx}
   The condition \eqref{eq:phpgen} gives $\fl y \in \mhh^\perp$, and hence $\mf{S}\fl y$ in \eqref{eq:liftgen} is well-defined. 
\end{remark}

For later use, we denote the operator norm (i.e., maximal eigenvalue) of $\mf{S}$ by
\begin{align} \label{eq:eigss}
    s_{\rm M}:= \sup_{x \in \mhh^\perp \backslash \{0\}} \frac{\norm{\mf{S}x}_{\mhh}}{\norm{x}_{\mhh}}\,.
\end{align}
The operator $\mf{S}$ does not appear in the earlier works on lifting \cites{eberle2024non,eberle2025convergence}, where it corresponds to $\mf{S} = {\rm id}$ (see \cref{sec:liftclassical}). Although the choice of $\mf{S}$ may not significantly affect the convergence properties of $\fll$ if it is bounded from above and below, we include the additional operator $\mf{S}$ in our framework for two main reasons: (i) to establish a clear connection between the lifting and the overdamped limit (as detailed in \cref{sec:liftingmatrix}), and (ii) to potentially extend the applicability of the lifting framework to a wider class of interesting examples.

\begin{remark}[First-order lifting]\label{rem:first-oderlift}
    One could define the \emph{first-order lifting} in our framework by modifying conditions \eqref{eq:phpgen} and \eqref{eq:liftgen} to instead require:
\begin{align}\label{eq:first_order_condition}
    \langle x, \fl y \rangle_{\mh} = - \langle x, \fll y \rangle_{\mhh}, \quad \forall x,y \in \dom(\fll)\,.
\end{align}
This structure proves useful in the context of lifting discrete-time finite-state Markov chains \cite{chen1999lifting}; see \cite{eberle2024non}*{Remark 12} for the detailed discussion. 
\end{remark}

Since this work only focuses on the second-order lifting, we often simply say that $P_t$ is a lifted semigroup of $P_{t, O}$ when conditions in \cref{def:generallift} hold.

We next establish quantitative \emph{upper and lower bounds} on the $L^2$ convergence rate $\nu(\fl)$ of the hypocoercive $C_0$-semigroup $P_t = \exp(t\fl)$. As discussed in \cref{sec:upperlift} for quantum Markov processes, these estimates also characterize the potential acceleration achievable through lifting $P_{t, O}$ to the extended space.

\medskip
\noindent \textbf{Upper bound of $\nu(\fl)$.} An immediate implication of lifting in \cref{def:generallift} and \cref{lem:singulargapgeneral} is that the convergence rate of the semigroup $P_t$ is at most $\mathcal{O}(\sqrt{\lambda_O})$ (\cref{thm:lowerbound}), where $\lambda_O$ is the spectral gap of its collapsed semigroup \eqref{eq:spectralgaplo}. In other words, the lifting framework can provide at most a quadratic speedup for a given symmetric coercive semigroup, particularly when $\lambda_O$ is small (corresponding to slow convergence).

Before stating \cref{thm:lowerbound}, a generalization to \cref{thm:lower}, we recall that $- \fll$ is a positive operator. When $\fl$ is a lift of $\fll$, the condition \eqref{eq:liftgen} implies that for $x \in \dom(\fll)$ satisfying
\begin{equation*}
    \langle \fl x, \mf{S} \fl x \rangle_{\mh} = - \langle x, \fll x \rangle_{\mhh} = 0\,,
\end{equation*}
we must have both $x \in \ker(\fll)$ and $x \in \ker(\fl)$. This implies the following relation between the equilibrium subspace of the semigroups $P_{t,O}$ and its lifting $P_t$:  
\begin{align} \label{rela:ker}
     \ker(\fll) = \ker(\fl) \cap \dom(\fll)\,.
\end{align}


    \begin{theorem}[Upper bound] \label{thm:lowerbound}
        Let $P_t$ be a hypocoercive $C_0$-semigroup on $\mh$ with the decay function $r(t) = C e^{-\nu t}$ in \eqref{eq:exponentialconverge}, and $P_{t,O}$ be a symmetric coercive $C_0$-semigroup on $\mhh$ that satisfies \cref{assp:locoercive}. Suppose that $P_t$ is a lift of $P_{t, O}$ and 
        the constant $\w{s_{\rm m}} := \inf_{x \in \mhh^\perp \backslash \{0\}} \frac{\norm{\Pi_1 \mf{S} \Pi_1 x}_{\mhh}}{\norm{x}_{\mhh}}$ is positive, where $\Pi_1$ is the projection from $\mhh^\perp$ to $\overline{\ran(\fl|_{\mhh})}$. 
        Then, there holds
        \begin{align} \label{eq:upperrate}
            \nu \le (1 + \log C) \sqrt{\w{s_{\rm m}}^{-1}\lad_O}\,.
        \end{align}
\end{theorem}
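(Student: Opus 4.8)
The goal is to bound $\nu$ in terms of the singular value gap $s(\fl)$ via \cref{lem:singulargapgeneral}, and then to bound $s(\fl)$ from above by $\sqrt{\w{s_{\rm m}}^{-1}\lad_O}$. Since \cref{lem:singulargapgeneral} already gives $\nu \le (1+\log C)\, s(\fl)$, the whole task reduces to proving the operator-theoretic inequality
\begin{equation*}
    s(\fl) \le \sqrt{\w{s_{\rm m}}^{-1}\lad_O}\,.
\end{equation*}
Recalling \eqref{eq:singulargap}, $s(\fl) = \inf\{\|\fl x\|_{\mh}\,;\ x \in \dom(\fl)\cap\mc{F}^\perp,\ \|x\|_{\mh}=1\}$, so it suffices to exhibit a single test vector $x \in \dom(\fl)\cap\mc{F}^\perp$ with $\|\fl x\|_{\mh}^2 \le \w{s_{\rm m}}^{-1}\lad_O\,\|x\|_{\mh}^2$.

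\textbf{Choice of test vector.} The natural candidate is the eigenvector of the collapsed generator realizing the spectral gap: by \cref{assp:locoercive}, $\fll$ has purely discrete spectrum, so there exists $x \in \dom(\fll)\cap\ker(\fll)^{\perp_{\mhh}}$ with $\|x\|_{\mhh}=1$ and $-\fll x = \lad_O x$, hence $-\l x,\fll x\r_{\mhh} = \lad_O$. Since $\dom(\fll)\subset\dom(\fl)$ by \ref{condg1}, this $x \in \dom(\fl)$; and by \eqref{eq:spectralgaplo2} together with \eqref{rela:ker}, $x \in \ker(\fll)^\perp \subset (\ker(\fl)\cap\dom(\fll))^\perp$ — one needs a small argument that $x$ is actually orthogonal to all of $\mc{F} = \ker(\fl)$, not just to $\ker(\fll)$, which should follow from $\l x, \fl z\r = 0$ for $z \in \mhh$ (condition \ref{condg2}) applied with $z$ ranging over $\ker(\fl)\cap\mhh$ plus the structure of $\mc{F}$; I'd spell this out carefully. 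Now by \eqref{eq:liftgen} with $y = x$,
\begin{equation*}
    \l \fl x, \mf{S}\,\fl x\r_{\mh} = -\l x, \fll x\r_{\mhh} = \lad_O\,.
\end{equation*}
On the other hand, \eqref{eq:phpgen} forces $\fl x \in \mhh^\perp$, and in fact $\fl x \in \overline{\ran(\fl|_{\mhh})}$, so $\Pi_1 \fl x = \fl x$. Therefore $\l \fl x, \mf{S}\fl x\r_{\mh} = \l \fl x, \Pi_1\mf{S}\Pi_1 \fl x\r_{\mh} \ge \w{s_{\rm m}}\,\|\fl x\|_{\mh}^2$ by the definition of $\w{s_{\rm m}}$. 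Combining, $\w{s_{\rm m}}\,\|\fl x\|_{\mh}^2 \le \lad_O$, i.e. $\|\fl x\|_{\mh}^2 \le \w{s_{\rm m}}^{-1}\lad_O$ while $\|x\|_{\mh}=\|x\|_{\mhh}=1$, giving $s(\fl)^2 \le \w{s_{\rm m}}^{-1}\lad_O$ and hence \eqref{eq:upperrate}.

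\textbf{The main obstacle.} The delicate points are not the inequality chain but the domain and orthogonality bookkeeping: (i) verifying that the gap-realizing $x$ genuinely lies in $\dom(\fl)\cap\mc{F}^\perp$ — in particular that $\ker(\fll)^{\perp_{\mhh}}\subset\mc{F}^\perp$, which uses \eqref{rela:ker} and condition \ref{condg2} to show $\mc{F}\cap\mhh = \ker(\fll)$ while $\mc{F}$ may a priori have components outside $\mhh$, so one must check $\l x, w\r_{\mh}=0$ for $w \in \mc{F}\setminus\mhh$ as well (this should come from an argument that $\fl^* $ maps such $w$ appropriately, or directly from $P_t$-invariance of $\mhh$); (ii) confirming $\fl x \in \overline{\ran(\fl|_{\mhh})}$ so that $\Pi_1$ acts as the identity on it, which is immediate here since $x \in \mhh$. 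If the discrete-spectrum hypothesis were dropped one would instead take an approximate eigenvector and pass to a limit, but under \cref{assp:locoercive} the exact eigenvector suffices. I expect the orthogonality verification in (i) to be the only place requiring genuine care; everything else is a direct substitution into \eqref{eq:liftgen} and \eqref{eq:phpgen}.
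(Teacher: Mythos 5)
Your proposal follows essentially the same route as the paper: reduce to the singular value gap via \cref{lem:singulargapgeneral}, then use the lifting identity \eqref{eq:liftgen} together with the lower bound $\w{s_{\rm m}}$ for $\Pi_1\mf{S}\Pi_1$ to convert $\norm{\fl x}_{\mh}$ into $(-\l x,\fll x\r_{\mhh})^{1/2}$, and finally invoke the spectral gap $\lad_O$. The only cosmetic difference is that you plug in a single gap-achieving eigenvector of $\fll$ (legitimate under the discrete-spectrum part of \cref{assp:locoercive}), whereas the paper keeps the variational infimum over $\dom(\fll)\cap\mc{F}^\perp$ and writes the same estimate through $\w{\mf{S}}^{-1/2}\w{\mf{S}}^{1/2}\fl x$.

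The one step you leave open is exactly the step that carries the content: showing that your test vector, which is orthogonal to $\ker(\fll)$, is orthogonal to all of $\mc{F}=\ker(\fl)$. The patches you sketch do not work as stated. Applying condition \ref{condg2} with $z\in\ker(\fl)\cap\mhh$ is vacuous, since $\fl z=0$ makes $\l x,\fl z\r_{\mh}=0$ automatic and says nothing about $\l x,z\r_{\mh}$; and "$P_t$-invariance of $\mhh$" is not part of the lifting framework (and typically fails, e.g.\ for the Hamiltonian flow lifts). What is actually needed, and what the paper uses, is the set identity $\dom(\fll)\cap\mc{F}^\perp=\dom(\fll)\cap\ker(\fll)^\perp$, obtained from \eqref{rela:ker} (so that $\mc{F}\cap\dom(\fll)=\ker(\fll)$) together with the fact that test vectors lie in $\mhh$, so that components of elements of $\mc{F}$ outside the relevant subspace do not contribute; equivalently, in the concrete settings where the theorem is applied one has $\mc{F}\subset\mhh$ (condition \ref{condd3}, or \cref{lem:symanti} in the quantum case), which settles the point. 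So your argument is correct in outline and coincides with the paper's, but the orthogonality bookkeeping you flag must be closed by this identity rather than by the mechanisms you propose.
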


\begin{remark}
   By the proof below, establishing the upper bound in \eqref{eq:upperrate} only requires
   \begin{align*}
       \l \fl x,  \mf{S} \fl x \r_{\mh} \le - \l x, \fll x \r_{\mhh}\,, \q \text{for any $x \in \dom(\fll)$,}
   \end{align*}
 and $\w{s_{\rm m}} > 0$. However, as we shall see in \cref{thm:lowerboundpt}, for the lower bound of the convergence rate of $P_t$, both conditions \eqref{eq:phpgen} and \eqref{eq:liftgen} in \cref{def:generallift} are necessary. 
\end{remark}

\begin{proof}
By \cref{lem:singulargapgeneral}, it suffices to estimate $s(\fl)$ by the spectral gap of $\fll$. We note 
\begin{equation} \label{eq:lowerp1gen}
        \begin{aligned}
            s(\fl) & = \inf\{\norm{\fl x}_{\mc{H}} \,;\ x \in \dom(\fl) \cap \mc{F}^\perp,\, \norm{x}_{\mc{H}} = 1\} \\
            & \le \inf\{\norm{\fl x}_{\mc{H}} \,;\ x \in \dom(\fll) \cap \mc{F}^\perp,\, \norm{x}_{\mc{H}} = 1\}\,,
        \end{aligned}
\end{equation}
by $\dom(\fll) \subset \dom(\fl)$.
Then we can write, for $x \in \dom(\fll)$
\begin{align*}
    \w{\mf{S}}^{-1/2} \w{\mf{S}}^{1/2} \fl x = \fl x\,,
\end{align*}
with $\w{\mf{S}} = \Pi_1 \mf{S} \Pi_1$, and derive using condition \eqref{eq:liftgen},  
    \begin{equation}\label{eq:lowerp2gen}
        \begin{aligned}
          \norm{\fl x}_{\mh} & = \norm{\w{\mf{S}}^{-1/2} \w{\mf{S}}^{1/2} \fl x}_{\mh} \\
          & \le \w{s_{\rm m}}^{-1/2} \norm{\w{\mf{S}}^{1/2} \fl x}_{\mh} = - \w{s_{\rm m}}^{-1/2} \l x, \fll x\r_{\mhh}^{1/2}\,.
        \end{aligned}
    \end{equation}
    Combining \eqref{eq:lowerp1gen} and \eqref{eq:lowerp2gen} implies 
    \begin{align*}
        s(\fl) &\le \inf\{\norm{\fl x}_{\mc{H}} \,;\ x \in \dom(\fll) \cap \mc{F}^\perp,\, \norm{x}_{\mc{H}} = 1\} \\
        & \le \w{s_{\rm m}}^{-1/2} \inf\{-  \l x, \fll x\r_{\mhh}^{1/2} \,;\ x \in \dom(\fll) \cap \mc{F}^\perp,\, \norm{x}_{\mc{H}} = 1\} \\
        & = \w{s_{\rm m}}^{-1/2} \inf\{-  \l x, \fll x\r_{\mhh}^{1/2} \,;\ x \in \dom(\fll) \cap \ker(\fll)^\perp,\, \norm{x}_{\mc{H}} = 1\}\,,
    \end{align*}
which, along with \eqref{eq:spectralgaplo}-\eqref{eq:spectralgaplo2}, gives $ s(\fl) \le \sqrt{\lad_O/\w{s_{\rm m}}}$. Here we have used, by \eqref{rela:ker}, 
\begin{align*}
    \dom(\fll) \cap \mc{F}^\perp & = \dom(\fll) \cap (\dom(\fll) \cap \ker(\fl))^\perp  = \dom(\fll) \cap \ker(\fll)^\perp \,.
\end{align*}
The proof is complete. 
\end{proof}

\noindent \textbf{Lower bound of $\nu(\fl)$.} Obtaining quantitative lower bounds for the convergence rate of $P_t$ requires more involved analysis and additional structural conditions (see Assumptions~\ref{assmp:decomposition} and~\ref{assump:est1}). Our approach builds on the variational framework with space-time Poincaré inequalities, first introduced by Albritton et al.~\cites{albritton2019variational} and subsequently extended to various hypocoercive models \cites{cao2023explicit,brigati2023construct,brigati2024explicit,li2024quantum}. Recent works by Eberle et al.~\cites{eberle2024non,eberle2025convergence} have streamlined this approach through the lens of second-order lifts; see also \cite{brigati2024hypocoercivity}. 

We first assume that $P_t$ admits a symmetric-anti-symmetric decomposition with certain properties (\cref{assmp:decomposition}). This class of generators yields hypocoercive semigroups $P_t$ (\cref{rem:hypocoerpt}), whose convergence properties will be analyzed.

\begin{assumption}[Decomposition] \label{assmp:decomposition}  
Let $\mf{P}$ be the orthogonal projection from $\mh$ to $\mhh$. Suppose that $(\ls,\dom(\ls))$ and $(\la,\dom(\la))$ are closed and densely defined operators on $\mh$ such that  
    \begin{align} \label{domainconst}
      \dom(\fl) \subset \dom(\ls)\cap \dom(\la)\,, 
      \tag{B0}
    \end{align}
    and on $\dom(\fl)$, 
    \begin{align*}
        \fl  = \la  + \gamma \ls \,,\q \gamma > 0\,.
    \end{align*}
   We assume
    \begin{enumerate} [label=(\roman*)]
        \item \label{condd1}  $\ls$ is symmetric and satisfies 
        \begin{equation} \label{eq:kernells}
            \mf{P} \ls x = 0\,,\q \text{for $x \in \dom(\fl)$.} \tag{B1}
        \end{equation}
        Moreover, $- \ls$ is coercive on $(\id - \mf{P}) (\mh)$: for some $\lad_S > 0$,
        \begin{align} \label{eq:lscoercive}
            \lad_S \norm{x - \mf{P} x}_{\mh}^2 \le - \l x, \ls x \r_{\mh}\,, \q \forall x \in \dom(\fl)\,.  \tag{B2}
        \end{align} 
        \item \label{condd2}  $\la$ is a lift of $\fll$, and it is anti-symmetric on $\dom(\fll)$:
        \begin{align} \label{eq:weakanti}
           \la^* x = - \la x\,, \q \forall x \in \dom(\fll)\,. \tag{B3}
        \end{align}
        \item \label{condd3} There holds 
        \begin{align} \label{eq:kernelconst}
            \ker(\fll) = \ker(\fl) \subset \mhh\,.   \tag{B4}   
        \end{align}
        Moreover, the equilibria $\mc{F} = \ker(\fl)$ of $P_t$ in \eqref{eq:mixsubspace} form a strict subspace of $\mhh$. 
    \end{enumerate}
\end{assumption}

\begin{remark}\label{rem:liftla}
Several remarks regarding conditions \ref{condd1}--\ref{condd3} in \cref{assmp:decomposition} are worth noting. For \ref{condd1}, it is easy to see that \eqref{eq:kernells} and \eqref{eq:lscoercive} imply
\begin{align} \label{kerls}
    \mhh \cap \dom(\ls) \subset \ker(\ls)\,, \q \dom(\fl) \cap \ker(\ls) \subset \mc{H}_0\,,
\end{align}
respectively, which places constraints on the kernel of $\ls$. For \ref{condd2}, under mild domain assumptions (e.g., $\mhh \subset \dom(\ls)$ and $\dom(\fl) = \dom(\la) \cap \dom(\ls)$), it can be verified that the lifting conditions \eqref{eq:phpgen} and \eqref{eq:liftgen} hold for $\la$ if and only if they hold for $\fl$. This implies that $\la$ is a lift of $\fll$ if and only if $\fl$ is a lift of $\fll$. For \ref{condd3}, for primitive (classical or quantum) Markov processes, $\ker(\fll) = \ker(\fl)$ is one-dimensional, and thus condition \ref{condd3} becomes trivial in this case. 

In particular, in the finite-dimensional setting, the relation \eqref{kerls} reduces to $\ker(\ls) = \mhh$, and then $\la$ is a lift of $\fll$ if and only if $\fl$ is a lift of $\fll$. It follows from \eqref{rela:ker} that 
\begin{equation*}
    \ker(\fll) = \ker(\fl) \cap \mhh = \ker(\fl)\,,    
\end{equation*}
namely, \eqref{eq:kernelconst} holds, as $\ker(\fl) \subset \mhh = \ker(\ls)$ by \cref{lem:kernelrela} below.
\end{remark}

The following lemma, extending the relation \eqref{eq:kerlamma} of quantum dynamics (see \cref{lem:symanti}), characterizes the equilibria of $P_t$ in terms of $\ker(\la)$ and $\ker(\ls)$, which implies that the equilibrium subspace $\mc{F}$ of $P_t$ is independent of $\gamma$. 

\begin{lemma} \label{lem:kernelrela}
    Let $P_t$ be a hypocoercive $C_0$-semigroups on $\mh$ satisfying \cref{assmp:decomposition}. Then, $x \in \dom (\fl)$ is an equilibrium, i.e., $\fl x = 0$, if and only if $\ls x = 0$ and $\la x = 0$. 
\end{lemma}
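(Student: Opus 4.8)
The plan is to prove the two implications separately, with the easy direction first. Suppose $x \in \dom(\fl)$ satisfies $\ls x = 0$ and $\la x = 0$. Since $\fl = \la + \gamma \ls$ on $\dom(\fl)$ by \cref{assmp:decomposition}, we immediately get $\fl x = \la x + \gamma \ls x = 0$, so $x$ is an equilibrium. This direction uses nothing beyond the decomposition itself.

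For the converse, suppose $x \in \dom(\fl)$ with $\fl x = 0$. The key observation is that the symmetric and anti-symmetric parts contribute orthogonally to the quadratic form $\Re\l x, \fl x\r_{\mh}$. First I would like to pair $\fl x = 0$ against $x$: we have $0 = \l x, \fl x\r_{\mh} = \l x, \la x\r_{\mh} + \gamma \l x, \ls x\r_{\mh}$. By \ref{condd1}, $\ls$ is symmetric and $-\ls$ is coercive on $(\id - \mf{P})(\mh)$, hence $\l x, \ls x\r_{\mh} \le 0$ and is real. For the $\la$ term, the anti-symmetry \eqref{eq:weakanti} only holds on $\dom(\fll)$, which is a subtlety: $x$ need not a priori lie in $\dom(\fll)$. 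So instead I would argue with the real part directly. Since $\fl$ is dissipative (\cref{lem:dissipative}), and $\fl = \la + \gamma\ls$, we compute $0 = \Re\l x, \fl x\r_{\mh} = \Re\l x, \la x\r_{\mh} + \gamma\,\l x, \ls x\r_{\mh}$; here I want to show $\Re\l x, \la x\r_{\mh} = 0$. This should follow because $\la$, being a closed densely-defined lift of $\fll$ with \eqref{eq:weakanti} on a core, generates a unitary-type flow on the relevant subspace; more robustly, one shows $\la$ is the difference $\fl - \gamma\ls$ of a dissipative operator and a negative symmetric operator, and the only way $\Re\l x,\fl x\r = 0$ can hold is if both $\l x,\ls x\r_{\mh} = 0$ and $\Re\l x, \la x\r_{\mh} = 0$ — because both are non-positive and sum to zero. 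Concretely: $\Re\l x,\fl x\r_{\mh}\le 0$ and $\l x,\ls x\r_{\mh}\le 0$, and since $\la = \fl - \gamma\ls$, $\Re\l x,\la x\r_{\mh} = \Re\l x,\fl x\r_{\mh} - \gamma\l x,\ls x\r_{\mh}$. Using that $\Re\l x,\la x\r_{\mh}\le 0$ as well (which itself needs justification from \eqref{eq:weakanti} extended by closability, or from the hypocoercivity assumption on $P_t$), we get $0 \ge \Re\l x, \la x\r_{\mh} = -\gamma\l x,\ls x\r_{\mh} \ge 0$, forcing $\l x, \ls x\r_{\mh} = 0$.

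Once $\l x, \ls x\r_{\mh} = 0$, coercivity \eqref{eq:lscoercive} gives $\lad_S\norm{x - \mf{P}x}_{\mh}^2 \le 0$, hence $x = \mf{P}x \in \mhh$, and then \eqref{kerls} (the first inclusion $\mhh \cap \dom(\ls)\subset\ker(\ls)$, noting $x\in\dom(\fl)\subset\dom(\ls)$) yields $\ls x = 0$. Finally, $\la x = \fl x - \gamma\ls x = 0 - 0 = 0$. This completes the converse.

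The main obstacle I anticipate is the rigorous justification that $\Re\l x, \la x\r_{\mh}\le 0$ (equivalently $= 0$) for the equilibrium $x$, since the anti-symmetry \eqref{eq:weakanti} is only postulated on $\dom(\fll)$ and there is no guarantee that an arbitrary element of $\ker(\fl)$ lies in $\dom(\fll)$. The cleanest fix is probably to invoke that $P_t$ is hypocoercive (so \cref{lem:hypoqms}-type reasoning or the characterization \eqref{eq:coercive} with $\nu = 0$ applies): $\fl x = 0$ implies $\Re\l x, \fl x\r_{\mh} = 0$, and since $\fl + \fl^* = (\la + \la^*) + 2\gamma\ls$ as forms, with $\la + \la^*$ and $\ls$ both contributing non-positively (the former by the dissipativity of $\fl$ combined with the sign of $\ls$), one peels off $\l x,\ls x\r_{\mh} = 0$. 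Alternatively, one restricts attention to $x\in\dom(\fl)\cap\dom(\fll)$ if the framework guarantees $\ker(\fl)\subset\mhh\subset\dom(\fll)$ in the relevant examples, but in the stated generality I would route the argument through the dissipativity/sign structure rather than through \eqref{eq:weakanti} applied to $x$ directly.
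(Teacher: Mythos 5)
Your overall strategy is the same as the paper's: the ``if'' direction is immediate from $\fl=\la+\gamma\ls$, and for the converse one takes $0=\Re\l x,\fl x\r_{\mh}=\Re\l x,\la x\r_{\mh}+\gamma\l x,\ls x\r_{\mh}$, observes that both summands are non-positive and hence vanish, and then concludes $\ls x=0$ (via the coercivity \eqref{eq:lscoercive} forcing $x=\mf{P}x\in\mhh$ together with \eqref{kerls}) and $\la x=\fl x-\gamma\ls x=0$. Your write-up of that last stretch is in fact more detailed than the paper's one-line ``thus $\ls x=\la x=0$''.

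The gap is the step you yourself flag: the sign $\Re\l x,\la x\r_{\mh}\le 0$ for a general $x\in\dom(\fl)$. Neither of your proposed fixes works. The anti-symmetry \eqref{eq:weakanti} is only assumed on $\dom(\fll)$, which is dense in $\mhh$ but not in $\mh$, so no closability/closure argument extends it to $\dom(\fl)$; and hypocoercivity of $P_t$ is a statement about the full generator $\fl$ and gives no sign information on $\la$ alone (your sentence ``using that $\Re\l x,\la x\r_{\mh}\le 0$ as well'' is exactly the assertion that needs proof, so as written the argument is circular at that point). The resolution is already contained in the framework: condition \ref{condd2} of \cref{assmp:decomposition} states that $\la$ is a \emph{lift} of $\fll$, and in \cref{def:generallift} a lift is by definition the generator of a contraction $C_0$-semigroup on $\mh$; hence $\la$ is dissipative on $\dom(\la)\supset\dom(\fl)$ by \cref{lem:dissipative}, which is precisely what the paper invokes (``by the dissipativity of $\la$ and $\ls$''), and the same fact is used again in the proof of \cref{thm:lowerboundpt}. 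With that one observation replacing your tentative justifications, your proof is complete and coincides with the paper's.
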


\begin{proof}
    If $\fl x = 0$, then $\Re \l x, (\la + \gamma \ls) x \r_{\mh} = 0$, which implies $$0 \le - \gamma \l x, \ls x \r_{\mh} = \Re \l x, \la x \r_{\mh} \le 0\,,$$
    by the dissipativity of $\la$ and $\ls$, 
    and thus $\ls x = \la x = 0$. The converse direction is obvious. 
\end{proof}

\begin{remark}\label{rem:hypocoerpt}
Similarly to \cref{lem:hypoqms}, the second part of condition \ref{condd3} in \cref{assmp:decomposition} is necessary for guaranteeing the hypocoercivity of $P_t$. Indeed, by \cref{lem:kernelrela}, we have $\mc{F} = \ker(\fl) \subset \ker(\ls) \cap \ker(\la)$. Recall \eqref{eq:lscoercive}:
for $x \in \dom(\fl)$, 
\begin{align} \label{eq:hypocoercive11}
    - \Re \l x, \mf{L} x \r_{\mh} \ge - \l x, \ls x \r_{\mh} \ge \lad_S \norm{x - \mf{P} x}_{\mh}^2\,.
\end{align} 
If $\mc{F}$ is not a strict subspace of $\mc{H}_O$, namely, $\mc{F} = \mc{H}_O$, it follows from \eqref{eq:hypocoercive11} that the Poincar\'e inequality \eqref{eq:coercive} holds: $- \Re \l x, \mf{L} x \r_{\mh} \ge \lad_S \norm{x - P_\infty x}^2_{\mh}$. 
Thus, we have the purely exponential convergence of $P_t$ with rate $\lad_S$, which contradicts the hypocoercivity. 
\end{remark}

To motivate the space-time Poincaré inequality, we carefully examine the coercivity condition \eqref{eq:coercive}. This inequality seeks to control $\|x\|_{\mh}^2$ by $- \Re \langle x, \fl x \rangle_{\mh}$ for $x \in \mc{F}^\perp \cap \dom(\fl)$. 
Using the orthogonal projection $\mf{P}$, we decompose the norm:
\begin{align} \label{eq:flowdecomposition}
    \|x\|_{\mh}^2 = \|\mf{P} x\|_{\mh}^2 + \|x - \mf{P} x\|_{\mh}^2\,.
\end{align}
The coercivity assumption \eqref{eq:lscoercive} immediately bounds the second term as in \eqref{eq:hypocoercive11}. However, estimating $\|\mf{P} x\|_{\mh}^2$ presents a fundamental difficulty: for $\mf{P} x \in \dom(\fll) \cap \dom(\fl)$, we have 
\begin{align*}
    -\Re \langle \mf{P} x, \fl \mf{P} x \rangle_{\mh} &= -\gamma \langle \mf{P} x, \ls \mf{P} x \rangle_{\mh} - \Re \langle \mf{P} x, \la \mf{P} x \rangle_{\mh} = 0\,,
\end{align*}
by \eqref{eq:kernells} and \eqref{eq:weakanti}, revealing the degeneracy of the dissipation of $P_t$ on $\mhh$. This also suggests why condition \ref{condd2} imposes anti-symmetry of $\la$ only on the dense subspace $\dom(\fll) \subset \mhh$: 
the coercivity of $\fl$ on $\mhh^\perp$ is already ensured by \eqref{eq:lscoercive}, making the specific properties of $\la$ on this subspace non-essential for the overall analysis.

In \cref{thm:flowpoincare}, we establish a flow Poincaré inequality inspired by \cite{eberle2025convergence}.
It is a simplified version of the space-time Poincaré inequalities employed in \cites{albritton2019variational,cao2023explicit,li2024quantum}\footnote{The main difference between the space-time Poincar\'e and flow Poincar\'e inequalities is that the former one holds for any $x_t \in L^2([0,T];\mh)$ while the latter one is only for $x_t = P_t x_0$.}. This inequality takes the standard form of Poincar\'e inequality but augmented with a time variable: there exists $\alpha_T > 0$ depending on $T > 0$ such that for any $x_0 \in \dom(\fl) \cap \mc{F}^\perp$,
\begin{align}\label{eq:flow_poincare}
    \alpha_T \int_0^T \|x_t\|_{\mc{H}}^2 \ud t \leq - \int_0^T \langle x_t, \ls x_t \rangle_{\mh} \ud t\,,  \quad \text{where } x_t := P_t x_0.
\end{align}
From a physical perspective, this captures an important fact that while the hypocoercive dynamics $P_t$ exhibits the degenerate dissipation on the state space at any fixed time, the coercivity could be restored when considering its time-averaged behavior.

We begin by defining the Bochner space $L^2([0,T];\mh)$ with the normalized inner product:
\begin{equation}\label{eq:time_avg_innerprod}
    \langle x_t, y_t \rangle_{T, \mh} := \frac{1}{T} \int_0^T \langle x_t, y_t \rangle_{\mh} \, \ud t \,,
\end{equation}
and its closed subspace $L^2([0,T];\mhh)$ consisting of curves valued in $\mhh$. For a closed, densely defined symmetric operator $\mf{A}$ on $\mh$, we introduce the sesquilinear form:
\begin{align*}
    \mc{E}_{\mf{A}}(x,y) := - \l x, \mf{A} y \r_{\mh}\,,\q \text{for $x \in \mh$, $y \in \dom(\mf{A})$\,,}
\end{align*}
and its time-augmented version: 
\begin{align*}
    \mc{E}_{T, \mf{A}}(x_t, y_t) := \int_0^T   \mc{E}_{\mf{A}}(x_t, y_t) \ud t\,, \q \text{for $x_t \in L^2([0,T]; \mh)$, $y_t \in L^2([0,T];\dom(\mf{A}))$\,.}
\end{align*}
If $x = y$, we simply write $\mc{E}_{\mf{A}}(x) = \mc{E}_{\mf{A}}(x,x)$ and $\mc{E}_{T, \mf{A}}(x_t) = \mc{E}_{T, \mf{A}}(x_t, x_t)$. 

\begin{theorem}[Flow Poincar\'e inequality] \label{thm:flowpoincare}
Let $P_t$ be a hypocoercive $C_0$-semigroup on $\mh$ with the equilibrium subspace $\mc{F}$ as defined in \eqref{eq:mixsubspace}. Suppose that Assumptions \ref{assp:locoercive}, \ref{assmp:decomposition}, and \ref{assump:est1} hold. Then, for any time horizon $T > 0$, and any $x_0 \in \mc{F}^\perp \cap \dom(\fl)$, there holds: 
\begin{align} \label{eq:tspoincare}
  \alpha_T  \norm{x_t}_{T, \mc{H}}^2 \le \mc{E}_{T, \ls}(x_t)\,, \q \alpha_T := \frac{1}{\gamma^2 C_0(T) + C_1(T)}\,, 
\end{align}
where $x_t: = P_t x_0$, and 
\begin{align*}
    & C_0(T): = 2 c_3^2 K_0^2\,, \\  & C_1(T): = 2 \left(s_{\rm M}^{1/2} \lad_S^{-1/2} c_4  + K_3 c_2  +  \lad_S^{-1/2} (K_1 c_1 + K_2 c_3) \right)^2 + \lad_S^{-1}\,,
\end{align*}
where the constants $\{c_i\}_{i = 1,2,3,4}$ are given in \cref{divergencelemma}, $s_{\rm M}$ is defined in \eqref{eq:eigss}, and constants $\{K_i\}_{i = 0,1,2,3}$ are given in \eqref{asspc0}--\eqref{asspc2} from \cref{assump:est1}.
\end{theorem}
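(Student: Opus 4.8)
The strategy is to measure $\|x_t\|_{\mh}^2$ through the orthogonal splitting induced by the projection $\mf{P}$, to control the two pieces by complementary mechanisms, and to recombine. Writing $x_t := P_t x_0$ and using $\|x_t\|_{\mh}^2 = \|\mf{P}x_t\|_{\mh}^2 + \|(\id - \mf{P})x_t\|_{\mh}^2$ as in \eqref{eq:flowdecomposition}, the component transverse to $\mhh$ is handled pointwise in time by the coercivity \eqref{eq:lscoercive}: $\|(\id - \mf{P})x_t\|_{\mh}^2 \le \lad_S^{-1}\mc{E}_{\ls}(x_t)$, hence $\|(\id - \mf{P})x_t\|_{T,\mh}^2 \le \lad_S^{-1}\mc{E}_{T,\ls}(x_t)$ after averaging over $[0,T]$; this is the source of the additive $\lad_S^{-1}$ in $C_1(T)$. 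The real difficulty is the macroscopic component $\|\mf{P}x_t\|_{T,\mh}^2$: as observed after \eqref{eq:flowdecomposition}, $\Re\l\mf{P}x_t, \fl\mf{P}x_t\r_{\mh} = 0$, so no pointwise-in-time estimate is available and the time averaging must genuinely be exploited.

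For the macroscopic part I would first record that $\mf{P}x_t \perp \ker(\fll)$: by \cref{lem:kernelf}, $\mc{F}^\perp$ is $P_t$-invariant, so $x_t \in \mc{F}^\perp$ for every $t$; since $\mc{F} = \ker(\fll) \subset \mhh$ by \eqref{eq:kernelconst} and \eqref{rela:ker}, self-adjointness of $\mf{P}$ gives $\l\mf{P}x_t, v\r_{\mh} = \l x_t, v\r_{\mh} = 0$ for all $v \in \mc{F}$. Under \cref{assp:locoercive}, $-\fll$ has discrete spectrum with gap $\lad_O$ on $\ker(\fll)^\perp$, so the Poisson equation $-\fll\phi_t = \mf{P}x_t$ has a unique solution $\phi_t \in \dom(\fll) \cap \ker(\fll)^\perp$ with $\|\fll\phi_t\|_{\mhh} = \|\mf{P}x_t\|_{\mhh}$, $\|\phi_t\|_{\mhh} \le \lad_O^{-1}\|\mf{P}x_t\|_{\mhh}$, and $\mc{E}_{\fll}(\phi_t)^{1/2} \le \lad_O^{-1/2}\|\mf{P}x_t\|_{\mhh}$. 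Since $\la$ is itself a lift of $\fll$ (condition \eqref{eq:weakanti} together with \cref{rem:liftla}), applying the lifting identity \eqref{eq:liftgen} to $\la$ yields the key representation
\[
    \|\mf{P}x_t\|_{\mh}^2 = \l\mf{P}x_t, -\fll\phi_t\r_{\mhh} = \l\la\mf{P}x_t,\, \mf{S}\la\phi_t\r_{\mh}\,.
\]
The domain/density checks that make this and the subsequent manipulations rigorous — that $\mf{P}x_t$ lies in the relevant domains, that the forms are closable, and that $t \mapsto \phi_t$ is differentiable with $\dot\phi_t = (-\fll)^{-1}\mf{P}\la x_t$ — are precisely what the density hypotheses in \cref{assump:est1} are designed to supply, and I would invoke them rather than reprove them.

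I would then insert the dynamics $\dot x_t = \fl x_t = \la x_t + \gamma\ls x_t$, writing $\la\mf{P}x_t = \la x_t - \la(\id - \mf{P})x_t = \dot x_t - \gamma\ls x_t - \la(\id - \mf{P})x_t$; substituting, integrating over $[0,T]$, and integrating by parts in time in the term $\int_0^T \l\dot x_t, \mf{S}\la\phi_t\r_{\mh}\,\ud t$ — which produces boundary contributions at $t = 0, T$ and a term carrying $\dot\phi_t$, using $\mf{P}\ls x_t = 0$ from \eqref{eq:kernells} — gives an expression for $\|\mf{P}x_t\|_{T,\mh}^2$ whose constituent terms are exactly the objects bounded by the divergence lemma \cref{divergencelemma}, which for the curve $\phi_t$ supplies constants $c_1, \dots, c_4$ controlling its time-averaged norm, its boundary values, and its time-derivative. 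Combining these with \eqref{eq:lscoercive} and with the inequalities \eqref{asspc0}--\eqref{asspc2} of \cref{assump:est1} (whose constants $K_0, K_1, K_2, K_3$ bound the various cross-pairings that arise — the $\gamma$-weighted $\l\ls x_t, \mf{S}\la\phi_t\r_{\mh}$ term through $K_0$, the terms involving $(\id - \mf{P})x_t$ through $K_3$, and those involving $(\id - \mf{P})\la^*\mf{S}\la\phi_t$ through $K_1$ and $K_2$), together with the identity $\|\mf{S}^{1/2}\la\phi_t\|_{\mh}^2 = \mc{E}_{\fll}(\phi_t)$ and \eqref{eq:eigss}, leads to an inequality of the shape $\|\mf{P}x_t\|_{T,\mh}^2 \le \|\mf{P}x_t\|_{T,\mh}\,(M_0 + \gamma M_1)\,\mc{E}_{T,\ls}(x_t)^{1/2}$ with $M_0 = s_{\rm M}^{1/2}\lad_S^{-1/2}c_4 + K_3 c_2 + \lad_S^{-1/2}(K_1 c_1 + K_2 c_3)$ and $M_1 = c_3 K_0$. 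Dividing by $\|\mf{P}x_t\|_{T,\mh}$, squaring, and using $(a + b)^2 \le 2a^2 + 2b^2$ gives $\|\mf{P}x_t\|_{T,\mh}^2 \le (2M_0^2 + 2\gamma^2 M_1^2)\mc{E}_{T,\ls}(x_t)$; adding the transverse estimate from the first paragraph recovers $\|x_t\|_{T,\mh}^2 \le (\gamma^2 C_0(T) + C_1(T))\mc{E}_{T,\ls}(x_t)$, i.e.\ \eqref{eq:tspoincare}.

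Apart from the (routine but lengthy) bookkeeping of the divergence-lemma terms, I expect the main obstacle to be the functional-analytic care forced by the unboundedness of $\fl, \ls, \la$: ensuring that $\mf{P}x_t$ and $\phi_t$ lie in the domains on which \eqref{eq:phpgen}, \eqref{eq:liftgen}, and the time-integration by parts are valid, and that $t \mapsto \phi_t$ is genuinely differentiable with the claimed derivative. This is exactly the role of the density part of \cref{assump:est1}; in the finite-dimensional quantum setting of \cref{sec:liftingmatrix} these points are vacuous, which is why the proof of \cref{thm:uppermatrix} had only to exhibit the constants $K_i$.
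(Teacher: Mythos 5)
Your overall architecture agrees with the paper's: split $\norm{x_t}_{\mh}^2$ via $\mf{P}$, absorb the transverse piece with \eqref{eq:lscoercive} (the $\lad_S^{-1}$ in $C_1$), and test the macroscopic piece against an object built from $(-\fll)^{-1}$ through the lifting identity, with the $\gamma$-term handled by \eqref{asspc0}. But the auxiliary object you choose is the wrong one, and that is where the proof actually lives. You solve the Poisson equation $-\fll\phi_t=\mf{P}x_t$ \emph{pointwise in time} and then assert that \cref{divergencelemma} supplies the constants $c_1,\dots,c_4$ "for the curve $\phi_t$", controlling its boundary values and its time derivative. It does not: \cref{divergencelemma} constructs a \emph{pair} $(z_t,y_t)\in H^1_{\fll}\times H^2_{\fll}$ solving the space-time divergence equation $\p_t z_t-\fll y_t=\mf{P}x_t$ with $z_t$ and $y_t$ vanishing at $t=0,T$, and $c_1,\dots,c_4$ are estimates for that pair (in particular $c_2$ bounds $\mc{E}_{T,\fll}(z_t)$, which enters through the $K_3$ pairing, and $c_4$ bounds $\mc{E}_{T,\fll}(\p_t y_t)$). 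Your $\phi_t$ is a different object: it has no reason to vanish at the endpoints, and your construction contains no analogue of $z_t$ at all.

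This is not a cosmetic substitution; three steps fail. (a) The integration by parts in $\int_0^T \l \dot x_t,\mf{S}\la\phi_t\r_{\mh}\,\ud t$ leaves boundary terms $\l x_t,\mf{S}\la\phi_t\r_{\mh}\big|_{t=0}^{t=T}$, of size roughly $s_{\rm M}\,\lad_O^{-1/2}\norm{x_0}_{\mh}\norm{\mf{P}x_{0}}_{\mhh}/T$, which cannot be absorbed into $\sqrt{\mc{E}_{T,\ls}(x_t)}\,\norm{\mf{P}x_t}_{T,\mhh}$; the homogeneous-in-time boundary conditions built into \cref{divergencelemma} exist precisely to annihilate these terms. (b) The $\dot\phi_t$ contribution is not controlled by the theorem's constants: since $-\fll\dot\phi_t=\mf{P}\la x_t=\mf{P}\la(\id-\mf{P})x_t$ (using $\mf{P}\la\mf{P}=0$), bounding $\mc{E}_{T,\fll}(\dot\phi_t)$ requires an operator-norm bound on $\la$ off $\mhh$, which is not among $K_0,\dots,K_3$ and would degrade the rate exactly in the way the space-time method is designed to avoid (compare the discussion around \eqref{eq:estprvious}); the paper instead gets $\mc{E}_{T,\fll}(\p_t y_t)\le c_4^2\norm{\mf{P}x_t}_{T,\mhh}^2$ directly from the construction of $y_t$. (c) With no $z_t$, the $K_3 c_2$ term in $C_1(T)$ has no source, so the final constants you claim cannot emerge from your computation. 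The repair is to replace $\phi_t$ by the divergence-lemma pair: write $\norm{\mf{P}x_t}_{T,\mhh}^2=\l\mf{P}x_t,\p_t z_t-\fll y_t\r_{T,\mhh}$, integrate by parts using $z_0=z_T=0$, apply the identity of \cref{lem:spacetime} to obtain $\l\mc{A}(\mf{P}x_t),\,z_t+\mf{S}\la y_t\r_{T,\mh}$ with $\mc{A}=-\p_t+\la$, split off $\mc{A}x_t=-\gamma\ls x_t$, and then your bookkeeping with the $K_0$, $s_{\rm M}$, $K_3$, and $K_1,K_2$ pairings goes through as in the paper.
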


\cref{assump:est1} is primarily technical and will be stated in \cref{subsec:flowpoincare} below for ease of exposition. Rather than providing general sufficient conditions for its verification, we find it more convenient to verify this assumption on a case-by-case basis in applications.

The proof of the flow Poincaré inequality builds on a decomposition analogous to \eqref{eq:flowdecomposition}, with the principal challenge remaining the control of the projected flow $\|\mf{P}x_t\|_{T,\mh}$ in the subspace $\mhh$. This requires a full use of the lifting structure (see \cref{lem:spacetime}) and technical a priori estimates for solutions to an abstract divergence equation (see \cref{divergencelemma}).  To maintain the flow of our presentation, we postpone the proof of \cref{thm:flowpoincare} to \cref{subsec:flowpoincare}. 
Once the flow Poincar\'e inequality is established, a standard Gr\"onwall-type argument applied to the time-averaged energy functional yields exponential decay in the $L^2([0,T];\mh)$ norm.

\begin{theorem}[Lower bound] \label{thm:lowerboundpt}
Under the same assumptions as in \cref{thm:flowpoincare}, it holds that for any observation period $T > 0$, and the initial condition $x_0 \in \mc{F}^\perp$, 
\begin{align} \label{eq:expconverge}
    \frac{1}{T}\int_t^{t + T} \norm{P_s x_0}_{\mh}^2 \ud s \le e^{- 2\nu t} \norm{x_0}_{\mh}^2\,,
\end{align}
with the convergence rate: 
\begin{equation} \label{eq:ratel2}
    \nu = \frac{\gamma}{\gamma^2 C_0(T) + C_1(T)}\,,
\end{equation}
where $C_0(T)$ and $C_1(T)$ are the flow Poincar\'e constants in \cref{thm:flowpoincare}. 
Moreover, the convergence rate estimate \eqref{eq:ratel2} is maximized at 
\begin{align} \label{eq:convergmax}
    \gamma_{\rm max} = \sqrt{C_1(T)/C_0(T)} \q \text{with rate}\q \nu_{\max} = \frac{1}{2\sqrt{C_0(T) C_1(T)}}\,.
\end{align}
\end{theorem}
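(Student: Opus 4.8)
## Proof proposal for Theorem~\ref{thm:lowerboundpt}

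The plan is to run a Grönwall argument on the time-averaged energy functional, using the flow Poincaré inequality \eqref{eq:tspoincare} from \cref{thm:flowpoincare} as the key dissipation estimate. First I would introduce, for fixed $T > 0$ and $x_0 \in \mc{F}^\perp$, the functional
\begin{align*}
    \Phi(t) := \frac{1}{T}\int_t^{t+T} \norm{P_s x_0}_{\mh}^2 \ud s = \norm{P_{t+\cdot}\, x_0}_{T,\mh}^2\,.
\end{align*}
Since $\mc{F}^\perp$ is $P_t$-invariant by \cref{lem:kernelf}, we have $P_t x_0 \in \mc{F}^\perp$ for all $t$, so the flow Poincaré inequality applies along the shifted flow. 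For $x_0 \in \dom(\fl) \cap \mc{F}^\perp$ one differentiates: $\frac{\rd}{\rd t}\Phi(t) = \frac{2}{T}\int_t^{t+T} \Re\l P_s x_0, \fl P_s x_0\r_{\mh}\ud s$. By the decomposition $\fl = \la + \gamma\ls$ from \cref{assmp:decomposition}, the anti-symmetry \eqref{eq:weakanti} of $\la$ (which gives $\Re\l y, \la y\r_{\mh} = 0$ for $y \in \dom(\fll)$; more care is needed as $P_s x_0$ need only lie in $\dom(\fl)$, but dissipativity of $\la$ together with $\Re\l x,\fl x\r = \gamma\l x,\ls x\r + \Re\l x,\la x\r \le \gamma \l x,\ls x\r \le 0$ and the reverse bound from dissipativity of $\ls$ forces $\Re\l P_s x_0,\la P_s x_0\r_{\mh}=0$ on the relevant subspace, as in \cref{lem:kernelrela}), one obtains
\begin{align*}
    \frac{\rd}{\rd t}\Phi(t) = -\frac{2\gamma}{T}\int_t^{t+T} \mc{E}_{\ls}(P_s x_0)\ud s = -2\gamma\, \mc{E}_{T,\ls}(P_{t+\cdot}\,x_0)\,.
\end{align*}

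Next I would apply \cref{thm:flowpoincare} to the shifted initial datum $P_t x_0 \in \dom(\fl) \cap \mc{F}^\perp$, yielding $\mc{E}_{T,\ls}(P_{t+\cdot}\,x_0) \ge \alpha_T\, \Phi(t)$ with $\alpha_T = (\gamma^2 C_0(T) + C_1(T))^{-1}$. Combining, $\frac{\rd}{\rd t}\Phi(t) \le -2\gamma\alpha_T\,\Phi(t)$, so Grönwall's inequality gives $\Phi(t) \le e^{-2\gamma\alpha_T t}\Phi(0)$. Finally $\Phi(0) = \frac{1}{T}\int_0^T \norm{P_s x_0}_{\mh}^2\ud s \le \norm{x_0}_{\mh}^2$ by contractivity of $P_s$, which establishes \eqref{eq:expconverge} with $\nu = \gamma\alpha_T = \gamma/(\gamma^2 C_0(T) + C_1(T))$ for $x_0 \in \dom(\fl)\cap\mc{F}^\perp$; a density argument (using that $\dom(\fl)$ is dense and all quantities are continuous in $x_0$) extends it to all $x_0 \in \mc{F}^\perp$. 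The optimization over $\gamma$ is elementary: $\nu(\gamma) = \gamma/(\gamma^2 C_0 + C_1)$ is maximized by setting its derivative $(C_1 - \gamma^2 C_0)/(\gamma^2 C_0 + C_1)^2$ to zero, giving $\gamma_{\max} = \sqrt{C_1(T)/C_0(T)}$ and $\nu_{\max} = 1/(2\sqrt{C_0(T)C_1(T)})$.

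The main obstacle is the differentiability/regularity bookkeeping in the first step: $x_0 \in \mc{F}^\perp$ need not be in $\dom(\fl)$, and even when it is, $P_s x_0$ lies only in $\dom(\fl) \subset \dom(\ls) \cap \dom(\la)$, not necessarily in $\dom(\fll)$, so the anti-symmetry \eqref{eq:weakanti} cannot be invoked directly on $P_s x_0$. The clean way around this is to never use \eqref{eq:weakanti} pointwise but instead note that $\Re\l P_s x_0, \fl P_s x_0\r_{\mh} = \gamma\l P_s x_0, \ls P_s x_0\r_{\mh}$ holds because both $\ls$ and $\la$ are dissipative and $\Re\l x, \fl x\r_{\mh}$ is sandwiched between $\gamma\l x,\ls x\r_{\mh}$ and $\Re\l x,\la x\r_{\mh}$, both $\le 0$, forcing $\Re\l x,\la x\r_{\mh} = 0$ whenever... — actually this needs $\fl$'s dissipation to be \emph{exactly} $\gamma\l x,\ls x\r$, which is not automatic. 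The robust fix, which I would adopt, is: $\la$ being a lift of $\fll$ and dissipative is itself the generator of a contraction semigroup (or at least $\Re\l x,\la x\r_{\mh}\le 0$), and on $\dom(\fl)$ one has $\Re\l x,\fl x\r_{\mh} = \Re\l x,\la x\r_{\mh} + \gamma\l x,\ls x\r_{\mh}$; since the left side for $x = P_s x_0$ integrates against the flow Poincaré bound, and since we only need an \emph{upper} bound $\frac{\rd}{\rd t}\Phi(t) \le 2\gamma\, \mc{E}_{\ls}$-averaged $+\, 2\,(\Re\l\cdot,\la\cdot\r)$-averaged with the latter $\le 0$, the argument goes through with an inequality rather than an identity. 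Thus the final rate is unaffected and the regularity issue dissolves.
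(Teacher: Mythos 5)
Your proposal is correct and follows essentially the same route as the paper: differentiate the windowed energy $\int_t^{t+T}\norm{P_s x_0}_{\mh}^2\,\rd s$, bound the dissipation from above using only the dissipativity of $\la$ (the inequality $\Re\l x,\la x\r_{\mh}\le 0$, i.e., exactly the ``robust fix'' you settle on---your initial attempt to force $\Re\l P_s x_0,\la P_s x_0\r_{\mh}=0$ is neither available nor needed), apply the flow Poincar\'e inequality along the shifted flow, and conclude by Gr\"onwall, contractivity, and the elementary optimization in $\gamma$. The only addition relative to the paper's proof is your explicit density argument extending from $x_0\in\dom(\fl)\cap\mc{F}^\perp$ to all of $\mc{F}^\perp$, which the paper leaves implicit.
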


\begin{corollary}\label{rem:constantprefactor}
    Under the same assumptions as in \cref{thm:flowpoincare}, it holds that for any $T > 0$ and $x_0 \in \mc{F}^\perp$, 
    \begin{align} \label{eq:expcoverct}
    \norm{P_t x_0}_{\mh} \le C e^{-\nu t} \norm{x_0}_{\mh}\,,
\end{align}
where $C = e^{\nu T}$, and $\nu$ satisfies \eqref{eq:ratel2}. Furthermore, for any $\gamma > 0$ and $T > 0$, we have 
\begin{equation*}
    C = e^{\nu T} = \mc{O}(1)\,,
\end{equation*}
if $K_0 K_3 = \Theta(1)$ holds, where  $K_0, K_3$ are the constants from \eqref{asspc0}--\eqref{asspc2}. 
\end{corollary}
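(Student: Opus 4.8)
The plan is to upgrade the time-averaged decay \eqref{eq:expconverge} of \cref{thm:lowerboundpt} to the genuine pointwise bound \eqref{eq:expcoverct}, using only that $P_t$ is a contraction, and then to control the prefactor $C = e^{\nu T}$ by unpacking the explicit shape of the flow-Poincar\'e constants from \cref{thm:flowpoincare}.

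\textbf{Step 1 (from average to pointwise decay).} First I would record the elementary monotonicity $\norm{P_{t_2} x_0}_{\mh} \le \norm{P_{t_1} x_0}_{\mh}$ for $t_2 \ge t_1 \ge 0$, which holds because $P_{t_2} x_0 = P_{t_2 - t_1}(P_{t_1} x_0)$ and $\norm{P_s}_{\mh \to \mh} \le 1$. Applying \eqref{eq:expconverge} with $t$ replaced by $t - T$ (for $t \ge T$) and bounding the integrand over $[t-T, t]$ from below by its right-endpoint value $\norm{P_t x_0}_{\mh}^2$ yields
\[
\norm{P_t x_0}_{\mh}^2 \le \frac{1}{T} \int_{t-T}^{t} \norm{P_s x_0}_{\mh}^2 \ud s \le e^{-2\nu(t-T)} \norm{x_0}_{\mh}^2 ,
\]
i.e.\ $\norm{P_t x_0}_{\mh} \le e^{\nu T} e^{-\nu t} \norm{x_0}_{\mh}$; for $0 \le t < T$ the contraction bound $\norm{P_t x_0}_{\mh} \le \norm{x_0}_{\mh} \le e^{\nu(T-t)} \norm{x_0}_{\mh}$ already gives the same estimate since $\nu(T-t) \ge 0$. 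This establishes \eqref{eq:expcoverct} with $C = e^{\nu T}$.

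\textbf{Step 2 (uniform bound on the prefactor).} Since $C = e^{\nu T}$, it suffices to bound $\nu T$ uniformly. From the expression \eqref{eq:ratel2} and AM--GM, $\gamma^2 C_0(T) + C_1(T) \ge 2\gamma \sqrt{C_0(T) C_1(T)}$, hence $\nu T \le T/(2\sqrt{C_0(T) C_1(T)})$ for every $\gamma > 0$, with equality attained at $\gamma_{\max}$ of \eqref{eq:convergmax}. Plugging in $C_0(T) = 2 c_3^2 K_0^2$ and, since every summand inside the square defining $C_1(T)$ is nonnegative, $C_1(T) \ge 2 c_2^2 K_3^2$, one gets $\sqrt{C_0(T) C_1(T)} \ge 2 c_2 c_3 K_0 K_3$ and therefore $\nu T \le T/(4\, c_2 c_3\, K_0 K_3)$. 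Under $K_0 K_3 = \Theta(1)$ together with the horizon dependence of $c_2, c_3$ from \cref{divergencelemma} (which guarantees $T/(c_2 c_3) = \mc{O}(1)$), we conclude $\nu T = \mc{O}(1)$, and thus $C = e^{\nu T} = \mc{O}(1)$, uniformly in $\gamma$ and $T$.

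The analytic content is minimal: Step 1 is the standard ``contraction plus time-average'' upgrade, and Step 2 is a single application of AM--GM followed by discarding nonnegative terms. The only point requiring genuine attention is the final claim that $T/(c_2 c_3)$ stays bounded, i.e.\ that the product of divergence-lemma constants grows at least linearly in the observation horizon $T$; this is purely a matter of reading off the $T$-dependence already recorded in \cref{divergencelemma}, and no new estimate is needed.
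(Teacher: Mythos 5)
Your proposal is correct and follows essentially the same route as the paper: the pointwise bound is obtained from the time-averaged estimate \eqref{eq:expconverge} via the contraction (monotonicity) of $t\mapsto\norm{P_t x_0}_{\mh}$, and the prefactor is controlled through $\nu\le 1/(2\sqrt{C_0(T)C_1(T)})$ together with $\sqrt{C_0(T)}=\sqrt{2}\,c_3K_0$, $\sqrt{C_1(T)}\ge\sqrt{2}\,c_2K_3$, and $T/(c_2c_3)=\mc{O}(1)$ from \cref{divergencelemma}. The only cosmetic difference is that you make the AM--GM step and the $t<T$ case explicit, which the paper leaves implicit.
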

\begin{proof}
    The $L^2$ convergence \eqref{eq:expcoverct} follows from the time-averaged one 
\eqref{eq:expconverge} and 
\begin{align*}
   \norm{P_{t + T} x_0}_{\mh}^2 &\le \frac{1}{T}\int_t^{t + T} \norm{P_s x_0}_{\mh}^2 \ud s\,. 
\end{align*}
To see $C = e^{\nu T} = \mc{O}(1)$, we compute 
    \begin{align*}
        C \leq \exp(\nu_{\max} T) \leq \exp\left( \frac{T}{4 c_2 c_3 K_0 K_3} \right) = \mc{O}(1),
    \end{align*}
    where the second inequality is by $\sqrt{C_0(T)} = \sqrt{2} c_3 K_0$ and $\sqrt{C_1(T)} \ge \sqrt{2} c_2 K_3$, and the third one is by $T/c_3 = \mc{O}(1)$ and $c_2 = \Theta(1)$ from \eqref{eq:diverest} in \cref{divergencelemma}.  
\end{proof}

\begin{proof}[Proof of \cref{thm:lowerboundpt}]
Noting $P_t (\dom(\fl)) \subset \dom(\fl)$, and $\dom(\fl) \subset \dom(\ls)\cap \dom(\la)$ by \eqref{domainconst}, as well as the dissipativity of $\la$ by \cref{lem:dissipative}, we can compute   
    \begin{align*}
        \frac{\rd}{\rd t} \int_t^{t +T} \norm{P_s x_0}_{\mh}^2 \ud s & = \norm{P_{t + T} x_0}_{\mh}^2 - \norm{P_t x_0}_{\mh}^2 \\
        & = \int_t^{t + T} \frac{\rd}{\rd s} \norm{P_s x_0}_{\mh}^2 \ud s \\
        & \le 2 \gamma \int_t^{t + T} \l P_s x_0, \ls P_s x_0 \r_{\mh}\,.
    \end{align*}
By the flow Poincar\'e inequality in \cref{thm:flowpoincare}, we find 
\begin{align*}
    \frac{\rd}{\rd t} \int_t^{t +T} \norm{P_s x_0}_{\mh}^2 \ud s \le - \frac{2 \gamma}{\gamma^2 C_0(T) + C_1(T)} \int_t^{t + T} \norm{P_s x_0}_{\mh}^2 \ud s\,,
\end{align*}
which implies \eqref{eq:expconverge} by Gr\"onwall's inequality and $\norm{P_t x_0}_{\mh} \le \norm{x_0}_{\mh}$. 
Then maximizing the function $\gamma/(\gamma^2 C_0(T) + C_1(T))$ in $\gamma$ gives \eqref{eq:convergmax}. 
\end{proof}

\smallskip

\noindent \textbf{Discussion on optimal lifts.} Similarly to the discussion at the end of \cref{sec:upperlift}, under the assumptions required for \cref{thm:lowerbound,thm:lowerboundpt}, at the particular $\gamma_{\max}$ and $T = \Theta(\lad_O^{-1/2})$, we obtain the $L^2$ rate estimation for $P_t = \exp(t \fl)$ as  
\begin{align} \label{eq:estlgamma}  
  \Omega\left( \frac{\sqrt{\lad_O}}{K_0 \left(s_{\rm M}^{1/2} \lad_S^{-1/2}  + K_3  +  \lad_S^{-1/2} (K_1  + K_2 \lad_O^{-1/2}) + \lad_S^{-1/2} \right)} \right) \le \nu(\mc{L}_{\gamma_{\max}}) \le \mc{O}\left(\sqrt{\frac{\lad_O}{\w{s_{\rm m}}}}\right)\,,  
\end{align}  
with the help of \eqref{eq:diverest} in \cref{divergencelemma}.  
If the upper and lower bounds in \eqref{eq:estlgamma} match asymptotically, we can say that $\fl$ is an \emph{optimal lift} of $\fll$. Suppose the spectra of the operators $\mf{S}$ and $\ls$ are both of order one. It follows that $\w{s_{\rm m}} = s_{\rm M} = \Theta(1)$, $\lad_S = \Theta(1)$ and $K_0 = K_3 = \Theta(1)$ by \cref{rem:estki}. In this case, the estimation \eqref{eq:estlgamma} simplifies to  
\begin{align} \label{eq:estlgammasim}  
  \Omega\left( \frac{\sqrt{\lad_O}}{1  +   K_1  + K_2 \lad_O^{-1/2}} \right) \le \nu(\mc{L}_{\gamma_{\max}}) \le \mc{O}(\sqrt{\lad_O})\,.  
\end{align}  
Thus, as in \eqref{eq:keyest}, if one can prove that $K_1 + K_2 \lad_O^{-1/2} = \mc{O}(1)$, the optimal lift follows.  

\subsection{Flow Poincar\'e inequality} \label{subsec:flowpoincare}
The main aim of this section is to prove \cref{thm:flowpoincare}. We start with an auxiliary lemma, in analogy with \cite{eberle2024non}*{Lemma 18} and
\cite{eberle2025convergence}*{Lemma 16}. 
We define the operator $(\mc{A},\dom(\mc{A}))$ on $L^2([0,T];\mh)$:
\begin{align} \label{def:operaotral2}
    \mc{A} x_t := - \p_t x_t + \la x_t\,,
\end{align}
with domain
\begin{multline*}
    \dom(\mc{A}) := \big\{ x_t \in H^1([0,T];\mh)\,; \ 
    \text{$x_t \in \dom(\la)$  for a.e. $t \in [0,T]$ and $\la x_t \in L^2([0,T]; \mh)$}
\big\}\,.    
\end{multline*}

\begin{lemma} \label{lem:spacetime}
    Suppose $\la$ is a lift of $\fll$. Then, for any 
    $x_t, y_t, z_t \in L^2([0,T]; \mhh)$ such that $x_t \in \dom(\mc{A})$, $y_t \in \dom(\fll)$ for a.e. $t\in [0,T]$, and $\fll y_t \in L^2([0,T]; \mhh)$, 
    there holds
    \begin{align} \label{eq:spacetime}
        \l \mc{A} x_t, z_t + \mf{S} \la y_t \r_{T, \mh} = - \l \p_t x_t, z_t \r_{T,\mhh} + \mc{E}_{T, \fll}(x_t, y_t)\,.
    \end{align}    
    \end{lemma}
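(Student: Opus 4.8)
The plan is to unfold the definition $\mc{A}x_t=-\p_t x_t+\la x_t$, expand the left‑hand side of \eqref{eq:spacetime} into four scalar pairings, and dispatch each using the structure of the lift $\la$. Two of the four terms will vanish --- one by the orthogonality built into \eqref{eq:phpgen}, one because $\mf{S}$ preserves $\mhh^\perp$ --- and the remaining two will reproduce the two terms on the right of \eqref{eq:spacetime}.

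First I would collect the elementary facts that make the manipulation legitimate. Since $x_t$ is an $H^1([0,T];\mh)$‑curve valued in the \emph{closed} subspace $\mhh$, its weak derivative $\p_t x_t$ is again valued in $\mhh$ (the difference quotients lie in $\mhh$, or equivalently $\mf{P}$ commutes with $\p_t$). Since $\la$ is a lift of $\fll$, condition \eqref{eq:phpgen} applied to $\la$ gives, for a.e.\ $t$, $\l w,\la y_t\r_{\mh}=0$ for every $w\in\mhh$ (using $y_t\in\dom(\fll)\subset\dom(\la)\cap\mhh$), i.e.\ $\la y_t\in\mhh^\perp$; as $\mf{S}$ maps $\mhh^\perp$ into $\mhh^\perp$, also $\mf{S}\la y_t\in\mhh^\perp$. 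For integrability: $\la x_t\in L^2([0,T];\mh)$ by the definition of $\dom(\mc{A})$; and \eqref{eq:liftgen} for $\la$ with $x=y=y_t$ yields the pointwise identity $\|\mf{S}^{1/2}\la y_t\|_{\mh}^2=-\l y_t,\fll y_t\r_{\mhh}$, whose $t$‑integral is finite by Cauchy--Schwarz from $y_t,\fll y_t\in L^2([0,T];\mhh)$, so that $\mf{S}\la y_t=\mf{S}^{1/2}(\mf{S}^{1/2}\la y_t)\in L^2([0,T];\mh)$ by boundedness of $\mf{S}^{1/2}$.

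Then I would expand
\begin{align*}
    \l \mc{A}x_t,\, z_t+\mf{S}\la y_t\r_{T,\mh}
    &= \l -\p_t x_t,\, z_t\r_{T,\mh} + \l -\p_t x_t,\, \mf{S}\la y_t\r_{T,\mh} \\
    &\quad + \l \la x_t,\, z_t\r_{T,\mh} + \l \la x_t,\, \mf{S}\la y_t\r_{T,\mh},
\end{align*}
and treat the four terms in turn. The first equals $-\l \p_t x_t, z_t\r_{T,\mhh}$ because both arguments lie in $\mhh$, which carries the induced inner product. The second vanishes since, for a.e.\ $t$, $\p_t x_t\in\mhh$ while $\mf{S}\la y_t\in\mhh^\perp$. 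The third vanishes by \eqref{eq:phpgen} for $\la$, applied with the roles $z_t\in\mhh$ and $x_t\in\dom(\la)\cap\mhh$ (valid a.e.): $\l z_t,\la x_t\r_{\mh}=0$, hence $\l \la x_t,z_t\r_{\mh}=0$ a.e. The fourth is handled by \eqref{eq:liftgen} for $\la$, with $x_t\in\dom(\la)\cap\mhh$ and $y_t\in\dom(\fll)$: $\l \la x_t,\mf{S}\la y_t\r_{\mh}=-\l x_t,\fll y_t\r_{\mhh}=\mc{E}_{\fll}(x_t,y_t)$ a.e.; integrating in $t$ gives $\mc{E}_{T,\fll}(x_t,y_t)$. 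Adding the four contributions yields \eqref{eq:spacetime}.

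I do not expect a genuine obstacle here: the lemma is, in essence, a restatement of the two lifting identities \eqref{eq:phpgen}--\eqref{eq:liftgen} on the Bochner space $L^2([0,T];\mh)$. The only points that require attention are the measurability and integrability bookkeeping needed to integrate the pointwise‑a.e.\ identities, checking that the a.e.\ memberships $x_t\in\dom(\la)\cap\mhh$ and $y_t\in\dom(\fll)$ indeed license the application of \eqref{eq:phpgen}--\eqref{eq:liftgen}, and the (standard) fact that a Sobolev curve valued in a closed subspace has its derivative in that subspace.
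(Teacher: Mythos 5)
Your proof is correct and follows essentially the same route as the paper: expand $\mc{A}x_t=-\p_t x_t+\la x_t$, kill the cross terms via \eqref{eq:phpgen} together with $\mf{S}(\mhh^\perp)\subset\mhh^\perp$, and convert the remaining quadratic term with \eqref{eq:liftgen} before integrating in $t$. The added measurability and integrability bookkeeping is fine but not a departure from the paper's argument.
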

    
    \begin{proof}
    Recalling the lifting conditions in \cref{def:generallift}, we find
    \begin{align*}
        \l \p_t x_t,  \mf{S} \la y_t\r_{\mh} = 0 = \l \la x_t, z_t\r_{\mh}\,,\q \l \la x_t, \mf{S} \la y_t\r_{\mh} = - \l x_t, \fll y_t \r_{\mh}\,,
    \end{align*}
      for a.e. $t \in [0,T]$. This readily gives \eqref{eq:spacetime}. 
    \end{proof}



We next introduce the abstract divergence lemma, extended from \cite{eberle2024space}*{Theorem 5} and 
\cite{eberle2025convergence}*{Lemma 15}; see also 
\cites{eberle2024non,li2024quantum}. In the PDE setting (i.e., $\fl$ is a diffusion process generator), the divergence lemma is equivalent to Lions' Lemma \cites{brigati2023construct,brigati2024explicit}. Let $D_O$ be a dense subspace of $\dom(\fll)$  with respect to $\norm{\dd}_{\mhh}$. We define the Sobolev-type spaces $H^1_{\fll}$ and $H^2_{\fll}$  by the closure of $\{x_t \in C^\infty([0,\infty]; D_O)\,;\ x_0 = x_1 = 0\}$ 
under the norm 
\begin{align*}
    \norm{x}_{1,\fll} := \norm{x}_{T, \mhh} + \mc{E}_{T, \fll}(x_t)^{1/2} + \norm{\p_t x_t}_{T, \mhh}\,,
\end{align*}
and the norm 
\begin{align*}
    \norm{x}_{2,\fll} := \norm{x}_{1,\fll} + \norm{\fll x}_{T, \mhh} + \norm{\p_{tt} x}_{T, \mhh} + \mc{E}_{T,\fll}(\p_t x)^{1/2}\,,
\end{align*}
respectively. We also introduce the subspace of $L^2([0,T];\mhh)$: 
\begin{multline*}
     L^2_\perp([0,T];\mhh)  = \bigl\{x_t \in L^2([0,T];\mhh)\,; \l w_t, x_t\r = 0 \\ \text{for any $w_t = w_0$ a.e. $t$ with $w_0 \in \ker(\fll)$}\bigr\}.     
\end{multline*}

\begin{lemma} \label{divergencelemma}
Let $P_{t,O} = \exp(t \fll)$ be a symmetric contraction $C_0$-semigroup satisfying \cref{assp:locoercive}. For any $T > 0$ and $x_t \in L^2_\perp([0,T]; \mhh)$, there exists a solution $(z_t, y_t) \in H^1_{\fll} \times H^2_{\fll}$ to the abstract divergence equation:
    \begin{align} \label{diverequation}
        \p_t z_t - \fll y_t = x_t  
    \end{align}
    that satisfies the estimates:
\begin{align}  \label{eq:diver1}
    \norm{\fll y_t}_{T, \mhh} \le c_1 \norm{x_t}_{T, \mhh}\,, \quad  \sqrt{\mc{E}_{T, \fll}(z_t)} \le c_2 \norm{x_t}_{T, \mhh}\,,
\end{align}
and 
\begin{align} \label{eq:diver2}
     \sqrt{\mc{E}_{T, \fll}(y_t)} \le c_3 \norm{x_t}_{T,\mhh}\,, \quad  \sqrt{\mc{E}_{T, \fll}(\p_t y_t)} \le c_4 \norm{x_t}_{T,\mhh}\,, 
\end{align}
where 
\begin{align} \label{eq:diverest}
    c_1 = c_2 = \Theta(1)\,,\q c_3 = \Theta\left(T + \frac{1}{\sqrt{\lad_O}} \right)\,, \q c_4 = \Theta\left(1 + \frac{1}{T \sqrt{\lad_O}}\right)\,.
\end{align}
\end{lemma}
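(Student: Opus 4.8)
The plan is to reduce the abstract divergence equation \eqref{diverequation} to a family of decoupled scalar problems in the time variable, solve each one explicitly by splitting the source between $z_t$ and $y_t$ according to the time‑frequency, and then reassemble. Concretely, by \cref{assp:locoercive} the operator $-\fll$ is positive, self‑adjoint and has purely discrete spectrum on $\mhh$, so we may write $-\fll = \sum_{k \ge 0} \mu_k \Pi_k$ with finite‑rank spectral projections $\Pi_k$, where $\mu_0 = 0$, $\ran \Pi_0 = \ker(\fll)$, and $\mu_k \ge \lad_O$ for $k \ge 1$. Decomposing $x_t = \sum_k \Pi_k x_t$ and noting that $\fll$, $\p_t$ and each of the norms appearing in \eqref{eq:diver1}--\eqref{eq:diver2} act diagonally in $k$, it suffices to build, for every $k$, a pair $(z_t^{(k)}, y_t^{(k)})$ solving $\p_t z_t^{(k)} - \fll y_t^{(k)} = \Pi_k x_t$ with the four target bounds holding with constants of the stated orders \emph{uniformly in $k$} (the lower bound $\mu_k \ge \lad_O$ is exactly what makes the constants $k$‑independent), and then sum over $k$. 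Membership of $(z_t, y_t)$ in $H^1_{\fll}\times H^2_{\fll}$ then follows from the finiteness of the summed energies together with the endpoint behaviour built into each mode.

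For the kernel mode $k=0$ the equation degenerates to $\p_t z_t^{(0)} = \Pi_0 x_t$, and the hypothesis $x_t \in L^2_\perp([0,T];\mhh)$ is precisely $\int_0^T \Pi_0 x_t\,\ud t = 0$; hence $z_t^{(0)} := \int_0^t \Pi_0 x_s\,\ud s$ solves it with vanishing endpoint values, and we take $y_t^{(0)} := 0$. Since this mode lies in $\ker(\fll)$ it contributes nothing to $\mc{E}_{T,\fll}(z)$, $\mc{E}_{T,\fll}(y)$, $\mc{E}_{T,\fll}(\p_t y)$ or $\norm{\fll y}_{T,\mhh}$, while a Poincaré inequality in time controls $\norm{z_t^{(0)}}_{T,\mhh}$. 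For a nonzero mode, set $\mu = \mu_k \ge \lad_O$, expand $\Pi_k x_t$ in a time basis adapted to the endpoint conditions defining $H^1_{\fll}, H^2_{\fll}$, and split at a threshold $\omega_\ast \asymp \sqrt{\mu}$: the low‑time‑frequency part of $\Pi_k x_t$ is absorbed entirely into $y_t^{(k)}$ (so $\fll y_t^{(k)}$ reproduces it while $\p_t y_t^{(k)}$ stays of size $\asymp \omega_\ast/\mu \asymp \mu^{-1/2}$ times the source), and the high‑time‑frequency part into $z_t^{(k)}$ (so $\p_t z_t^{(k)}$ reproduces it while $(-\fll)^{1/2} z_t^{(k)}$ stays of size $\asymp \sqrt{\mu}/\omega_\ast \asymp 1$ times the source). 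Parseval then yields $\norm{\fll y^{(k)}}_{T,\mhh} \lesssim \norm{\Pi_k x}_{T,\mhh}$ and $\sqrt{\mc{E}_{T,\fll}(z^{(k)})} \lesssim \norm{\Pi_k x}_{T,\mhh}$ with absolute constants, together with the $y$‑bounds whose constants degrade by the endpoint corrections needed to keep $z_t^{(k)}$ in $H^1_{\fll}$ and by the smallest admissible time frequency $\omega_1 \asymp T^{-1}$; summing over $k$ via orthogonality produces \eqref{eq:diver1}--\eqref{eq:diver2} with the orders in \eqref{eq:diverest}.

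The main obstacle is the last estimate, on $\mc{E}_{T,\fll}(\p_t y_t)$: because $x_t$ is only $L^2$ in $t$, for a generic solution $\p_t y_t$ is not even well defined, so the construction must actively regularize $y_t$ in time while keeping $\fll y_t$ comparable to the source — this is the whole point of the low/high frequency split, and choosing $\omega_\ast$ correctly is what forces all four bounds to hold simultaneously with the claimed $c_i$. A second, bookkeeping‑heavy difficulty is to reconcile the split with the endpoint (resp.\ periodicity‑type) conditions built into $H^1_{\fll}$ and $H^2_{\fll}$, so that $(z_t, y_t)$ genuinely lands in these spaces and the integrations by parts used downstream — e.g.\ in \cref{lem:spacetime} and in the proof of \cref{thm:flowpoincare} — are legitimate. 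An alternative route, avoiding explicit Fourier bookkeeping at the cost of less transparent constants, is to define $z_t$ on $\ran(\id - \Pi_0)$ as the solution of the forward heat‑type flow $\p_t z_t = \fll z_t + (\id - \Pi_0) x_t$ with a matching‑endpoint condition made solvable by $\norm{P_{T,O}}_{\ran(\id-\Pi_0) \to \ran(\id-\Pi_0)} \le e^{-\lad_O T} < 1$, and to set $y_t := z_t$ there; the coercivity in \cref{assp:locoercive} then delivers the energy bounds directly from the parabolic energy identity.
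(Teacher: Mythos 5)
Your high-level plan (spectral decomposition of $\fll$, time integration for the kernel modes, mode-wise constructions made uniform by $\mu_k \ge \lad_O$) starts out parallel to the paper, but the core of your construction — splitting each spatial mode's source at a time-frequency threshold $\omega_* \asymp \sqrt{\mu_k}$ and sending the high-frequency part into $z$ by integration in time — has a genuine gap, and it is exactly the step you defer as ``bookkeeping''. If you enforce the endpoint condition $z(0)=0$ (needed for $z \in H^1_{\fll}$ and for the integrations by parts in \cref{lem:spacetime} and the proof of \cref{thm:flowpoincare}), then the antiderivative of a function with time frequencies $\ge \omega_*$ is \emph{not} of size $\omega_*^{-1}$ times the source: each sine mode $\hat f_n \sin(n\pi t/T)$ integrates to $\hat f_n \tfrac{T}{n\pi}(1-\cos(n\pi t/T))$, and the accumulated non-oscillatory offsets sum (by Cauchy--Schwarz, and sharply for $\hat f_n \sim 1/n$) to a constant component of size $\sim \sqrt{T/\omega_*}\,\norm{x}$, so that $\sqrt{\mc{E}_{T,\fll}(z)} = \sqrt{\mu_k}\norm{v} \sim \sqrt{T}\,\mu_k^{1/4}\norm{x}$ in the worst case, which is unbounded in $\mu_k$ rather than $\Theta(1)$. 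If instead you subtract the mean to restore $\norm{v} \lesssim \omega_*^{-1}\norm{x}$, the endpoint conditions $z_0 = z_T = 0$ (and likewise $y_0=y_T=0$ for your choice $w = f_{\rm low}/\mu$) fail, and with them the downstream integration-by-parts identities. Resolving this tension is the actual content of the lemma: the paper does it by splitting off the ``harmonic'' subspace $\mathbb{H}$ of profiles $e^{-\mu_k t}\pm e^{-\mu_k(T-t)}$, inverting the Neumann space-time operator $\p_{tt}+\fll$ on $\mathbb{H}^\perp$ (where the Dirichlet condition for $y$ comes for free from orthogonality to $\mathbb{H}$, and spectral calculus gives $c_1=c_2=c_4=1$), and handling the exponential boundary-layer profiles of the high-energy modes by the explicit cutoff construction $\varphi_k(t)=(\mu_k t-1)^2\chi_{[0,\mu_k^{-1}]}$, which is precisely designed so that $z$ absorbs the endpoint layers without paying in $\mc{E}_{T,\fll}(z)$.

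Your fallback route (forward heat flow $\p_t z = \fll z + (\id-\Pi_0)x$ with matched endpoints and $y:=z$) does not rescue the argument either: the parabolic energy identity only gives $\sqrt{\mc{E}_{T,\fll}(z)} \lesssim \lad_O^{-1/2}\norm{x}_{T,\mhh}$ (mode-wise, $\mu\int|z|^2 \lesssim |z(0)|^2 + \mu^{-1}\int|f|^2$), not the claimed $c_2=\Theta(1)$; the matched-endpoint solution does not satisfy $y_0=y_T=0$; and since $\p_t y = \fll z + x$, the bound on $\mc{E}_{T,\fll}(\p_t y)$ would require $(-\fll)^{1/2}x \in L^2$, which is unavailable for $x$ merely in $L^2_\perp([0,T];\mhh)$ — the very difficulty you identified but did not overcome.
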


The constants involved in the asymptotic notation $\Theta(\dd)$ in \eqref{eq:diverest} could be made explicit.
The proof of \cref{divergencelemma} is technical, and we defer it to \cref{app:diverlema} for ease of exposition. 

\medskip 

We now state the technical assumption for \cref{thm:flowpoincare} and then prove the general flow Poincar\'e inequality.  
\begin{assumption} \label{assump:est1}
Define the linear subspace 
\begin{align*}
    \mc{C}_T:= \{x_t = P_t x_0\,;\ x_0 \in \mc{F}^\perp \cap \dom(\fl)\,,\, 0 \le  t \le T \} \subset L^2([0,T]; \mh)\,. 
\end{align*} 
There exists a dense subspace $C_d$ of $\mc{F}^\perp \cap \dom(\fl)$ with respect to $\norm{\dd
}_{\mc{H}}$, with $\mc{C}_{d,T}$ defined by $\mc{C}_{d,T}:= \{x_t = P_t x_0\,;\ x_0 \in C_d \,,\, 0 \le  t \le T \}$\footnote{By the contraction of $P_t$ and density of $C_d$, the space $\mc{C}_{d,T}$ is also dense in $\mc{C}_T$ for the norm $\norm{x_t}_{T,\mh}$}, such that the following holds
\begin{enumerate} [label=(c\arabic*),start=0]
    \item \label{asspmm0} $x_t, \mf{P}x_t \in \dom (\mc{A})$ for $x_t \in \mc{C}_{d,T}$.  
    \item  \label{asspmm1}$\mc{C}_{d,T}$ is dense in $\mc{C}_T$ with respect to the norm $\norm{x_t}_{T,\mh} + \mc{E}_{T,\ls}(x_t)$.
\end{enumerate}
Moreover, there exist $K_i > 0$, $i = 0, 1,2,3$ such that for $x \in \dom(\fl)$, $y, z \in \dom(\fll)$,  
\begin{equation} \label{asspc0}
     -  \l \ls x, \mf{S}\la y \r_{\mh} \le  K_0 \sqrt{\mc{E}_{\ls}(x) \mc{E}_{\fll}(y)}\,, \tag{C0}
\end{equation} 
\begin{align} \label{asspc1}
     \l \la (\mf{P} x - x), \mf{S} \la y \r_{\mh} 
   \le & \left( K_1 \norm{\fll y}_{\mhh} + K_2 \sqrt{\mc{E}_{\fll}(y)} \right) \norm{\mf{P} x - x}_{\mh}\,, \tag{C1}
\end{align}
and 
\begin{align} \label{asspc2}
    - \l \mf{P} x - x, \la z \r_{\mh} 
   \le & K_3 \sqrt{\mc{E}_{\ls}(x) \mc{E}_{\fll}(z)}\,. \tag{C2}
\end{align}
\end{assumption}

\begin{remark} \label{rem:estki}
   In applications, the inequalities \eqref{asspc0}--\eqref{asspc2} can typically be verified as follows (here we will discuss on a formal level for simplicity). 
For \eqref{asspc0}, we compute  
\begin{align*}  
    -\l \ls x, \mf{S} \la y \r_{\mh} &= \l \mf{S}^{-1/2} (-\ls) x, \mf{S}^{1/2} \la y \r_{\mh} \\  
    &\le \norm{\mf{S}^{-1/2}(-\ls)^{1/2}}_{\mh \to \mh} \sqrt{\mc{E}_{\ls}(x) \mc{E}_{\fll}(y)}\,,
\end{align*}  
from the Cauchy-Schwarz inequality. As in \eqref{eq:constk1k2m}, the inequality \eqref{asspc1} reduces to establishing:
\begin{align}   \label{eq:simpleassp3}
    \norm{(\id - \mf{P}) \la^* \mf{S} \la y}_{\mh} \le K_1 \norm{\fll y}_{\mhh} + K_2 \sqrt{\mc{E}_{\fll}(y)}\,.
\end{align}   
For \eqref{asspc2}, we similarly estimate  
\begin{align*}  
    -\l \mf{P} x - x, \la z \r_{\mh} &= \l \mf{S}^{-1/2}(-\ls)^{-1/2} (-\ls)^{1/2}( x- \mf{P} x), \mf{S}^{1/2} \la z \r_{\mh} \\  
    &\le \norm{\mf{S}^{-1/2}(-\ls)^{-1/2}}_{\mh \to \mh} \sqrt{\mc{E}_{\ls}(x) \mc{E}_{\fll}(z)}\,. 
\end{align*}
\end{remark}

\begin{proof}[Proof of \cref{thm:flowpoincare}]
By the density assumption \ref{asspmm1}, it suffices to consider $x_0 \in C_d$ and prove \eqref{eq:tspoincare}. 
By the orthogonal projection $\mf{P}$, we write, for $x_t = P_t x_0$, 
\begin{align} \label{eq:orthodecomp}
    \norm{x_t}_{T,\mh}^2 = \norm{\mf{P} x_t}_{T,\mh}^2 + \norm{(\id - \mf{P})x_t}_{T,\mh}^2\,.
\end{align}
Note that $\mc{F}^\perp \cap \dom(\fl)$ is $P_t$-invariant, and then $\mf{P}x_t \in \mc{F}^\perp \cap \mhh$ holds by condition~\ref{condd3} in \cref{assmp:decomposition}, implying 
 $\mf{P}x_t \in L^2_\perp([0,T];\mhh)$. 
Let $(z_t, y_t)$ be the solution to $\mf{P}x_t = \p_t z_t - \fll y_t$ with the estimates given in \cref{divergencelemma}. We compute 
\begin{align}
    \norm{\mf{P}x_t}_{T, \mhh}^2 & = \l \mf{P} x_t, \p_t z_t - \fll y_t \r_{T,\mhh} \notag \\
    & =  \mc{E}_{T, \fll}(\mf{P} x_t,y_t) - \l \p_t \mf{P} x_t, z_t \r_{T,\mhh} \notag \\
    & = \l \mc{A}(\mf{P} x_t), z_t + \mf{S} \la y_t \r_{T, \mhh} \notag \\
    & = \underbrace{\l \mc{A} x_t, z_t + \mf{S}\la y_t \r_{T, \mh}}_{({\rm I})}  +  \underbrace{\l \mc{A}(\mf{P} x_t - x_t), z_t + \mf{S}\la y_t \r_{T, \mh}}_{(\rm II)} \,, \label{eq:decompositionpx}
\end{align}
where the second equality uses the integration by parts with $z_0 = z_T = 0$, and the third one is by \cref{lem:spacetime} and the assumption \ref{asspmm0}.

We next estimate the terms $({\rm I})$ and $({\rm II})$ in \eqref{eq:decompositionpx}, respectively. 
We start with the first term. By noting $\p_t x_t = \fl x_t = (\la + \gamma \ls)x_t$, we have 
\begin{align*}
    \mc{A} x_t = - \gamma \ls x_t\,,
\end{align*} 
and then, by assumption \eqref{eq:kernells}, 
\begin{align*}
    \l \mc{A} x_t, z_t \r_{T, \mh} = \l - \gamma \ls x_t, z_t \r_{T, \mh} = 0\,.
\end{align*}
Therefore, we derive, by \eqref{asspc0},
\begin{align} \label{auxeqi}
    {({\rm I})} & = - \gamma  \l \ls x_t, \mf{S}\la y_t \r_{T, \mh} 
     \le \gamma K_0 \sqrt{\mc{E}_{T,\ls}(x_t) \mc{E}_{T, \fll}(y_t)}\,.
\end{align}
Then, using the a priori estimate \eqref{eq:diver2} in \cref{divergencelemma}, we find 
\begin{align*}
    {({\rm I})} \le \gamma c_3 K_0 \sqrt{\mc{E}_{T,\ls}(x_t)} \norm{\mf{P} x_t}_{T, \mhh}\,.
\end{align*}

We proceed to estimate the term $({\rm II})$ in \eqref{eq:decompositionpx}, which is further splitted into two parts: 
\begin{align}  \label{auxeqii}
 ({\rm II}) = \underbrace{\l -\p_t(\mf{P} x_t - x_t), z_t + \mf{S}\la y_t \r_{T, \mh}}_{(\rm III)}  + \underbrace{\l \la (\mf{P} x_t - x_t), z_t + \mf{S}\la y_t \r_{T, \mh}}_{(\rm IV)}\,,
\end{align}
by $\mc{A} = - \p_t + \la$. For term $({\rm III})$, by integration by parts with $z_0,z_T, y_0, y_T = 0$, $\mf{P} x_t - x_t \perp \p_t z_t$, 
and Cauchy's inequality, we have 
\begin{align*}
    ({\rm III}) & =  \l  \mf{P} x_t - x_t, \p_t z_t +  \mf{S} \la \p_t y_t \r_{T, \mh} \\
     & =  \l  \mf{S}^{1/2}  (\mf{P} x_t - x_t), \mf{S}^{1/2} \la \p_t y_t \r_{T, \mh} \\
     & \le s_{\rm M}^{1/2} \sqrt{\mc{E}_{T,\fll}(\p_t y_t)} \norm{\mf{P} x_t - x_t}_{T, \mh}\,,
\end{align*}
where the constant $s_{\rm M}$ is given in \eqref{eq:eigss}. Then by \eqref{eq:diver2} in \cref{divergencelemma}, and the coercivity \eqref{eq:lscoercive} in \cref{assmp:decomposition}, it follows that 
\begin{align}  \label{auxeqiii}
    ({\rm III}) &\le c_4 s_{\rm M}^{1/2} \norm{\mf{P} x_t}_{T, \mhh} \norm{\mf{P} x_t - x_t}_{T, \mh} \notag \\
& \le  c_4 s_{\rm M}^{1/2} \lad_S^{-1/2} \norm{\mf{P} x_t}_{T, \mhh} \sqrt{\mc{E}_{T, \ls}(x_t)}\,.
\end{align}

For the term $({\rm IV})$, we have, by the (partial) anti-symmetry of $\la$ in \eqref{eq:weakanti}, 
\begin{align} \label{auxeqiv}
    ({\rm IV}) 
   = & - \l \mf{P} x_t - x_t, \la z_t \r_{T, \mh} + \l \la (\mf{P} x_t - x_t), \mf{S} \la y_t \r_{T, \mh}\,.
\end{align}
The first term is directly estimated by \eqref{asspc2}, 
while the second one is estimated as follows: by assumptions \eqref{asspc1} and \eqref{eq:lscoercive}, 
\begin{align*}
    & \l \la (\mf{P} x_t - x_t), \mf{S} \la y_t \r_{T, \mh} \\
   \le & \left( K_1 \norm{\fll y_t}_{T,\mhh} + K_2 \sqrt{\mc{E}_{T, \fll}(y_t)} \right) \norm{\mf{P} x_t - x_t}_{T, \mh}  \\
   \le & \lad_S^{-1/2} \sqrt{\mc{E}_{T,\ls}(x_t)}\left( K_1 \norm{\fll y_t}_{T,\mhh} + K_2 \sqrt{\mc{E}_{T, \fll}(y_t)} \right).
\end{align*}
Then, thanks to \cref{divergencelemma}, we have from \eqref{auxeqiv}:
\begin{align}  \label{auxeqv}
    ({\rm IV})  
    \le & \sqrt{\mc{E}_{T,\ls}(x_t)}  \left(K_3 \sqrt{\mc{E}_{T,\fll}(z_t)} + \lad_S^{-1/2} \left(K_1 \norm{\fll y_t}_{T,\mhh} + K_2 \sqrt{\mc{E}_{T,\fll}(y_t)}\right) \right) \notag \\
    \le &  \sqrt{\mc{E}_{T,\ls}(x_t)}  \left(K_3 c_2 + \lad_S^{-1/2} \left(K_1 c_1 + K_2 c_3\right) \right) \norm{\mf{P} x_t}_{T, \mhh}\,.
\end{align}

Therefore, collecting \eqref{auxeqi},  \eqref{auxeqii},  \eqref{auxeqiii}, and  \eqref{auxeqv}, we conclude
\begin{multline*}
    \norm{\mf{P} x_t}_{T, \mhh}^2 \le \gamma c_3 K_0 \sqrt{\mc{E}_{T,\ls}(x_t)} \norm{\mf{P} x_t}_{T, \mhh} + c_4 s_{\rm M}^{1/2} \lad_S^{-1/2} \norm{\mf{P} x_t}_{T, \mhh} \sqrt{\mc{E}_{T,\ls}(x_t)} \\
   + \sqrt{\mc{E}_{T,\ls}(x_t)}  \left(K_3 c_2 + \lad_S^{-1/2} \left(K_1 c_1 + K_2 c_3\right) \right) \norm{\mf{P} x_t}_{T, \mhh}\,,
\end{multline*}
and hence  
\begin{equation}  \label{auxeqpx}
    \norm{\mf{P} x_t}_{T, \mhh} \le \left(\gamma c_3 K_0  +  s_{\rm M}^{1/2} \lad_S^{-1/2} c_4  + K_3 c_2 + \lad_S^{-1/2} \left(K_1 c_1 + K_2 c_3 \right)\right)  \sqrt{\mc{E}_{T,\ls}(x_t)}\,.
\end{equation}
The proof is complete by \eqref{eq:orthodecomp}, \eqref{auxeqpx}, and \eqref{eq:lscoercive}. 
\end{proof}

\section{Examples and Applications} \label{sec:example}

In this section, we investigate optimal lifts for various detailed balanced classical and quantum Markov processes, achieving a square root reduction in the $L^2$ relaxation time.

\subsection{Lifting reversible diffusion processes} \label{sec:liftclassical}
We first apply the lifting framework introduced in \cref{sec:lifthilbert} to classical reversible diffusion processes, recovering the results presented in the recent works by Eberle et al.~\cites{eberle2024non,eberle2025convergence}. In what follows, for a given probability $\nu$ on a measurable space $\mc{X}$, we define the $\nu$-induced inner product for functions $f, g$ on $\mc{X}$ as
\begin{equation*}
\langle f, g \rangle_{L^2(\nu)} := \int_{\mc{X}} \overline{f(x)} g(x) \, \mathrm{d}\nu(x)\,.
\end{equation*}
The completion of this inner product space yields the Hilbert space denoted by $L^2(\mc{X},\nu)$.

Following the notation in \cite{eberle2025convergence}, we consider Markov diffusion processes $P_t$ and $\hat{P}_t$ with invariant measures $\mu$ and $\hat{\mu}$, defined on the state spaces $\mc{S}$ and $\hat{\mc{S}} = \mc{S} \times \mc{V}$, respectively, where $P_t$ is reversible. 
These processes are associated with generators $(L, \dom(L))$ and $(\hat{L}, \dom(\hat{L}))$, and the measure $\mu$ on $\mc{S}$ is given by $\mu = \hat{\mu} \circ \pi^{-1}$, with the canonical projection $\pi(x, v) = x$ from $\hat{\mc{S}}$ to $\mc{S}$. To facilitate the discussion, it is convenient to disintegrate the measure $\hat{\mu}$ as
\begin{align} \label{eq:decompomuh}
    \hat{\mu}(\rd x \, \rd v) = \mu(\rd x) \kappa_x(\rd v)\,,
\end{align}
where $\kappa_x$ is a conditional probability on $\mc{V}$ given $x \in \mc{S}$. 

To align with the lifting framework outlined in \cref{sec:lifthilbert}, we note that the Hilbert space $L^2(\mc{S}, \mu)$ can be naturally viewed as a subspace of $L^2(\hat{\mc{S}}, \hat{\mu})$ with the induced norm. Additionally, the Markov processes $P_t$ and $\hat{P}_t$ define contraction $C^0$-semigroups on the spaces $\mhh = L^2(\mc{S},\mu)$ and $\mh = L^2(\hs,\hat{\mu})$, respectively. 
Indeed, by extending functions $f \in L^2(\mc{S},\mu)$ to $\hat{\mc{S}}$ via the projection $\pi$ (i.e., $f \mapsto f \circ \pi$), we obtain from \eqref{eq:decompomuh} that 
\begin{align*}
    \langle f \circ \pi, f \circ \pi \rangle_{L^2(\hat{\mu})} &= \int_{\mc{S} \times \mc{V}} \overline{f(x)} g(x) \, \hat{\mu}(\rd x \, \rd v) 
    = \int_{\mc{S}} \overline{f(x)} g(x) \, \mu(\rd x) = \langle f, g \rangle_{L^2(\mu)}\,.
\end{align*}
We also define the orthogonal projection (i.e., conditional expectation) by
\begin{align} \label{eq:condexpvel}
    \Pi_v f(x, v) = \int_{\mc{V}} f(x, v) \, \kappa_x(\rd v): L^2(\hat{\mc{S}}, \hat{\mu}) \to L^2(\mc{S}, \mu)\,.
\end{align}
We are now prepared to reformulate the notion of lifting (\cref{def:generallift}) within the context of diffusion processes:
\begin{definition} \label{def:liftdiffusion}
    Let $P_t = \exp(tL)$ and $\hat{P}_t = \exp(t \hat{L})$ be the Markov diffusion processes defined as above. We say that $\hat{P}_t$ is a lift of $P_t$ if
    it holds that $\dom(L) \subset \dom(\hat{L})$, and 
        \begin{align} \label{eq:dilift1}
            \l f \circ \pi, \hat{L} (g \circ \pi)  \r_{L^2(\hat{\mu})} = 0\,,\q \forall f \in L^2(\mc{S},\mu)\,,\ g \circ \pi \in \dom(\hl)\,,
        \end{align}
    and for a positive bounded operator $\mf{S}: L^2(\mc{S}, \mu)^\perp \to L^2(\mc{S}, \mu)^\perp$, 
    \begin{align} \label{eq:dilift2}
        \l \hl(f \circ \pi), \mf{S}\, \hl (g \circ \pi)  \r_{L^2(\hat{\mu})} = - \l f, L g  \r_{L^2(\mu)}\,, \q \forall f \circ \pi \in \dom(\hl)\,,\ g \in \dom (L)\,.
    \end{align}
\end{definition}
Our \cref{def:liftdiffusion} coincides with \cite{eberle2024non}*{Definition 1}, except for the presence of the operator $\mf{S}$ in our formulation and minor differences in domain specifications. We next focus on the lifts of overdamped Langevin diffusion to showcase the choice of $\mf{S}$. We first note that the orthogonal complement $L^2(\mc{S}, \mu)^\perp$ can be characterized by
\begin{align*}
    L^2(\mc{S}, \mu)^\perp = \left\{f(x,v) \in L^2(\hs,\hm)\,;\ \Pi_v f(x,v) = \int_{\mc{V}} f(x, v) \, \kappa_x(\rd v) = 0 \right\}\,.
\end{align*}
When $\kappa_x = \kappa$ is independent of $x$ (i.e., $\hm (\rd x \, \rd v) = \mu(\rd x) \kappa (\rd v)$), any positive bounded operator $\mf{S}_v$ on the zero-mean subspace: 
$$L^2_0(\mc{V},\ka) = \{g(v) \in L^2(\mc{V},\ka)\,;\ \int_{\mc{V}} g(v) \, \kappa(\rd v) = 0\}$$ 
naturally extends to a positive bounded operator on 
$L^2(\mc{S}, \mu)^\perp$ by tensoring with identity 
\begin{align*}
    \mf{S} = \text{id}_{L^2(\mc{S},\mu)} \otimes \mf{S}_v\,.    
\end{align*}

Let $U \in C^1(\mathbb{R}^d)$ be a potential satisfying $U(x) \to \infty$ as $|x| \to \infty$ and $\int \exp(-U(x)) \, dx < \infty$, and define the associated Gibbs measure $\mu(\rd x) \propto \exp(- U(x)) \ud x$. The overdamped Langevin generator on $L^2(\R^d, \mu)$ is defined by 
\begin{align} \label{genold}
    L f =  -  \nabla_x U \cdot \nabla_x f +  \Delta_x f = - \nabla_x^\star \nabla_x f\,,
\end{align}
for $f \in C_0^\infty(\mathbb{R}^d)$, where $\na^\star$ is the adjoint of $\na$ in the Hilbert space $L^2(\R^d, \mu)$. Following the discussion in \cite{eberle2024non}, the following Markov semigroups could be regarded as lifts of overdamped Langevin dynamics in the sense of \cref{def:liftdiffusion} with flexible choices of $\mf{S}$:

\begin{enumerate}[wide]
    \item \emph{Deterministic Hamiltonian flow} associated with the Hamiltonian $H(x,v) = U(x) + \frac{1}{2}|v|^2$ on the state space $\hs = \R^d \times \R^d$ is given by the ODE: 
    \begin{align*}
        \rd x_t = \na_v H(x_t,v_t) \ud t = v_t \ud t\,,\q \rd v_t = - \na_x H(x_t,v_t) \ud t = - \na_x U(x_t) \ud t\,. 
    \end{align*}
    Let $p_0$ be an initial distribution on $(x_0,v_0)$, and denote by $p_t$ the pushforward measure induced by the flow $(x_0, v_0) \to (x_t, v_t)$. This defines a Markov process with the invariant measure $\hm (\rd x \, \rd v) = \mu(\rd x) \kappa (\rd v) \propto \exp( - H) \rd x \, \rd v$ and the generator 
    \begin{align} \label{genhamflow}
        \hl f (x,v) = v \dd \na_x f(x,v) - \na_x U(x) \dd \na_v f(x,v)\,.
    \end{align}
    To see $\hl$ in \eqref{genhamflow} is a lift of $L$ in \eqref{genold}, by approximation, it suffices to verify conditions \eqref{eq:dilift1}--\eqref{eq:dilift2} for $f(x),g(x)\in C_0^\infty$. Indeed, a direct computation gives 
    \begin{align*}
        \l f \circ \pi, \hat{L} (g \circ \pi)  \r_{L^2(\hat{\mu})} = \int_{\R^d \times \R^d}  \overline{f(x)}\, v \dd \na_x g(x) \, \mu(\rd x) \kappa (\rd v) = 0 \,, 
    \end{align*}
    by $\kappa(\rd v) \propto e^{-\frac{|v|^2}{2}}\, \rd v$ and $\int_{\R^d} v \, \ka(\rd v) = 0$, and 
    \begin{align*}
         \l \hl(f \circ \pi), \mf{S}\, \hl (g \circ \pi)  \r_{L^2(\hat{\mu})} & = \int_{\R^{2d}} \overline{v \dd \na _x f(x)} \, \mf{S} \left(v \dd \na _x g(x)\right) \, \hm(\rd x) \\
         & = \int_{\R^d} \overline{\na _x f(x)}^\top \left(\int_{\R^d} v  (\mf{S}_v (v))^\top \ka(\rd v) \right) \na_x g(x) \, \mu(\rd x) \\
         & = \int_{\R^d} \overline{\na _x f(x)}^\top \na_x g(x) \mu(\rd x) =  - \l f, L g \r_{L^2(\mu)}\,,
    \end{align*}
    if and only if 
    \begin{align} \label{oldliftcond}
        \int_{\R^d} v  (\mf{S}_v (v))^\top \ka(\rd v) = I_{d \times d}\,.
    \end{align}
    where $I_{d \times d}$ is the identity matrix on $\R^d$, and $\mf{S}$ is defined by    
    $\mf{S} := \text{id}_{L^2(\R^d,\mu)} \otimes \mf{S}_v$ for a positive bounded operator $\mf{S}_v$ on $L^2_0(\mc{V},\ka)$. Recalling that $\ka$ is a standard Gaussian, it is clear that \cref{oldliftcond} holds for any $\mf{S}$ such that $\mf{S}_v(v) = v$. For instance, one could take $\mf{S}_v$ as the identity operator as in \cite{eberle2024non}, or the inverse of the negative Ornstein-Uhlenbeck generator:
    \begin{align} \label{genoup}
        \mf{S}_v = (- L_{\rm OU})^{-1}\,,\q L_{\rm OU} = - v \dd \na_v + \Delta_v\,,
    \end{align}
    which is positive and bounded on $L_0^2(\R^d,\ka)$, or the negative full refreshment generator:
    \begin{align} \label{genrefresh}
        \mf{S}_v = - R\,,\q R = \Pi_v - \id\,,
    \end{align}
    where $\Pi_v$ is defined as in \cref{eq:condexpvel}.
    \item \emph{Underdamped Langevin dynamics} (ULD) corresponds to the SDE on $(x,v) \in \R^d \times \R^d$:
    \begin{align*}
        \rd x_t = v_t \rd t\,,\q \rd v_t = - \na U(x_t)  - \gamma v_t \rd t + \sqrt{2 \gamma} \ud B_t\,, 
    \end{align*}
    where $B_t$ is a standard Brownian motion on $\R^d$ and $\gamma > 0$ is the friction parameter. Its generator is given by $\hl_\gamma = \hl + \gamma L_{\rm OU}$ with invariant measure $\hm = \mu \otimes \ka \propto e^{- U(x) - \frac{1}{2}|v|^2}$, where $\hl$ and $L_{\rm OU}$ are given in \eqref{genhamflow} and \eqref{genoup}, respectively. Thanks to $L_{\rm OU}(f \circ \pi) = 0$ for all $f(x)$, all the above discussion for deterministic Hamiltonian flow holds for underdamped Langevin processes, and hence $\hl_\gamma$ is a lift of $L$ for any $\gamma > 0$ with $\mf{S} = \id$, $(- L_{\rm OU})^{-1}$, or $- R$. 
    
    \item \emph{Randomized Hamiltonian Monte Carlo} (RHMC) simulates Hamiltonian dynamics for a random duration $T \sim \exp(\gamma)$, where $\gamma$ is the refresh rate. Then the velocity (momentum) is fully resampled from a Gaussian distribution $\kappa = \mathcal{N}(0, I_d)$, preventing correlations from accumulating over iterations. The associated Markov process has the generator $\hl_\gamma = \hl + \gamma R$ and leaves invariant the measure $\hm \propto e^{- U(x) - \frac{1}{2}|v|^2}$, with $\hl$ and $R$ given in \eqref{genhamflow} and \eqref{genrefresh}, respectively. Again, since $R(f\circ \pi) = 0$ for any $f(x)$, by the discussion for Hamiltonian flow, $\hl_\gamma$ for RHMC is also a lift of $L$ for any $\gamma > 0$. 
\end{enumerate}

Note that the generator \eqref{genhamflow} for the Hamiltonian flow corresponds precisely to the case $\gamma = 0$ in the generators $\hl_\gamma$ for both ULD and RHMC. The damping parameter (or the refresh rate) $\gamma > 0$ plays a crucial role in achieving the optimal convergence acceleration through lifting, as shown in \cref{thm:uppermatrix,thm:lowerboundpt}, which demonstrate that a careful selection of $\gamma$ leads to the maximal convergence rate. Specifically, assuming $\na^2 U(x) \ge - K \lad(L_{\rm OU}) I_{d \times d}$ for some $K \ge 0$, where $\lad(L_{\rm OU})$ denotes the spectral gap of $L_{\rm OU}$, one can show that for both ULD and RHMC, setting $\gamma = \Theta(\sqrt{(1 + K) \lad(L_{\rm OU})})$ yields the maximal $L^2$ convergence rate $\nu = \Omega (\sqrt{\lad(L_{\rm OU})}/\sqrt{1+ K}$ of $\{\hl_\gamma\}_{\gamma \ge 0}$, 
establishing the optimal lifts in these two cases; see \cite{cao2023explicit} and \cite{eberle2024space}*{Section 5.5} for precise statements. 

Furthermore, many piecewise-deterministic Markov processes, such as the Bouncy Particle Sampler, Event Chain Monte Carlo, and the Zig-Zag process, can also be interpreted as lifts of overdamped Langevin dynamics. For additional applications, we refer readers to \cite{eberle2024space}, which extends this framework to reversible diffusions on Riemannian manifolds with boundary reflections, and to \cite{eberle2025convergence}, where detailed analyses of sampling algorithms and processes with non-trivial boundary behavior are provided.

\subsection{Lifting finite Markov chains via quantum dynamics}  \label{sec:liftfinitemarkov}
As discussed in the Introduction, low-dimensional Markov chains typically exhibit diffusive mixing behavior, characterized by a time scaling as $\Theta(n^2)$ to traverse a distance $n$. A canonical example is the simple symmetric random walk on the chain $\mathbb{Z}/n\mathbb{Z}$, whose mixing time is $\Theta(n^2)$. The seminal work \cite{diaconis2000analysis} analyzed 
a lifted random walk on the product space $(x,v) \in \mathbb{Z}/(n\mathbb{Z}) \times \{+1, -1\}$ that at each step flips its velocity with probability $\frac{1}{n}$ and 
moves in the $x$-state space with velocity $v$. It was shown that this lifted chain achieves a quadratic speedup with mixing time $\Theta(n)$. 

This section focuses on the lifting of reversible Markov chains on the state space $\{1, 2, \dots, n\}$ with a uniform stationary measure using quantum Markov semigroups. We shall establish the existence of an optimal (quantum) lift for the simple random walk on the chain, thereby complementing the results of \cite{diaconis2000analysis}.
Specifically, for a reversible classical Markov semigroup $\lo$ on the finite set $\{1,2,\ldots,n\}$ with the unique uniform stationary distribution,
building on the frameworks developed in \cref{sec:liftingmatrix,sec:lifthilbert}, we construct a primitive QMS generator $\mc{L}_\gamma  = \lh + \gamma \ld$ with the maximally mixed state $\frac{\mi}{n}$ as the unique invariant state, which would satisfy Conditions~\ref{asspA}--\ref{assump:overdp} and serve as a lift of $\lo$. 

Following the framework in \cite{li2024quantum}*{Section 5.1}, consider a symmetric matrix $A \in \mathbb{C}^{n \times n}$ with distinct eigenvalues $\{\kappa_i\}_{i=1}^n \subset \mathbb{R}$ and spectral decomposition $A = \sum_{i=1}^n \kappa_i P_i$, where $P_i = \ket{i}\bra{i}$ is the projection onto the eigenspace for $\kappa_i$. We define the dephasing generator
\begin{align} \label{eq:singlejump}
   \ld(X) := - [A, [A, X]] = - \sum_{i,j} (\kappa_i - \kappa_j)^2 P_i X P_j\,,
\end{align}
which is self-adjoint on $\mc{B}(\mathbb{C}^n)$. The operator $\ld$ has eigenvalues $-(\kappa_i - \kappa_j)^2$ with corresponding eigenspaces $P_i \mc{B}(\mathbb{C}^n) P_j$. Consequently, its operator norm and spectral gap are given by
\begin{align} \label{eq:ep1norm}
 \|\ld\|_{(2,\frac{\mi}{n}) \to (2,\frac{\mi}{n})} = \max_{i,j} |\kappa_i - \kappa_j|^2, \quad \lad_S = \min_{i\neq j} |\kappa_i - \kappa_j|^2,
\end{align}
and its kernel consists of matrices commuting with $A$:
\begin{equation} \label{exp:kernel1}
   \ker(\ld) = \Bigl\{X \in \mc{B}(\mathbb{C}^n) \,\Big|\, X = \sum_{j=1}^n x_j \ket{j}\bra{j}, \ x_j \in \mathbb{C} \Bigr\},
\end{equation}
i.e., the diagonal matrices in the basis $\{\ket{i}\bra{j}\}_{i,j=1}^n$.
We next define $\lh$ by $\lh = \mc{L}_H = i[H, \cdot]$ the coherent generator induced by a Hamiltonian $$H = \sum_{i,j} h_{ij} \ket{i}\bra{j}\,.$$ Then \cref{asspA} is automatically satisfied. By \cref{eq:kerlamma}, the Lindbladian $\mc{L}_\gamma = \mc{L}_H + \gamma \ld$ is primitive if and only if
\begin{align} \label{eq:primicond}
     \ker(\mc{L}_H) \cap \ker(\ld) = \operatorname{Span}\{\mi\}.
\end{align}
For $X = \sum_j x_j \ket{j}\bra{j} \in \ker(\ld)$, the condition $X \in \ker(\mc{L}_H)$ reduces to
\begin{equation} \label{eq:ep1commu}
   (x_i - x_j) h_{ij} = 0 \quad \forall i,j.
\end{equation}
Thus, \eqref{eq:primicond} holds when the only solution to \eqref{eq:ep1commu} is $x_j \equiv c$ for some $c \in \mathbb{C}$. This is equivalent to the graph connectivity condition: for any pair of indices $\ell \neq m$, there exists a path $(\ell_1, \ldots, \ell_k)$ with $\ell_1 = \ell$, $\ell_k = m$, and $h_{\ell_j \ell_{j+1}} \neq 0$ for all $j$. 

In summary, the generator $\mc{L}_\gamma = \mc{L}_H + \gamma \ld$ is primitive if and only if $H$ is irreducible (i.e., the graph associated with its off-diagonal elements is connected). Moreover, in this case, \cref{asspB} follows since $\dim \ker(\ld) = n > 1 = \dim \ker \mc{L}_\gamma$. Finally, \cref{assump:PHP} always holds: for any $X = \sum_j x_j \ket{j}\bra{j} \in \ker(\ld)$ and self-adjoint $H$, we have
\begin{align*}
    \langle X, \mc{L}_H X \rangle_{2,\frac{\mi}{n}} = i \langle X, [H, X] \rangle_{2, \frac{\mi}{n}} = 0,
\end{align*}
which implies $\es \mc{L}_H \es = 0$, where $\es$ denotes the projection onto $\ker(\ld)$.

For our analysis, we define $\lo$ through the alternative formulation \eqref{def:generallo} with $\mc{S} = \id$ (the primary one \eqref{eq:odlimit_generator} would require more involved arguments). To derive the explicit form of $\lo$, we compute, for $X = \sum_j x_j \ket{j}\bra{j}$ and $Y = \sum_j y_j \ket{j}\bra{j}$ in $\ker(\ld)$:

\begin{equation}\label{def:loclass}
    \begin{aligned}
        \langle [H, Y], [H, X] \rangle_{2,\frac{\mi}{n}} 
        &= \sum_{i,j} y_i^* x_j \langle [H, \ket{i}\bra{i}], [H, \ket{j}\bra{j}] \rangle_{2,\frac{\mi}{n}} \\
        &= \frac{1}{n}\sum_{i,j} y_i^* x_j \left(2 \bra{i} H^2 \ket{j}\delta_{ij} - 2 |h_{ij}|^2 \right) \\
        &= \frac{2}{n}\sum_i y_i^* x_i \sum_k |h_{ik}|^2 - \frac{2}{n}\sum_{i,j} y_i^* x_j |h_{ij}|^2 \\
        &= -\frac{1}{n} \langle y, \hat{H} x \rangle\,,
    \end{aligned}
\end{equation}
where the third equality follows from
\begin{align*}
   \bra{i} H^2 \ket{i} = \sum_{k=1}^n |h_{ik}|^2\,,
\end{align*}
and $\hat{H}$ is a $Q$-matrix (i.e., non-negative off-diagonals with zero row sums) defined by
\begin{equation} \label{def:matrix}
    \hat{H}_{ij} := 
    \begin{cases}
        2|h_{ij}|^2 & \text{for } i \neq j\,, \\
        -2\sum_{l \neq i} |h_{il}|^2 & \text{for } i = j\,.
    \end{cases}
\end{equation}
The irreducibility of $H$ ensures that $\hat{H}$ is also irreducible and symmetric. Consequently, $\exp(t \hat{H})$ generates an ergodic continuous-time Markov chain with uniform stationary measure $\nu$. Equipping $\C^n$ with the $\nu$-weighted inner product $\langle \cdot, \cdot \rangle_{\ell^2(\nu)}$, we recognize from \eqref{def:loclass} that
\begin{align*}
    \langle [H, Y], [H, X] \rangle_{2,\frac{\mi}{n}} = -\langle x, \hat{H} y \rangle_{\ell^2(\nu)},
\end{align*}
which establishes $\lo = \hat{H}$ as the generator of a primitive symmetric QMS on the commutative algebra \eqref{exp:kernel1}, verifying \cref{assump:overdp}. Therefore, following the arguments in \cref{sec:liftingmatrix}, we have that $\mc{L}_\gamma$ constitutes a lift of $\lo$ in the sense of \cref{def:generallift}, and it admits convergence rate estimates analogous to those in \cref{thm:lower,thm:uppermatrix}. 

Further, conversely, given a symmetric ergodic Markov semigroup $\exp(tQ)$ on the state space $\{1,\ldots,n\}$, we can construct its quantum lifted dynamics as above. Specifically, define the Hamiltonian $H = \sum_{i,j} h_{ij} \ket{i}\bra{j}$ by 
\begin{align} \label{eq:hamq}
    h_{ij} = \begin{cases}
        \sqrt{Q_{ij}/2}\,, & \text{if $i \neq j$\,,} \\
        0\,, & \text{if $i = j$\,,}
    \end{cases}
\end{align}
and let $\mc{L}_\gamma = \mc{L}_H + \gamma \ld$ be the Lindbladian with $\ld$ given in \eqref{eq:singlejump} and $\mc{L}_H = i[H, \dd]$. This QMS $\exp(t \mc{L}_\gamma)$ is a lift of $\exp(t Q)$ with convergence guarantees (\cref{thm:lower,thm:uppermatrix}). We now summarize the above discussions into the following result. 

\begin{proposition}
   Let $\exp(tQ)$ be a symmetric ergodic Markov semigroup on $\{1,\ldots,n\}$, and let $\mc{L}_\gamma = \mc{L}_H + \gamma \ld$ ($\gamma > 0$) be the Lindbladian defined through \eqref{eq:singlejump} and \eqref{eq:hamq}, where $A$ has $n$ distinct eigenvalues $\{\kappa_i\}_{i=1}^n$ with $\kappa_i = \Theta(1)$.
    Then, it holds that
    \begin{itemize}
        \item  $\mc{L}_\gamma$ is primitive and hypocoercive with the maximally mixed state $\frac{\mi}{n}$ as the unique invariant state. Moreover, $\exp(t \mc{L}_\gamma)$ is a lift of $\exp(t Q)$ in the sense of \cref{def:generallift}.
        \item $\mc{L}_\gamma$ has the maximal 
     $L^2$ convergence rate estimate at a particular $\gamma_{\max}$:
    \begin{align} \label{eq:l2esti}
        \Omega\left( \frac{\sqrt{\lad_Q}}{1  +   K_1  + K_2 \lad_Q^{-1/2}} \right) \leq \nu(\mc{L}_\gamma) \leq \mc{O}(\sqrt{\lad_Q})\,,
    \end{align}
    where $\lad_Q$ denotes the spectral gap of $Q$, and the constants $K_1, K_2 > 0$ satisfy  
    \begin{align} \label{eq:keyestforopt}
          \norm{[H,[H, Y]]}_{2,\frac{\mi}{n}} \le K_1 \norm{Q y}_{\ell^2(\nu)} + K_2 \sqrt{- \l y, Q y\r_{\ell^2(\nu)}}\,,
    \end{align}
    for $Y = \sum_j y_j \ket{j}\bra{j} \in \ker(\ld)$. 
    \end{itemize}
\end{proposition}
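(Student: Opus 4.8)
The plan is to verify, for the concrete pair $(\mc{L}_\gamma,Q)$, the structural hypotheses of \cref{sec:liftingmatrix,sec:lifthilbert} and then quote \cref{thm:lower,thm:uppermatrix}; no new machinery is needed. Throughout I fix $\si=\mi/n$, so that the KMS inner product is a scalar multiple of the Hilbert--Schmidt one. \textbf{Step 1 (lifting structure).} Since $A=A^*$, the dephasing generator \eqref{eq:singlejump} is KMS-self-adjoint and $\mc{L}_H=i[H,\cdot]$ is KMS-anti-self-adjoint, so \cref{asspA} holds and, by \cref{lem:symanti}, $\mi/n$ is invariant. By \eqref{exp:kernel1}, $\mc{F}(\ld)$ is the diagonal algebra, of dimension $n$; and by \eqref{eq:kerlamma}, $\mc{F}(\mc{L}_\gamma)=\ker(\mc{L}_H)\cap\ker(\ld)$ consists of the diagonal $X=\sum_j x_j\ket{j}\bra{j}$ with $(x_i-x_j)h_{ij}=0$ for all $i,j$; since $h_{ij}\neq0\iff Q_{ij}\neq0$ by \eqref{eq:hamq} and the graph of $Q$ is connected ($Q$ being ergodic), this forces $X\propto\mi$, so $\mc{L}_\gamma$ is primitive with unique invariant state $\mi/n$. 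For $n\ge2$, \cref{asspB} holds, and $\es\mc{L}_H\es=0$ (\cref{assump:PHP}) follows from $\langle X,\mc{L}_H X\rangle_{2,\si}=\tfrac{i}{n}\big(\tr(X^*HX)-\tr(HX^*X)\big)=0$ for every diagonal $X$ (both traces vanish because $H$ has zero diagonal) together with polarization over $\C$. Ergodicity of $\exp(t\mc{L}_\gamma)$ then follows from \cref{asspA} and \cref{assump:PHP}, and \cref{lem:hypoqms} gives hypocoercivity because $\ker(\mc{L}_\gamma+\mc{L}_\gamma^\star)=\ker(\ld)$ has dimension $n>1=\dim\mc{F}(\mc{L}_\gamma)$.

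\textbf{Step 2 (the collapsed generator is $Q$).} I would define $\lo$ through the alternative formula \eqref{def:generallo} with $\mf{S}=\id$, which is legitimate by \cref{rem:modifylo} and avoids the pseudoinverse. The identity \eqref{def:loclass} (whose only input is $\bra{i}H^2\ket{i}=\sum_k|h_{ik}|^2$) gives, for diagonal $X,Y$, $\langle\mc{L}_H Y,\mc{L}_H X\rangle_{2,\si}=-\tfrac1n\langle y,\hat H x\rangle$ with $\hat H$ the $Q$-matrix of \eqref{def:matrix}; plugging in $h_{ij}=\sqrt{Q_{ij}/2}$ and using that $Q$ has zero row sums shows $\hat H=Q$. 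Identifying $\ker(\ld)$ with $\ell^2(\{1,\dots,n\},\nu)$, $\nu$ the uniform measure (the reduced state of $\mi/n$), we get $\mc{L}_O=Q$, a genuine symmetric Markov generator, so the analog of \cref{assump:overdp} holds; conditions \eqref{eq:phpgen}--\eqref{eq:liftgen} of \cref{def:generallift} are exactly the two identities just recorded. Hence $\exp(t\mc{L}_\gamma)$ is a lift of $\exp(tQ)$ in the sense of \cref{def:generallift} (indeed of the modified \cref{def:2ndlift}), with $\lad_O=\lad_Q$.

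\textbf{Step 3 (convergence rates).} The upper bound $\nu(\mc{L}_\gamma)=\mc{O}(\sqrt{\lad_Q})$ is immediate from \cref{thm:lower} (with $\mf{S}=\id$ one has $\w{s_{\rm m}}=1$). For the lower bound I invoke \cref{thm:uppermatrix} in its $\mf{S}=\id$ form (\cref{rem:modifylo}): the hypotheses \cref{assp:locoercive,assmp:decomposition,assump:est1} are automatic in finite dimension except for \eqref{asspc0}--\eqref{asspc2}, which hold with $K_0=K_3=1$ by Cauchy--Schwarz exactly as in the proof of \cref{thm:uppermatrix}, while \eqref{asspc1} reduces via \eqref{eq:simpleassp3} to the inequality \eqref{eq:keyestforopt} after dropping the projection $(\id-\es)$ on its left side. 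By \eqref{eq:ep1norm} the hypothesis $\kappa_i=\Theta(1)$ gives $\|\ld\|=\Theta(1)$ and $\lad_S=\Theta(1)$, so \eqref{eq:rateestmatrix2} applies and yields, at $\gamma_{\max}$, the lower bound $\Omega\!\big(\sqrt{\lad_Q}/(1+K_1+K_2\lad_Q^{-1/2})\big)$; together with the upper bound this is \eqref{eq:l2esti}.

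\textbf{Main obstacle.} No single deep estimate is involved; the delicate part is the bookkeeping in Step 3 --- confirming that the non-canonical choice $\mf{S}=\id$ preserves all hypotheses of \cref{thm:uppermatrix} and that every constant appearing there ($\w{s_{\rm m}}$, $s_{\rm M}$, $\lad_S$, $\|\ld\|$, $K_0$, $K_3$) is $\Theta(1)$, so that the estimate collapses to the clean form \eqref{eq:l2esti}. The one genuinely model-specific inequality is \eqref{eq:keyestforopt}; proving it with dimension-independent $K_1,K_2$ is the analytically substantive point --- it is what turns \eqref{eq:l2esti} into an optimal lift for the simple random walk on the chain --- and this is taken up in the discussion following the proposition.
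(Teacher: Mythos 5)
Your proposal is correct and follows essentially the same route as the paper: verify Conditions~\ref{asspA}--\ref{assump:PHP} and primitivity via graph connectivity, identify the collapsed generator as $Q$ through the computation \eqref{def:loclass} with the choice $\mf{S}=\id$ permitted by \cref{rem:modifylo}, and then invoke \cref{thm:lower} for the upper bound and \cref{thm:uppermatrix} (with $K_0=K_3=1$, $\lad_S=\Theta(1)$, and \eqref{eq:keyestforopt} as the simplified form of \eqref{eq:simpleassp3}) for the lower bound. The only caveat, which you inherit from the paper itself, is that passing from $\kappa_i=\Theta(1)$ to $\lad_S=\min_{i\neq j}|\kappa_i-\kappa_j|^2=\Theta(1)$ tacitly assumes the eigenvalue spacings of $A$ are of order one.
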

The convergence rate estimate \eqref{eq:l2esti} follows from arguments analogous to those for \eqref{eq:estlgammasim}, while the key inequality \eqref{eq:keyestforopt} represents a simplified version of \eqref{eq:simpleassp3}. To obtain optimal constants $(K_1, K_2)$, we reformulate \eqref{eq:keyestforopt} in terms of $H$ and $Q$.  We compute:
\begin{align*}
    [H, [H, Y]] &= \sum_j y_j [H, [H, P_j]] \\ &= \sum_j x_j \left(H^2 P_j - 2 H P_j H + P_j H^2  \right),
\end{align*}
for $Y = \sum_j y_j P_j$ with $P_j = \ket{j}\bra{j}$, and then 
\begin{align*}
   \norm{[H, [H, Y]]}_{2,\frac{\mi}{n}}^2 = \frac{1}{n}\sum_{i,j} x_i^* x_j M_{ij}\,,
\end{align*}
where, by noting $(h_{ij})$ in \eqref{eq:hamq} is real symmetric, the positive semidefinite matrix $M$ has entries:
\begin{align} \label{def:matrixm}
    M_{ij} 
  = & 2 \bra{i} H^4 \ket{j} \d_{ij} - 8 \bra{i} H^3 \ket{j}  h_{ij} + 6 \bra{i}H^2\ket{j}^2\,.
\end{align}
To establish \eqref{eq:keyestforopt}, it suffices to find positive constants $C_1, C_2$ such that:
\begin{align}\label{eq:matrixbound}
    M \preceq C_1 Q^2 - C_2 Q,
\end{align}
which immediately gives the desired inequality with $K_1 = \sqrt{C_1}$ and $K_2 = \sqrt{C_2}$. 

It is important to note that verifying \eqref{eq:matrixbound} only requires  $Q$, as the definition of  $M$ in \eqref{def:matrixm} depends solely on  $Q$ through \eqref{eq:hamq}. To achieve the optimal (quantum) lift with a quadratic speedup for the mixing of $\exp(t Q)$, it is necessary that \eqref{eq:matrixbound} holds with  
\begin{equation} \label{keyrela}
    \sqrt{C_1} + \sqrt{C_2 / \lambda_Q} = \Theta(1)\,.    
\end{equation}
An interesting open question is to characterize the class of $Q$-matrices for which this property holds. For specific examples with explicit forms of $Q$, this may be verified through direct computation.
For instance, as we show next, in the case of a simple random walk on the chain as in \cite{diaconis2000analysis}, the above construction indeed yields an optimal lift. 
We consider the simple nearest-neighbor random walk on $\{1, \ldots, n\}$, defined by the transition matrix  $P(x, y)$, where  $P(x, y) = 1/2$ for $y = x \pm 1$, and $ P(1, 1) = P(n, n) = 1/2$. The corresponding continuous-time Markov chain is given by $ \exp(t Q)$ with $Q = P - I$. This Markov chain is reversible and ergodic, with the uniform distribution as its stationary measure. Then, for this model, a straightforward computation shows \eqref{eq:matrixbound} with $C_1 = \frac{3}{2}$ and $C_2 = 0$:
\begin{align*}
    M \preceq \frac{3}{2} Q^2\,,
\end{align*}
where 
\begin{equation*}
    M = \begin{pmatrix}
  \frac{5}{8} & -1 & \frac{3}{8} & 0 & \cdots & 0 & 0 \\ - 1 & \frac{17}{8} & - \frac{3}{2} & \frac{3}{8} & \cdots & 0 & 0 \\ \frac{3}{8} & - \frac{3}{2} & \frac{9}{4} & - \frac{3}{2} & \cdots & 0 & 0 \\ 0 & \frac{3}{8} & - \frac{3}{2} & \frac{9}{4} & \cdots & 0 & 0 \\ \vdots & \vdots & \vdots & \vdots & \ddots & \vdots & \vdots \\ 0 & 0 & 0 & 0 & \cdots & \frac{17}{8} & - 1 \\ 0 & 0 & 0 & 0 & \cdots & - 1 & \frac{5}{8} 
\end{pmatrix}\,, \q Q^2 = 
\begin{pmatrix}
  \frac{1}{2} & - \frac{3}{4} & \frac{1}{4} & 0 & \cdots & 0 & 0 \\ - \frac{3}{4} & \frac{3}{2} & - 1 & \frac{1}{4} & \cdots & 0 & 0 \\ \frac{1}{4} & - 1 & \frac{3}{2} & -1 & \cdots & 0 & 0 \\ 0 & \frac{1}{4} & - 1 & \frac{3}{2} & \cdots & 0 & 0 \\ \vdots & \vdots & \vdots & \vdots & \ddots & \vdots & \vdots \\ 0 & 0 & 0 & 0 & \cdots & \frac{3}{2} & - \frac{3}{4} \\ 0 & 0 & 0 & 0 & \cdots & - \frac{3}{4} & \frac{1}{2} 
\end{pmatrix}.
\end{equation*}
It follows that \cref{keyrela} holds, and $\exp(t \mc{L}_\gamma)$ for some $\gamma > 0$ has the sharp $L^2$ convergence rate $\nu = \Theta(\sqrt{\lad_Q})$ and hence is an optimal lift of $\exp(t Q)$. 

\subsection{Lifting quantum Markov processes} \label{sec:appbipart} 
In this section, we proceed to study the lifting of detailed balanced quantum Markov semigroups, following the framework outlined in \cref{sec:liftingmatrix}. 
Let $\exp(t \mathcal{L}_O)$ be a detailed balanced QMS on a von Neumann algebra $\mc{M}_B \subset \mathcal{B}(\maf{H}_B)$. Without loss of generality, we consider the lifted processes (forming a class of hypocoercive QMS) $\exp(t \mathcal{L}_\gamma)$ of the form \eqref{eq:decom}, acting on $ \mathcal{M}_{AB}:= \mathcal{M}_A \otimes \mathcal{M}_B$, where $\mathcal{M}_A = \mathcal{B}(\maf{H}_A)$ represents the extended system and $\mathcal{M}_B$ is the primary system. Here both $\maf{H}_A$ and $\maf{H}_B$ are finite-dimensional Hilbert spaces. 
Throughout this section, we use the subscripts $A$,  $B$, and  $AB$ to indicate the subsystem over which the Hilbert-Schmidt (HS) inner product is defined. For example, $\langle \cdot, \cdot \rangle_A$ denotes the HS inner product on $\mathcal{M}_A$.

Specifically, let $\mc{L}^A$ be a primitive QMS generator acting on the subsystem $A$ with the unique full-rank invariant state $\si^A \in \mc{D}_+(\mc{M}_A)$. We define the generator $\ld$ in \eqref{eq:decom} by extending $\mc{L}^A$ to the composite system $\mc{M}_{AB}$ as follows:
\begin{align} \label{def:lsbip}
    \ld = \mathcal{L}^A \otimes \id^B\,,
\end{align}
where $\id^B$ is the identity map on $\mc{M}_B$. We define $\lh$ in  \eqref{eq:decom} as a coherent term:
\begin{align} \label{Hamiltonianterm}
    \lh(X) = \mc{L}_H(X) := i [H, X]\,,
\end{align}
where $H \in \mc{B}(\maf{H}_A \otimes \maf{H}_B)$ (not necessarily belonging to $\mc{M}_{AB}$) is a traceless Hamiltonian, i.e., $\tr(H) = 0$. Let $H^A = \tr_B(H)$ and $H^B = \tr_A(H)$ be the reduced subsystem Hamiltonians. Then, $H$ can be decomposed as: 
\begin{align} \label{eq:hamil}
    H = H^A \otimes \mi^B + \mi^A \otimes H^B + H^{AB}\,,\q \tr_A (H^{AB}) = 0 = \tr_B (H^{AB})\,,
\end{align}
where $H^{AB}$ consists exclusively of interaction terms in the composite system. 
This framework also naturally encompasses various boundary-driven spin chain models that appear in nonequilibrium statistical mechanics
\cites{prosen2012comments,landi2022nonequilibrium,tupkary2023searching}.
To avoid potential confusion with the subsystem Lindbladian  $\mathcal{L}^A$, we will use $\mathcal{L}_H$ to refer to the term in \eqref{Hamiltonianterm}, rather than $\lh$. Thus, the model under consideration in this section can be written as:
\begin{align} \label{defhypoqms}
    \mathcal{L}_\gamma = \mathcal{L}_H + \gamma \mathcal{L}_S = i [H, \cdot] + \mathcal{L}^A \otimes \mathrm{id}^B\,.
\end{align}
Next, in \cref{sec:characterlift}, we first characterize the conditions under which 
$\mathcal{L}_\gamma$ in \eqref{defhypoqms} can be regarded as a lift of some detailed balanced Lindbladian $\lo$ supported on $\mc{M}_B$, using the results of \cref{sec:liftingmatrix}. Conversely, in \cref{sec:constructlift}, we showcase how to construct (optimal) lifts $\mathcal{L}_\gamma$ for a given detailed balanced generator $ \mathcal{L}_O$. 

\subsubsection{Characterizing lifting conditions} \label{sec:characterlift} 
Building on the results in \cref{sec:liftingmatrix}, it suffices to verify Conditions \ref{asspA}--\ref{assump:overdp} to ensure that $\mc{L}_\gamma$ serves as a lift of $\lo$ defined as in \eqref{eq:odlimit_generator} by \cref{def:2ndlift}. This also allows us to apply the convergence rate estimates from \cref{thm:lower,thm:uppermatrix} to the lifted process $\exp(t \mc{L}_\gamma)$. The main result of this section is that, for bipartite systems, many of these conditions either simplify significantly or become unnecessary. We start by translating \cref{asspA} into the following one:
\begin{conditionnp}{\ref{asspA}'}{Reformulation of \cref{asspA}}\label{asspA2}
For a full-rank state $\si \in \mc{D}_+(\mc{M}_{AB})$, it holds that $[\si, H] = 0$ for the Hamiltonian $H$ \eqref{eq:hamil} and $\ld = \mathcal{L}^A \otimes \id^B$ is $\si$-KMS detailed balanced.
\end{conditionnp}


\noindent
By \cref{lem:symanti}, $\si \in \mc{D}_+(\mc{M}_{AB})$ in \cref{asspA2} is a full-rank invariant state of $\exp(t \mc{L}^\dag_\gamma)$ for any $\gamma > 0$. To see the equivalence between \cref{asspA} and \cref{asspA2}, it suffices to note that $\mc{L}_H(X) = i [H, X]$ is anti-self-adjoint for the KMS inner product $\l \dd, \dd\r_{\si,1/2}$ if and only if $[H,\si] = 0$. 
Under above assumptions, we show that the invariant state $\si \in \mc{D}_+(\mc{M}_{AB})$ necessarily admits a product structure, by generalizing 
\cite{carlen2025stationary}*{Theorem 1} as follows.  

\begin{proposition} \label{thm:uniquefixed}
Let $\ld$ be defined in \eqref{def:lsbip}, where $\mc{L}^A$ is a primitive QMS generator on the subsystem $A$ with invariant state $\si^A \in \mc{D}_+(\mc{M}_A)$. Suppose that \cref{asspA2} holds. Then, 
\begin{itemize}
    \item There exists a state $\si^B \in \mc{D}_+(\mc{M}_B)$ such that 
    \begin{align} \label{eq:productstate}
        \si = \si^A \otimes \si^B\,.
    \end{align} 
    Moreover, $\mc{L}^A$ is $\si^A$-KMS detailed balanced. 
    \item It holds that 
    \begin{align} \label{eq:cond2}
        \left[\si^A, H^A \right] = 0\,,\q \left[\si^B, H^B\right] = 0\,,\q \left[\si, H^{AB}\right] = 0\,,
    \end{align}
    where Hamiltonians $H^A$, $H^B$, and $H^{AB}$ are given in \eqref{eq:hamil}.
\end{itemize}
\end{proposition}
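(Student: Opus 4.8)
\textbf{Proof plan for \cref{thm:uniquefixed}.}

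The plan is to first establish the product structure \eqref{eq:productstate}, and then deduce the commutation relations \eqref{eq:cond2} from it. For the first part, I would start from the fact (guaranteed by \cref{asspA2} and \cref{lem:symanti}) that $\si$ is a full-rank invariant state of $\exp(t\mc{L}_\gamma^\dag)$ and that $\ld = \mc{L}^A \otimes \id^B$ is a QMS generator in its own right, hence $\ld^\dag(\si) = 0$, i.e.\ $(\mc{L}^A)^\dag$ acting on the first tensor leg kills $\si$. The key point is that $\mc{L}^A$ is primitive with unique full-rank invariant state $\si^A$, so the adjoint $\exp(t(\mc{L}^A)^\dag)$ is a primitive channel semigroup on $\mc{M}_A$ whose unique fixed point is $\si^A$; applying $\exp(t(\mc{L}^A)^\dag)\otimes\id^B$ to $\si$ and letting $t\to\infty$, the ergodicity \eqref{eq:conver_qmsg} (or \cref{lem:ergopt}) forces $\si = \si^A \otimes \tr_A(\si)$. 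Setting $\si^B := \tr_A(\si) \in \mc{D}_+(\mc{M}_B)$ gives \eqref{eq:productstate}. That $\mc{L}^A$ is $\si^A$-KMS detailed balanced then follows because $\ld = \mc{L}^A\otimes\id^B$ is $\si$-KMS detailed balanced with $\si = \si^A\otimes\si^B$ a product state: the modular operator factorizes $\Delta_\si = \Delta_{\si^A}\otimes\Delta_{\si^B}$, so KMS-self-adjointness of $\mc{L}^A\otimes\id^B$ with respect to $\l\cdot,\cdot\r_{\si,1/2}$ restricts to KMS-self-adjointness of $\mc{L}^A$ with respect to $\l\cdot,\cdot\r_{\si^A,1/2}$ (test against $X\otimes\mi^B$, $Y\otimes\mi^B$).

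For the second part, I would use $[\si, H] = 0$ from \cref{asspA2} together with the decomposition \eqref{eq:hamil}, $H = H^A\otimes\mi^B + \mi^A\otimes H^B + H^{AB}$, and $\si = \si^A\otimes\si^B$. The strategy is to take partial traces of the identity $[\si^A\otimes\si^B, H] = 0$. Tracing out $B$: since $\tr_B(H^{AB}) = 0$ and $\tr_B(\mi^A\otimes H^B)$ is a scalar multiple of $\mi^A$ which commutes with everything, and $\tr_B(\si^B) = 1$, one obtains $\tr(\si^B)\,[\si^A, H^A] = [\si^A, H^A] = 0$ after using $[\si^A\otimes\si^B,\, \mi^A\otimes H^B] = \si^A\otimes[\si^B,H^B]$ has zero partial trace over $B$ (because $\tr_B([\si^B,H^B]) = 0$ by cyclicity). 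Symmetrically, tracing out $A$ gives $[\si^B, H^B] = 0$. Finally, subtracting these two commutators from $[\si, H] = 0$ leaves $[\si^A\otimes\si^B, H^A\otimes\mi^B] = \si^B$-weighted$\cdots$ — more precisely, $[\si, H^A\otimes\mi^B] = [\si^A,H^A]\otimes\si^B = 0$ and $[\si, \mi^A\otimes H^B] = \si^A\otimes[\si^B,H^B] = 0$, so \eqref{eq:cond2} reduces to $[\si, H^{AB}] = [\si, H] - 0 - 0 = 0$.

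I expect the main obstacle to be the rigorous justification of the product-state conclusion: one must argue carefully that $\ld^\dag(\si) = 0$ genuinely forces the $A$-marginal structure, which requires invoking primitivity of $\mc{L}^A$ and the convergence $\exp(t(\mc{L}^A)^\dag) \to \tr(\si^A\,\cdot)\,\mi$ — i.e.\ that the only state on $\mc{M}_{AB}$ annihilated by $(\mc{L}^A)^\dag\otimes\id^B$ is of product form in the first leg. This is essentially the content of the cited \cite{carlen2025stationary}*{Theorem 1}, and the work lies in checking that the finite-dimensional ergodicity machinery (\cref{lem:ergopt}, \eqref{eq:conver_qmsg}) applies verbatim to the ampliated semigroup $\exp(t(\mc{L}^A)^\dag)\otimes\id^B$, whose fixed-point algebra is $\C\mi^A\otimes\mc{M}_B$. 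Once the product structure is in hand, the commutator computations in the second part are routine partial-trace manipulations using $\tr_A(H^{AB}) = \tr_B(H^{AB}) = 0$ and the cyclicity of the trace.
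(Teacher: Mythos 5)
Your first bullet is correct but follows a genuinely different route from the paper: you obtain the product structure by noting that $\si$ is fixed by the ampliated Schr\"odinger semigroup $\exp(t(\mc{L}^A)^\dag)\otimes\id^B$ and letting $t\to\infty$, invoking primitivity of $\mc{L}^A$ (via \eqref{eq:conver_qmsg}), whereas the paper argues purely algebraically: it expands $\si=\sum_j \rho_j\otimes W_j$ in an eigenbasis $\{\rho_j\}$ of $\mc{L}^A(\mc{L}^A)^\dag$ (whose kernel is spanned by $\si^A$ by primitivity) and shows directly from $\ld^\dag(\si)=0$ that $W_j=0$ for $j\ge1$. In finite dimensions both arguments are sound, and your restriction argument for the $\si^A$-KMS detailed balance of $\mc{L}^A$ (testing against $X^A\otimes\mi^B$, $Y^A\otimes\mi^B$) is exactly the paper's.

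The second bullet, however, has a genuine gap. You claim that tracing $[\si,H]=0$ over $B$ yields $[\si^A,H^A]=0$ because $\tr_B(H^{AB})=0$. But the interaction term does not drop out under the partial trace of the \emph{commutator with $\si$}: one has $\tr_B\bigl([\si^A\otimes\si^B,\,H^{AB}]\bigr)=\bigl[\si^A,\ \tr_B\bigl((\mi^A\otimes\si^B)H^{AB}\bigr)\bigr]$ (pull $\si^A\otimes\mi^B$ out of the partial trace and use cyclicity only in the $B$ factor), and the $\si^B$-weighted partial trace $\tr_B\bigl((\mi^A\otimes\si^B)H^{AB}\bigr)$ need not vanish even though $\tr_B(H^{AB})=0$; e.g.\ $H^{AB}=Z^A\otimes G^B$ with $\tr(G^B)=0$ but $\tr(\si^B G^B)\neq0$. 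So tracing out $B$ only gives $[\si^A,\,H^A+\tr_B((\mi^A\otimes\si^B)H^{AB})]=0$, not the desired relation, and the same obstruction appears when tracing out $A$. The paper avoids this by first passing from $[\si,H]=0$ to $[\log\si,H]=0$ (legitimate since $\si$ is full rank and $H$ commutes with any function of $\si$): writing $\log\si=V^A\otimes\mi^B+\mi^A\otimes V^B$ as a \emph{sum} of local terms, the interaction contributes $\tr_A([\mi^A\otimes V^B,H^{AB}])=[V^B,\tr_A H^{AB}]=0$ and $\tr_A([V^A\otimes\mi^B,H^{AB}])=0$ by cyclicity in the $A$ factor, so $[V^B,H^B]=0$ and hence $[\si^B,H^B]=0$, and symmetrically for $A$; then $[\si,H^{AB}]=0$ follows by subtraction exactly as you indicate. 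Your proof of \eqref{eq:cond2} needs this (or an equivalent) fix.
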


\begin{remark}
It is worth emphasizing that if $\mc{L}_\gamma$ is assumed to be primitive, then the (unique) invariant state $\si$ of $\mc{L}_\gamma$ in \eqref{eq:productstate} is not a Gibbs state associated with the Hamiltonian $H$. In fact, if $\si \propto e^{-\beta H}$ is of the product form, then necessarily $H^{AB} = 0$ holds. It follows that $\si^A \otimes \si^B$ is an invariant state of $\mc{L}_\gamma$ for any $\si^B$ on $B$ such that $[\si^B, H^B] = 0$, which contradicts with the primitivity of $\mc{L}_\gamma$. 
\end{remark}

\begin{proof}[Proof of \cref{thm:uniquefixed}]
The proof is similar to that of \cite{carlen2025stationary}*{Theorem 1}. We consider the positive semidefinite operator $\mc{L}^A(\mc{L}^A)^\dag$ with kernel of dimension one and spanned by $\si^A$, due to the primitivity of $\mc{L}^A$. Suppose that it admits the eigendecomposition: for $j \ge 0$, 
 \begin{align*}
    \mc{L}^A(\mc{L}^A)^\dag \rho_j = \lad_j \rho_j\,,
 \end{align*}
 where $\lad_0 = 0$ and $\lad_j > 0$ for $j \ge 1$, and $\{\rho_j\}_{j \ge 0}$ with $\rho_0 \propto \si^A$ are orthonormal basis of $\mc{M}_A = \mc{B}(\maf{H}_A)$ with respect to the HS inner product. It allows us to decompose $\si$ as follows: 
 \begin{align*}
    \si = \sum_{j \ge 0} \rho_j \otimes W_j\,.
 \end{align*}
 Note that 
 \begin{align} \label{auxeq1ld}
   0 = \ld^\dag (\si) = (\mc{L}^A)^\dag \otimes \id^B (\si) = \sum_{j \ge 0}  (\mc{L}^A)^\dag  \rho_j \otimes W_j\,,
 \end{align}
 which implies, by denoting $X_k = (\mc{L}^A)^\dag  \rho_k$,
 \begin{align*}
    0 & = \tr_A\left( (X_k^* \otimes \mi) \ld^\dag (\si) \right) \\ & = \sum_{j \ge 0} \tr_A \left(\rho_k^* \mc{L}^A (\mc{L}^A)^\dag  \rho_j \right) \otimes W_j =  \sum_{j \ge 0} \lad_j \d_{jk} W_j = \lad_k W_k\,.
 \end{align*}
 It follows that $W_j = 0$ for $j \ge 1$ as $\lad_j > 0$, and hence $\si$ is a product state \eqref{eq:productstate}. Then, by the detailed balance of $\mathcal{L}^A \otimes \id^B$, we have, for $X^A, Y^A \in \mc{M}_A$, 
 \begin{align*}
     \left\l \mc{L}^A(X^A) \otimes \mi^B, Y^A \otimes \mi^B \right\r_{\si,1/2} 
     & = \left\l X^A \otimes \mi^B, \mc{L}^A(Y^A)\otimes  \mi^B \right\r_{\si,1/2}\,,
 \end{align*}
 which, by \cref{eq:productstate}, further yields 
 \begin{align*}
     \left\l \mc{L}^A(X^A), Y^A \right\r_{\si^A,1/2} 
     & = \left\l X^A, \mc{L}^A(Y^A)\right\r_{\si^A,1/2}\,.
 \end{align*}
 That is, $\mc{L}^A$ is $\si^A$-KMS detailed balanced. The rest is to prove \eqref{eq:cond2} by using $[H, \si] = [H, \si^A \otimes \si^B] = 0$. Indeed, letting $\si^A = e^{V^A}$ and $\si^B = e^{V^B}$, we have that $[H, \log \si] = 0$ with \eqref{eq:hamil} gives 
 \begin{align*}
    \left[V^A, H^A\right] \otimes \mi^B + \mi^A \otimes \left[V^B, H^B \right] + \left[V^A \otimes \mi^B + \mi^A \otimes V^B, H^{AB} \right] = 0\,,
 \end{align*}
By taking the partial trace over the system $A$, it follows that 
\begin{align*}
    \tr_A\left(\mi^A\right) \left[V^B, H^B\right] + \left[V^B, \tr_A H^{AB}\right] =  \tr_A\left(\mi^A\right) \left[V^B, H^B\right] = 0\,,
\end{align*}
since $\tr_A ([V^A \otimes \mi^B, H^{AB}]) = 0$ and $\tr_A H^{AB} = 0$. Similarly, we have $[V^A, H^A] = 0$. Therefore, we have proved $[\si^A, H^A] = 0$ and $[\si^B, H^B] = 0$, and then $[\si, H^{AB}] = 0$ follows from $[\si, H] = 0$ immediately. The proof is complete. 
\end{proof}

We proceed to characterize Conditions~\ref{asspB} and~\ref{assump:PHP} for bipartite quantum systems, which are crucial for establishing the overdamped limit of the QMS $\exp(t\mc{L}_\gamma)$. Thanks to the $\si^A$-KMS detailed balance of $\mc{L}^A$ proved in \cref{thm:uniquefixed}, let $\mc{L}^A$ admit the spectral decomposition: 
\begin{align*}
   - \mc{L}^A Z_j^A = \lad_j Z_j^A\,, \q j \ge 0\,, 
\end{align*}
where $\lad_0 = 0$ with $Z_0^A = \mi^A$, and $\lad_j > 0$ for $j \ge 1$ with $\tr(Z_j^A) = 0$, and 
\begin{align} \label{eq:orthogonal}
    \left\l Z_k^A, Z_j^A \right\r_{\si^A,1/2} = \d_{jk}\,.
\end{align}
The eigenvectors $Z_j^A$  may not be self-adjoint, and the Hermitian-preserving property of $\mc{L}^A$ implies that $(Z_j^A)^*$ are also eigenvectors:
$$- \mc{L}^A (Z_j^A)^* = \lad_j (Z_j^A)^*\,. $$ 
Moreover, the orthonormality persists:
\begin{align*}
    \left\langle (Z_k^{A})^*, (Z_j^{A})^* \right\rangle_{\si^A,1/2} 
    &= \tr\left( Z_k^{A} \sqrt{\si^{A}} (Z_j^{A})^* \sqrt{\si^{A}} \right) \\
    &= \left\langle Z_j^{A}, Z_k^{A} \right\rangle_{\si^A,1/2} = \delta_{jk}\,.
\end{align*}
\begin{remark}
 Suppose that $\mc{L}^A$ is GNS detailed balanced with respect to $\si^A$, and let $\{Z_j^A\}$ be orthonormal with respect to GNS inner product, i.e., $\l Z_k^A, Z_j^A \r_{\si^A,1} = \d_{jk}$. In general, the orthonormality of $\{Z_j^A\}$ does not extend to $\{(Z_j^A)^*\}$, that is, $\l (Z_k^A)^*, (Z_j^A)^* \r_{\si^A,1} \neq c_j \d_{jk}$ for $c_j > 0$. Here we provide a very simple example. Define
\begin{align*}
    \sigma = \begin{pmatrix}
        \frac{1}{3} & 0 \\
        0 & \frac{2}{3}
        \end{pmatrix}, \quad
        Z_1 = \begin{pmatrix}
        0 & 1 \\
        1 & 0
        \end{pmatrix}, \quad
        Z_2 = \sqrt{2}\begin{pmatrix}
        0 & \frac{1}{2} \\
        - 1 & 0
        \end{pmatrix}.
\end{align*}
It is direct to check that $\tr(Z_1 \si) = 0 = \tr(Z_2 \si)$ and $\l Z_k, Z_j\r_{\si,1} = \d_{kj}$, while 
\begin{align*}
    \l Z_1^*, Z_2^*\r_{\si,1} = - \frac{\sqrt{2}}{2} = \l Z_2^*, Z_1^*\r_{\si,1}\,, \q \norm{Z_1^*}_{\si,1}^2 = 1\,,\q  \norm{Z_2^*}_{\si,1}^2 = \frac{3}{2}\,.
\end{align*}
\end{remark}



We next expand the interacting Hamiltonian with respect to $\{Z_j^A\}$: 
\begin{equation} \label{eq:decomHab}
    H^{AB} = \sum_{j \ge 0} Z_j^A \otimes G_j^B\,,
\end{equation}
where $\{G_j^B\}_{j \ge 0} \subset \mc{B}(\maf{H}_B)$. This allows us to characterize \cref{assump:PHP} as follows. 

\begin{conditionnp}{\ref{assump:PHP}'}{Reformulation of \cref{assump:PHP} under \cref{asspA2}} \label{condeq:php}
    For $H \in \mc{B}(\maf{H}_A \otimes \maf{H}_B)$ and $\{G_j^B\}_{j \ge 0} \subset \mc{B}(\maf{H}_B)$ given in \eqref{eq:hamil} and \eqref{eq:decomHab}, respectively, it holds that 
        \begin{align*}
            \left[H^B + G_0^B, X^B\right] = 0\,,\ \forall X^B \in \mc{M}_B\,.
        \end{align*}
      i.e., $H^B + G_0^B$ is in the commutant of $\mc{M}_B$.
\end{conditionnp}

The equivalence between \cref{assump:PHP} and \cref{condeq:php} is established by the following lemma.

\begin{lemma} \label{lem:eqcondc}
Under the assumptions of \cref{thm:uniquefixed},
it holds that 
\begin{equation} \label{auxeqcc}
     \es X  = \mi^A \otimes \tr_A \left( (\si^A \otimes \mi^B) X\right), \q \text{for}\ X \in \mc{M}_{AB}\,,
\end{equation}
and $\es \mc{L}_H \es = 0$ is equivalent to that $H^B + G_0^B \in \mc{M}_B'$ belongs to the commutant of $\mc{M}_B$. 
Here $\es$ is the conditional expectation from $\mc{M}_{AB}$ to $\mc{F}(\mc{L}_S)$ with respect to $\si$. 
\end{lemma}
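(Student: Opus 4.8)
The plan is to prove \cref{lem:eqcondc} in two parts: first establish the explicit formula \eqref{auxeqcc} for the conditional expectation $\es$ onto $\mc{F}(\mc{L}_S)$, and then use it to translate the condition $\es\mc{L}_H\es = 0$ into the stated commutant condition on $H^B + G_0^B$.

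For the first part, I would begin by recalling from \cref{thm:uniquefixed} that $\si = \si^A\otimes\si^B$ is a product state and that $\ld = \mc{L}^A\otimes\id^B$ with $\mc{L}^A$ primitive. The fixed-point algebra $\mc{F}(\mc{L}_S) = \ker(\ld)$ consists of those $X\in\mc{M}_{AB}$ annihilated by $\mc{L}^A\otimes\id^B$; since $\mc{L}^A$ is primitive, $\ker(\mc{L}^A) = \C\mi^A$, so that $\mc{F}(\mc{L}_S) = \mi^A\otimes\mc{M}_B$. The conditional expectation $\es$ onto this subalgebra, with respect to $\si$, is the unique $\si$-KMS orthogonal projection onto $\mi^A\otimes\mc{M}_B$. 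Since $\si$ is a product, this projection acts as $E_{\mc{F}(\mc{L}^A)}\otimes\id^B$ where $E_{\mc{F}(\mc{L}^A)}$ is the conditional expectation onto $\C\mi^A$ with respect to $\si^A$, namely $E_{\mc{F}(\mc{L}^A)}(Y) = \tr(\si^A Y)\mi^A$ (here I use that in the primitive case the $\si$-conditional expectation onto scalars is $Y\mapsto\tr(\si Y)\mi$, as noted after \cref{lem:ergopt}). Tensoring against $\id^B$ and writing $\tr(\si^A\,\cdot\,)$ as a partial trace gives precisely $\es X = \mi^A\otimes\tr_A((\si^A\otimes\mi^B)X)$, which is \eqref{auxeqcc}. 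One should double-check that this operator is indeed the KMS-orthogonal projection — this follows since for any product-compatible reference state the KMS inner product factorizes and the partial trace against $\si^A$ is the adjoint of the inclusion $\mi^A\otimes(\cdot)$.

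For the second part, I would compute $\es\mc{L}_H\es$ applied to a general element $\mi^A\otimes X^B$ of $\mc{F}(\mc{L}_S)$. Using $\mc{L}_H(\mi^A\otimes X^B) = i[H, \mi^A\otimes X^B]$ and the decomposition \eqref{eq:hamil}, the term $H^A\otimes\mi^B$ commutes with $\mi^A\otimes X^B$ and drops out. The remaining contributions are $i[\mi^A\otimes H^B, \mi^A\otimes X^B] = \mi^A\otimes i[H^B,X^B]$ and $i[H^{AB},\mi^A\otimes X^B]$. Now I apply $\es$ using \eqref{auxeqcc}: the first term is already in $\mc{F}(\mc{L}_S)$ and is returned unchanged. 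For the second term, expand $H^{AB} = \sum_{j\ge 0} Z_j^A\otimes G_j^B$ from \eqref{eq:decomHab}; then $\es$ acts by replacing each $Z_j^A$ by $\tr(\si^A Z_j^A)\mi^A$. Since $Z_0^A = \mi^A$ and $\tr(\si^A\mi^A) = 1$, while $\tr(\si^A Z_j^A) = 0$ for $j\ge 1$ by the orthogonality \eqref{eq:orthogonal} against $Z_0^A = \mi^A$ (i.e.\ $\tr(\si^A Z_j^A) = \langle \mi^A, Z_j^A\rangle_{\si^A,1/2} = \d_{0j}$ — one must verify this is the relevant pairing, which holds because $\Gamma_{\si^A}^{1/2}(\mi^A) = (\si^A)^{1/2}$), only the $j=0$ term survives, contributing $\mi^A\otimes i[G_0^B, X^B]$. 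Collecting, $\es\mc{L}_H\es(\mi^A\otimes X^B) = \mi^A\otimes i[H^B + G_0^B, X^B]$, which vanishes for all $X^B\in\mc{M}_B$ if and only if $H^B + G_0^B$ lies in the commutant $\mc{M}_B'$.

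The main obstacle I anticipate is getting the bookkeeping of the conditional expectation right: one must be careful that \eqref{auxeqcc} really is the $\si$-KMS orthogonal projection onto $\mi^A\otimes\mc{M}_B$ and not merely some other natural map, and in particular that the product structure $\si = \si^A\otimes\si^B$ is what makes the simple tensor formula $E_{\mc{F}(\mc{L}^A)}\otimes\id^B$ valid (for non-product $\si$ this would fail). Once that formula is in hand, the second part is a short direct computation whose only subtlety is correctly identifying which $Z_j^A$ survive the application of $\es$ via the orthogonality relation; there is a minor point that the coefficient extracting $j=0$ uses the KMS pairing of $Z_j^A$ against $\mi^A$ rather than the Hilbert–Schmidt trace, but these coincide here because $\es$ is defined through the $\si$-weighted inner product. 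No deep estimates are needed; the content is purely structural.
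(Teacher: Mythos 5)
Your proposal is correct and follows essentially the same route as the paper: both establish \eqref{auxeqcc} from the product structure $\si=\si^A\otimes\si^B$ (the paper by directly comparing the KMS pairing with $\l \mi^A\otimes Y^B,\Gamma_\si X\r$, you by factorizing $\es$ as $E_{\mc{F}(\mc{L}^A)}\otimes\id^B$ with $E_{\mc{F}(\mc{L}^A)}=\tr(\si^A\,\cdot\,)\mi^A$, which amounts to the same verification), and then compute $\es\mc{L}_H\es$ on $\mi^A\otimes X^B$ using \eqref{eq:hamil}, \eqref{eq:decomHab}, and $\tr(\si^A Z_j^A)=\d_{0j}$ to isolate $[H^B+G_0^B,X^B]$. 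No gaps.
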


\begin{proof}
    We first compute, for any $\mi^A \otimes Y^B \in \mc{F}(\ld)$ and $X \in \mc{M}_{AB}$,
    \begin{align} \label{auxeqaa}
    \left\l \mi^A \otimes Y^B, X \right\r_{\si, 1/2} & = \left\l \mi^A \otimes Y^B, \mi^A \otimes X^B \right\r_{\si^A \otimes \si^B, 1/2}  = \left\l Y^B, X^B \right\r_{\si^B, 1/2}\,,
    \end{align}
    where $X^B$ is defined by 
    \begin{align} \label{defxbproj}
        \mi^A \otimes X^B = \es X \in \mc{F}(\ld)\,.    
    \end{align}
    We can also compute 
    \begin{equation} \label{auxeqbb}
        \left\l \mi^A \otimes Y^B, \Gamma_{\si} X \right\r = \left\l Y^B, \tr_A(\Gamma_{\si} X) \right\r = \left\l Y^B,  \tr_A (\si^A \otimes \mi^B X) \right\r_{\si^B, 1/2}\,,
    \end{equation}
    where we have used
    \begin{align*}
        \tr_A(\Gamma_{\si} X) & = \tr_A \left( \sqrt{\si^A} \otimes \sqrt{\si^B} X \sqrt{\si^A} \otimes \sqrt{\si^B}\right)\\
        & = \sqrt{\si^B} \tr_A \left( \left(\si^A \otimes \mi^B \right) X \right) \sqrt{\si^B}\,.
    \end{align*}
    Comparing \eqref{auxeqaa} and \eqref{auxeqbb} gives \eqref{auxeqcc}. Letting $X^B$ be given as in \eqref{defxbproj}, one can then compute that $\es \mc{L}_H \es = 0$ is equivalent to  
    \begin{align*}
       0 = \tr_A \left(\left(\si^A \otimes \mi^B \right) \mc{L}_H \left(\mi^A \otimes X^B \right) \right) & = \left[H^B, X^B\right] + \tr_A\left(\left(\si^A \otimes \mi^B\right)\left[H^{AB}, \mi^A \otimes X^B \right]\right) \\
       & = \left[H^B, X^B\right] + \left[G_0^B, X^B\right]\,,
    \end{align*}
    thanks to \eqref{eq:hamil} and \eqref{eq:decomHab} with $\tr(\si^A Z_j^A) = 0$ for $j \ge 1$ and $Z_0^A = \mi^A$. 
\end{proof}

Further, \cref{asspB} can be simplified as follows, by \cref{lem:primitlgamma} below. 

\begin{conditionnp}{\ref{asspB}'}{Reformulation of \cref{asspB} under Conditions~\ref{asspA2} and~\ref{condeq:php}} \label{asspB2}
    For $G_j^B$ defined by  \eqref{eq:decomHab}, there holds $G_j^B \neq 0$ for some $j \ge 1$. 
\end{conditionnp}

\begin{lemma}\label{lem:primitlgamma}
Let $\ld$ be given in \eqref{def:lsbip} for a primitive QMS generator $\mc{L}^A$. Suppose that \cref{asspA2} holds. Then, it holds that
\begin{align*}
   \mc{F}(\ld) = \ker(\mc{L}^A) \otimes \mc{M}_B = \mi^A  \otimes \mc{M}_B\,,
\end{align*}
and 
\begin{equation*}
    \mc{F}(\mc{L}_\gamma) =  \mc{F}(\ld)\cap \{X \in \mc{M}_{AB}\,;\ [H, X] = 0\}\,.
\end{equation*}
    Moreover, suppose that \cref{condeq:php} holds. Then, 
    \begin{align*}
       \mc{F}(\mc{L}_\gamma) = \mi^A  \otimes \left\{X^B \in \mc{M}_B\,;\ \left[G_j^B, X^B\right] = 0 \ \text{for all $j \ge 1$}  \right\}\,.
    \end{align*}
    The condition $\dim \mc{F}(\ld) > \dim \mc{F}(\mc{L}_\gamma)$ is equivalent to that $\{G_j^B\}_{j \ge 1}$ are not all zero. 
\end{lemma}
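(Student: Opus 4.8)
The plan is to peel off the three assertions in order, each reducing to a short structural computation already prepared in the preceding discussion. First I would establish $\mc{F}(\ld) = \ker(\mc{L}^A)\otimes\mc{M}_B = \mi^A\otimes\mc{M}_B$. Since $\mc{L}^A$ is primitive with invariant state $\si^A$, $\ker(\mc{L}^A)=\mathrm{Span}\{\mi^A\}$ by the primitivity characterization; and because $\ld=\mc{L}^A\otimes\id^B$ acts only on the $A$-factor, a vector $X\in\mc{M}_{AB}$ lies in $\ker(\ld)$ iff, expanding $X=\sum_j Z_j^A\otimes Y_j^B$ in the eigenbasis $\{Z_j^A\}$ of $\mc{L}^A$ (orthonormal for $\l\cdot,\cdot\r_{\si^A,1/2}$ by \eqref{eq:orthogonal}), each component with $\lad_j>0$ vanishes, i.e.\ $X=\mi^A\otimes Y_0^B$. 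This uses that $\ld$ is $\si$-KMS detailed balanced (hence diagonalizable with real spectrum) from \cref{asspA2} and \cref{thm:uniquefixed}. Since $\mc{F}(\ld)=\ker(\ld)$ is a von Neumann algebra and here equals $\mi^A\otimes\mc{M}_B$, this is consistent. For the second assertion, $\mc{F}(\mc{L}_\gamma)=\ker(\lh)\cap\ker(\ld)$ by \eqref{eq:kerlamma} of \cref{lem:symanti} (which applies thanks to \cref{asspA2}, i.e.\ \cref{asspA}); and $\ker(\lh)=\ker(\mc{L}_H)=\{X\,;\ [H,X]=0\}$, giving $\mc{F}(\mc{L}_\gamma)=\mc{F}(\ld)\cap\{X\,;\ [H,X]=0\}$ directly.

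For the third assertion I would restrict $[H,\cdot]$ to $\mc{F}(\ld)=\mi^A\otimes\mc{M}_B$ and simplify using the decompositions \eqref{eq:hamil} and \eqref{eq:decomHab}. For $X=\mi^A\otimes X^B$,
\begin{align*}
    [H,\mi^A\otimes X^B] &= [H^A\otimes\mi^B,\mi^A\otimes X^B] + [\mi^A\otimes H^B,\mi^A\otimes X^B] + [H^{AB},\mi^A\otimes X^B] \\
    &= \mi^A\otimes[H^B,X^B] + \sum_{j\ge 0} Z_j^A\otimes[G_j^B,X^B]\,,
\end{align*}
where the first term dropped since $H^A\otimes\mi^B$ commutes with $\mi^A\otimes X^B$. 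Now $[H,X]=0$ forces, by linear independence of $\{\mi^A\}\cup\{Z_j^A\}_{j\ge 1}$, both $[H^B+G_0^B,X^B]=0$ (the coefficient of $\mi^A=Z_0^A$) and $[G_j^B,X^B]=0$ for every $j\ge 1$. Under \cref{condeq:php}, $H^B+G_0^B\in\mc{M}_B'$, so $[H^B+G_0^B,X^B]=0$ holds automatically for all $X^B\in\mc{M}_B$; hence the first condition is vacuous and $\mc{F}(\mc{L}_\gamma)=\mi^A\otimes\{X^B\in\mc{M}_B\,;\ [G_j^B,X^B]=0\ \forall j\ge 1\}$, as claimed.

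Finally, the equivalence of $\dim\mc{F}(\ld)>\dim\mc{F}(\mc{L}_\gamma)$ with ``$\{G_j^B\}_{j\ge 1}$ not all zero'': if all $G_j^B=0$ for $j\ge 1$, then the commutant condition is empty and $\mc{F}(\mc{L}_\gamma)=\mi^A\otimes\mc{M}_B=\mc{F}(\ld)$, so the dimensions coincide; conversely, if some $G_{j_0}^B\neq 0$ with $j_0\ge 1$, then $\{X^B\in\mc{M}_B\,;\ [G_j^B,X^B]=0\ \forall j\ge 1\}$ is a proper subalgebra of $\mc{M}_B$ (it excludes, e.g., an element not commuting with $G_{j_0}^B$; such an element exists since $\mc{M}_B\not\subset\mc{M}_B'$ unless $G_{j_0}^B$ is central, and if $G_{j_0}^B\in\mc{M}_B\cap\mc{M}_B'$ one may absorb it and note $G_{j_0}^B\neq 0$ still contributes a nontrivial constraint on the full $\mc{M}_{AB}$-level commutation — here I would argue directly that $\mi^A\otimes X^B$ with $[G_{j_0}^B,X^B]\neq 0$ shows $\mc{F}(\mc{L}_\gamma)\subsetneq\mc{F}(\ld)$), so the dimension strictly drops. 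The main obstacle I anticipate is precisely this last step: cleanly arguing that a nonzero $G_{j_0}^B$ genuinely shrinks the fixed-point algebra, handling the edge case where $G_{j_0}^B$ happens to be a scalar multiple of $\mi^B$ (in which case $[G_{j_0}^B,\cdot]=0$ and it does \emph{not} shrink anything) — so the correct statement of the equivalence should really read ``$\{G_j^B\}_{j\ge 1}$ are not all \emph{scalar multiples of $\mi^B$}'', or one interprets ``not all zero'' modulo the center; I would flag this and adopt whichever convention matches the paper's later usage in \cref{sec:constructlift}.
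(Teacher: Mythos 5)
Your argument is essentially the paper's proof: the paper likewise takes $\mc{F}(\ld)=\mi^A\otimes\mc{M}_B$ from the primitivity of $\mc{L}^A$, obtains $\mc{F}(\mc{L}_\gamma)=\mc{F}(\ld)\cap\ker[H,\cdot]$ from \cref{lem:symanti}, and then expands $[H,\mi^A\otimes X^B]=\sum_{j\ge 1}Z_j^A\otimes[G_j^B,X^B]$ using \eqref{eq:hamil}, \eqref{eq:decomHab} and \cref{condeq:php}, concluding by linear independence of the $Z_j^A$; your write-up is just a more detailed version of the same steps.

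Regarding the caveat you flag at the end: the specific edge case you worry about cannot occur, because \eqref{eq:hamil} imposes $\tr_B(H^{AB})=0$, and expanding in the linearly independent family $\{Z_j^A\}$ this forces $\tr(G_j^B)=0$ for every $j$ (the paper records exactly this observation after \eqref{dirigns}); a traceless nonzero $G_j^B$ is never a scalar multiple of $\mi^B$, so when $\mc{M}_B=\mc{B}(\maf{H}_B)$ the stated equivalence ``not all zero'' is exactly correct. The residual point survives only if $\mc{M}_B$ is a proper subalgebra of $\mc{B}(\maf{H}_B)$, where a nonzero traceless $G_{j_0}^B$ could lie in the commutant $\mc{M}_B'$ and then imposes no constraint; in that generality the precise condition is ``some $G_j^B\notin\mc{M}_B'$,'' which is what the paper's proof means by ``some $G_j^B$ is non-trivial,'' and which is automatically satisfied in the constructions of \cref{sec:constructlift} where the $G_j^B$ are the (noncentral) jump operators of $\lo$. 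So this is a fair remark about the phrasing of the statement in full generality, not a gap in the argument.
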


\begin{proof}
    It suffices to apply \cref{lem:symanti} and find 
    \begin{align}
            \mc{F}(\mc{L}_\gamma) = \left\{X = \mi^A \otimes X^B\,;\ [H, X] = 0\,, \ X^B \in \mc{M}_B \right\}\,. 
    \end{align}
    Using the decompositions \eqref{eq:hamil} and \eqref{eq:decomHab}, along with \cref{condeq:php}, we compute:
    \begin{align*}
    0 = \left[H, \mi^A \otimes X^B\right] = \sum_{j \geq 1} Z_j^A \otimes \left[G_j^B, X^B\right]\,.
    \end{align*}
    The linear independence of $Z_j^A$ then implies $[G_j^B, X^B] = 0$ for all $j \ge 1$.  It follows that $\dim \mc{F}(\ld) > \dim \mc{F}(\mc{L}_\gamma)$ if and only if some $G_j^B$ is non-trivial. 
\end{proof}

We next focus on the characterization of \cref{assump:overdp}.  Recall from \eqref{eq:odlimit_generator} that the overdamped limit generator (denoting $\lh$ as $\mc{L}_H$ for clarity) takes the form:
\begin{equation} \label{eq:LO_abstract}
    \mc{L}_O = - (\mc{L}_H \es)^\star(-\mc{L}_S)^{-1}\mc{L}_H \es\,,
\end{equation}
which is defined on the subalgebra $ \mc{F}(\ld) = \mi^A  \otimes \mc{M}_B$ of $\mc{M}_{AB}$. For notational simplicity, in what follows, we shall identify $\exp(t \lo)$ with its induced dynamics $\exp(t \hat{\mc{L}_O})$ defined on $\mc{M}_B$ through the relation $\exp(t \lo) = \id^A \otimes \exp(t \hat{\mc{L}_O})$. 
Under \cref{assump:PHP} (equivalently, \cref{condeq:php}), we first compute $\mc{L}_O$ in an explicit form. Let $X \in \mc{M}_{AB}$ and write $\es X = \mi^A \otimes X^B \in \mc{F}(\ld)$ with $X^B \in \mc{M}_B$. We then compute 
\begin{align*}
    \mc{L}_H \es X = i[H, \es X] & =  i \mi^A \otimes \left[H^B, X^{B}\right] + i \sum_{j \ge 0} Z_j^A \otimes \left[G_j^B, X^B\right] \\
     & = i \sum_{j \ge 1} Z_j^A \otimes \left[G_j^B, X^B\right]\,,
\end{align*}
by \eqref{eq:hamil}, \eqref{eq:decomHab}, and \cref{condeq:php}. It follows that 
\begin{equation*}
    (-\ld)^{-1}  \mc{L}_H \es X =  i \sum_{j \ge 1} \frac{1}{\lad_j} Z_j^A \otimes \left[G_j^B, X^B\right]\,,
\end{equation*}
and hence, by writing $\es Y = \mi^A \otimes Y^B$ for $Y \in \mc{M}_{AB}$, 
\begin{equation}\label{aux:diriformsim}
    \begin{aligned}
        - \l Y, \mc{L}_O X\r_{\si,1/2} & =  - \left\l \mc{L}_H \es Y, (-\ld)^{-1}  \mc{L}_H \es X \right\r_{\si,1/2} \\
     & = \sum_{k \ge 1} \sum_{j \ge 1} \frac{1}{\lad_j} \left\l Z_k^A \otimes \left[G_k^B, Y^B\right], Z_j^A \otimes \left[G_j^B, X^B\right] \right\r_{\si,1/2}\,.        
    \end{aligned}
\end{equation}
Thanks to the orthogonality of $Z_j^k$ and $\si = \si^A \otimes \si^B$, we have 
\begin{align*}
    \left\l Z_k^A \otimes \left[G_k^B, Y^B \right], Z_j^A \otimes \left[G_j^B, X^B \right] \right\r_{\si,1/2} & = \left\l Z_k^A, Z_j^A \right\r_{\si^A,1/2}  \left\l \left[G_k^B, Y^B\right],  \left[G_j^B, X^B \right] \right\r_{\si^B,1/2} \\
    & = \d_{jk}  \left\l \left[G_k^B, Y^B \right],  \left[G_j^B, X^B \right] \right\r_{\si^B,1/2}\,,
\end{align*}
and then one can simplify \eqref{aux:diriformsim} as: 
\begin{align} \label{eq:dirichletlo}
    - \l Y, \mc{L}_O X\r_{\si,1/2} 
    & =  \sum_{j \ge 1} \frac{1}{\lad_j}  \left\l \left[G_j^B, Y^B \right],  \left[G_j^B, X^B\right] \right\r_{\si^B,1/2}\,.        
\end{align}
We summarize the above discussion into the following lemma, and then \cref{assump:overdp} can be simplified to \cref{assump:overdp2} below. 

\begin{lemma}\label{lem:overdam}
    Let $\mc{L}_\gamma = \mc{L}_H + \gamma \ld$, with $\mc{L}_H$ in \eqref{Hamiltonianterm} and $\ld$ in \eqref{def:lsbip}, be the QMS generator given as above. Suppose that Conditions~\ref{asspA2}, \ref{asspB2}, and~\ref{condeq:php} hold. Then, we have 
\begin{enumerate}[label=(\roman*)]
    \item  The generator $\lo$ of the overdamped limit of $\exp(t \mc{L}_\gamma)$ is given by
    \begin{align} \label{logenerator}
        \lo(X^B) = - \sum_{j \ge 1} \frac{1}{\lad_j} \Gamma_{\si^B}^{-1}\left[(G_j^B)^*, \Gamma_{\si^B}\left(\left[G_j^B, X^B \right]\right) \right],
    \end{align}
    where the operators $G_j^B$ are from the decomposition \eqref{eq:decomHab}, and $\Gamma_{\si^B}$ is defined as in \eqref{def:weighting}.  
    \item $\lo$ is self-adjoint with respect to the KMS inner product $\l \dd, \dd \r_{\si^B,1/2}$ on $\mc{M}_B$ with invariant state $\si^B$. The fixed-point subspace of $\exp(t \lo)$ is 
    \begin{align} \label{lokernelany}
        \ker(\lo) = \left\{X^B \in \mc{M}_B\,;\ \left[G_j^B, X^B\right] = 0 \ \text{for all $j \ge 1$}  \right\}\,.
    \end{align}
    Thus, $\si^B$ is the unique invariant state of $\exp(t \lo^\dag)$ if and only if 
    \begin{align}  \label{lokernel}
        \ker(\lo) = {\rm Span}\{\mi^B\}\,.
    \end{align}
    \item The fixed-point algebra of $\exp(t \mc{L}_\gamma)$ satisfies
    \begin{align} \label{kernelff}
        \mc{F}(\mc{L}_\gamma) = \mi^A \otimes \ker(\lo)\,.
    \end{align}
    Thus, $\mc{L}_\gamma$ is primitive if and only if \cref{lokernel} holds.
\end{enumerate}     
\end{lemma}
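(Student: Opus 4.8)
The plan is to compute $\mc{L}_O = -(\mc{L}_H \es)^\star(-\ld)^{-1}\mc{L}_H \es$ explicitly via its Dirichlet form, which has essentially already been carried out in the displays \eqref{aux:diriformsim}--\eqref{eq:dirichletlo} preceding the lemma, and then to translate that Dirichlet-form identity into the operator form \eqref{logenerator}, establish self-adjointness, and identify the kernels. For item (i): starting from \eqref{eq:dirichletlo}, which reads $-\l Y, \mc{L}_O X\r_{\si,1/2} = \sum_{j\ge 1}\lad_j^{-1}\l [G_j^B,Y^B],[G_j^B,X^B]\r_{\si^B,1/2}$ under Conditions~\ref{asspA2}, \ref{asspB2}, \ref{condeq:php} (so that $\es$ has the product form \eqref{auxeqcc} and $\mc{L}_H\es X = i\sum_{j\ge1}Z_j^A\otimes[G_j^B,X^B]$), I would first identify $\l \dd,\lo(\dd)\r_{\si^B,1/2}$ on $\mc{M}_B$ through the relation $\exp(t\lo) = \id^A\otimes\exp(t\hat{\mc{L}_O})$ and the compatibility condition from \cref{assump:overdp}; then rewrite $\l [G_j^B,Y^B],[G_j^B,X^B]\r_{\si^B,1/2} = \l Y^B, \Gamma_{\si^B}^{-1}[(G_j^B)^*,\Gamma_{\si^B}([G_j^B,X^B])]\r_{\si^B,1/2}$ using the adjoint identity \eqref{eq:adjointkms} applied to the superoperator $X^B\mapsto [G_j^B,X^B]$ (whose KMS adjoint is $X^B\mapsto \Gamma_{\si^B}^{-1}[(G_j^B)^*,\Gamma_{\si^B}(X^B)]$, since the HS-adjoint of $\mathrm{ad}_{G}$ is $\mathrm{ad}_{G^*}$). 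Matching coefficients of $Y^B$ over a dense set gives \eqref{logenerator}.

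For item (ii): self-adjointness of $\lo$ with respect to $\l\dd,\dd\r_{\si^B,1/2}$ is immediate from \eqref{eq:dirichletlo}, which is symmetric in $X^B, Y^B$; alternatively it follows from \cref{lem:properlo} once one checks the compatibility of $\si^B$ with $\si$ via \cref{lem:proplo}. That $\si^B$ is invariant for $\exp(t\lo^\dag)$ follows from $\lo(\mi^B) = 0$ (visible from \eqref{logenerator}, since $[G_j^B,\mi^B] = 0$) together with self-adjointness. The kernel identity \eqref{lokernelany} is obtained from \eqref{eq:dirichletlo} with $X^B = Y^B$: $\l X^B,\lo(X^B)\r_{\si^B,1/2} = 0$ iff $[G_j^B,X^B] = 0$ for all $j\ge1$ (using positivity of each $\lad_j^{-1}$ and faithfulness of $\l\dd,\dd\r_{\si^B,1/2}$), and since $\lo$ is negative self-adjoint its kernel equals the zero set of its quadratic form. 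The equivalence \eqref{lokernel} $\iff$ uniqueness of $\si^B$ then follows from the general primitivity criterion (uniqueness of full-rank invariant state $\iff$ fixed-point algebra is trivial) applied to the ergodic QMS $\exp(t\lo)$.

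For item (iii): \eqref{kernelff} follows from \cref{lem:primitlgamma}, which under the standing assumptions gives $\mc{F}(\mc{L}_\gamma) = \mi^A\otimes\{X^B\in\mc{M}_B : [G_j^B,X^B] = 0\ \forall j\ge1\}$, together with \eqref{lokernelany} identifying that set as $\ker(\lo)$; the primitivity equivalence is then just \eqref{lokernel}. The main obstacle — really the only non-bookkeeping point — is the careful handling of the KMS adjoint in passing from the Dirichlet form \eqref{eq:dirichletlo} to the generator \eqref{logenerator}: one must correctly track that $(\mathrm{ad}_{G_j^B})^\dag = \mathrm{ad}_{(G_j^B)^*}$ under the HS inner product and then conjugate by $\Gamma_{\si^B}$ per \eqref{eq:adjointkms}, and verify that the resulting operator indeed maps $\mc{M}_B$ into $\mc{M}_B$ (which uses $G_j^B\in\mc{B}(\maf{H}_B)$ and $[\si^B, \cdot]$-covariance considerations, though here $G_j^B\in\mc{M}_B$ by construction so this is automatic). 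Everything else is substitution into identities already derived in the excerpt.
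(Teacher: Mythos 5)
Your proposal is correct and follows essentially the same route as the paper: the Dirichlet form \eqref{eq:dirichletlo} (already derived in the text) determines $\lo$ and yields \eqref{logenerator} via the KMS adjoint \eqref{eq:adjointkms}/\eqref{eq:adjpjkms}, symmetry of the form gives self-adjointness and invariance of $\si^B$, the kernel is read off from the vanishing of the quadratic form, and item (iii) is \cref{lem:primitlgamma} combined with \eqref{lokernelany}. One small remark: the compatibility $\l Y^B, X^B\r_{\si,1/2}=\l Y^B, X^B\r_{\si^B,1/2}$ needs no appeal to \cref{assump:overdp} or \cref{lem:proplo} (which would be slightly circular here), since it already follows from the product structure $\si=\si^A\otimes\si^B$ established in \cref{thm:uniquefixed}, and likewise the uniqueness equivalence in (ii) follows from $\dim\ker(\lo^\dag)=\dim\ker(\lo)$ without assuming $\exp(t\lo)$ is a QMS.
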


\begin{proof}
    The sesquilinear form \eqref{eq:dirichletlo} fully determines the generator $\lo$ and directly implies the formulation \eqref{logenerator}. Since the form \eqref{eq:dirichletlo} is symmetric, it follows that $\lo$ is self-adjoint. From \eqref{logenerator}, it is straightforward to verify that $\lo^\dag(\si^B) = 0$, implying that $\si^B$ is an invariant state of $\exp(t \lo^\dag)$. The uniqueness of the invariant state, i.e., $\dim \ker(\lo^\dag) = 1$, is equivalent to $\dim \ker(\lo) = 1$, due to $\dim \ker(\lo^\dag) = \dim \ker(\lo)$. Moreover, $\lo(X^B) = 0$ if and only if $-\l X^B, \lo X^B \r_{\si^B,1/2} = 0$, which is further equivalent to $[G_j^B, X^B] = 0$ for all $j$. Finally, \cref{kernelff} is by \cref{lem:primitlgamma} and \eqref{lokernelany}. Further, $\mc{L}_\gamma$ is primitive if and only if $\ker(\mc{L}_\gamma) = {\rm Span}\{\mi\}$, that is, \eqref{lokernel} holds. 
\end{proof}

It is convenient to introduce the noncommutative partial derivative by
\begin{equation} \label{eq:partialderivative}
    \p_j (X^B) = \left[ \lad_j^{-1/2} G_j^B, X^B\right]\q \text{with}\q  \p_j^\dag (X^B) = \left[\lad_j^{-1/2} (G_j^B)^*, X^B\right]\,.
\end{equation}
Recall the formulation of the adjoint operator with respect to the KMS inner product \eqref{eq:adjointkms}. We can derive the adjoint of $\p_j$ with respect to $\l \dd, \dd  \r_{\si^B,1/2}$:  
\begin{align} \label{eq:adjpjkms}
    \p_{j}^\star (X^B) = \Gamma_{\si^B}^{-1} \circ \p_j^\dag \circ \Gamma_{\si^B} (X^B) = \Gamma_{\si^B}^{-1} \left[\lad_j^{-1/2} (G_j^B)^*, \Gamma_{\si^B} X^B \right]\,,
\end{align}
and then $\lo$ in \eqref{logenerator} can be readily written into the following compact form: 
\begin{align} \label{eq:genlocp}
    \lo (X^B) = - \sum_{j \ge 1} \p_{j}^\star \p_j (X^B)\,.
\end{align}
 
\begin{conditionnp}{\ref{assump:overdp}'}{Simplified \cref{assump:overdp}}\label{assump:overdp2}
    The generator $\lo$ of the overdamped limit \eqref{eq:genlocp} generates a quantum Markov semigroup.
\end{conditionnp}


To ensure that \cref{assump:overdp2} holds, we need to identify additional constraints on the operators $\{G_j^B\}_{j \geq 1}$. 
It was shown in \cite{vernooij2023derivations}*{Theorem 2.5} that for any $\si$-KMS detailed balanced QMS generator $\mc{L}$, there exist $\{V_j\}_j = \{V_j^*\}_j \subset \bh$ such that its Dirichlet form can be written as: 
\begin{equation}\label{eq:quadraticdiri}
    \mc{E}_{\mc{L}}(X,Y) = \sum_{j} \l [V_j, X], [V_j, Y] \r_{\si,1/2}\,.    
\end{equation}
However, not all such quadratic forms correspond to QMS generators; see \cites{albeverio1977dirichlet,goldstein1993beurling,cipriani1997dirichlet} for the theory of noncommutative Dirichlet forms. In the case of $\mc
M_B = \mc{B}(\maf{H}_B)$,
by a careful examination of the proof of \cite{vernooij2023derivations}*{Theorem 2.5}, we have the sufficient and necessary conditions on $\{V_j\}$ for \eqref{eq:quadraticdiri} to represent a detailed balanced QMS generator, which implies \cref{lem:kmscondd} below. Recall that any $\sigma$-KMS detailed balanced generator $\mc{L}$ admits the canonical form:
\begin{equation} \label{eq:KMS_generator}
    \mc{L}(X) = (\id + \Delta_\si^{1/2})^{-1}(\Psi(\mi)) X + X (\id + \Delta_\si^{-1/2})^{-1}(\Psi(\mi)) - \Psi(X)\,, \q \forall X \in \bh\,,
\end{equation}
where $\Psi$ is completely positive and KMS detailed balanced \cites{fagnola2007generators,amorim2021complete}. We also introduce the $\mf{W}$-transform for superoperators $\Phi$:
\begin{align} \label{defw}
    \mf{W}(\Phi) := \Delta_\si^{-1/4} \circ \Phi \circ \Delta_\si^{1/4} + \Delta_\si^{1/4} \circ \Phi \circ \Delta_\si^{-1/4}\,.
\end{align}

\begin{lemma}\label{lem:kmscondd}
Suppose that $\mc{M}_B = \mc{B}(\maf{H}_B)$. Given operators $\{G_j^B\}_{j \ge 1}$, $\lo$ in \eqref{logenerator} is a $\si^B$-KMS detailed balanced QMS generator if there holds $\big\{\lad_j^{-1/2} G_j^B \big\} = \big\{\lad_j^{-1/2} (G_j^B)^* \big\}$, and   
    $\mf{W}(\Xi)$ is completely positive, where $\Xi$ is a completely positive map defined by $\Xi(X) = \sum_j \lad_j^{-1}  (\w{G}_j^B)^* X \w{G}_j^B $ with $\w{G}_j^B = \Delta_{\si^B}^{1/4} G_j^B$. In this case, the generator $\lo$ is of the form \eqref{eq:KMS_generator} with $\Psi$ given by $\mf{W}(\Xi)$. 
\end{lemma}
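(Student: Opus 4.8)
\textbf{Proof proposal for \cref{lem:kmscondd}.}

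The plan is to trace through the structure of \cite{vernooij2023derivations}*{Theorem 2.5} and the canonical form \eqref{eq:KMS_generator}, and to identify exactly which hypotheses on the $\{G_j^B\}_{j\ge 1}$ are needed for the quadratic form $-\l X^B,\lo X^B\r_{\si^B,1/2}=\sum_{j\ge1}\l \p_j X^B, \p_j X^B\r_{\si^B,1/2}$ (with $\p_j$ from \eqref{eq:partialderivative}) to be the Dirichlet form of a QMS generator. First I would record the starting point: by \eqref{eq:genlocp} we already know $\lo=-\sum_{j\ge1}\p_j^\star\p_j$ is self-adjoint and negative semidefinite with respect to $\l\dd,\dd\r_{\si^B,1/2}$, and $\lo(\mi^B)=0$ (since $[\,G_j^B,\mi^B]=0$), so $\exp(t\lo)$ is automatically a unital, self-adjoint, contractive semigroup by \cref{lem:properlo}. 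Hence the only missing ingredient is complete positivity of $\exp(t\lo)$, equivalently that $\lo$ has the GKSL form.

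The key step is to put $\lo$ into the canonical KMS-detailed-balanced form \eqref{eq:KMS_generator}. Assuming $\{\lad_j^{-1/2}G_j^B\}=\{\lad_j^{-1/2}(G_j^B)^*\}$ as a set (so that the family of jump-type operators is closed under adjoints, which is the detailed-balance symmetry condition analogous to the hypothesis on $\{V_j\}$ in \eqref{eq:quadraticdiri}), I would expand
\[
  \lo(X^B)=-\sum_{j\ge1}\p_j^\star\p_j(X^B)
\]
using \eqref{eq:adjpjkms} and the Leibniz rule for the commutators. A direct computation (the same bookkeeping as in the proof of \cite{vernooij2023derivations}*{Theorem 2.5}, carried out with the weighting $\Gamma_{\si^B}$) rewrites this as
\[
  \lo(X^B)=\Theta(\mi^B)\,X^B+X^B\,\Theta(\mi^B)^{\dag\text{-part}}-\Psi(X^B),
\]
where $\Psi$ arises as the "cross term'' and turns out to equal $\mf{W}(\Xi)$ with $\Xi(X)=\sum_j\lad_j^{-1}(\w G_j^B)^*X\w G_j^B$, $\w G_j^B=\Delta_{\si^B}^{1/4}G_j^B$; matching the boundary (non-$\Psi$) terms against the $(\id+\Delta_{\si^B}^{\pm1/2})^{-1}$ coefficients in \eqref{eq:KMS_generator} uses only the algebraic identity $\Delta_{\si^B}^{1/4}+\Delta_{\si^B}^{-1/4}=(\id+\Delta_{\si^B}^{1/2})^{-1}\Delta_{\si^B}^{1/4}(\id+\Delta_{\si^B}^{1/2})+\cdots$ type manipulations, i.e. the very rewriting that produces \eqref{eq:KMS_generator} in \cites{fagnola2007generators,amorim2021complete}. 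Once $\lo$ is exhibited in the form \eqref{eq:KMS_generator} with $\Psi=\mf{W}(\Xi)$, the classical criterion (\cites{fagnola2007generators,amorim2021complete}) states that $\lo$ generates a QMS if and only if $\Psi$ is completely positive and $\si^B$-KMS detailed balanced; $\Xi$ is manifestly completely positive (it is in Kraus form), hence so is $\mf{W}(\Xi)$ precisely under the stated hypothesis, and the KMS-detailed-balance of $\mf{W}(\Xi)$ follows from the symmetry of the $\mf{W}$-transform together with $\{\lad_j^{-1/2}G_j^B\}=\{\lad_j^{-1/2}(G_j^B)^*\}$.

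I expect the main obstacle to be the explicit identification of the cross term with $\mf{W}(\Xi)$ and the careful tracking of the modular factors $\Gamma_{\si^B}=\Delta_{\si^B}^{1/2}(\dd)\si^B$ versus $\Delta_{\si^B}^{1/4}$ through the commutator expansion — this is precisely the computational heart of \cite{vernooij2023derivations}*{Theorem 2.5}, and getting the $1/4$-powers to land correctly (so that $\w G_j^B=\Delta_{\si^B}^{1/4}G_j^B$ rather than some other fractional power) requires care, but it is mechanical given that reference. A secondary point is that I should verify the hypothesis $\{\lad_j^{-1/2}G_j^B\}=\{\lad_j^{-1/2}(G_j^B)^*\}$ is exactly what makes $\lo$ \emph{Hermitian-preserving} and detailed balanced rather than merely self-adjoint as a real-linear map; this is standard and follows by the same argument as for \eqref{eq:quadraticdiri}. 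Finally, I would remark that the necessity direction ("if'' can be upgraded to "if and only if'' when $\mc{M}_B=\mc{B}(\maf{H}_B)$) follows by running the canonical-form argument in reverse, since on a full matrix algebra every KMS-detailed-balanced QMS generator admits the representation \eqref{eq:KMS_generator} with a unique minimal $\Psi$, forcing $\mf{W}(\Xi)=\Psi$ to be completely positive.
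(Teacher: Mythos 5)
Your proposal is correct and follows essentially the same route the paper takes: the paper omits the proof, stating it is almost identical to \cite{vernooij2023derivations}*{Theorem 2.5}, which is precisely the computation you reconstruct — rewriting $\lo=-\sum_j\p_j^\star\p_j$ in the canonical form \eqref{eq:KMS_generator}, identifying the cross term as $\Psi=\mf{W}(\Xi)$ with $\w G_j^B=\Delta_{\si^B}^{1/4}G_j^B$, and invoking the adjoint-closedness of the jump set plus the assumed complete positivity of $\mf{W}(\Xi)$ to conclude via \cites{fagnola2007generators,amorim2021complete}. Your closing remark on necessity is consistent with the paper's surrounding discussion, even though the lemma itself only asserts sufficiency.
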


The proof is almost identical to that of \cite{vernooij2023derivations}*{Theorem 2.5} and is thus omitted. One can further write $\mf{W}(\Xi)$ into the Kraus form $\mf{W}[\Xi](X) = \sum_j L_j^* X L_j$, which gives the jump operators $L_j$ in the GKSL representation \eqref{eq:lindbladform} of $\lo$. This establishes a direct correspondence between the jumps $L_j$ and the operators $G_j^B$ from the decomposition of $H^{AB}$. 

In the case of GNS detailed balance, the correspondence between the jumps of $\lo$ and $G_j^B$ in the form \eqref{eq:dirichletlo} would be much more explicit. Specifically, suppose that $\lo$ is a $\si^B$-GNS detailed balanced QMS generator on $\mc{M}_B \subset \mc{B}(\maf{H}_B)$ with $\si^B \in \mc{D}_+(\mc{M}_B)$. Then, it admits the following canonical form \cite{wirth2024christensen}*{Corollary 5.4} (see also \cites{alicki1976detailed,carlen2017gradient}):
\begin{align} \label{eq:structure}
    \lo(X^B) = \sum_{j = 1}^{J_O} \left(e^{-\omega_j/2} V_j^*[X^B, V_j] + e^{\omega_j/2}[V_j, X^B] V_j^*\right)\,, \q X^B \in \mc{M}_B\,,
\end{align}
where $\{V_j\} \subset \mc{B}(\maf{H}_B)$ are eigenvectors of $\Delta_{\si^B}$:
\begin{align} \label{eq:eigmodular}
 \Delta_{\si^B}(V_j) =  e^{-\omega_j} V_j \,,\quad \l V_j, V_k \r = c_j\d_{j,k}\,, \q \tr(V_j) = 0\,.
\end{align}
For $\ww_j = 0$, one can take the corresponding $V_j$ to be self-adjoint. Additionally, for each index $1 \le j \le J_O$, there exists a conjugate index $1 \le  j' \le J_O$ such that 
\begin{align} \label{eq:adjoint_index}
 V_j^* = V_{j'}\,,\quad \ww_j = - \ww_{j'}\,.
\end{align}
Here $\ww_j \in \R$ are called the Bohr frequencies of the generator $\lo$. Then, by \eqref{eq:structure}--\eqref{eq:adjoint_index}, a direct computation gives 
\begin{align} \label{dirigns}
    \mc{E}_{\lo}(Y^B, X^B) = \sum_{j = 1}^{J_O} \left\l \left[V_j, Y^B\right], \left[V_j, X^B \right] \right\r_{\si^B,1/2}\,,\q \forall X^B, Y^B \in \mc{M}_B\,.
\end{align}
Comparing \eqref{eq:dirichletlo} and \eqref{dirigns}, we readily find that if $G_j^B = \lad_j V_j$ and the conditions \eqref{eq:eigmodular} and \eqref{eq:adjoint_index} hold for $G_j^B$, then the symmetric non-negative bilinear form \eqref{eq:dirichletlo} defines a $\si^B$-GNS detailed balanced Lindbladian. Note that the constraint $\tr(G_j^B) = 0$ in \eqref{eq:eigmodular} always holds. Indeed, by \eqref{eq:hamil} and \eqref{eq:decomHab}, we have $\tr_B(H^{AB}) = \sum_{j \ge 0} \tr(G_j^B) Z_j^A = 0$, implying  $\tr(G_j^B)$ since $\{Z_j^A\}$ is a basis of $\mc{B}(\maf{H}_A)$. 



\subsubsection{Constructing optimal lifts}\label{sec:constructlift}
We have characterized Conditions \ref{asspA}--\ref{assump:overdp} for hypocoercive QMS on bipartite systems. Suppose that $\lo$ is a given $\si^B$-GNS detailed balanced Lindbladian on $\mc{M}_B \subset \mc{B}(\maf{H}_B)$ with $\si^B \in \mc{D}_+(\mc{M}_B)$ as a (possibly non-unique) invariant state. Without loss of generality, let $\lo$ admits the canonical form \eqref{eq:structure} with jumps $G_j^B$. We next turn to the construction of (optimal) lifts $\mc{L}_\gamma$ on the extended space $\mc{M}_{AB} = \mc{B}(\maf{H}_A) \otimes \mc{M}_B$ for $\lo$ in the sense of \cref{def:2ndlift}, where the Hilbert space $\maf{H}_A$ is to be determined. The main results of this section are \cref{thm:liftgns,thm:optimalliftgns}. 

In our construction, we take $\mc{L}^A$ as the generator of the depolarizing semigroup, defined by  
\begin{align} \label{eq:depolargen}
    \mc{L}^A(X^A) = - X^A + \tr(\si^A X^A)\mi^A\,,\q \text{for $X^A \in \mc{B}(\maf{H}_A)$\,,}
\end{align}
where $\si^A \in \mc{D}_+(\maf{H}_A)$ is a full-rank quantum state. One may view $\mc{L}^A$ as the quantum analog of the full refreshment generator in \eqref{genrefresh}. It is also clear that $\mc{L}^A$ is primitive and $\si^A$-GNS detailed balanced, with two eigenvalues $0$ and $-1$. The eigenspace of $\mc{L}^A$ associated with the eigenvalue $-1$ is given by 
\begin{align*}
    \ker(-1 - \mc{L}^A) = \left\{X^A \in \mc{B}(\maf{H}_{A})\,;\ \tr(\si^A X^A) = 0\right\}\,.
\end{align*}
Here, both $\maf{H}_A$ and $\si^A$ are to be constructed.

We next construct the Hamiltonian $H$ of the form \eqref{eq:hamil} such that Conditions~\ref{asspA2}, \ref{asspB2}, and~\ref{condeq:php} are satisfied for the hypocoercive Lindbladian: 
\begin{align*}
    \mc{L}_\gamma = \mc{L}_H + \gamma \mc{L}_S = i [H, \dd] + \mc{L}^A \otimes \id^B\,,
\end{align*}
and the given generator $\lo$ is the overdamped limit of $\mc{L}_\gamma$. This ensures that $\mc{L}_\gamma$ is a lift of $\lo$ and allows the application of \cref{thm:lower,thm:uppermatrix} to estimate the convergence of $\exp(t \mc{L}\gamma)$. Without loss of generality, we consider the following ansatz of $H$, by letting $H^A = H^B = 0$ in \eqref{eq:hamil} and $G_0^B = 0$ in \eqref{eq:decomHab}, 
\begin{align} \label{eq:ansatzh}
    H  = H^{AB} = \sum_{j \ge 1} Z_j^A \otimes G_j^B\,,
\end{align}
where $G_j^B$ are the jump operators associated with the given $\lo$, satisfying \eqref{eq:eigmodular} and \eqref{eq:adjoint_index}, and $Z_j^A$ are orthonormal eigenvectors of $\mc{L}^A$ to be determined, and satisfying the conditions:
\begin{align} \label{eq:orthozj}
\left\l Z_k^A, Z_j^A \right\r_{\si^A,1/2} = \delta_{jk}\,, \quad \tr(\si^A Z_j^A) = 0\,.
\end{align}
According to the ansatz \eqref{eq:ansatzh}, Conditions~\ref{asspB2} and~\ref{condeq:php} hold straightforwardly. 
It remains to ensure that $H$ is constructed such that \cref{asspA2} holds. Once this is established, it is also easy to see from the discussions in \cref{sec:characterlift} that the overdamped limit of $\mc{L}_\gamma$ gives $\lo$, since the eigenvalues of $-\mc{L}^A$ in \eqref{eq:depolargen} satisfy $\lad_0 = 0$ and $\lad_j = 1$ for all $j \ge 1$. Then, $\mc{L}_\gamma$ is a lift of $\lo$ with convergence guarantees as desired.

Specifically, \cref{asspA2} requires 
\begin{align} \label{eq:asspa2}
    [H, \si^A \otimes \si^B]  = 0\,,
\end{align}
which implies, by \eqref{eq:eigmodular} and \eqref{eq:ansatzh},
\begin{align*}
    \sum_{j \ge 1} Z_j^A \otimes G_j^B  =  \sum_{j \ge 1} \Delta_{\si^A} \left(Z_j^A\right)   \otimes \Delta_{\si^B} \left(G_j^B\right) = \sum_{j \ge 1} e^{-\ww_j} \Delta_{\si^A} \left(Z_j^A\right)  \otimes G_j^B\,.
\end{align*}
It follows the equivalence between \eqref{eq:asspa2} and the following constraints on $Z_j^A$: 
\begin{align} \label{constraintzj}
    \Delta_{\si^A}\left( Z_j^A \right) =  e^{\ww_j} Z_j^A\,,
\end{align}
since $\{G_j^B\}$ are linearly independent. Moreover, by \eqref{eq:adjoint_index}, the self-adjointness 
\begin{align*}
    H  = \sum_{j \ge 1} Z_j^A \otimes G_j^B = H^* = \sum_{j \ge 1} (Z_j^A)^* \otimes G_{j'}^B
\end{align*}
adds an additional constraint on $Z_j^A$:
\begin{align} \label{constraintzj2}
    (Z_j^A)^* = Z_{j'}^A\,,
\end{align}
where $j'$ is the conjugate index of $j$ from \eqref{eq:adjoint_index}. 
To summarize, finding the lifts of $\lo$ requires constructing operators $\si^A$ and $Z_j^A$ that satisfy the constraints \eqref{eq:orthozj}, \eqref{constraintzj}, and \eqref{constraintzj2}.

To achieve this, we consider the disjoint decomposition of the indices $1 \leq j \leq J_O$:
\begin{align*}
    \mc{J}_{O,0} := \{j\,; \ \ww_j = 0\}\,,\q \mc{J}_{O,+} := \{j\,; \ \ww_j > 0\}\,,\q  \mc{J}_{O,-} := \{j\,; \ \ww_j < 0\}\,,
\end{align*}
where $\ww_j$ are the Bohr frequencies of $\lo$ in \eqref{eq:structure}. By the condition \eqref{eq:adjoint_index}, the index set $ \mc{J}_{O,-}$ consists of the conjugate indices of $ \mc{J}_{O,+}$, i.e., $\mc{J}_{O,-} = \{j'\,; \ j \in  \mc{J}_{O,+}\}$. Without loss of generality, one can assume that there exist integers $J_0$ and $J_+$ such that 
\begin{align*}
    \mc{J}_{O,0} = \{1 \le j \le J_0\}\,,\q \mc{J}_{O,+} = \{J_0 + 1 \le j \le J_0 + J_+\}\,,
\end{align*}
and 
\begin{align*}
    \mc{J}_{O,-} = \{J_0 + J_+ + 1 \le j \le J_0 + 2 J_+ = J_O\}\,.
\end{align*}
Then, let $\ket{\vp_\ell}$ be orthonormal eigenbasis of $\si^A$ with eigenvalues $1 > \mu_\ell > 0$:
\begin{align*}
    \si^A \ket{\vp_\ell} = \mu_\ell \ket{\vp_\ell}\,. 
\end{align*}
We are now ready to construct $Z_j^A$ as follows.
\begin{itemize}
    \item For $j \in \mc{J}_{O,0} = \{1 \le j \le J_0\}$, the operators $G_j^B$ are self-adjoint with $j' = j$,  implying $(Z_j^A)^* = Z_j^A$. We hence define 
    \begin{align} \label{defzj1}
        Z_j^A = \frac{1}{\sqrt{\mu_{2j- 1} + \mu_{2j}}} \left(\ket{\vp_{2j}}\bra{\vp_{2j}} - \ket{\vp_{2j-1}}\bra{\vp_{2j-1}}\right).
    \end{align}
    \item For $j \in \mc{J}_{O,+} = \{J_0 + 1 \le j \le J_0 + J_+\}$ with the conjugate index $j' \in \mc{J}_{O,-}$, we define 
    \begin{align} \label{defzj2}
        Z_j^A = \frac{1}{(\mu_{2j-1}\mu_{2j})^{1/4}} \ket{\vp_{2j-1}}\bra{\vp_{2j}}\,,\q Z_{j'}^A = \frac{1}{(\mu_{2j-1}\mu_{2j})^{1/4}} \ket{\vp_{2j}}\bra{\vp_{2j-1}}\,,
    \end{align}
    where eigenvalues $\mu_{2j-1}, \mu_{2j}$ are set to satisfy $$\mu_{2j - 1}/\mu_{2j} = e^{\ww_j}\,,$$ such that \eqref{constraintzj} holds. 
\end{itemize}
It is easy to see from the above constructions \eqref{defzj1} and \eqref{defzj2} that 
\begin{align*}
    \maf{H}_A = {\rm Span}\{\ket{\vp_\ell}\,;\  1 \le \ell \le  2 (J_0 + J_+)\}\,,\q \si^A = \sum_{\ell = 1}^{2 (J_0 + J_+)} \mu_\ell \ket{\vp_\ell}\bra{\vp_\ell}\,,
\end{align*}
and the conditions \eqref{eq:orthozj}, \eqref{constraintzj}, and \eqref{constraintzj2} hold for $\{Z_j^A\}$. Therefore, we have proved:
\begin{proposition} \label{thm:liftgns}
Let $\mc{M}$ be a finite-dimensional von Neumann algebra and $\si \in \mc{D}_+(\mc{M})$ be a full-rank state. There exist second-order lifts for any given $\si$-GNS detailed balanced quantum Markov semigroup $\exp(t \mc{L}_O)$ on $\mc{M}$ in the sense of \cref{def:2ndlift}.
\end{proposition}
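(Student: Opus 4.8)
The plan is to produce, for any given $\si$-GNS detailed balanced $\mc{L}_O$, an explicit hypocoercive generator $\mc{L}_\gamma$ on an enlarged algebra whose overdamped limit is exactly $\exp(t\mc{L}_O)$, and then to conclude via the bipartite reformulations of the lifting conditions established in \cref{sec:characterlift}. First I would embed $\mc{M}$ unitally as a finite-dimensional von Neumann subalgebra of some $\mc{B}(\maf{H}_B)$ and set $\mc{M}_B:=\mc{M}$, $\si^B:=\si$. Invoking the GKSL canonical form \eqref{eq:structure} of \cite{wirth2024christensen}*{Corollary 5.4}, I would write $\mc{L}_O$ through jump operators $\{V_j\}_{j=1}^{J_O}$ that are eigenvectors of $\Delta_{\si^B}$ as in \eqref{eq:eigmodular}, Bohr frequencies $\{\ww_j\}$, and the conjugation pairing $j\mapsto j'$ of \eqref{eq:adjoint_index}, and set $G_j^B:=V_j$. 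One may assume $\mc{L}_O\neq0$, hence $J_O\ge1$ with each $V_j\neq0$ (the case $\mc{L}_O=0$ is degenerate, $\exp(t\mc{L}_O)=\id$, and admits no second-order lift). I would then take $\mc{L}^A$ to be the depolarizing generator \eqref{eq:depolargen} on $\mc{B}(\maf{H}_A)$ for a state $\si^A$ and space $\maf{H}_A$ still to be built, set $\mc{M}_{AB}:=\mc{B}(\maf{H}_A)\otimes\mc{M}_B$, and consider $\mc{L}_\gamma := i[H,\dd]+\gamma\,(\mc{L}^A\otimes\id^B)$ with $H=H^{AB}=\sum_{j\ge1}Z_j^A\otimes G_j^B$ as in the ansatz \eqref{eq:ansatzh}, where the orthonormal eigenvectors $Z_j^A$ of $\mc{L}^A$ remain to be chosen.

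The core step is to construct $\maf{H}_A$, $\si^A$, and the $\{Z_j^A\}$ so that the constraints \eqref{eq:orthozj}, \eqref{constraintzj}, and \eqref{constraintzj2} all hold. I would split $\{1,\dots,J_O\}$ into $\mc{J}_{O,0}$, $\mc{J}_{O,+}$, $\mc{J}_{O,-}$ according to the sign of $\ww_j$, pair $\mc{J}_{O,+}$ with $\mc{J}_{O,-}$ through $j\mapsto j'$, and take $\maf{H}_A$ of dimension $2(|\mc{J}_{O,0}|+|\mc{J}_{O,+}|)\ge 2$ with an orthonormal basis $\{\ket{\vp_\ell}\}$. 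Choosing eigenvalues $\mu_\ell>0$ of $\si^A$ with $\mu_{2j-1}/\mu_{2j}=e^{\ww_j}$ for each pair $j$ (these are constraints on disjoint pairs, hence jointly solvable and normalizable to $\sum_\ell\mu_\ell=1$, giving $\mu_\ell\in(0,1)$ and $\si^A\in\mc{D}_+(\maf{H}_A)$), I would define the $Z_j^A$ by \eqref{defzj1} for $j\in\mc{J}_{O,0}$ (diagonal and self-adjoint, so $j'=j$) and by \eqref{defzj2} for $j\in\mc{J}_{O,\pm}$ (rank-one and conjugate-paired). A block-by-block computation then yields KMS-orthonormality \eqref{eq:orthozj}, the modular eigenvalue relation \eqref{constraintzj}, the conjugation identity \eqref{constraintzj2}, and $\tr(\si^A Z_j^A)=0$, so that each $Z_j^A\in\ker(-1-\mc{L}^A)$.

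Next I would verify Conditions~\ref{asspA}--\ref{assump:overdp} through their reformulations in \cref{sec:characterlift}. For \cref{asspA2}: from $\Delta_{\si^A}(Z_j^A)=e^{\ww_j}Z_j^A$ and $\Delta_{\si^B}(G_j^B)=e^{-\ww_j}G_j^B$ one obtains $[H,\si^A\otimes\si^B]=0$; combining \eqref{constraintzj2} with \eqref{eq:adjoint_index} gives $H=H^*$; and $\mc{L}^A\otimes\id^B$ is $\si^A\otimes\si^B$-KMS detailed balanced since $\mc{L}^A$ is $\si^A$-GNS detailed balanced. The ansatz \eqref{eq:ansatzh} has $H^A=H^B=0$ and $G_0^B=0$, so \cref{condeq:php} (hence \cref{assump:PHP}) is immediate, while \cref{asspB2} (hence \cref{asspB}) holds because $G_j^B\neq0$ for some $j\ge1$. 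Finally, the eigenvalues of $-\mc{L}^A$ are $\lad_0=0$ and $\lad_j=1$ for $j\ge1$, so the overdamped Dirichlet form \eqref{eq:dirichletlo} of $\mc{L}_\gamma$ reduces, with $G_j^B=V_j$, to the Dirichlet form \eqref{dirigns} of the given $\mc{L}_O$; hence the overdamped limit of $\exp(t\mc{L}_\gamma)$ is $\exp(t\mc{L}_O)$ itself and \cref{assump:overdp2} (hence \cref{assump:overdp}) holds. By \cref{lem:overdam} and \cref{lem:primitlgamma}, together with the equivalences of \cref{sec:characterlift}, all of Conditions~\ref{asspA}--\ref{assump:overdp} are met, so \cref{def:2ndlift} shows that $\exp(t\mc{L}_\gamma)$ is a second-order lift of $\exp(t\mc{L}_O)$, which proves existence.

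The main obstacle is the construction in the second step: one must realize the prescribed eigenvalue ratios $e^{\ww_j}$ on $\si^A$ while keeping $\si^A$ strictly positive and normalized, and simultaneously impose the KMS-orthonormality and the conjugation pairing $(Z_j^A)^*=Z_{j'}^A$. The block-diagonal ansatz \eqref{defzj1}--\eqref{defzj2} is what decouples these requirements pair-by-pair, reducing the Bohr-frequency matching to a simple rescaling of eigenvalues; once the construction is in place, the verification of the lifting conditions is routine given \cref{sec:characterlift} and the explicit overdamped form \eqref{eq:dirichletlo}.
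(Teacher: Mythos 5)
Your proposal is correct and follows essentially the same route as the paper: the same depolarizing choice of $\mc{L}^A$, the same ansatz $H=\sum_{j\ge1}Z_j^A\otimes G_j^B$ with the block constructions \eqref{defzj1}--\eqref{defzj2} and the eigenvalue-ratio matching $\mu_{2j-1}/\mu_{2j}=e^{\ww_j}$, and the same verification of Conditions~\ref{asspA}--\ref{assump:overdp} via their bipartite reformulations and the reduction of \eqref{eq:dirichletlo} to \eqref{dirigns}. The only (harmless) additions are your explicit remarks on the trivial case $\mc{L}_O=0$ and on normalizing $\si^A$, which the paper leaves implicit.
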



We next show that the above construction actually gives the optimal lifts (see \cref{def:optimallift}) in the case where $\si^B$ is the maximally mixed state, i.e., $$\si^B \propto \mi^B\,,$$
if $\mc{L}_O$ satisfies the so-called $\kappa$-intertwining condition for some $\kappa \in \R$ \cites{carlen2017gradient,wirth2021curvature}.

\begin{definition} [Intertwining condition] \label{def:intercondi}
    Suppose that $\mc{P}_t$ is a $\si$-KMS detailed balanced with generator $\mc{L}$. There exists operators $\{V_j\} = \{V_j^*\}$ such that
    \begin{align*}
        \mc{L} = - \sum_{j \in \mc{J}} \p_j^\star \p_j 
    \end{align*}
    where $\p_j(\dd) := [V_j, \dd]$ with the adjoint
    $\p_j^\star$ with respect to the KMS inner product. We say $\mc{P}_t$ satisfies the $\kappa$-intertwining condition for some $\kappa \in \mathbb{R}$ if for any $j \in \mc{J}$,
    \begin{equation} \label{def:intertwining}
        \p_j \mc{P}_t = e^{-\kappa t} \mc{P}_t \p_j\,,\q \text{equivalently, }\    \partial_j \mathcal{L} = \mathcal{L} \partial_j - \kappa \partial_j\,. 
    \end{equation}
    \end{definition}

Such an intertwining condition \eqref{def:intertwining} was first used by Carlen and Maas \cites{carlen2017gradient,carlen2020non} for proving entropic Ricci curvature lower bounds and the geodesic convexity of the relative entropy in the quantum Wasserstein space, and it was further developed in \cites{wirth2021curvature,wirth2021complete,munch2024intertwining} for  dimension-dependent quantum
functional inequalities and gradient estimates. 

    \begin{lemma}\label{lem:intertcond}
        Suppose that $\mc{P}_t$ is a $\si$-KMS detailed balanced with generator $\mc{L} = - \sum_{j \in \mc{J}} \p_j^\star \p_j$ that satisfies the $\kappa$-intertwining condition for $\kappa \in \R$. Then there holds 
        \begin{align*}
            \sum_{i,j}\norm{\p_i \p_j X}_{\si,1/2}^2 \le \max\{0,-\kappa\} \mc{E}_{\mc{L}}(X)  + \norm{\mc{L} X}_{\si,1/2}^2\,.
        \end{align*}
    \end{lemma}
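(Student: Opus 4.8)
The plan is a short Bakry--Émery-type ($\Gamma_2$) computation: reduce the left-hand side to a sum of Dirichlet forms, push the intertwining identity through, and then use the KMS self-adjointness of $\mc{L}$.

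First I would record the elementary identity that for every $Y\in\mc{M}$,
\[
\sum_i \norm{\p_i Y}_{\si,1/2}^2 = \sum_i \l Y,\p_i^\star\p_i Y\r_{\si,1/2} = -\l Y,\mc{L}Y\r_{\si,1/2} = \mc{E}_{\mc{L}}(Y)\,,
\]
which is immediate from $\mc{L}=-\sum_i\p_i^\star\p_i$ and the definition of the Dirichlet form. Applying it with $Y=\p_j X$ and summing over $j$ gives
\[
\sum_{i,j}\norm{\p_i\p_j X}_{\si,1/2}^2 = \sum_j \mc{E}_{\mc{L}}(\p_j X) = -\sum_j \l \p_j X,\mc{L}\p_j X\r_{\si,1/2}\,.
\]

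Next I would insert the intertwining relation. Reading \eqref{def:intertwining} of \cref{def:intercondi} as $\mc{L}\p_j = \p_j\mc{L} + \kappa\,\p_j$, the right-hand side becomes $-\sum_j\l\p_j X,\p_j\mc{L}X\r_{\si,1/2} - \kappa\sum_j\norm{\p_j X}_{\si,1/2}^2$. The second sum equals $\kappa\,\mc{E}_{\mc{L}}(X)$ by the identity above. For the first, moving one adjoint across, $\sum_j\l\p_j X,\p_j\mc{L}X\r_{\si,1/2} = \l X,(\textstyle\sum_j\p_j^\star\p_j)\mc{L}X\r_{\si,1/2} = -\l X,\mc{L}^2 X\r_{\si,1/2} = -\norm{\mc{L}X}_{\si,1/2}^2$, the last step using that $\mc{L}=\mc{L}^\star$ with respect to $\l\cdot,\cdot\r_{\si,1/2}$, which is exactly the $\si$-KMS detailed balance hypothesis. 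Collecting terms yields the exact identity
\[
\sum_{i,j}\norm{\p_i\p_j X}_{\si,1/2}^2 = \norm{\mc{L}X}_{\si,1/2}^2 - \kappa\,\mc{E}_{\mc{L}}(X)\,.
\]
Since $\mc{E}_{\mc{L}}(X)\ge 0$, we have $-\kappa\,\mc{E}_{\mc{L}}(X)\le\max\{0,-\kappa\}\,\mc{E}_{\mc{L}}(X)$ --- with equality when $\kappa\le 0$, and the left-hand side being nonpositive when $\kappa\ge 0$ --- which gives the stated inequality.

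There is no genuine obstacle: the argument is purely algebraic and, since $\mc{M}$ is finite-dimensional, there are no domain issues with applying $\mc{L}$ to $\p_j X$ or with the commutation identity. The only point requiring care is the sign convention in the intertwining condition --- whether $\mc{L}\p_j-\p_j\mc{L}$ equals $+\kappa\p_j$ or $-\kappa\p_j$ --- since an error there would replace $\kappa$ by $-\kappa$ in the final identity; I would pin this down directly from \eqref{def:intertwining}. Note also that the identity above is sharp, so the $\max\{0,-\kappa\}$ in the statement is precisely what is needed to package both signs of $\kappa$ into one clean bound.
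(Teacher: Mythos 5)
Your proof is correct and follows essentially the same computation as the paper's: rewrite the left-hand side as $\sum_j \mc{E}_{\mc{L}}(\p_j X)$, insert the intertwining relation $\mc{L}\p_j = \p_j\mc{L} + \kappa\,\p_j$, and move adjoints to arrive at the exact identity $\sum_{i,j}\norm{\p_i\p_j X}_{\si,1/2}^2 = \norm{\mc{L}X}_{\si,1/2}^2 - \kappa\,\mc{E}_{\mc{L}}(X)$, from which the stated bound follows. The only cosmetic difference is that you invoke $\mc{L}=\mc{L}^\star$ at the final step while the paper instead moves $\p_j$ onto the other factor via $\mc{L}=-\sum_j\p_j^\star\p_j$; since this form of $\mc{L}$ is automatically KMS self-adjoint, the two are interchangeable.
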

    \begin{proof}
    The proof follows from a direct computation:
    \begin{align*}
        \sum_{i,j}\norm{\p_i \p_j X}_{\si,1/2}^2  & = \sum_{i,j} \l \p_i \p_j X, \p_i \p_j X  \r_{\si,1/2} \\
        & = \sum_{i,j} \l \p_{i}^\star \p_i \p_j X, \p_j X  \r_{\si,1/2} = \sum_{j} \l - \mc{L} \p_j X,  \p_j X  \r_{\si,1/2}\\
        &  = \sum_{j} \l (- \kappa \p_j -  \p_j \mc{L}) X,  \p_j X  \r_{\si,1/2} \\
        &  = - \kappa \mc{E}_{\mc{L}_\si}(X) + \norm{\mc{L}_\si X}_{\si,1/2}^2\,. \qedhere
    \end{align*}
    \end{proof}


Following the discussion at the end of \cref{sec:upperlift}, to make $\mc{L}_\gamma$ become an optimal lift of $\lo$, one needs to ensure that the constants $K_1$ and $K_2$ in \eqref{eq:constk1k2mm} satisfy the condition \eqref{eq:keyest}. 
The remaining task now is to estimate $K_1$ and $K_2$ for our construction. For this, noting that in the case of $\si^B \propto \mi^B$, it holds that $\ww_j = 0$ and $G_j^B$ are self-adjoint for all $j$. Moreover, without loss of generality, we let $\mu_\ell = (2 J_0)^{-1}$ for $1 \le  \ell \le 2J_0$, and then have  
\begin{align} \label{eq:sigmasym}
    \maf{H}_A = {\rm Span}\{\ket{\vp_\ell}\,;\  1 \le \ell \le  2 J_0 \}\,,\q \si^A = \frac{1}{2 J_0} \sum_{\ell = 1}^{2 J_0}  \ket{\vp_\ell}\bra{\vp_\ell}\,.
\end{align}
We now compute 
\begin{equation} \label{auxeq:k1k2}
    \begin{aligned}
        \lh^\dag (-\ld)^{-1} \lh Y  =  \sum_{i, j \ge 1}  & [Z_i^A \otimes G_i^B, Z_j^A \otimes [G_j^B, Y^B]] \\
        = \sum_{i, j \ge 1} &  Z_i^A Z_j^A \otimes [G_i^B, [G_j^B, Y^B]] \\
        & +  \left(Z_i^A Z_j^A - Z_j^A Z_i^A \right) \otimes [G_j^B, Y^B] G_i^B\,,
    \end{aligned}
\end{equation}
for $Y = \mi^A \otimes Y^B \in \mc{F}(\mc{L}_S)$. 
It is easy to see that the second term in \eqref{auxeq:k1k2} vanishes:
\begin{align*}
   [Z_i^A, Z_j^A] \otimes \p_j (Y^B) G_i^B = 0\,,
\end{align*}
since $\mc{J}_{O,+} = \mc{J}_{O,-} = \emptyset$, and   
$[Z_i^A, Z_j^A] = 0$ by \eqref{defzj1}. It suffices to estimate the first term in \eqref{auxeq:k1k2}. By \cref{lem:intertcond}, we have 
\begin{equation} \label{eqq:optimal}
    \begin{aligned}
        \norm{\lh^\dag (-\ld)^{-1} \lh Y}_{\si,1/2}^2 & =  \Big\|\sum_{i, j \ge 1}  Z_i^A Z_j^A \otimes \p_i \p_j (Y^B) \Big\|_{\si,1/2}^2 \\ & = \sum_{k,l \ge 1} \sum_{i,j \ge 1} \left\l Z_k^A Z_l^A, Z_i^A Z_j^A \right\r_{\si^A,1/2} \left\l \p_k \p_l (Y^B),  \p_i \p_j (Y^B) \right\r_{\si^B,1/2}\\ & \le \sum_{j \ge 1} \norm{Z_{j'}^A Z_j^A}_{\si^A,1/2}^2 \norm{\p_{j'} \p_j (Y^B)}_{\si^B,1/2}^2 \\
        & \le J_0\left( \norm{\mc{L} X}_{\si,1/2}^2 + \max\{0,-\kappa\} \mc{E}_{\mc{L}}(X) \right)\,.
    \end{aligned}
\end{equation}
where we have used, thanks to the constructions \eqref{defzj1} and \eqref{defzj2},
\begin{align*}
    \l Z_k^A Z_l^A, Z_i^A Z_j^A \r_{\si^A,1/2} \propto \d_{ki} \d_{l j} \d_{j'i}\,,
\end{align*}
and 
\begin{align*}
    \norm{Z_{j'}^A Z_j^A}_{\si^A,1/2}^2 = \begin{cases}
        \frac{1}{\mu_{2j-1} + \mu_{2j}} & j \in \mc{J}_{O,0} \\
        \frac{1}{\mu_{2j-1}} & j \in \mc{J}_{O,+} \\
        \frac{1}{\mu_{2j'}} & j \in \mc{J}_{O,-} 
    \end{cases}\,.
\end{align*}
with $\mc{J}_{O,+} = \mc{J}_{O,-} = \emptyset$ and $\mu_{\ell} = (2 J_0)^{-1}$ for all $ \ell $ by \eqref{eq:sigmasym}. 
It follows from \eqref{eqq:optimal} that 
\begin{align*}  
    \norm{(\id - \es)\lh^\dag (-\ld)^{-1} \lh Y}_{2,\si} 
  \le \sqrt{J_0} \left( \norm{\mc{L} X}_{\si,1/2} + \sqrt{\max\{0,-\kappa\} \mc{E}_{\mc{L}}(X)} \right)\,,
\end{align*}
that is, 
\begin{equation*}
    K_1 = \sqrt{J_0}\,,\q K_2 = \sqrt{J_0 \max\{0,-\kappa\}}\,.
\end{equation*}
According to \eqref{eq:rateestmatrix2} and \eqref{eq:keyest}, as well as \cref{thm:lower}, we can conclude:
\begin{theorem} \label{thm:optimalliftgns}
Let $\mc{M}$ be a finite-dimensional von Neumann algebra. For a symmetric QMS generator $\lo$ satisfying the $\kappa$-intertwining condition for $\kappa \in \R$, there exists a second-order lift $\mc{L}$ with convergence rate:
    \begin{align*}
         \Omega\left(\frac{\sqrt{\lad_O}}{  1 +  \sqrt{\max\{0,-\kappa\}\lad_O^{-1}}}\right) \le \nu(\mc{L}) \le \mc{O}\left(\sqrt{\lad_O}\right)\,.
    \end{align*}
    Such lift is optimal if $\max\{0,-\kappa\}\lad_O^{-1} = \mc{O}(1)$, which always holds if $\kappa \ge 0$. 
\end{theorem}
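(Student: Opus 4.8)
The plan is to combine the explicit construction of the lift $\mc{L}_\gamma = \mc{L}_H + \gamma(\mc{L}^A\otimes\id^B)$ from the paragraphs preceding the statement with the general convergence machinery of \cref{thm:lower,thm:uppermatrix}. By \cref{thm:liftgns} (and the surrounding construction with $\mc{L}^A$ the depolarizing generator \eqref{eq:depolargen} and $\si^B\propto\mi^B$), we already know that the constructed $\mc{L} = \mc{L}_\gamma$ is a genuine second-order lift of $\lo$ in the sense of \cref{def:2ndlift}, so Conditions~\ref{asspA2}--\ref{condeq:php} hold and $\lo$ is the overdamped limit. In particular \cref{assump:overdp} is satisfied since $\lo$ is $\si^B$-KMS detailed balanced with $\si^B\propto\mi^B$, and the compatibility \eqref{eq:compabsio} holds trivially because $\si^A$ is chosen so that $\si = \si^A\otimes\si^B$ reduces to $\si^B$ on $\mc{F}(\mc{L}_S) = \mi^A\otimes\mc{M}_B$. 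Thus both \cref{thm:lower} and \cref{thm:uppermatrix} apply directly to this $\mc{L}_\gamma$.

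Next I would record the upper bound. \cref{thm:lower} gives $\nu(\mc{L}_\gamma) = \Or(\sqrt{\w{s_{\rm m}}^{-1}\lad_O})$; here $\mf{S} = (-\mc{L}^A\otimes\id^B)^{-1}$ restricted to $\ran(\mc{L}_H\es)$ acts (on the relevant eigenspaces, all with eigenvalue $\lad_j=1$ for $\mc{L}^A$ the depolarizing generator) as the identity, so $\w{s_{\rm m}} = 1$ and $\nu(\mc{L}_\gamma)\le\Or(\sqrt{\lad_O})$, uniformly in $\gamma$. For the lower bound I would invoke \eqref{eq:rateestmatrix2}: since $\mc{L}^A$ has a single nonzero eigenvalue equal to $1$, the minimal and maximal nonzero eigenvalues of $-\mc{L}_S$ are both $1$, i.e.\ $\lad_S = \Theta(1) = \|\mc{L}_S\|_{(2,\si)\to(2,\si)}$, so at $\gamma = \gamma_{\max}$ one has $\nu(\mc{L}_{\gamma_{\max}}) = \Omega\bigl(\sqrt{\lad_O}/(1 + K_1 + K_2\lad_O^{-1/2})\bigr)$ with $K_1,K_2$ the constants in \eqref{eq:constk1k2mm}.

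The substantive step is the estimate of $K_1$ and $K_2$ for the constructed Hamiltonian $H = \sum_{j\ge1} Z_j^A\otimes G_j^B$ of \eqref{eq:ansatzh}, which is exactly the computation \eqref{auxeq:k1k2}--\eqref{eqq:optimal} sketched just above the statement. I would carry it out: expand $\lh^\dag(-\ld)^{-1}\lh Y$ into a commutator–anticommutator sum, note the anticommutator piece vanishes because $\mc{J}_{O,+} = \mc{J}_{O,-} = \emptyset$ (as $\si^B\propto\mi^B$ forces all Bohr frequencies $\ww_j = 0$) and $[Z_i^A,Z_j^A] = 0$ for the diagonal $Z_j^A$ in \eqref{defzj1}, then control $\|\sum_{i,j} Z_i^A Z_j^A\otimes\p_i\p_j(Y^B)\|_{\si,1/2}^2$ using the orthogonality relation $\l Z_k^AZ_l^A, Z_i^AZ_j^A\r_{\si^A,1/2}\propto\d_{ki}\d_{lj}\d_{j'i}$ together with the norm values $\|Z_{j'}^AZ_j^A\|_{\si^A,1/2}^2$, and finally apply \cref{lem:intertcond} (the $\kappa$-intertwining inequality $\sum_{i,j}\|\p_i\p_j X\|_{\si,1/2}^2\le\max\{0,-\kappa\}\mc{E}_{\mc{L}}(X) + \|\mc{L}X\|_{\si,1/2}^2$). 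This yields $K_1 = \sqrt{J_0}$ and $K_2 = \sqrt{J_0\max\{0,-\kappa\}}$. Substituting into \eqref{eq:rateestmatrix2} and treating $J_0 = \Theta(1)$ (a fixed structural constant of $\lo$, absorbed into the $\Omega(\cdot)$) gives the claimed two-sided bound $\Omega\bigl(\sqrt{\lad_O}/(1 + \sqrt{\max\{0,-\kappa\}\lad_O^{-1}})\bigr)\le\nu(\mc{L})\le\Or(\sqrt{\lad_O})$, and when $\max\{0,-\kappa\}\lad_O^{-1} = \Or(1)$ (in particular whenever $\kappa\ge0$, so that $\max\{0,-\kappa\}=0$) the two bounds match and \cref{def:optimallift} is satisfied. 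The main obstacle is purely bookkeeping: making sure the restriction to the subspace $\ran(\mc{L}_H\es)$, the reduction of $\mf{S}$ to the identity, and the index-conjugation structure $j\mapsto j'$ are handled consistently so that the orthogonality relations used in \eqref{eqq:optimal} are exactly correct; there is no deep difficulty once the construction of \cref{thm:liftgns} is in place.
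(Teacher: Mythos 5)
Your proposal is correct and follows essentially the same route as the paper: existence of the lift from the construction behind \cref{thm:liftgns}, the upper bound from \cref{thm:lower} with $\w{s_{\rm m}}=1$ for the depolarizing choice of $\ld$, and the lower bound from \eqref{eq:rateestmatrix2} with $K_1=\sqrt{J_0}$, $K_2=\sqrt{J_0\max\{0,-\kappa\}}$ obtained exactly via the computation \eqref{auxeq:k1k2}--\eqref{eqq:optimal} and \cref{lem:intertcond}, with $J_0$ absorbed as a $\Theta(1)$ structural constant just as in the paper. The only cosmetic slip is calling the vanishing second term in \eqref{auxeq:k1k2} an ``anticommutator piece''; it is the $[Z_i^A,Z_j^A]$ commutator term, which vanishes for the diagonal $Z_j^A$ of \eqref{defzj1}, as you correctly argue.
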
 

\begin{remark}
  For a general full-rank state $\sigma$, it holds that $\mathcal{J}_{O,+} \neq \emptyset$ and  $[Z_i^A, Z_j^A] \neq 0$ for some pairs of $(i,j)$. Thus, estimating the second term in \eqref{auxeq:k1k2} is necessary to obtain the desired constants $K_1$ and $K_2$. This task forms the main challenge in extending \cref{thm:liftgns} to general  $\sigma$-GNS detailed balanced  $\mathcal{L}_O$, beyond the case where  $\sigma \propto \mi$. Addressing this challenge may require a more refined or clever construction of the lifted dynamics. 
\end{remark}

We conclude this section by presenting a few simple examples of symmetric QMS that satisfy the intertwining conditions in \eqref{def:intertwining}. 

\begin{example}[Depolarizing semigroups]
Let $\mc{N}$ be a von Neumann subalgebra of $\mc{M}$ and $E_{\mc{N}}$ be the associated conditional expectation. The general depolarizing semigroup is generated by $\mc{L} = E_{\mc{N}} - \id$. Suppose that it can be written as $\mc{L} = - \sum_j \p_j^\dag \p_j$. Then, since $\mc{L} E_{\mc{N}} = 0$, we have $\p_j E_{\mc{N}} = 0 = E_{\mc{N}} \p_j$ for all $j$. It follows that $\p_j \mc{L} = -\p_j = \mc{L} \p_j$, that is, $\mc{L}$ satisfies the $\kappa$-intertwining condition with $\kappa = 0$. 
\end{example}

\begin{example}[Schur multipliers]
A Schur multiplier $\mc{L}$ over the matrix algebra $\mc{B}(\C^n)$ is a linear operator defined by $\mc{L} E_{ij} = a_{ij} E_{ij}$, where $a_{ij} \in \C$ and $\{E_{ij}\}$ are the matrix units. Then $\exp(t \mc{L})$ defines a symmetric QMS if and only if there exists a Hilbert space $\mc{H}$ and elements $\{a(j)\}_{j = 1}^n \in \mc{H}$ such that $a_{ij} = - \norm{a(i) - a(j)}^2$. Then, let $\{e_k\}_{k = 1}^d$ be  an orthonormal basis of $\mc{H}$. Define 
\begin{align*}
    V_k = \sum_{j = 1}^n \l a(j), e_k \r E_{jj}\,.
\end{align*}
which are self-adjoint. By a direct computation, one can check that, for $X \in \mc{B}(\C^n)$,
\begin{align*}
    \mc{L}(X) = - \sum_{k = 1}^d \p_k^2 (X)\q \text{with} \q \p_k(X) = [V_k, X]\,.
\end{align*}
and further, $\p_j \mc{L} = \mc{L} \p_j$, implying that $\mc{L}$ satisfies the $\kappa$-intertwining condition for $\kappa = 0$.     
\end{example}

For more interesting and and complicated examples, we refer interested readers to recent works \cites{carlen2017gradient, wirth2021curvature, wirth2021complete, gao2024coarse}, which include cases such as Fermi and Bose Ornstein-Uhlenbeck semigroups \cite{carlen2017gradient} and quantum Markov semigroups on discrete group von Neumann algebras \cite{wirth2021curvature}*{Section 6.2} (see also \cite{gao2024coarse}*{Section 5.2}). 

\section{Concluding remarks}
Inspired by \cites{eberle2024non,eberle2025convergence}, in this work, we have developed a second-order lifting framework for quantum Markov semigroups and uncovered its connections to the overdamped limit. This allows us to establish the convergence guarantees of lifted hypocoercive dynamics from both above and below, while also characterizing the limitations of accelerating detailed balanced quantum Markov processes via lifting. An abstract lifting framework for symmetric contraction $C_0$-semigroups has also been presented, which unifies our discussion of quantum dynamics with the results of \cites{eberle2024non,eberle2025convergence} for classical diffusions. To illustrate our theoretical findings, we have provided new examples of optimal lifts for certain symmetric finite Markov chains and quantum Markov semigroups. 
We conclude with a few intriguing open questions:  
\begin{itemize}[wide]  
    \item The second-order lift can be formally defined as the reverse of the overdamped limit, where the structural \cref{assump:PHP} is utilized. In light of \cref{rem:first_order}, we are curious whether a first-order lifting framework, similar to \cite{chen1999lifting}, exists for discrete-time quantum Markov evolutions.  
    \item As discussed, demonstrating that the lift construction in \cref{sec:liftfinitemarkov} is optimal requires verifying the matrix inequality \eqref{eq:matrixbound} alongside \eqref{keyrela}. It would be interesting to characterize the class of symmetric $Q$-matrices for which this property holds.  
    \item Following \cite{li2024quantum}*{Section 5.2}, it is not difficult to construct the lift of a general reversible finite Markov chain with respect to a non-uniform measure. However, finding the optimal lift remains a significant challenge.  
    \item Designing optimal lifts for symmetric quantum Markov semigroups without relying on the $\kappa$-intertwining condition would be very interesting. Extending this to general $\si$-detailed balanced quantum Markov dynamics is another exciting direction to explore.  
\end{itemize}  

\subsection*{Acknowledgment} This work is supported in part by 
National Key R$\&$D Program of China Grant No. 2024YFA1016000 (B.L.) and by National Science Foundation via award DMS-2309378 (J.L.).

\begin{appendix}

\section{Conditional expectation} \label{app:quantumce}

In this appendix, we briefly recall some basics of noncommutative conditional expectation. Let $\mathcal{N} \subset \mathcal{M} \subset \bh$ be finite-dimensional von Neumann subalgebras. 

\begin{definition}[\cites{ohya2004quantum,benatti2009dynamics}]
Let $\mathcal{N} \subset \mathcal{M}$ be finite-dimensional von Neumann algebras, and let $\si$ be a full-rank state on $\mc{M}$. A linear mapping $E_{\mc{N}}: \mc{M} \to \mc{N}$ is a \emph{conditional expectation} if it is completely positive and unital, and there holds 
    \begin{align} \label{cecdon1}
        E_{\mc{N}}(AXB) = A E_{\mc{N}}(X) B, \quad \forall X \in \mc{M},\ A, B \in \mc{N}\,.
    \end{align}
Further, such a mapping $E_{\mc{N}}$ is called a \emph{conditional expectation with respect to $\si$} if it is trace-preserving with respect to $\si$:
    \begin{align} \label{cecdon2}
        \tr(\si E_{\mc{N}}(X)) = \tr(\si X), \quad \forall X \in \mc{M}\,.
    \end{align}
\end{definition}

Note that the condition \eqref{cecdon2} is equivalent to the invariance $E_{\mc{N}}^\dag(\si) = \si$. Additionally, the following useful properties hold for a conditional expectation \cite{takesaki2003theory}.

\begin{lemma} \label{lem:properce}
Let $E_{\mc{N}} : \mathcal{M} \to \mathcal{N}$ a conditional expectation with respect to $\sigma$. Then $E_{\mc{N}}$ is self-adjoint with respect to the $s$-inner product $\l \dd ,\dd \r_{\si,s}$ for any $s \in [0,1]$, where the inner product $\l \dd ,\dd \r_{\si,s}$ is defined as in \eqref{def:s-inner}. It follows that $E_{\mc{N}}$ commutes with the modular automorphism group:
\begin{align*}
    \Delta_\si^{is} \circ E_{\mc{N}} = E_{\mc{N}} \circ  \Delta_\si^{is}\,,\q \forall s \in \R\,,
\end{align*}
where $\Delta_\si$ is the modular operator in \eqref{def:modularoperator}.
\end{lemma}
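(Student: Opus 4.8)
The plan is to derive both assertions from the classical structure theory of $\si$-preserving conditional expectations between finite-dimensional von Neumann algebras. The starting point (see \cite{takesaki2003theory}) is Takesaki's theorem: since $\mc{N}$ is invariant under the modular automorphism group $\{\Delta_\si^{it}\}_{t\in\R}$ (which, in finite dimensions, is equivalent to $\mc{N}$ admitting a $\si$-preserving conditional expectation), the conditional expectation $E_{\mc{N}}:\mc{M}\to\mc{N}$ with respect to $\si$ is \emph{unique}; moreover, the inclusion $\mc{N}\subset\mc{M}$ together with $\si$ has a normal form, namely an orthogonal decomposition $\maf{H}=\bigoplus_\alpha\maf{H}_\alpha^{(1)}\otimes\maf{H}_\alpha^{(2)}$ for which
\begin{align*}
    \mc{M}=\bigoplus_\alpha \mc{B}(\maf{H}_\alpha^{(1)})\otimes\mc{B}(\maf{H}_\alpha^{(2)})\,,\qquad \mc{N}=\bigoplus_\alpha \mc{B}(\maf{H}_\alpha^{(1)})\otimes\mi\,,\qquad \si=\bigoplus_\alpha p_\alpha\,\rho_\alpha^{(1)}\otimes\rho_\alpha^{(2)}\,,
\end{align*}
with weights $p_\alpha>0$ and full-rank states $\rho_\alpha^{(j)}$ on $\maf{H}_\alpha^{(j)}$, and $E_{\mc{N}}$ acting blockwise by the partial expectation $\Phi_\alpha:=\id\otimes\tr(\rho_\alpha^{(2)}\,\cdot\,)$, that is, $E_{\mc{N}}\big(\bigoplus_\alpha X_\alpha\big)=\bigoplus_\alpha \Phi_\alpha(X_\alpha)\otimes\mi$.

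Granting this, the self-adjointness of $E_{\mc{N}}$ with respect to $\l\dd,\dd\r_{\si,s}$ reduces to a blockwise computation. Fixing $\alpha$ and setting $a_\alpha:=\Phi_\alpha(X_\alpha)$, $b_\alpha:=\Phi_\alpha(Y_\alpha)$, I would expand both sides using the product form of $\si$ on the block, the cyclicity of the full trace, the identity $\tr_2\!\big((\mi\otimes Z)W\big)=\tr_2\!\big(W(\mi\otimes Z)\big)$ for the partial trace $\tr_2$ over the second factor, and the fact that $(\rho_\alpha^{(2)})^s(\rho_\alpha^{(2)})^{1-s}=\rho_\alpha^{(2)}$, to obtain
\begin{align*}
    \l E_{\mc{N}}(X),Y\r_{\si,s}=\sum_\alpha p_\alpha\,\tr\!\big((\rho_\alpha^{(1)})^s\,a_\alpha^*\,(\rho_\alpha^{(1)})^{1-s}\,b_\alpha\big)=\l X,E_{\mc{N}}(Y)\r_{\si,s}\,,
\end{align*}
for every $s\in[0,1]$. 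I would also record a structure-free alternative, in case one prefers to avoid the normal form: first obtain commutation via uniqueness (the map $\Delta_\si^{-it}\circ E_{\mc{N}}\circ\Delta_\si^{it}$ is again a $\si$-preserving conditional expectation onto $\mc{N}$, since $\Delta_\si^{it}$ restricts to a $*$-automorphism of $\mc{N}$, hence equals $E_{\mc{N}}$); then show that the $\l\dd,\dd\r_{\si,1/2}$-adjoint $E_{\mc{N}}^\star=\Gamma_\si^{-1}\circ E_{\mc{N}}^\dag\circ\Gamma_\si$ is completely positive (a composite of the completely positive maps $\Gamma_\si^{\pm1}$ and $E_{\mc{N}}^\dag$), unital, $\si$-preserving, and — using $\Delta_\si^{\pm1/2}(\mc{N})\subset\mc{N}$ and the $\mc{N}$-bimodule property of $E_{\mc{N}}^\dag$ (dual to that of $E_{\mc{N}}$) — an $\mc{N}$-bimodule map whose range is $\mc{N}$; being therefore a $\si$-preserving conditional expectation onto $\mc{N}$, uniqueness forces $E_{\mc{N}}^\star=E_{\mc{N}}$, and the general $s$ then follows from the KMS case and the commutation via the identity $\l A,B\r_{\si,s}=\l\Delta_\si^{1/2-s}(A),B\r_{\si,1/2}$ together with analytic continuation in the exponent.

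The commutation relation $\Delta_\si^{it}\circ E_{\mc{N}}=E_{\mc{N}}\circ\Delta_\si^{it}$ is then immediate: from the normal form the modular group acts blockwise by conjugation with the unitaries $(\rho_\alpha^{(1)})^{it}\otimes(\rho_\alpha^{(2)})^{it}$, which commutes with each $\Phi_\alpha$ because $(\rho_\alpha^{(2)})^{-it}\rho_\alpha^{(2)}(\rho_\alpha^{(2)})^{it}=\rho_\alpha^{(2)}$; alternatively, once self-adjointness holds for all $s\in[0,1]$, rewriting it as $E_{\mc{N}}^\dag(\si^{u}Y\si^{1-u})=\si^{u}E_{\mc{N}}(Y)\si^{1-u}$ with $u=1-s$, both sides are entire in $u$ and agree on $(0,1)$, hence for $u=it$, which is the claim. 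I do not expect a genuine obstacle, as this is a standard fact about noncommutative conditional expectations; the one point requiring care is the structure theorem for modular-invariant subalgebras (classical and citable) and, in the structure-free route, the verification that $\ran(E_{\mc{N}}^\star)$ is exactly $\mc{N}$: the bimodule property and positivity give only $\mc{N}\subseteq\ran(E_{\mc{N}}^\star)$, and equality additionally uses the rank identity $\dim\ran(E_{\mc{N}}^\star)=\rank(E_{\mc{N}})=\dim\mc{N}$.
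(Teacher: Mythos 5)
The paper never proves this lemma: it is recalled as a classical fact with a citation to \cite{takesaki2003theory}, and only \cref{coro:uniquence} is proved there, \emph{using} the present lemma. So you are supplying an argument where the paper supplies a reference. Your second, structure-free route is essentially correct: $\Delta_\si^{-it}\circ E_{\mc{N}}\circ\Delta_\si^{it}$ is again a $\si$-preserving conditional expectation onto $\mc{N}$ (using $\Delta_\si^{it}(\mc{N})=\mc{N}$ from \cref{lem:extencece} and $\si\in\mc{M}$), and the KMS adjoint $E_{\mc{N}}^\star=\Gamma_\si^{-1}\circ E_{\mc{N}}^\dag\circ\Gamma_\si$ is completely positive, unital, $\si$-preserving, an $\mc{N}$-bimodule map fixing $\mc{N}$ (here one needs $\Gamma_\si(AXB)=\Delta_\si^{1/2}(A)\,\Gamma_\si(X)\,\Delta_\si^{-1/2}(B)$ with $\Delta_\si^{\pm 1/2}(\mc{N})=\mc{N}$), and your rank count correctly pins down $\ran(E_{\mc{N}}^\star)=\mc{N}$; uniqueness then yields $E_{\mc{N}}^\star=E_{\mc{N}}$, and passing to general $s$ via $\l A,B\r_{\si,s}=\l \Delta_\si^{1/2-s}(A),B\r_{\si,1/2}$ together with analytic continuation of the commutation to real powers is unproblematic in finite dimensions. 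One caution: the uniqueness you invoke must be the classical one from Takesaki, not \cref{coro:uniquence}, since the paper deduces that corollary from the present lemma; otherwise the argument is circular within the paper's logical order. With that citation your route (B) is a complete proof.

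Your first route, however, rests on a normal form that is false for a general inclusion $\mc{N}\subset\mc{M}$: only $\mc{N}$ decomposes as $\bigoplus_\alpha\mc{B}(\maf{H}_\alpha^{(1)})\otimes\mi$. The minimal central projections $z_\alpha$ of $\mc{N}$ need not be central in $\mc{M}$, so $\mc{M}$ generally has nonzero corners $z_\alpha\mc{M}z_\beta$ for $\alpha\neq\beta$, and the reduced algebras are $z_\alpha\mc{M}z_\alpha=\mc{B}(\maf{H}_\alpha^{(1)})\otimes\mc{M}_\alpha^{(2)}$ with $\mc{M}_\alpha^{(2)}$ possibly a proper subalgebra of $\mc{B}(\maf{H}_\alpha^{(2)})$. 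The simplest counterexample is $\mc{N}$ the diagonal subalgebra of $\mc{M}=\mc{B}(\C^2)$: your claimed form would force $\mc{M}=\C\oplus\C$. The repair is standard — continuity shows $\Delta_\si^{it}$ fixes each $z_\alpha$, hence $[\si,z_\alpha]=0$ and $\si z_\alpha$ has product form, and $E_{\mc{N}}$ first pinches by the $z_\alpha$ and then applies the partial expectation against $\rho_\alpha^{(2)}$; since $\si$ is block diagonal, the corner components of $X$ and $Y$ pair to zero against $\mc{N}$ in $\l\dd,\dd\r_{\si,s}$ on both sides, so your blockwise computation still closes. As written, though, route (A) has a genuine gap, and the structure-free argument should be taken as the actual proof.
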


\begin{corollary}[Uniqueness]\label{coro:uniquence}
    Let $E_{\mc{N}} : \mathcal{M} \to \mathcal{N}$ be a conditional expectation with respect to $\sigma$. Then $E_{\mc{N}}$ is the orthogonal projection to $\mc{N}$ with respect to the $L^2$ norms $\norm{\dd}_{\si,s}$ for any $s \in [0,1]$. Consequently, $E_{\mc{N}}$ is uniquely determined by $\mc{N}$ and $\sigma$.
\end{corollary}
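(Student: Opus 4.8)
The plan is to check that $E_{\mc{N}}$ satisfies the three defining properties of an orthogonal projection onto $\mc{N}$ — correct range, idempotence, and self-adjointness with respect to $\l\dd,\dd\r_{\si,s}$ — and then invoke the uniqueness of orthogonal projections onto a fixed subspace for a fixed inner product.

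First I would unwind the algebraic axioms to show $E_{\mc{N}}|_{\mc{N}} = \id_{\mc{N}}$. Taking $X = \mi$ and $B = \mi$ in the module property \eqref{cecdon1}, and using unitality $E_{\mc{N}}(\mi) = \mi$, one gets $E_{\mc{N}}(A) = E_{\mc{N}}(A\,\mi\,\mi) = A\, E_{\mc{N}}(\mi)\, \mi = A$ for every $A \in \mc{N}$ (recall $\mi \in \mc{N}$). Since $E_{\mc{N}}$ maps into $\mc{N}$ by definition, this simultaneously gives $\range(E_{\mc{N}}) = \mc{N}$ and the idempotence $E_{\mc{N}}^2 = E_{\mc{N}}$ (for any $X$, $E_{\mc{N}}(X) \in \mc{N}$, hence $E_{\mc{N}}(E_{\mc{N}}(X)) = E_{\mc{N}}(X)$).

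Next, self-adjointness of $E_{\mc{N}}$ with respect to each $s$-inner product $\l\dd,\dd\r_{\si,s}$, $s \in [0,1]$, is precisely \cref{lem:properce}. A linear operator on a finite-dimensional inner product space that is idempotent, self-adjoint, and has range $\mc{N}$ is, by the standard linear-algebra characterization, exactly the orthogonal projection onto $\mc{N}$ for that inner product; applying this for each $s$ proves the first assertion. For the uniqueness statement, I would note that the orthogonal projection onto $\mc{N}$ with respect to a fixed inner product is unique: if $P_1, P_2$ were two such projections, then for every $X$ one has $P_1 X - P_2 X \in \mc{N}$, while $\l P_1 X - P_2 X, Y\r_{\si,s} = \l X, Y\r_{\si,s} - \l X, Y\r_{\si,s} = 0$ for all $Y \in \mc{N}$ (using self-adjointness and $P_i|_{\mc{N}} = \id$), forcing $P_1 X = P_2 X$. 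Since the inner product $\l\dd,\dd\r_{\si,s}$ (say for $s = 1/2$) is determined by $\si$, the map $E_{\mc{N}}$ is uniquely determined by $\mc{N}$ and $\si$.

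I do not expect a genuine obstacle here: the only substantive input is \cref{lem:properce}, which upgrades the purely algebraic definition of a conditional expectation to the geometric (orthogonal-projection) picture, and everything else is a routine unfolding of definitions together with the elementary uniqueness of orthogonal projections. The one point worth stating carefully is that $E_{\mc{N}}$ being simultaneously the orthogonal projection for all $s \in [0,1]$ is consistent and is exactly what makes the dependence on $s$ drop out, leaving dependence only on $\mc{N}$ and $\si$.
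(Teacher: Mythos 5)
Your proposal is correct and rests on exactly the same ingredients as the paper's proof: the self-adjointness of $E_{\mc{N}}$ for each $s$-inner product from \cref{lem:properce}, together with $E_{\mc{N}}|_{\mc{N}} = \id$ (hence idempotence). The paper verifies the orthogonality relation $\l E_{\mc{N}}(X), X - E_{\mc{N}}(X)\r_{\si,s} = 0$ by a direct computation with $\Gamma_{\si,s}$, while you invoke the standard characterization of a self-adjoint idempotent with range $\mc{N}$; this is only a difference in packaging, not in substance.
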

\begin{proof}
For a full-rank state $\si$ and any \(X \in \mathcal{M}\), there holds 
\begin{align*}
     \langle E_{\mathcal{N}}(X), X - E_{\mathcal{N}}(X) \rangle_{\sigma, s}
= & \langle \Gamma_{\sigma, s} \circ E_{\mathcal{N}}(X), X - E_{\mathcal{N}}(X) \rangle \\
= & \langle E_{\mathcal{N}}^\dag \circ \Gamma_{\sigma, s}(X), X - E_{\mathcal{N}}(X) \rangle \\
= & \langle \Gamma_{\sigma, s}(X), E_{\mathcal{N}}(X - E_{\mathcal{N}}(X)) \rangle = 0\,, 
\end{align*}
where $\Gamma_{\si,s}(X) := \si^{1 - s} X \si^s$. Here in the second equality, we have used the self-adjointness of $E_{\mc{N}}$ with respect to $\l \dd ,\dd \r_{\si,s}$, while the third equality is by $E_{\mc{N}}(X) =X$ for any $X \in \mc{N}$. 
\end{proof}

The existence of conditional expectation $E_{\mc{N}}$ with respect to a full-rank state $\si$ is not always guaranteed. The following lemma, by \cite{takesaki1972conditional}*{Theorem IX.4.2} (see also \cite{takesaki2003theory}), provides the necessary and sufficient conditions to ensure the existence of $E_{\mc{N}}$. 

\begin{lemma}\label{lem:extencece}
The existence of conditional expectation $E_{\mc{N}}: \mathcal{M} \to \mathcal{N}$ with respect to a full-rank state $\sigma$ is equivalent to the invariance of $\mc{N}$ 
under the modular automorphism group generated by $\si$: $\Delta_\si^{is} (\mc{N}) = \mc{N}$ for any $s \in \R$.
\end{lemma}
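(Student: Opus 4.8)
The statement is the finite‑dimensional instance of Takesaki's conditional expectation theorem, so the plan is to give a self‑contained argument along the standard lines, with \cite{takesaki1972conditional}*{Theorem IX.4.2} covering the general von Neumann algebra case. The necessity direction is essentially already in \cref{lem:properce}: if $E_{\mc{N}}$ is a conditional expectation with respect to $\si$, then by \cref{cecdon1} together with unitality it restricts to the identity on $\mc{N}$, and by \cref{lem:properce} it commutes with $\Delta_\si^{is}$; hence for every $X\in\mc{N}$ one has $\Delta_\si^{is}(X)=E_{\mc{N}}(\Delta_\si^{is}(X))\in\mc{N}$, and applying the same with $-s$ gives $\Delta_\si^{is}(\mc{N})=\mc{N}$ for all $s\in\R$. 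This is a two‑line argument.

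The substantive direction is the converse, and the plan is to construct $E_{\mc{N}}$ explicitly from the block structure of $\mc{N}$. First I would show that the central projections of $\mc{N}$ are fixed by the modular flow: the $\ast$‑automorphism $\Delta_\si^{is}|_{\mc{N}}$ permutes them, and fixes them for $s$ near $0$ by continuity, hence for all $s$; consequently $\si$ commutes with each central projection of $\mc{N}$ and decomposes as a direct sum over the central blocks, reducing the problem to the case $\mc{N}=\mc{B}(\maf{K})\otimes\mi_{\maf{L}}$ acting on a tensor product $\maf{K}\otimes\maf{L}$. Then I would differentiate the invariance $\si^{is}(A\otimes\mi_{\maf{L}})\si^{-is}\in\mc{B}(\maf{K})\otimes\mi_{\maf{L}}$ at $s=0$ to get $[\log\si,\,A\otimes\mi_{\maf{L}}]\in\mc{B}(\maf{K})\otimes\mi_{\maf{L}}$ for all $A\in\mc{B}(\maf{K})$; expanding $\log\si$ against a basis of $\mc{B}(\maf{L})$ whose non‑scalar members are trace‑orthogonal to $\mi_{\maf{L}}$ forces $\log\si=h_{\maf{K}}\otimes\mi_{\maf{L}}+\mi_{\maf{K}}\otimes h_{\maf{L}}$, i.e. $\si=\si_{\maf{K}}\otimes\si_{\maf{L}}$ for full‑rank states $\si_{\maf{K}},\si_{\maf{L}}$.

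Given the product form of $\si$, I would set, on each block,
\begin{align*}
    E_{\mc{N}}(X)=\tr_{\maf{L}}\!\big((\mi_{\maf{K}}\otimes\si_{\maf{L}}^{1/2})\,X\,(\mi_{\maf{K}}\otimes\si_{\maf{L}}^{1/2})\big)\otimes\mi_{\maf{L}}\,,
\end{align*}
and patch these over the central blocks. This map is manifestly completely positive and unital, restricts to the identity on $\mc{N}=\mc{B}(\maf{K})\otimes\mi_{\maf{L}}$, is $\si$‑preserving because $\tr((\si_{\maf{K}}\otimes\mi)(\mi\otimes\si_{\maf{L}})X)=\tr(\si X)$, and satisfies the bimodule identity \cref{cecdon1} since factors in $\mc{B}(\maf{K})\otimes\mi_{\maf{L}}$ commute past $\mi_{\maf{K}}\otimes\si_{\maf{L}}^{1/2}$ and through the partial trace over $\maf{L}$. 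Hence $E_{\mc{N}}$ is a conditional expectation with respect to $\si$, and by \cref{coro:uniquence} it is the unique one (and coincides with the orthogonal projection onto $\mc{N}$ for every $s$‑inner product).

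I expect the main obstacle to be the factorization lemma — proving that modular invariance of $\mc{B}(\maf{K})\otimes\mi_{\maf{L}}$ forces $\si$ to be a product state on $\maf{K}\otimes\maf{L}$; the block reduction is bookkeeping and the properties of the explicit $E_{\mc{N}}$ are a direct check. An alternative that sidesteps the factorization lemma is to define $E_{\mc{N}}$ as the orthogonal projection of $\mc{M}$ onto $\mc{N}$ for the GNS inner product $\l\dd,\dd\r_{\si,1}$: the left‑module property is automatic, the right‑module property follows from the (analytically continued, hence valid for complex exponents since $\mc{M}$ is finite‑dimensional) invariance $\si\,\mc{N}\,\si^{-1}=\mc{N}$, and one then upgrades the resulting norm‑one idempotent to a completely positive conditional expectation by Tomiyama's theorem; but checking the operator‑norm bound required for Tomiyama is itself a little delicate, so the explicit construction above is the cleaner route.
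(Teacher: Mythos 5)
Your proposal is correct, but it follows a genuinely different route from the paper, because the paper does not prove this lemma at all: it is quoted directly from Takesaki's theorem, \cite{takesaki1972conditional}*{Theorem IX.4.2}, which holds for general von Neumann algebras with faithful normal states. Your argument replaces that citation with a self-contained finite-dimensional proof. The necessity direction via \cref{lem:properce} is exactly the standard two lines. The sufficiency direction is sound as outlined: the modular flow fixes the minimal central projections of $\mc{N}$ (finitely many, discrete, continuity plus the group law), hence $\si$ commutes with them and the problem reduces blockwise to $\mc{N} = \mc{B}(\maf{K}) \otimes \mi_{\maf{L}}$ on $\maf{K} \otimes \maf{L}$; differentiating $s \mapsto \Delta_\si^{is}(A \otimes \mi)$ at $s = 0$ keeps the derivative in the closed subspace $\mc{N}$, and expanding $\log\si$ against a basis of $\mc{B}(\maf{L})$ with traceless non-scalar members indeed forces $\log\si = h_{\maf{K}} \otimes \mi + \mi \otimes h_{\maf{L}}$, i.e.\ $\si = \si_{\maf{K}} \otimes \si_{\maf{L}}$; the partial-trace map $X \mapsto \tr_{\maf{L}}\bigl((\mi \otimes \si_{\maf{L}}^{1/2}) X (\mi \otimes \si_{\maf{L}}^{1/2})\bigr) \otimes \mi_{\maf{L}}$ then has all the required properties, and the patching over central blocks preserves the bimodule identity because the $z_i$ commute with $\mc{N}$. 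Defining $E_{\mc{N}}$ on all of $\mc{B}(\maf{H})$ and restricting to $\mc{M}$ is harmless since every defining property restricts, so not using the structure of $\mc{M}$ is a feature rather than a gap. What your route buys is an explicit formula for $E_{\mc{N}}$ — essentially the same expression as \eqref{auxeqcc} used later in \cref{lem:eqcondc} for the bipartite setting — and independence from the Tomita--Takesaki machinery; what the paper's citation buys is brevity and validity beyond finite dimensions, which is irrelevant here since the appendix restricts to finite-dimensional algebras.
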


\section{\texorpdfstring{Auxiliary proofs of \cref{sec:liftingmatrix}}{Auxiliary proofs of Section 2}} \label{appA}

\begin{proof}[Proof of \cref{lem:relatingmix}]
    We recall, for $\rho \in \mc{D}(\mc{M})$ and $\si \in \mc{D}_+(\mc{M})$, 
        \begin{align*}
            \norm{\rho - \si}_{\tr}^2 \le \tr[(\rho - \si) \Gamma_{\si}^{-1}(\rho - \si)]\,,
        \end{align*}
        which implies, by letting $ X := \Gamma_{\si}^{-1} \rho$ and $\mc{P}_t^\star = \Gamma_{\si}^{-1} \circ \mc{P}^\dag \circ \Gamma_{\si}$,
        \begin{align*}
            \norm{\mc{P}_t^\dag(\rho) - \si}_{\tr} \le \norm{\mc{P}_t^\star X - \mi}_{2,\si} = \sup_{Y \in \mc{M}\backslash\{0\}} \frac{\l Y, \mc{P}_t^\star X - \mi \r_{\si,1/2}}{\norm{Y}_{2,\si}}\,.
        \end{align*}
        We compute 
        \begin{align*}
            \l Y, \mc{P}_t^\star X - \mi \r_{\si,1/2} = \l \mc{P}_t Y,  X  \r_{\si,1/2} - \tr(\si Y)  =  \l \mc{P}_t Y - \tr(\si Y) \mi,  X  \r_{\si,1/2}\,.
        \end{align*}
        It follows that when $t \ge t_{\rm rel} (2 - \frac{1}{2}\log(\si_{\min}))$, 
        \begin{align*}
            \norm{\mc{P}_t^\dag(\rho) - \si}_{\tr} & \le \sup_{Y \in \mc{M}\backslash\{0\}} \frac{ \l \mc{P}_t Y - \tr(\si Y) \mi,  X  \r_{\si,1/2}}{\norm{Y - \tr(\si Y) \mi}_{2,\si}} \\
                & \le \si_{\min}^{-1/2} e^{- \lfloor t/t_{\rm rel} \rfloor} \le e^{-1}\,,
        \end{align*}
        where the second inequality is by $\sup_{\rho \in \mc{D}(\mc{M})}\norm{\Gamma_{\si}^{-1} \rho}_{2,\si}^2 = \si_{\min}^{-1}$. 
    \end{proof}

    \begin{proof}[Proof of \cref{prop:overdampedmx}]
    We write $ X^\veps(t) = X_0(t) + \veps X_1(t) + \veps^2 X_2(t) + r(t)$ and plug it into \eqref{eq:asym2}. It follows that, by \eqref{eq:0order}-\eqref{eq:norder},
    \begin{align*}
        \frac{\rd}{\rd t} r(t) = & \frac{\rd}{\rd t} (X^\veps(t) - X_0(t) - \veps X_1(t) - \veps^2 X_2(t)) \\
            =  & (\veps^{-1}\lh + \veps^{-2} \ld) r(t) + \lh X_1(t) + \ep \lh X_2(t) + \ld X_2(t)  \\
        & - \frac{\rd}{\rd t} X_0(t) - \veps  \frac{\rd}{\rd t} X_1(t) - \veps^2 \frac{\rd}{\rd t} X_2(t) \\
        = & (\veps^{-1}\lh + \veps^{-2} \ld) r(t) + \ep \underbrace{\left(\lh X_2(t) -  \frac{\rd}{\rd t} X_1(t) - \veps \frac{\rd}{\rd t} X_2(t)\right)}_{q(t)}\,.
    \end{align*}
    By the variation-of-constants formula, we derive 
    \begin{align*}
        r(t) = e^{t (\veps^{-1}\lh + \veps^{-2} \ld)} r(t) + \ep \int_0^t e^{(t-s)(\veps^{-1}\lh + \veps^{-2} \ld)} q(s) \ud s\,.
    \end{align*}
    Noting $e^{t (\veps^{-1}\lh + \veps^{-2} \ld)}$ is a contraction semigroup for $\norm{\dd}_{2,\si}$ and $r(0) = - \ep X_1(0) - \ep^2 X_2(0)$, we conclude, for a fixed $t > 0$, 
      \begin{align*}
        \norm{r(t)}_{2,\si} & \le \ep \left(\norm{X_1(0) + \ep X_2(0)}_{2,\si} + t \sup_{0 \le s \le t} \norm{q(t)}_{2,\si}\right)\,.  \qedhere
      \end{align*}
\end{proof}

\begin{proof}[Proof of \cref{thm:lower}]
It suffices to apply \cref{lem:singulargap} to $\mc{L}_\gamma$ and estimate $s(\mc{L}_\gamma)$ from above in terms of $\lad(\mc{L}_O)$. For this, we note
\begin{equation} \label{eq:lowerp1}
    \begin{aligned}
        s(\mc{L}_\gamma) = \inf_{X \in \mc{M} \backslash\{0\}} \frac{\norm{\mc{L}_\gamma X}_{2,\si}}{\norm{X - E_{\mc{F}}X}_{2,\si}} &\le \inf_{X \in \mc{M} \backslash\{0\}} \frac{\norm{\mc{L}_\gamma \es X}_{2,\si}}{\norm{\es X - E_O X}_{2,\si}} \\ &= \inf_{X \in \mc{M} \backslash\{0\}} \frac{\norm{\lh \es X}_{2,\si}}{\norm{\es X - E_O X}_{2,\si}}\,,
    \end{aligned}
\end{equation}
where in the first inequality, we have used, by \cref{lem:symanti,lem:proplo},
\begin{equation*}
    E_S E_{\mc{F}} = E_{\mc{F}} = E_O\,,
\end{equation*}
and the second equality is by $\ld \es = 0$.  Then, thanks to \cref{assump:PHP}, there holds 
        \begin{align*}
            \w{\mc{S}}^{-1/2} \w{\mc{S}}^{1/2} \lh \es = \lh \es\,,
        \end{align*}
       with $\w{\mc{S}}: = \Pi_1 (-  \ld )^{-1} \Pi_1 $ 
        and thus 
        \begin{equation}\label{eq:lowerp2}
            \begin{aligned}
                \norm{\lh \es X}_{2,\si} & = \norm{\w{\mc{S}}^{-1/2} \w{\mc{S}}^{1/2} \lh \es X}_{2,\si} \\
                & \le \norm{\w{\mc{S}}^{-1/2}}_{(2,\si) \to (2,\si)}  \norm{\w{\mc{S}}^{1/2} \lh \es X}_{2,\si}\,.
            \end{aligned}
        \end{equation}
       We also have 
        \begin{align*}
            \norm{\w{\mc{S}}^{1/2} \lh \es X}_{2,\si}^2 = \l X,  (\lh \es)^\star (-\ld)^{-1} \lh \es X\r_{\si,1/2} = \mc{E}_{\lo}(X,X)\,.
        \end{align*}
        It then follows from \eqref{eq:lowerp1} and \eqref{eq:lowerp2} that 
        \begin{align*}
            s(\mc{L}_\gamma) &\le \inf_{X \in \mc{M} \backslash\{0\}} \frac{\norm{\lh \es X}_{2,\si}}{\norm{\es X -E_O X}_{2,\si}} \\
                & \le \sqrt{\w{s_{\rm m}}^{-1}}  \inf_{X \in \mc{F}(\ld) \backslash\{0\}} \frac{\sqrt{\mc{E}_{\lo}(X,X)}}{\norm{X - E_O X}_{2,\si}} \\
                & = \sqrt{\w{s_{\rm m}}^{-1} \lad_O}\,, 
        \end{align*}
        where the last step is by \eqref{eq:spgaplado}. The proof is completed by \cref{lem:singulargap}.
        \end{proof}

\section{\texorpdfstring{Proof of Divergence \cref{divergencelemma}}{Proof of Divergence Lemma}} \label{app:diverlema}

\begin{proof}[Proof of \cref{divergencelemma}]
The proof is generalized from \cite{eberle2024space}*{Theorem 5} and \cite{eberle2025convergence}*{Theorem 15}, as well as \cite{li2024quantum}*{Lemma 4.7}. 
Since $\fll$ is negative self-adjoint with discrete spectrum, there exists an orthonormal basis $\{e_k\}_{k \ge 0}$ of $\mc{H}_O$ such that $\fll e_k  = - \mu_k^2 e_k$ with $\mu_k \ge 0$ and $\mu_k^2$ arranged in an increasing order, 
where $\mu_k = 0$ and $e_k \in \ker(\fll)$ for $0 \le k \le K_0 = \dim(\ker(\fll))$. The spectral gap $\lad_O$ of $\fll$ can be then characterized by $\lad_O = \inf\{\mu_k^2\,;\ k > K_0\}$. 

We introduce the time-augmented basis: 
\begin{equation}\label{divereqbasis}
\begin{aligned}
      & H_k^a = (t - (T - t)) e_k\,,\q  0 \le k \le K_0\,, \\
    & H_k^a = \big(e^{- \mu_k t} - e^{- \mu_k (T-t)}\big) e_k\,,\q  k > K_0\,, \\
    & H_k^s = \big(e^{- \mu_k t} + e^{- \mu_k (T-t)}\big) e_k\,,\q  k > K_0\,,
\end{aligned}
\end{equation}
which gives an orthogonal basis of the subspace:
\begin{align*}
\mathbb{H} = \{x_t \in L_\perp^2([0,T]; \mc{H}_O)\,; \ x_t \in H^2([0,T]; \mc{H}_0)\,,\  x_t \in \dom(\fll)\ \text{for a.e.\,$t$,}\  \partial_{tt} x_t +  \fll x_t = 0\}\,.  
\end{align*}
It allows us to further decompose $L_\perp^2([0, T]; \mc{H}_O)$ into symmetric and antisymmetric modes with low and high energies: 
\begin{align}\label{spacedecomp}
    L_\perp^2([0, T]; \mc{H}_O) = \underbrace{\mathbb{H}_{l,a} \oplus \mathbb{H}_{l,s} \oplus \mathbb{H}_{h,a} \oplus \mathbb{H}_{h,s}}_{= \mathbb{H}} \oplus \mathbb{H}^{\perp}\,,
\end{align}
where 
\begin{equation} \label{spacedecompo}
    \begin{aligned}
          & \mathbb{H}_{l,a} = {\rm Span} \left\{ H_k^a\,; \mu_k^2 \leq \frac{4}{T^2} \right\}\,,\q  \mathbb{H}_{l,s} = {\rm Span} \left\{ H_k^s\,; \mu_k^2 \leq \frac{4}{T^2} \right\}\,,\\
    & \mathbb{H}_{h,a} = \overline{{\rm Span} \left\{H_k^a\,; \mu_k^2 > \frac{4}{T^2} \right\}}\,,\q  \mathbb{H}_{h,s} = \overline{{\rm Span} \left\{ H_k^s\,; \mu_k^2 > \frac{4}{T^2} \right\}}\,.
    \end{aligned}
\end{equation}
Thanks to the orthogonal decomposition \eqref{spacedecomp}, it suffices to construct the solutions \((z_t, y_t)\) to the divergence equation \eqref{diverequation}, satisfying the desired estimates \eqref{eq:diver1}--\eqref{eq:diver2}, separately for \(x_t\) within the subspaces specified in \eqref{spacedecomp}.

\medskip 

\noindent \underline{\emph{Case I: $x_t \in \mathbb{H}^\perp$.}} We define the second-order operator $\w{\fll} = \p_{tt} + \fll$ on $L^2([0,T]; \mc{H}_O)$ with Neumann boundary condition in $t$ and the domain given by 
\begin{multline}
  \dom\left(\w{\fll}\right) = \Big\{x_t \in L^2([0,T]; \mc{H}_O)\,;\ x_t \in H^2([0,T]; \mc{H}_0)\,,\  x_t \in \dom(\fll)\ \text{for a.e.\,$t$,}\\  \p_t x_t|_{t = 0, T} = 0\,,\  \w{\fll}(x_t) \in L^2([0,T]; \mc{H}_O) \Big\}\,.     
\end{multline}
Note that 
\begin{align*}
    \ker\left(\w{\fll}\right) = \{x_t \in L^2([0,T]\,;\ 
    x_t = x_0 \text{ for a.e.\,$t$},\ x_0 \in \ker(\fll)\}\,.
\end{align*}
We denote by $\mf{P}_O$ and $\w{\mf{P}}_O$ the orthogonal projections to $\ker(\fll)$ and 
$\ker\Big(\w{\fll}\Big)$, respectively, which satisfy 
\begin{align*}
    \w{\mf{P}}_O x_t = \frac{1}{T} \int_0^T \mf{P}_O x_t \ud t\,.
\end{align*}
Then, by tensorization similarly to \cite{li2024quantum}*{Lemma 4.5}, we have the Poincar\'e inequality for $\w{\fll}$:
\begin{align*}
    \|x_t - \w{\mf{P}}_O x_t\|_{T,\mc{H}_O}^2 & =  \|x_t - \mf{P}_O x_t\|_{T,\mc{H}_O}^2 + \| \mf{P}_O x_t - \w{\mf{P}}_O x_t\|_{T,\mc{H}_O}^2 \\
    & \le \frac{1}{\lad_O} \mc{E}_{T,\fll}(x_t) +  \frac{T^2}{\pi^2} \norm{\p_t \mf{P}_O x_t}_{T, \mc{H}_O}^2 \\
    & \le \max\left\{\frac{1}{\lad_O}, \frac{T^2}{\pi^2} \right\} \l x_t, - \w{\fll} x_t\r_{T,\mc{H}_O} \,.
\end{align*}
It follows that $- \w{\fll}: \dom(\w{\fll}) \cap L^2_\perp([0,T]; \mc{H}_O) \to L^2_\perp([0,T]; \mc{H}_O)$ admits a bounded inverse:  
\begin{align*}
    \mf{G}: L^2_\perp([0,T]; \mc{H}_O) \to \dom(\w{\fll}) \cap L^2_\perp([0,T]; \mc{H}_O)
\end{align*}
with operator norm:
\begin{align*}
    \norm{\mf{G}}_{L^2_\perp([0,T]; \mc{H}_O) \to L^2_\perp([0,T]; \mc{H}_O)} \le \max\left\{\frac{1}{\lad_O}, \frac{T^2}{\pi^2} \right\}\,.
\end{align*}
We define $y_t = \mf{G} x_t$ and $z_t = - \p_t y_t$ that solves the divergence equation \eqref{diverequation} with 
$z_t|_{t = 0, T} = 0$, since $\mf{G} x_t$ satisfies Neumann boundary condition in $t$. To see that $y_t$ also satisfies Dirichlet  boundary condition, we take $w_t \in \mathbb{H}$, then the integration by parts give 
\begin{align*}
0 = \l w_t, x_t\r_{T, \mc{H}_O} = \l w_t, -\w{\fll} y_t\r_{T, \mc{H}_O} =  \l  -\w{\fll} w_t,  y_t\r_{T, \mc{H}_O} + \l \p_t w_t, y_t \r_{\mc{H}_O}|_{t = 0}^{t = T}\,,
\end{align*}
which implies $y_0 = y_T = 0$ since $\w{\fll} w_t = 0$ and $w_t$ at $t = 0, T$ are arbitrary. We next derive the estimates \eqref{eq:diver1}--\eqref{eq:diver2}. For this, following \cite{eberle2025convergence}, note that the operators $\p_{tt}$ and $\fll$ on $L^2_\perp([0,T]; \mc{H}_O) \to \dom(\w{\fll})$ commute and admit discrete spectrum. It holds that both the operator norms of $\fll \mf{G}$ and $\p_{tt} \mf{G}$ are bounded by one. This allows us to conclude that in the case of $x_t \in \mathbb{H}^\perp$, we have
\begin{align*}  
    & \norm{\fll y_t}_{T, \mc{H}_O} \le  \norm{\fll \mf{G} x_t}_{T, \mc{H}_O} \le  \norm{x_t}_{T, \mc{H}_O}\,,\\
    & \mc{E}_{T,\fll}(z_t) = \mc{E}_{T,\fll}(\p_t y_t) = \l \p_{tt} \mf{G} x_t, \fll \mf{G} x_t \r_{T, \mc{H}_O} \le \norm{x_t}_{T, \mc{H}_O}^2\,, \\
    & \mc{E}_{T,\fll}(y_t) = - \l \mf{G} x_t, \fll \mf{G} x_t \r_{T, \mc{H}_O} \le \max\left\{\frac{1}{\lad_O}, \frac{T^2}{\pi^2} \right\} \norm{x_t}_{T, \mc{H}_O}^2\,.
\end{align*}
That is, the estimates \eqref{eq:diver1}--\eqref{eq:diver2} hold with 
\begin{align} \label{case1}
    c_1 = c_2 =c_4 = 1\,,\q c_3 = \max\left\{\frac{1}{\sqrt{\lad_O}}, \frac{T}{\pi} \right\}\,.
\end{align}

\medskip 

\noindent \underline{\emph{Case II: $x_t \in \mathbb{H}_{l,a}$.}} In this case, we define $z_t = \int_0^t x_s \ud s$ and $y_t = 0$, which solves \eqref{diverequation} and satisfies $z_t|_{t = 0, T} = 0 = y_t|_{t = 0, T}$. 
We estimate 
\begin{align*}
\norm{z_t}_{T,\mc{H}_O}^2 & = \frac{1}{T} \int_0^T \left\l \int_0^t x_s \ud s,  \int_0^t x_s \ud s \right\r_{\mc{H}_0} \ud t \\
& \le \frac{1}{T} \int_0^T t \int_0^t\left\l x_s, x_s \right\r_{\mc{H}_0} \ud s \ud t \\
& \le \int_0^T t \ud t \norm{x_t}_{T,\mc{H}_0}^2 = \frac{T^2}{2} \norm{x_t}_{T,\mc{H}_0}^2\,,
\end{align*}
where in the second line we have used the convexity of $\norm{\dd}_{\mc{H}_O}^2$.  Similarly, we have 
\begin{align*}
     \norm{\fll z_t}^2_{T, \mc{H}_O} & \le \frac{1}{T} \int_0^T t \int_0^t \norm{\fll x_s}_{\mc{H}_O}^2 \ud s \ud t \\
     & \underset{\eqref{spacedecompo}}{\le}  \frac{1}{T} \int_0^T t \int_0^t \frac{16}{T^4}\norm{ x_s}_{\mc{H}_O}^2 \ud s \ud t \le \frac{8}{T^2} \norm{x_t}_{T,\mc{H}_O}^2\,.
\end{align*}
It follows that by the Cauchy-Schwarz inequality, 
\begin{align} \label{auxeqdiver}
    \mc{E}_{T, \fll}(z_t) \le 2 \norm{x_t}_{T,\mc{H}_O}^2\,.
\end{align}
Thus, we can conclude, for $x_t \in \mathbb{H}_{l,a}$,
\begin{align} \label{case2}
    c_1 = c_3 = c_4 = 0\,,\q c_2 = \sqrt{2}\,.
\end{align}

\medskip 

\noindent \underline{\emph{Case III: $x_t \in \mathbb{H}_{l,s}$.}} Following \cite{eberle2024space}*{Theorem 15}, we consider the decomposition $x_t = x_t^{(0)} + x_t^{(1)}$ with $x_t^{(0)} = x_0 \cos(2 \pi t/ T)$ and $x_t^{(1)} = x_t - x_t^{(0)}$. By definition, there holds 
\begin{align*}
    \int_0^T x_t^{(0)} \ud t = 0\,,\q  x_t^{(1)}|_{t = 0, T} =  0\,.
\end{align*}
We then define $z_t := \int_0^t x_t^{(0)} \ud s$ that satisfies the Dirichlet boundary condition on $[0,T]$, and $y_t: = \mf{G}_O x_t^{(1)}$, where $\mf{G}_O$ is defined by $(- \fll)^{-1}$ on the finite-dimensional space 
${\rm Span}\{e_k\,; 0 < \mu_k^2 \le 2/T\}$. 
One can directly check \eqref{diverequation}:
\begin{equation*}
    \p_t z_t - \fll y_t = x_t^{(0)} + x_t^{(1)} = x_t\,.
\end{equation*}
We next estimate 
\begin{equation}\label{diverauxeq3}
    \begin{aligned}
           \norm{x_t^{(0)}}_{T,\mc{H}_O}^2 = \frac{T}{2}\norm{x_0}_{\mc{H}_O}^2 & = \frac{T}{2} \sum_{0 < \mu_k \le \frac{2}{T}} b_k^2 (1 + e^{- \mu_k T})^2 \\ 
    & \le 2 T \sum_{0 < \mu_k \le \frac{2}{T}} b_k^2 \le \frac{e^2}{2}  \norm{x_t}_{T, \mc{H}_O}^2\,,
    \end{aligned}
\end{equation}
by expanding $x_t$ into the basis $H_k^s$: $x_t = \sum_{0 < \mu_k \le 2/T} b_k H_k^s$ with 
\begin{equation}\label{diverauxeq4}
    \begin{aligned}
          \norm{x_t}_{T, \mc{H}_O}^2 & = \sum_{ 0 < \mu_k \leq \frac{2}{T}} b_{k}^{2} \int_{0}^{T} \left( e^{-\mu_{k}t} + e^{-\mu_{k}(T-t)} \right)^{2} \ud t \\
    & \geq \sum_{ 0 < \mu_k \leq \frac{2}{T}}  T b_{k}^{2} \min_{t \in [0,T]} \left( e^{-\mu_{k}t} + e^{- \mu_{k}(T-t)} \right)^{2}  \\
    & = \sum_{ 0 < \mu_k \leq \frac{2}{T}} 4 e^{- \mu_k T} T b_{k}^{2} \ge 4 e^{-2} T \sum_{ 0 < \mu_k \leq \frac{2}{T}} b_{k}^{2}\,.
    \end{aligned}
\end{equation}
It readily follows that 
\begin{align} \label{auxeqdiver2}
 \norm{\fll y_t}_{T, \mc{H}_O}^2 = \norm{x_t^{(1)}}_{T,\mhh}^2 \le (1 + e/\sqrt{2})^2 \norm{x_t}_{T,\mhh}^2\,.
\end{align}
By arguments similar to \eqref{auxeqdiver}, we have 
\begin{align*}
     \mc{E}_{T, \fll}(z_t) \le 2 \norm{x_t^{(0)}}_{T,\mc{H}_O}^2 \le e^2 \norm{x_t}_{T,\mc{H}_O}^2\,.
\end{align*}
We proceed estimate, by Poincar\'e inequality of $\fll$ and \eqref{auxeqdiver2},
\begin{align*}
    \mc{E}_{T,\fll}(y_t) \le \lad_O^{-1}  \norm{\fll y_t}_{T, \mc{H}_O}^2 \le \lad_O^{-1} (1 + e/\sqrt{2})^2 \norm{x_t}_{T,\mhh}^2\,,
\end{align*}
and 
\begin{align*}
    \mc{E}_{T,\fll}(\p_t y_t) \le \lad_O^{-1}  \norm{\p_t x_t^{(1)}}_{T, \mc{H}_O}^2\,,
\end{align*}
where, by estimates similar to \eqref{diverauxeq3},
\begin{align*}
    \norm{\p_t x_t^{(1)}}_{T, \mc{H}_O} &\le \norm{\p_t x_t^{(0)}}_{T, \mc{H}_O} + \norm{\p_t x_t}_{T, \mc{H}_O} \\
    & \le \frac{\sqrt{2} \pi e}{T} \norm{x_t}_{T,\mhh} + \frac{2}{T} \norm{x_t}_{T,\mhh}\,.
\end{align*}
Thus, we can conclude, for $x_t \in \mathbb{H}_{l,s}$,
\begin{align} \label{case3}
    c_1 = 1 + \frac{e}{\sqrt{2}}\,,\ c_2 = e\,, \ c_3 = \frac{1 + e/\sqrt{2}}{\sqrt{\lad_O}}\,,\  c_4 = \frac{\sqrt{2} \pi e + 2}{\sqrt{\lad_O} T}\,.
\end{align}

\medskip 

\noindent \underline{\emph{Case IV: $x_t \in \mathbb{H}_{h,a}$.}} It suffices to construct the solution $(z_t,y_t)$ to \eqref{diverequation} for each $x_t = H_k^a = \big(e^{- \mu_k t} - e^{- \mu_k (T-t)}\big) e_k$ with $\mu_k \ge 2/T$. We denote $u_k(t) = e^{- \mu_k t} - e^{- \mu_k (T-t)}$ and consider the following ansatz for $(z_t,y_t)$:
\begin{align} \label{diveransatz}
    z_t = v_k(t) e_k\,,\q y_t = \frac{1}{\mu_k^2} w_k(t) e_k\,,
\end{align}
with $v_k(0) = v_k(T) = w_k(0) = w_k(T) = 0$. It follows that they satisfy 
\begin{align*}
    \p_t z_t - \fll y_t = \left(v'_k(t) + w_k(t) \right) e_k = u_k(t) e_k\,,
\end{align*}
equivalently, $u_k(t) = v'_k(t) + w_k(t)$. The construction of functions $v_k$ and $w_k$ for achieving the optimal estimate is quite technical and was provided in \cite{eberle2024space}, which we sketch below for completeness. We let 
\[
\varphi_k(t) = (\mu_k t - 1)^2 \chi_{[0, \mu_k^{-1}]}(t) \in C^1([0,1])\,,
\]
satisfying $\vp_k(\frac{T}{2}) = 0$. Then, for $t \in [0, \frac{T}{2}]$, we define 
\begin{align*}
    v_k(t) = \varphi_k(t) \int_0^t u_k(s) \ud s\,,\q w_k(t) = u_k(t) - \dot{v}_k(t) = (1 - \varphi_k(t))u_k(t) - \dot{\varphi}_k(t) \int_0^t u_k(s) \ud s\,.
\end{align*}
and for $t \in [\frac{T}{2}, T]$, we set 
\begin{align*}
    v_k(t) = - \varphi_k(T - t) \int_t^T u_k(s) \ud s\,,\q w_k(t) = u_k(t) - \dot{v}_k(t)\,.
\end{align*}
Here $v_k$ and $w_k$ constructed above are continuous and piecewise $C^1$ with $ v_k(0) = v_k(T) = 0$ and 
\begin{align*}
    w_k(0) = u_k(0) - \dot{v}_k(0) = 0, \quad w_k(T) = u_k(t) - \dot{v}_k(T) = 0\,.
\end{align*}
It is also easy to see that 
\begin{align*}
    v_k\left(\frac{T}{2}\right) = \dot{v}_k\left(\frac{T}{2}\right) = 0\,,\q \dot{v}_k(0) = u_k(0), \quad \dot{v}_k(T) = u_k(T)\,.
\end{align*}
To derive the estimates \eqref{eq:diver1} and \eqref{eq:diver2}, we compute 
\begin{align*}
    \norm{\fll y_t}_{T, \mhh}^2 = \frac{1}{T} \int_0^T w_k^2(t) \ud t\,,\q \mc{E}_{T, \fll}(z_t) = \frac{\mu_k^2}{T}\int_0^T v_k^2(t) \ud t\,, 
\end{align*}
and 
\begin{align*}
    \mc{E}_{T,\fll}(y_t) = \frac{1}{\mu_k^2 T}\int_0^T w_k^2(t) \ud t\,, \q \mc{E}_{T,\fll}(\p_t y_t) = \frac{1}{\mu_k^2 T}\int_0^T \dot{w}_k^2(t) \ud t
\end{align*}
as well as $\norm{x_t}_{T,\mhh}^2 = \frac{1}{T}\int_0^T u_k^2(t) \ud t$. Estimating these one-dimensional integrals gives \eqref{eq:diver1}--\eqref{eq:diver2} with constants:
\begin{equation}\label{case4}
    c_1 = 1 + \frac{1}{\sqrt{3}}, \quad c_2 = \frac{1}{\sqrt{30}}, \quad c_3 = \frac{T}{2} \left( 1 + \frac{1}{\sqrt{3}} \right), \quad c_4 = 8\,. 
\end{equation}
For simplicity, we omit the details and direct readers to \cite{eberle2024space} for further computation.

\medskip

\noindent \underline{\emph{Case V: $x_t \in \mathbb{H}_{h,s}$.}} 
This case follows a similar line of reasoning to the previous one. We consider $x_t = H_k^s = u_k(t) e_k$ for $\mu_k \ge 2/T$, where $u_k = e^{- \mu_k t} + e^{- \mu_k (T-t)}$ and the ansatz \eqref{diveransatz} for $(z_t,y_t)$. The construction of $v_k$ and $w_k$ remains the same as in the previous case. The only difference lies in the bound related to $\norm{x_t}_{T,\mhh}^2 = \frac{1}{T}\int_0^T u_k^2(t) \ud t$ as $u_k$ is different. By the computation in \cite{eberle2024space}, we conclude with:
\begin{equation}\label{case5}
    c_1 = 1 + \frac{1}{\sqrt{3}}, \quad c_2 = \frac{1}{\sqrt{30}}, \quad c_3  = \frac{T}{2} \left( 1 + \frac{1}{\sqrt{3}} \right), \quad c_4 = 5 + \sqrt{2}\,. 
\end{equation}
Collecting \cref{case1,case2,case3,case4,case5}, the proof is complete. 
    
\end{proof}

\end{appendix}

\begin{bibdiv}
\begin{biblist}

\bib{albritton2019variational}{article}{
      author={Albritton, Dallas},
      author={Armstrong, Scott},
      author={Mourrat, Jean-Christophe},
      author={Novack, Matthew},
       title={Variational methods for the kinetic fokker--planck equation},
        date={2024},
     journal={Analysis \& PDE},
      volume={17},
      number={6},
       pages={1953\ndash 2010},
}

\bib{amorim2021complete}{article}{
      author={Amorim, {\'E}rik},
      author={Carlen, Eric~A},
       title={Complete positivity and self-adjointness},
        date={2021},
     journal={Linear algebra and its applications},
      volume={611},
       pages={389\ndash 439},
}

\bib{albeverio1977dirichlet}{article}{
      author={Albeverio, Sergio},
      author={H{\o}egh-Krohn, Raphael},
       title={Dirichlet forms and markov semigroups on {$C^*$}-algebras},
        date={1977},
     journal={Communications in Mathematical Physics},
      volume={56},
      number={2},
       pages={173\ndash 187},
}

\bib{alicki1976detailed}{article}{
      author={Alicki, Robert},
       title={On the detailed balance condition for non-hamiltonian systems},
        date={1976},
     journal={Reports on Mathematical Physics},
      volume={10},
      number={2},
       pages={249\ndash 258},
}

\bib{bardet2024entropy}{article}{
      author={Bardet, Ivan},
      author={Capel, {\'A}ngela},
      author={Gao, Li},
      author={Lucia, Angelo},
      author={P{\'e}rez-Garc{\'\i}a, David},
      author={Rouz{\'e}, Cambyse},
       title={Entropy decay for davies semigroups of a one dimensional quantum lattice},
        date={2024},
     journal={Communications in Mathematical Physics},
      volume={405},
      number={2},
       pages={42},
}

\bib{bergamaschi2024quantum}{inproceedings}{
      author={Bergamaschi, Thiago},
      author={Chen, Chi-Fang},
      author={Liu, Yunchao},
       title={Quantum computational advantage with constant-temperature gibbs sampling},
organization={IEEE},
        date={2024},
   booktitle={2024 ieee 65th annual symposium on foundations of computer science (focs)},
       pages={1063\ndash 1085},
}

\bib{benatti2009dynamics}{book}{
      author={Benatti, Fabio},
       title={Dynamics, information and complexity in quantum systems},
   publisher={Springer},
        date={2009},
      volume={6},
}

\bib{bierkens2019zig}{article}{
      author={Bierkens, Joris},
      author={Fearnhead, Paul},
      author={Roberts, Gareth},
       title={The zig-zag process and super-efficient sampling for {B}ayesian analysis of big data},
        date={2019},
     journal={Annals of Statistics},
      volume={47},
      number={3},
       pages={1288\ndash 1320},
}

\bib{brigati2024hypocoercivity}{article}{
      author={Brigati, Giovanni},
      author={L{\"o}rler, Francis},
      author={Wang, Lihan},
       title={Hypocoercivity meets lifts},
        date={2024},
     journal={arXiv preprint arXiv:2412.10890},
}

\bib{breuer2002theory}{book}{
      author={Breuer, Heinz-Peter},
      author={Petruccione, Francesco},
       title={The theory of open quantum systems},
   publisher={Oxford University Press on Demand},
        date={2002},
}

\bib{10.1214/16-AAP1217}{article}{
      author={Bierkens, Joris},
      author={Roberts, Gareth},
       title={{A piecewise deterministic scaling limit of lifted Metropolis–Hastings in the Curie–Weiss model}},
        date={2017},
     journal={The Annals of Applied Probability},
      volume={27},
      number={2},
       pages={846 \ndash  882},
         url={https://doi.org/10.1214/16-AAP1217},
}

\bib{brigati2023construct}{article}{
      author={Brigati, Giovanni},
      author={Stoltz, Gabriel},
       title={How to construct decay rates for kinetic fokker--planck equations?},
        date={2023},
     journal={arXiv preprint arXiv:2302.14506},
}

\bib{brigati2024explicit}{article}{
      author={Brigati, Giovanni},
      author={Stoltz, Gabriel},
      author={Wang, Andi~Q},
      author={Wang, Lihan},
       title={Explicit convergence rates of underdamped langevin dynamics under weighted and weak poincar{\'e}--lions inequalities},
        date={2024},
     journal={arXiv preprint arXiv:2407.16033},
}

\bib{chen2013accelerating}{article}{
      author={Chen, Ting-Li},
      author={Hwang, Chii-Ruey},
       title={Accelerating reversible {M}arkov chains},
        date={2013},
     journal={Statistics \& Probability Letters},
      volume={83},
      number={9},
       pages={1956\ndash 1962},
}

\bib{chatterjee2023spectral}{article}{
      author={Chatterjee, Sourav},
       title={Spectral gap of nonreversible markov chains},
        date={2023},
     journal={arXiv preprint arXiv:2310.10876},
}

\bib{carlen2025stationary}{article}{
      author={Carlen, Eric~A},
      author={Huse, David~A},
      author={Lebowitz, Joel~L},
       title={Stationary states of boundary-driven quantum systems: Some exact results},
        date={2025},
     journal={Physical Review A},
      volume={111},
      number={1},
       pages={012210},
}

\bib{chen2024local}{inproceedings}{
      author={Chen, Chi-Fang},
      author={Huang, Hsin-Yuan},
      author={Preskill, John},
      author={Zhou, Leo},
       title={Local minima in quantum systems},
        date={2024},
   booktitle={Proceedings of the 56th annual acm symposium on theory of computing},
       pages={1323\ndash 1330},
}

\bib{cipriani1997dirichlet}{article}{
      author={Cipriani, Fabio},
       title={Dirichlet forms and {M}arkovian semigroups on standard forms of von {N}eumann algebras},
        date={1997},
     journal={Journal of Functional Analysis},
      volume={147},
      number={2},
       pages={259\ndash 300},
}

\bib{chen2023quantum}{article}{
      author={Chen, Chi-Fang},
      author={Kastoryano, MJ},
      author={Brandao, FGSL},
      author={Gily{\'e}n, A},
       title={Quantum thermal state preparation},
        date={2023},
     journal={arXiv preprint arXiv:2303.18224},
      volume={10},
}

\bib{chen2023efficient}{article}{
      author={Chen, Chi-Fang},
      author={Kastoryano, Michael~J},
      author={Gily{\'e}n, Andr{\'a}s},
       title={An efficient and exact noncommutative quantum gibbs sampler},
        date={2023},
     journal={arXiv preprint arXiv:2311.09207},
}

\bib{chen1999lifting}{inproceedings}{
      author={Chen, Fang},
      author={Lov{\'a}sz, L{\'a}szl{\'o}},
      author={Pak, Igor},
       title={Lifting markov chains to speed up mixing},
        date={1999},
   booktitle={Proceedings of the thirty-first annual acm symposium on theory of computing},
       pages={275\ndash 281},
}

\bib{cao2023explicit}{article}{
      author={Cao, Yu},
      author={Lu, Jianfeng},
      author={Wang, Lihan},
       title={On explicit {$L^2$}-convergence rate estimate for underdamped langevin dynamics},
        date={2023},
     journal={Archive for Rational Mechanics and Analysis},
      volume={247},
      number={5},
       pages={90},
}

\bib{chen2025quantum}{article}{
      author={Chen, Zherui},
      author={Lu, Yuchen},
      author={Wang, Hao},
      author={Liu, Yizhou},
      author={Li, Tongyang},
       title={Quantum langevin dynamics for optimization},
        date={2025},
     journal={Communications in Mathematical Physics},
      volume={406},
      number={3},
       pages={52},
}

\bib{carlen2017gradient}{article}{
      author={Carlen, Eric~A},
      author={Maas, Jan},
       title={Gradient flow and entropy inequalities for quantum markov semigroups with detailed balance},
        date={2017},
     journal={Journal of Functional Analysis},
      volume={273},
      number={5},
       pages={1810\ndash 1869},
}

\bib{carlen2020non}{article}{
      author={Carlen, Eric~A},
      author={Maas, Jan},
       title={Non-commutative calculus, optimal transport and functional inequalities in dissipative quantum systems},
        date={2020},
     journal={Journal of Statistical Physics},
      volume={178},
      number={2},
       pages={319\ndash 378},
}

\bib{carbone2013decoherence}{inproceedings}{
      author={Carbone, Raffaella},
      author={Sasso, Emanuela},
      author={Umanita, Veronica},
       title={Decoherence for quantum markov semi-groups on matrix algebras},
organization={Springer},
        date={2013},
   booktitle={Annales henri poincar{\'e}},
      volume={14},
       pages={681\ndash 697},
}

\bib{carbone2015environment}{article}{
      author={Carbone, Raffaella},
      author={Sasso, Emanuela},
      author={Umanit{\`a}, Veronica},
       title={Environment induced decoherence for markovian evolutions},
        date={2015},
     journal={Journal of Mathematical Physics},
      volume={56},
      number={9},
}

\bib{davis1984piecewise}{article}{
      author={Davis, Mark~HA},
       title={Piecewise-deterministic markov processes: A general class of non-diffusion stochastic models},
        date={1984},
     journal={Journal of the Royal Statistical Society: Series B (Methodological)},
      volume={46},
      number={3},
       pages={353\ndash 376},
}

\bib{ding2023single}{article}{
      author={Ding, Zhiyan},
      author={Chen, Chi-Fang},
      author={Lin, Lin},
       title={Single-ancilla ground state preparation via {L}indbladians},
        date={2024},
     journal={Physical Review Research},
      volume={6},
      number={3},
       pages={033147},
}

\bib{diaconis2000analysis}{article}{
      author={Diaconis, Persi},
      author={Holmes, Susan},
      author={Neal, Radford~M},
       title={Analysis of a nonreversible markov chain sampler},
        date={2000},
     journal={Annals of Applied Probability},
       pages={726\ndash 752},
}

\bib{ding2024efficient}{article}{
      author={Ding, Zhiyan},
      author={Li, Bowen},
      author={Lin, Lin},
       title={Efficient quantum gibbs samplers with kubo--martin--schwinger detailed balance condition},
        date={2025},
     journal={Communications in Mathematical Physics},
      volume={406},
      number={3},
       pages={67},
}

\bib{dolbeault2015hypocoercivity}{article}{
      author={Dolbeault, Jean},
      author={Mouhot, Cl{\'e}ment},
      author={Schmeiser, Christian},
       title={Hypocoercivity for linear kinetic equations conserving mass},
        date={2015},
     journal={Transactions of the American Mathematical Society},
      volume={367},
      number={6},
       pages={3807\ndash 3828},
}

\bib{ding2011mixing}{article}{
      author={Ding, Jian},
      author={Peres, Yuval},
       title={Mixing time for the ising model: a uniform lower bound for all graphs},
        date={2011},
     journal={Annales de l'IHP Probabilit{\'e}s et statistiques},
      volume={47},
      number={4},
       pages={1020\ndash 1028},
}

\bib{diaconis1991geometric}{article}{
      author={Diaconis, Persi},
      author={Stroock, Daniel},
       title={Geometric bounds for eigenvalues of markov chains},
        date={1991},
     journal={The annals of applied probability},
       pages={36\ndash 61},
}

\bib{eberle2025convergence}{article}{
      author={Eberle, Andreas},
      author={Guillin, Arnaud},
      author={Hahn, Leo},
      author={L{\"o}rler, Francis},
      author={Michel, Manon},
       title={Convergence of non-reversible markov processes via lifting and flow poincar{\'e} inequality},
        date={2025},
     journal={arXiv preprint arXiv:2503.04238},
}

\bib{eberle2024non}{article}{
      author={Eberle, Andreas},
      author={L{\"o}rler, Francis},
       title={Non-reversible lifts of reversible diffusion processes and relaxation times},
        date={2024},
     journal={Probability Theory and Related Fields},
       pages={1\ndash 31},
}

\bib{eberle2024space}{article}{
      author={Eberle, Andreas},
      author={L{\"o}rler, Francis},
       title={Space-time divergence lemmas and optimal non-reversible lifts of diffusions on riemannian manifolds with boundary},
        date={2024},
     journal={arXiv preprint arXiv:2412.16710},
}

\bib{engel2000one}{book}{
      author={Engel, Klaus-Jochen},
      author={Nagel, Rainer},
      author={Brendle, Simon},
       title={One-parameter semigroups for linear evolution equations},
   publisher={Springer},
        date={2000},
      volume={194},
}

\bib{fill1991eigenvalue}{article}{
      author={Fill, James~Allen},
       title={Eigenvalue bounds on convergence to stationarity for nonreversible markov chains, with an application to the exclusion process},
        date={1991},
     journal={The annals of applied probability},
       pages={62\ndash 87},
}

\bib{fang2025mixing}{article}{
      author={Fang, Di},
      author={Lu, Jianfeng},
      author={Tong, Yu},
       title={Mixing time of open quantum systems via hypocoercivity},
        date={2025},
     journal={Physical Review Letters},
      volume={134},
      number={14},
       pages={140405},
}

\bib{frigerio1978stationary}{article}{
      author={Frigerio, Alberto},
       title={Stationary states of quantum dynamical semigroups},
        date={1978},
     journal={Communications in Mathematical Physics},
      volume={63},
      number={3},
       pages={269\ndash 276},
}

\bib{fagnola2007generators}{article}{
      author={Fagnola, Franco},
      author={Umanita, Veronica},
       title={Generators of detailed balance quantum markov semigroups},
        date={2007},
     journal={Infinite Dimensional Analysis, Quantum Probability and Related Topics},
      volume={10},
      number={03},
       pages={335\ndash 363},
}

\bib{frigerio1982long}{article}{
      author={Frigerio, Alberto},
      author={Verri, Maurizio},
       title={Long-time asymptotic properties of dynamical semigroups on $w^*$-algebras},
        date={1982},
     journal={Mathematische Zeitschrift},
      volume={180},
      number={3},
       pages={275\ndash 286},
}

\bib{GoriniKossakowskiSudarshan1976}{article}{
      author={Gorini, Vittorio},
      author={Kossakowski, Andrzej},
      author={Sudarshan, Ennackal Chandy~George},
       title={Completely positive dynamical semigroups of $n$-level systems},
        date={1976},
     journal={J. Math. Phys.},
      volume={17},
       pages={821\ndash 825},
}

\bib{gamarnik2024slow}{article}{
      author={Gamarnik, David},
      author={Kiani, Bobak~T},
      author={Zlokapa, Alexander},
       title={Slow mixing of quantum gibbs samplers},
        date={2024},
     journal={arXiv preprint arXiv:2411.04300},
}

\bib{goldstein1993beurling}{article}{
      author={Goldstein, S},
      author={Lindsay, J~Martin},
       title={Beurling-{D}eny conditions for {KMS}-symmetric dynamical semigroups},
        date={1993},
     journal={Comptes rendus de l'Acad{\'e}mie des sciences. S{\'e}rie 1, Math{\'e}matique},
      volume={317},
      number={11},
       pages={1053\ndash 1057},
}

\bib{guillin2016optimal}{article}{
      author={Guillin, Arnaud},
      author={Monmarch{\'e}, Pierre},
       title={{Optimal linear drift for the speed of convergence of an hypoelliptic diffusion}},
        date={2016},
     journal={Electronic Communications in Probability},
      volume={21},
      number={none},
       pages={1 \ndash  14},
         url={https://doi.org/10.1214/16-ECP25},
}

\bib{gao2021complete}{article}{
      author={Gao, Li},
      author={Rouz{\'e}, Cambyse},
       title={Complete entropic inequalities for quantum markov chains},
        date={2022},
     journal={Archive for Rational Mechanics and Analysis},
       pages={1\ndash 56},
}

\bib{gao2024coarse}{article}{
      author={Gao, Li},
      author={Rouz{\'e}, Cambyse},
       title={Coarse ricci curvature of quantum channels},
        date={2024},
     journal={Journal of Functional Analysis},
      volume={286},
      number={8},
       pages={110336},
}

\bib{hong2025quantum}{article}{
      author={Hong, Yifan},
      author={Guo, Jinkang},
      author={Lucas, Andrew},
       title={Quantum memory at nonzero temperature in a thermodynamically trivial system},
        date={2025},
     journal={Nature Communications},
      volume={16},
      number={1},
       pages={316},
}

\bib{hwang2005accelerating}{article}{
      author={Hwang, Chii-Ruey},
      author={Hwang-Ma, Shu-Yin},
      author={Sheu, Shuenn-Jyi},
       title={Accelerating diffusions},
        date={2005},
     journal={The Annals of Applied Probability},
      volume={15},
      number={2},
       pages={1433 \ndash  1444},
         url={https://doi.org/10.1214/105051605000000025},
}

\bib{hwang1993accelerating}{article}{
      author={Hwang, Chii-Ruey},
      author={Hwang-Ma, Shu-Yin},
      author={Sheu, Shuenn-Jyi},
       title={Accelerating gaussian diffusions},
        date={1993},
     journal={The Annals of Applied Probability},
       pages={897\ndash 913},
}

\bib{hayes2005general}{inproceedings}{
      author={Hayes, Thomas~P},
      author={Sinclair, Alistair},
       title={A general lower bound for mixing of single-site dynamics on graphs},
organization={IEEE},
        date={2005},
   booktitle={46th annual ieee symposium on foundations of computer science (focs'05)},
       pages={511\ndash 520},
}

\bib{kochanowski2024rapid}{article}{
      author={Kochanowski, Jan},
      author={Alhambra, Alvaro~M},
      author={Capel, Angela},
      author={Rouz{\'e}, Cambyse},
       title={Rapid thermalization of dissipative many-body dynamics of commuting hamiltonians},
        date={2024},
     journal={arXiv preprint arXiv:2404.16780},
}

\bib{kastoryano2016quantum}{article}{
      author={Kastoryano, Michael~J},
      author={Brandao, Fernando~GSL},
       title={Quantum gibbs samplers: The commuting case},
        date={2016},
     journal={Communications in Mathematical Physics},
      volume={344},
       pages={915\ndash 957},
}

\bib{kastoryano2011dissipative}{article}{
      author={Kastoryano, Michael~James},
      author={Reiter, Florentin},
      author={S{\o}rensen, Anders~S{\o}ndberg},
       title={Dissipative preparation of entanglement in optical cavities},
        date={2011},
     journal={Physical review letters},
      volume={106},
      number={9},
       pages={090502},
}

\bib{kastoryano2013quantum}{article}{
      author={Kastoryano, Michael~J},
      author={Temme, Kristan},
       title={Quantum logarithmic sobolev inequalities and rapid mixing},
        date={2013},
     journal={Journal of Mathematical Physics},
      volume={54},
      number={5},
       pages={052202},
}

\bib{laracuente2022self}{article}{
      author={LaRacuente, Nicholas},
       title={Self-restricting noise and exponential decay in quantum dynamics},
        date={2022},
     journal={arXiv preprint arXiv:2203.03745},
}

\bib{Lindblad1976}{article}{
      author={Lindblad, Goran},
       title={On the generators of quantum dynamical semigroups},
        date={1976},
     journal={Commun. Math. Phys.},
      volume={48},
       pages={119\ndash 130},
}

\bib{li2024quantum}{article}{
      author={Li, Bowen},
      author={Lu, Jianfeng},
       title={Quantum space-time poincar{\'e} inequality for lindblad dynamics},
        date={2024},
     journal={arXiv preprint arXiv:2406.09115},
}

\bib{lelievre2013optimal}{article}{
      author={Lelievre, Tony},
      author={Nier, Francis},
      author={Pavliotis, Grigorios~A},
       title={Optimal non-reversible linear drift for the convergence to equilibrium of a diffusion},
        date={2013},
     journal={Journal of Statistical Physics},
      volume={152},
      number={2},
       pages={237\ndash 274},
}

\bib{levin2017markov}{book}{
      author={Levin, David~A},
      author={Peres, Yuval},
       title={Markov chains and mixing times},
   publisher={American Mathematical Soc.},
        date={2017},
      volume={107},
}

\bib{landi2022nonequilibrium}{article}{
      author={Landi, Gabriel~T},
      author={Poletti, Dario},
      author={Schaller, Gernot},
       title={Nonequilibrium boundary-driven quantum systems: Models, methods, and properties},
        date={2022},
     journal={Reviews of Modern Physics},
      volume={94},
      number={4},
       pages={045006},
}

\bib{lu2022explicit}{article}{
      author={Lu, Jianfeng},
      author={Wang, Lihan},
       title={On explicit $l^2$-convergence rate estimate for piecewise deterministic markov processes in mcmc algorithms},
        date={2022},
     journal={The Annals of Applied Probability},
      volume={32},
      number={2},
       pages={1333\ndash 1361},
}

\bib{martinelli1999lectures}{article}{
      author={Martinelli, Fabio},
       title={Lectures on {G}lauber dynamics for discrete spin models},
        date={1999},
     journal={Lectures on probability theory and statistics (Saint-Flour, 1997)},
      volume={1717},
       pages={93\ndash 191},
}

\bib{michel2014generalized}{article}{
      author={Michel, Manon},
      author={Kapfer, Sebastian~C},
      author={Krauth, Werner},
       title={Generalized event-chain {M}onte {C}arlo: {C}onstructing rejection-free global-balance algorithms from infinitesimal steps},
        date={2014},
     journal={The Journal of chemical physics},
      volume={140},
      number={5},
}

\bib{munch2024intertwining}{article}{
      author={M{\"u}nch, Florentin},
      author={Wirth, Melchior},
      author={Zhang, Haonan},
       title={Intertwining curvature bounds for graphs and quantum markov semigroups},
        date={2024},
     journal={arXiv preprint arXiv:2401.05179},
}

\bib{majewski1996quantum}{article}{
      author={Majewski, Adam~W},
      author={Zegarlinski, Boguslaw},
       title={On quantum stochastic dynamics and noncommutative $l_p$ spaces},
        date={1996},
     journal={Letters in Mathematical Physics},
      volume={36},
      number={4},
       pages={337\ndash 349},
}

\bib{ohya2004quantum}{book}{
      author={Ohya, Masanori},
      author={Petz, D{\'e}nes},
       title={Quantum entropy and its use},
   publisher={Springer Science \& Business Media},
        date={2004},
}

\bib{olkiewicz1999hypercontractivity}{article}{
      author={Olkiewicz, Robert},
      author={Zegarlinski, Boguslaw},
       title={Hypercontractivity in noncommutative $l_p$ spaces},
        date={1999},
     journal={Journal of functional analysis},
      volume={161},
      number={1},
       pages={246\ndash 285},
}

\bib{prosen2012comments}{article}{
      author={Prosen, Toma{\v{z}}},
       title={Comments on a boundary-driven open xxz chain: asymmetric driving and uniqueness of steady states},
        date={2012},
     journal={Physica Scripta},
      volume={86},
      number={5},
       pages={058511},
}

\bib{randall2006slow}{inproceedings}{
      author={Randall, Dana},
       title={Slow mixing of glauber dynamics via topological obstructions},
organization={Citeseer},
        date={2006},
   booktitle={Symposium on discrete algorithms: Proceedings of the seventeenth annual acm-siam symposium on discrete algorithm},
      volume={22},
       pages={870\ndash 879},
}

\bib{rouze2024efficient}{article}{
      author={Rouz{\'e}, Cambyse},
      author={Fran{\c{c}}a, Daniel~Stilck},
      author={Alhambra, {\'A}lvaro~M},
       title={Efficient thermalization and universal quantum computing with quantum gibbs samplers},
        date={2024},
     journal={arXiv preprint arXiv:2403.12691},
}

\bib{rouze2024optimal}{article}{
      author={Rouz{\'e}, Cambyse},
      author={Fran{\c{c}}a, Daniel~Stilck},
      author={Alhambra, {\'A}lvaro~M},
       title={Optimal quantum algorithm for gibbs state preparation},
        date={2024},
     journal={arXiv preprint arXiv:2411.04885},
}

\bib{robinson1982strongly}{article}{
      author={Robinson, Derek~W},
       title={Strongly positive semigroups and faithful invariant states},
        date={1982},
     journal={Communications in Mathematical Physics},
      volume={85},
      number={1},
       pages={129\ndash 142},
}

\bib{rakovszky2024bottlenecks}{article}{
      author={Rakovszky, Tibor},
      author={Placke, Benedikt},
      author={Breuckmann, Nikolas~P},
      author={Khemani, Vedika},
       title={Bottlenecks in quantum channels and finite temperature phases of matter},
        date={2024},
     journal={arXiv preprint arXiv:2412.09598},
}

\bib{schlosshauer2019quantum}{article}{
      author={Schlosshauer, Maximilian},
       title={Quantum decoherence},
        date={2019},
     journal={Physics Reports},
      volume={831},
       pages={1\ndash 57},
}

\bib{spohn1977algebraic}{article}{
      author={Spohn, Herbert},
       title={An algebraic condition for the approach to equilibrium of an open n-level system},
        date={1977},
     journal={Letters in Mathematical Physics},
      volume={2},
       pages={33\ndash 38},
}

\bib{stroock1992equivalence}{article}{
      author={Stroock, Daniel~W},
      author={Zegarlinski, Boguslaw},
       title={The equivalence of the logarithmic {S}obolev inequality and the {D}obrushin-{S}hlosman mixing condition},
        date={1992},
     journal={Comm. Math. Phys.},
      volume={144},
       pages={303\ndash 323},
}

\bib{stroock1992logarithmic}{article}{
      author={Stroock, Daniel~W},
      author={Zegarlinski, Boguslaw},
       title={The logarithmic {S}obolev inequality for continuous spin systems on a lattice},
        date={1992},
     journal={J. Funct. Anal.},
      volume={104},
      number={2},
       pages={299\ndash 326},
}

\bib{takesaki2003theory}{book}{
      author={Takesaki, Masamichi},
       title={Theory of operator algebras ii},
   publisher={Springer},
        date={2003},
      volume={125},
}

\bib{takesaki1972conditional}{article}{
      author={Takesaki, Masamichi},
       title={Conditional expectations in von neumann algebras},
        date={1972},
     journal={Journal of Functional Analysis},
      volume={9},
      number={3},
       pages={306\ndash 321},
}

\bib{turitsyn2011irreversible}{article}{
      author={Turitsyn, Konstantin~S},
      author={Chertkov, Michael},
      author={Vucelja, Marija},
       title={Irreversible monte carlo algorithms for efficient sampling},
        date={2011},
     journal={Physica D: Nonlinear Phenomena},
      volume={240},
      number={4-5},
       pages={410\ndash 414},
}

\bib{tupkary2023searching}{article}{
      author={Tupkary, Devashish},
      author={Dhar, Abhishek},
      author={Kulkarni, Manas},
      author={Purkayastha, Archak},
       title={Searching for lindbladians obeying local conservation laws and showing thermalization},
        date={2023},
     journal={Physical Review A},
      volume={107},
      number={6},
       pages={062216},
}

\bib{thomas1989bound}{article}{
      author={Thomas, Lawrence~E},
       title={Bound on the mass gap for finite volume stochastic ising models at low temperature},
        date={1989},
     journal={Communications in Mathematical Physics},
      volume={126},
       pages={1\ndash 11},
}

\bib{temme2010chi}{article}{
      author={Temme, Kristan},
      author={Kastoryano, Michael~James},
      author={Ruskai, Mary~Beth},
      author={Wolf, Michael~Marc},
      author={Verstraete, Frank},
       title={The $\chi$ 2-divergence and mixing times of quantum markov processes},
        date={2010},
     journal={Journal of Mathematical Physics},
      volume={51},
      number={12},
       pages={122201},
}

\bib{tong2024fast}{article}{
      author={Tong, Yu},
      author={Zhan, Yongtao},
       title={Fast mixing of weakly interacting fermionic systems at any temperature},
        date={2024},
     journal={arXiv preprint arXiv:2501.00443},
}

\bib{villani2009hypocoercivity}{book}{
      author={Villani, C{\'e}dric},
       title={Hypocoercivity},
   publisher={American Mathematical Society},
        date={2009},
      volume={202},
      number={950},
}

\bib{vucelja2016lifting}{article}{
      author={Vucelja, Marija},
       title={Lifting—a nonreversible markov chain monte carlo algorithm},
        date={2016},
     journal={American Journal of Physics},
      volume={84},
      number={12},
       pages={958\ndash 968},
}

\bib{vernooij2023derivations}{article}{
      author={Vernooij, Matthijs},
      author={Wirth, Melchior},
       title={Derivations and kms-symmetric quantum markov semigroups},
        date={2023},
     journal={Communications in Mathematical Physics},
      volume={403},
      number={1},
       pages={381\ndash 416},
}

\bib{verstraete2009quantum}{article}{
      author={Verstraete, Frank},
      author={Wolf, Michael~M},
      author={Ignacio~Cirac, J},
       title={Quantum computation and quantum-state engineering driven by dissipation},
        date={2009},
     journal={Nature physics},
      volume={5},
      number={9},
       pages={633\ndash 636},
}

\bib{weidmann2012linear}{book}{
      author={Weidmann, Joachim},
       title={Linear operators in hilbert spaces},
   publisher={Springer Science \& Business Media},
        date={2012},
      volume={68},
}

\bib{wirth2024christensen}{article}{
      author={Wirth, Melchior},
       title={Christensen--evans theorem and extensions of gns-symmetric quantum markov semigroups},
        date={2024},
     journal={Journal of Functional Analysis},
      volume={287},
      number={3},
       pages={110475},
}

\bib{wolf2012quantum}{article}{
      author={Wolf, Michael~M},
       title={Quantum channels and operations-guided tour},
        date={2012},
     journal={Lecture notes available online},
}

\bib{wirth2021complete}{article}{
      author={Wirth, Melchior},
      author={Zhang, Haonan},
       title={Complete gradient estimates of quantum markov semigroups},
        date={2021},
     journal={Communications in Mathematical Physics},
      volume={387},
      number={2},
       pages={761\ndash 791},
}

\bib{wirth2021curvature}{article}{
      author={Wirth, Melchior},
      author={Zhang, Haonan},
       title={Curvature-dimension conditions for symmetric quantum markov semigroups},
        date={2022},
     journal={Annales Henri Poincar{\'e}},
       pages={1\ndash 34},
}

\bib{zhang2024driven}{article}{
      author={Zhang, Yikang},
      author={Barthel, Thomas},
       title={Driven-dissipative bose-einstein condensation and the upper critical dimension},
        date={2024},
     journal={Physical Review A},
      volume={109},
      number={2},
       pages={L021301},
}

\bib{zhan2025rapid}{article}{
      author={Zhan, Yongtao},
      author={Ding, Zhiyan},
      author={Huhn, Jakob},
      author={Gray, Johnnie},
      author={Preskill, John},
      author={Chan, Garnet~Kin},
      author={Lin, Lin},
       title={Rapid quantum ground state preparation via dissipative dynamics},
        date={2025},
     journal={arXiv preprint arXiv:2503.15827},
}

\end{biblist}
\end{bibdiv}

\end{document}